\newtheorem{thm}[equation]{Theorem}
\newtheorem{defin}[equation]{Definition}
\newtheorem{remark}[equation]{Remark}
\newtheorem{prop}[equation]{Proposition}
\newtheorem{cor}[equation]{Corollary}
\newtheorem{lemma}[equation]{Lemma}
\newtheorem{ese}[equation]{Example}
\newtheoremstyle{named}{}{}{\itshape}{}{\bfseries}{.}{.5em}{\thmnote{#3}#1}
\theoremstyle{named}
\newcommand{\Keler}             {K\"{a}hler }
\newcommand{\Ga}{\Gamma}
\newcommand{\OO}{\mathcal{O}}
\newcommand{\meno}{^{-1}}
\newcommand{\PP}{\mathbb{P}}
\newcommand{\ext}{\operatorname{ext}} 
\newcommand{\liu}{\mathfrak{u}}
\newcommand{\liek}{\mathfrak{k}}
\newcommand{\lieg}{\mathfrak{g}}
\newcommand{\liep}{\mathfrak{p}}
\newcommand{\lieq}{\mathfrak{q}}
\newcommand{\liez}{\mathfrak{z}}
\newcommand{\lia}{\mathfrak{a}}
\newcommand{\lier}{\mathfrak{r}}
\newcommand{\supp}{\operatorname{supp}}
\newcommand{\spam}{\,\operatorname{span}\, }
\newcommand{\alfa}{\alpha}
\newcommand{\vacuo}{\emptyset}
\newcommand{\la}{\lambda}
\newcommand{\enf}{\emph}
\newcommand{\desudt}[1] []      {\dfrac {\mathrm {d} #1 }{\mathrm {dt}}}
\newcommand{\desudtzero}        {\desudt \bigg \vert _{t=0} }
\newcommand{\restr}[1]          {\vert_{#1}}
\newcommand{\Ad}{\operatorname{Ad}}
\newcommand{\ad}{{\operatorname{ad}}}
\newcommand{\sx}{\langle}
\newcommand{\xs}{\rangle}
\newcommand{\scalo}{\sx \cdot , \cdot \xs}
\newcommand{\relint}{\operatorname{relint}}
\newcommand{\Sl}{\operatorname{SL}}
\newcommand{\Gl}{\operatorname{GL}}
\newcommand{\End}{\operatorname{End}}
\newcommand{\lds}{\ldots}
\newcommand{\cds}{\cdots}
\newcommand{\cd}{\cdot}
\renewcommand{\setminus}{-}
\newcommand{\tr}{\operatorname{tr}}
\newcommand{\ra}{\rightarrow}
\newcommand{\lra}{\longrightarrow}
\newcommand{\C}{\mathbb{C}}
\newcommand{\R}{\mathds{R}}
\newcommand{\om}{\omega}
\renewcommand{\phi}{\varphi}
\renewcommand{\Bigl}{\left}
\renewcommand{\Bigr}{\right}
\newcommand{\Crit}{\operatorname{Crit}}
\newcommand{\CF}{\mathrm{C}_F}
\newcommand{\meo}{\end{document}}
\newcommand{\mup}{\mu_\liep}
\newcommand{\mua}{\mu_\lia}
\newcommand{\mupb}{\mu_\liep^\beta}
\newcommand{\roots}{\Delta} 
\newcommand{\liem}{\mathfrak{m}}
\newcommand{\simple}{\Pi} 
\newcommand{\lien}{\mathfrak{n}}
\newcommand{\metrica}{(\, , \, )}
\newcommand{\XS}{\overline{X}^S_\tau}
\newcommand{\Herm}{\mathcal{H}}
\newcommand{\Pos}{\mathcal{P}}
\begin{document}
\title{Satake-Furstenberg compactifications and gradient map}
\author{Leonardo Biliotti}
\address{(Leonardo Biliotti) Dipartimento di Scienze Matematiche, Fisiche e Informatiche \\
          Universit\`a di Parma (Italy)}
\email{leonardo.biliotti@unipr.it}
\subjclass[2000]{22E46; 
     53D20} 
\begin{abstract}
Let $G$ be a real semisimple Lie group with finite center and let $\lieg=\liek \oplus \liep$ be a Cartan decomposition of its Lie algebra. Let $K$ be a maximal compact subgroup of $G$ with Lie algebra $\liek$ and let $\tau$ be an irreducible representation of $G$ on a complex vector space $V$. Let $h$ be a Hermitian scalar product on $V$ such that $\tau(G)$ is compatible with respect to $\mathrm{SU}(V,h)^\C$. We denote by $\mup:\mathbb P(V) \lra \liep$ the $G$-gradient map and by $\mathcal O$ the unique closed orbit of $G$ in $\mathbb P(V)$,
which is a $K$-orbit \cite{gjt,heinzner-schwarz-stoetzel}, contained in the unique closed orbit of the Zariski closure of $\tau(G)$ in $\mathrm{SU}(V,h)^\C$. We prove that up to equivalence the set of irreducible representations of parabolic subgroups of $G$ induced by $\tau$ are completely determined by the facial structure of the polar orbitope  $\mathcal E=\mathrm{conv}(\mup(\mathcal O))$.
Moreover, any parabolic subgroup of $G$ admits a unique closed orbit in $\mathcal O$ which is well-adapted to $\mup$.
These results are new also in the complex reductive case. The connection between $\mathcal E$ and $\tau$ provides a geometrical description of the Satake compactifications without root data. In this context the properties of the  Bourguignon-Li-Yau map are also investigated. Given a measure $\gamma$ on $\mathcal O$, we construct a map  $\Psi_\gamma$ from the Satake compactification of $G/K$ associated to $\tau$ and $\mathcal E$. If $\gamma$ is a  $K$-invariant measure then $\Psi_\gamma$ is an homeomorphism of the Satake compactification and $\mathcal E$. Finally, we  prove that for a large class of measures the map $\Psi_\gamma$ is surjective.
\end{abstract}
\keywords{Representations theory, Parabolic subgroups, Gradient map}

%
\subjclass[2010]{53D20; 14L24}
\thanks{The author was partially supported by the Project PRIN 2015, ``Real and Complex Manifolds: Geometry, Topology and Harmonic Analysis'', Project PRIN  2017 ``Real and Complex Manifolds: Topology, Geometry and holomorphic dynamics'' and by GNSAGA INdAM.}
\maketitle
\tableofcontents{}
\section{Introduction}
Let $G$ be a semisimple noncompact real Lie group with finite center and let $K$ be a maximal compact subgroup of $G$. Then $X=G/K$ is a symmetric space of noncompact type.  Let $\tau:G \lra \mathrm{SL}(V)$ be an irreducible complex representation. There exists a Hermitian scalar product $h$ on $V$ such that $\tau(G)\subset \mathrm{SU}(V,h)^\C$ is compatible with respect to the Cartan decomposition of $\mathrm{SU}(V,h)^\C=\mathrm{SU}(V,h)\exp (i\mathfrak{su}(V,h))$, where $\mathfrak{su}(V,h)=\mathrm{Lie}(\mathrm{SU}(V,h))$ \cite[4.32 Proposition]{gjt}. In this paper we investigate the projective representation $\tau:G\lra \mathrm{PSL}(V)$. We will write $gv$ instead of $\tau(g)v$ for simplicity.
There is a corresponding  $G$-gradient map $\mup:\mathbb P(V) \lra \liep$. This map is $K$-equivariant  and for any $\beta\in \liep$, the gradient of the smooth function $
\mup^\beta:\mathbb P(V) \lra \R,
$
$\mup^\beta(x)=\langle \mup(x),\beta \rangle$ where $\langle \cdot, \cdot \rangle$ is an $\mathrm{Ad}(K)$-invariant scalar product on $\liep$, is given by
the fundamental vector field $\beta^\#$ induced by the $G$- action on $\mathbb P(V)$, i.e., $\beta^\# (p):=\desudtzero \exp(t\beta) p$. If $\lia$ is
an Abelian subalgebra of $\lieg$ contained in $\liep$ and $\pi_\lia$ is the orthogonal projection of $\liep$ onto $\lia$,
then $\mua = \pi_\lia \circ \mup$ is the $A =\exp(\lia)$-gradient map. The gradient map has been extensively studied in \cite{heinz-stoezel,heinzner-schwarz-Cartan,heinzner-schwarz-stoetzel} where the authors develop a geometrical invariant theory for actions of real Lie groups on real submanifolds of complex spaces.

The Zariski closure of $\tau(G)$ in
$\mathrm{U}(V,h)^\C$  is a semisimple complex Lie group \cite[Proposition 3.4]{heinz-stoezel} that we denote by $U^\C$. It acts irreducibly on $V$. By Borel-Weil Theorem there exists a unique closed orbit of the $U^\C$-action on $\mathbb P(V)$, that we denote by $\OO'$.  By a Theorem of Wolf (Lemma \ref{closed-G-orbit})
there exists a unique $G$ closed orbit $\OO$ contained in $\OO'$. $\mathcal O$ is a $K$-orbit \cite{heinzner-schwarz-stoetzel} and it captures much of the informations of the projective representation $\tau$ and of the $G$-gradient map $\mup$.
\begin{prop}
Let $A=\exp(\lia)$, where $\lia \subset \liep$ is an Abelian subalgebra. Then
\[
\mua(\mathbb P(V))=\mua(\mathcal O).
\]
If $\lia$ is a maximal Abelian subalgebra, then $\mua(\mathbb P(V))$ is the convex hull of a Weyl-group orbit.
\end{prop}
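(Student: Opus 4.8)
The plan is to identify both $\mua(\PP(V))$ and $\mua(\OO)$ with the convex hull of the $\liet$-weights of $V$, where $\liet$ is a maximal Abelian subspace of $\liep$ containing $\lia$ (every Abelian subalgebra of $\lieg$ lying in $\liep$ is contained in such a $\liet$, by Zorn's lemma). Once this is established for $\lia=\liet$, the general case follows at once: writing the orthogonal projection as $\pi_{\lia}=\pi_{\lia}^{\liet}\circ\pi_{\liet}$ we get $\mua=\pi_{\lia}^{\liet}\circ\mu_{\liet}$, so $\mua(\PP(V))=\pi_{\lia}^{\liet}(\mu_{\liet}(\PP(V)))=\pi_{\lia}^{\liet}(\mu_{\liet}(\OO))=\mua(\OO)$; and the last assertion is precisely the statement, for $\lia=\liet$, that this common set is $\conv(\Weyl\cdot\Lambda)$, where $\Lambda$ is the highest restricted weight and $\Weyl=N_K(\liet)/Z_K(\liet)$ the restricted Weyl group. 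From here on I take $\lia=\liet$.

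By compatibility of $\tau(G)$ with $\mathrm{SU}(V,h)^\C$, the operators $d\tau(H)$, $H\in\liet$, form a commuting family of $h$-self-adjoint endomorphisms, so $V=\bigoplus_j V_{\lambda_j}$ is an $h$-orthogonal decomposition into joint eigenspaces with $d\tau(H)|_{V_{\lambda_j}}=\lambda_j(H)\,\Id$, $\lambda_j\in\liet^*$, and $\tau(\exp H)=\exp(d\tau(H))$. Using the description of the gradient map, $\mu_{\liet}^{H}([v])=\langle d\tau(H)v,v\rangle/\langle v,v\rangle$ for $H\in\liet$ (with a fixed positive normalization of the Fubini--Study form, which affects none of the statements), so for $v=\sum_j v_{\lambda_j}$ one gets $\mu_{\liet}([v])=\sum_j\tfrac{|v_{\lambda_j}|^2}{|v|^2}\,\lambda_j$, a convex combination of the weights. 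Hence $\mu_{\liet}(\PP(V))=\conv\{\lambda_j\}$, and by standard highest-weight theory for the restricted root system --- there is a unique highest weight $\Lambda$, its $\Weyl$-orbit consists of weights, and every weight lies in $\conv(\Weyl\cdot\Lambda)$ --- this equals $\conv(\Weyl\cdot\Lambda)$. Since $\OO\subseteq\PP(V)$, the inclusion $\mu_{\liet}(\OO)\subseteq\conv(\Weyl\cdot\Lambda)$ is automatic, and what remains is $\conv(\Weyl\cdot\Lambda)\subseteq\mu_{\liet}(\OO)$.

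The core step is to produce a point of $\OO$ that $\mup$ sends to $\Lambda$. Since $V$ is $G$-irreducible and $\OO$ is a $G$-orbit, the linear span of the lines in $\OO$ is all of $V$; in particular $\OO$ is not contained in the hyperplane $\{[v]:v_{\Lambda}=0\}$, so there is $[v]\in\OO$ whose $V_{\Lambda}$-component $v_{\Lambda}$ is nonzero. Let $H_{\Lambda}\in\liet$ be dual to $\Lambda$; since every weight lies in $\conv(\Weyl\cdot\Lambda)$, one has $\lambda_j(H_{\Lambda})<\Lambda(H_{\Lambda})$ whenever $\lambda_j\neq\Lambda$, so $\exp(tH_{\Lambda})$ acts on $V_{\lambda_j}$ by $e^{t\lambda_j(H_{\Lambda})}$ and $\exp(tH_{\Lambda})\cdot[v]$ converges, as $t\to+\infty$, to $x_0:=[v_{\Lambda}]\in\PP(V_{\Lambda})$; moreover $x_0\in\OO$ because $\OO$ is closed. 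Now $\mu_{\liet}^H(x_0)=\Lambda(H)$ for $H\in\liet$, while for $\beta$ in the orthogonal complement of $\liet$ in $\liep$ the operator $d\tau(\beta)$ shifts $\liet$-weights, so $d\tau(\beta)V_{\Lambda}\subseteq\bigoplus_{\alpha\in\roots^+}V_{\Lambda-\alpha}$ is $h$-orthogonal to $V_{\Lambda}$ and $\mup^{\beta}(x_0)=0$; hence $\mup(x_0)=\Lambda\in\liet$. Finally, $\OO$ being a single $K$-orbit and $\mup$ being $K$-equivariant, $\mup(\OO)=\Ad(K)\Lambda$, so Kostant's nonlinear convexity theorem gives $\mu_{\liet}(\OO)=\pi_{\liet}(\Ad(K)\Lambda)=\conv(\Weyl\cdot\Lambda)$. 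Thus $\mu_{\liet}(\OO)=\conv(\Weyl\cdot\Lambda)=\conv\{\lambda_j\}=\mu_{\liet}(\PP(V))$, which together with the reductions of the first paragraph proves the Proposition.

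I expect the main obstacle to be this core step: one must know that $\OO$ linearly spans $V$ (so that the $\exp(tH_{\Lambda})$-limit genuinely lands in $\PP(V_{\Lambda})$) and --- crucially --- that the limit stays inside $\OO$, which is exactly where closedness of the orbit enters, and then carry out the short computation pinning down $\mup$ on a line contained in the top weight space. If one prefers not to invoke Kostant's theorem, the same conclusion follows from the convexity theorem for Abelian gradient maps applied to the compact connected $G$-invariant set $\OO$: $\mu_{\liet}(\OO)$ is then a convex, $\Weyl$-invariant (since $\OO$ is $N_K(\liet)$-invariant) subset of $\conv(\Weyl\cdot\Lambda)$ containing $\Lambda$, hence equal to it.
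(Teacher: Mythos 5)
Your proof is correct, and its engine coincides with the paper's: apply the one--parameter flow $\exp(tH)$ to a point of $\OO$ whose top weight component is nonzero (fullness of $\OO$ from irreducibility), use closedness of $\OO$ to keep the limit inside the orbit, and invoke Kostant's convexity theorem on the $K$-orbit $\mup(\OO)$. Where you genuinely diverge is in how the two images are finally compared. The paper runs the flow argument for \emph{every} $\beta\in\lia$ (Lemma \ref{unstable-maxima-restriction}) to match the support functions of $\mua(\OO)$ and $\mua(\PP(V))$, and concludes with the elementary criterion that nested compact convex sets with equal support functions coincide (Propositions \ref{convex-criterium} and \ref{image-gradient}). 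You instead compute $\mu_{\liet}(\PP(V))=\conv\{\lambda_j\}$ directly from the weight decomposition, quote the classical fact that all restricted weights lie in $\conv(\Weyl\cdot\Lambda)$, and therefore need the flow only in the single direction $H_\Lambda$ to place a point of $\OO$ in $\PP(V_\Lambda)$. This inverts the paper's logical order: Proposition \ref{weight} is there \emph{deduced} from the present statement, whereas you feed the weight containment in as an input; since that containment is classical (it is $\lambda=\mu_\tau-\sum c_\alpha\alpha$ with $c_\alpha\geq 0$, quoted from Satake, plus $\Weyl$-invariance of the weight set), there is no circularity, only a loss of self-containedness. On the plus side, your reduction $\mua=\pi_{\lia}^{\liet}\circ\mu_{\liet}$ gives convexity of $\mua(\OO)$ for non-maximal $\lia$ as a linear projection of a convex set, a point the paper's direct appeal to Proposition \ref{image-gradient} leaves implicit. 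The one shaky spot is your closing alternative to Kostant: convexity of the Abelian gradient image of the \emph{real} orbit $\OO$ is not a formal consequence of the Atiyah--Guillemin--Sternberg theorem for Hamiltonian torus actions, so the Kostant route should be regarded as the actual argument there.
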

As an application,  the weights of $\tau$ are contained in the convex hull of the Weyl group orbit of $\mu_\tau$, where $\mu_\tau$ is the highest weight with respect to a positive Weyl chamber (see Proposition \ref{weight}).

For $\beta \in \liep$, let $G^{\beta+}=\{g\in G:\, \lim_{t\mapsto -\infty} \exp(t\beta)g\exp(-t\beta)\, \mathrm{exists}\, \}$. Then $G^{\beta+}$ is parabolic with Levi factor $G^{\beta}=\{g\in G:\, \mathrm{Ad}(g)(\beta)=\beta\}$.  It is well-known that any parabolic subgroup of $G$ arises as a $G^{\beta+}$ for some $\beta \in \liep$  \cite{biliotti-ghigi-heinzner-2,kirwan}. In Section \ref{representation-gradient map}, we prove (see Theorem \ref{parabloic-representation}) the following result.
\begin{thm}\label{teo-parabolici}
Let $\beta \in \liep$ and let $W$ the eigenspace associated to the maximum eigenvalue of $\beta$. Then:
\begin{enumerate}
\item $\mathbb P(W)=\{z \in \mathbb P(V):\, \mup^\beta(z)=\mathrm{max}_{y\in \mathbb P(V)}\, \mup^\beta\}$;
\item $G^{\beta+}$ preserves $W$ and acts irreducibly on $W$;
\item $W$ is the unique subspace of $V$ satisfying $b)$;
\item $G^{\beta+}$ has a unique closed orbit in $\mathcal O$ given by
\[
\mathbb P(W) \cap \mathcal O =\{z\in \mathcal O:\, \mup^\beta (z)=\mathrm{max}_{y\in \mathcal O}\, \mup^\beta \}.
\]
This orbit is connected, it is a $(K^\beta)^o$-orbit, and it is full in $\mathbb P(W)$.
\end{enumerate}
\end{thm}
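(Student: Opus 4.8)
The plan is to read everything off the $\ad(\beta)$-eigenspace grading of $\lieg$ and the $\tau(\beta)$-eigenspace grading of $V$. Since $\beta\in\liep$, the operator $\ad(\beta)$ is symmetric for a suitable invariant inner product, so $\lieg=\bigoplus_{c\in\R}\lieg[c]$ with $\lieg[c]=\ker(\ad(\beta)-c)$, and $\tau(\beta)$ is $h$-Hermitian, so $V=V_1\oplus\cdots\oplus V_r$ along the eigenvalues $\lambda_1>\cdots>\lambda_r$, with $W=V_1$. The single identity $\beta(Xv)=(c+\lambda_j)(Xv)$ for $X\in\lieg[c]$, $v\in V_j$ shows that $\lieg[c]\cdot V_j$ lies in the $(\lambda_j+c)$-eigenspace of $\tau(\beta)$; in particular $\lien^{\beta+}:=\bigoplus_{c>0}\lieg[c]$ annihilates $W$, $\lieg^\beta=\lieg[0]$ preserves each $V_j$, and $\lien^{\beta-}:=\bigoplus_{c<0}\lieg[c]$ moves $V_j$ strictly downward. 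Writing $g\in G$ in block form with respect to $V=\bigoplus_j V_j$ and conjugating by $\exp(t\beta)$, existence of the limit as $t\to-\infty$ forces $g$ to be block upper triangular, i.e. to preserve the flag $F_j:=V_1\oplus\cdots\oplus V_j$; in particular $G^{\beta+}W\subseteq W$, and $N^{\beta+}$ restricts to the identity on each $V_j$, hence acts trivially on $W$, so the $G^{\beta+}$-action on $W$ factors through the Levi $G^\beta$. This gives the first half of (b).

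Part (a) is then immediate: with the normalization of the Introduction $\mup^\beta([v])=h(\tau(\beta)v,v)/h(v,v)$ is the Rayleigh quotient of $\tau(\beta)$, whose supremum $\lambda_1$ over $\mathbb P(V)$ is attained exactly on $\mathbb P(V_1)=\mathbb P(W)$; this also proves (c)'s claim $\mathbb P(W)=\{\mup^\beta=\max\}$. For the irreducibility in (b), suppose $0\neq W_1\subsetneq W$ is $G^{\beta+}$-stable; then it is $\lieg^\beta$-stable and, lying in $W$, annihilated by $\lien^{\beta+}$, so by the PBW factorization $U(\lieg)=U(\lien^{\beta-})\,U(\lieg^\beta)\,U(\lien^{\beta+})$ we get $U(\lieg)\cdot W_1=U(\lien^{\beta-})\cdot W_1$, a nonzero $\lieg$-submodule of the irreducible module $V$, hence all of $V$. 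Projecting to $V_1=W$ and using that elements of $U(\lien^{\beta-})$ of positive degree strictly lower the $\beta$-grading, the $V_1$-component of $U(\lien^{\beta-})\cdot W_1$ is contained in $W_1$; therefore $W=V_1\subseteq W_1$, a contradiction. The same computation shows $W$ is already irreducible under $\lieg^\beta$, hence under $(K^\beta)^o$, which is needed below.

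Part (c): if $G^{\beta+}$ preserves $W_1$ and acts irreducibly, then $W_1$ is $\tau(\beta)$-stable because $\exp(\R\beta)\subseteq G^\beta\subseteq G^{\beta+}$, so $W_1=\bigoplus_j(W_1\cap V_j)$; with $j_1:=\min\{j:W_1\cap V_j\neq 0\}$, the group $N^{\beta+}$ fixes $W_1\cap V_{j_1}$ pointwise (the correction terms lie in $F_{j_1-1}\cap W_1=0$) while $\lieg^\beta$ preserves it, so by irreducibility $W_1=W_1\cap V_{j_1}\subseteq V_{j_1}$ and $\lien^{\beta+}$ kills $W_1$; then $U(\lieg)\cdot W_1=U(\lien^{\beta-})\cdot W_1\subseteq\bigoplus_{k\geq j_1}V_k$ must equal $V$, forcing $j_1=1$, and $W_1\subseteq W$ is a nonzero $\lieg^\beta$-submodule, hence $W_1=W$ by (b). For (d), put $Z:=\mathbb P(W)\cap\mathcal O$. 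By (a), $\mathbb P(W)=\{\mup^\beta=\lambda_1\}$, and the Proposition of the Introduction applied with $\lia=\R\beta$ gives $\max_{\mathcal O}\mup^\beta=\max_{\mathbb P(V)}\mup^\beta=\lambda_1$, so $Z=\{z\in\mathcal O:\mup^\beta(z)=\max_{\mathcal O}\mup^\beta\}$; this is a nonempty compact $G^{\beta+}$-stable set on which $N^{\beta+}$ acts trivially, and it is full in $\mathbb P(W)$ because the complex span of its affine cone is a nonzero $\lieg^\beta$-submodule of $W$, hence equals $W$ by (b). That $Z$ is a single $(K^\beta)^o$-orbit (and so connected) I would obtain from the structure of the gradient map on a closed orbit — the maximum set of $\mup^\beta$ on $\mathcal O$ being a single $(K^\beta)^o$-orbit — equivalently, passing to $U^\C$, $\mathbb P(W)\cap\mathcal O'$ is the closed $(U^\C)^\beta$-orbit in $\mathbb P(W)$, to which Wolf's Lemma~\ref{closed-G-orbit} applies, yielding a unique closed $G^\beta$-orbit that is a $(K^\beta)^o$-orbit, and this orbit is $Z$. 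Finally, if $Y\subseteq\mathcal O$ is a closed $G^{\beta+}$-orbit, the maximum of $\mup^\beta|_Y$ is attained at some $y^*=[v^*]$ which, being critical with $\grad\mup^\beta=\beta^\#$ tangent to $Y$, is fixed by $\exp(\R\beta)$, so $v^*\in V_j$; maximality forces $\lien^{\beta+}v^*=0$ (a nonzero higher-eigenspace term in $\exp(X)v^*$, $X\in\lien^{\beta+}$, would strictly raise the Rayleigh quotient), whence $U(\lieg)v^*=U(\lien^{\beta-})\cdot v^*\subseteq\bigoplus_{k\geq j}V_k$ equals $V$, so $j=1$, $y^*\in\mathbb P(W)$, and $Y=G^{\beta+}y^*\subseteq Z$; since $Z$ is a single orbit, $Y=Z$.

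The one genuinely non-formal step is the claim that $Z=\mathbb P(W)\cap\mathcal O$ is a single $(K^\beta)^o$-orbit rather than a union of several. Everything else — parts (a), (b), (c), the identification of $Z$ with the extreme set, fullness, and the uniqueness of the closed $G^{\beta+}$-orbit — follows from the $\ad(\beta)$-grading and the irreducibility of $V$ alone. I expect the single-orbit statement to be the crux and to require something beyond formal representation theory: either the \Keler/Hamiltonian structure on $\mathcal O$ (connectedness of extreme sets of moment-map components, together with transitivity of $(K^\beta)^o$ on such a set) or a careful comparison, via Wolf's theorem, of the closed $G$-orbit $\mathcal O$ with the closed $(U^\C)^\beta$-orbit $\mathbb P(W)\cap\mathcal O'$.
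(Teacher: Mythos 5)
Your arguments for (a), (b) and (c) are correct and take a genuinely different, more self-contained route than the paper. The paper derives irreducibility and uniqueness of $W$ from the orbit structure: it first establishes (via the complex case, Wolf's theorem and the slice theorem) that $G^{\beta+}$ has a unique closed orbit, then argues that a nontrivial splitting $V_1=L\oplus Z$ would produce two closed orbits (Theorem \ref{parabloic-representation1}), handles uniqueness of $W$ via Engel's theorem plus fullness of the closed orbit, and needs a separate reduction for the general case $G=G_0\cdot U_1^\C$. Your PBW/grading argument ($U(\lieg)=U(\lien^{\beta-})U(\lieg^\beta)U(\lien^{\beta+})$, with positive-degree elements of $U(\lien^{\beta-})$ strictly lowering the $\tau(\beta)$-eigenvalue) proves (b) and (c) directly from irreducibility of $V$, independently of any orbit analysis; likewise your Rayleigh-quotient computation for (a) and your reduction of an arbitrary closed $G^{\beta+}$-orbit to a point of $\mathbb P(W)\cap\OO$ are correct. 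Two small slips: $R^{\beta+}$ does not restrict to the identity on each $V_j$ for $j>1$, only on the graded quotients $F_j/F_{j-1}$ of your flag (which suffices for $j=1$); and irreducibility of $W$ under $\lieg^\beta$ does not imply irreducibility under $(K^\beta)^o$ (the unitary trick produces $U^\beta$, not $K^\beta$) --- fortunately you never actually use that claim.

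The genuine remaining gap is exactly where you place it: the claim that $Z=\mathbb P(W)\cap\OO$ is a single $(K^\beta)^o$-orbit. Your second sketch --- apply Wolf's theorem to $(G^\beta)^o$ acting on the flag manifold $\mathbb P(W)\cap\OO'$, the unique closed $(U^\C)^{\beta+}$-orbit, to get a unique closed $(G^\beta)^o$-orbit, ``and this orbit is $Z$'' --- is the paper's route (Step 3 of Theorem \ref{unique-parabolic}), but the final identification is not automatic: Wolf hands you one closed $(G^\beta)^o$-orbit inside $\mathbb P(W)\cap\OO'$, while $Z$ could a priori contain non-closed $(G^\beta)^o$-orbits accumulating on it. The missing ingredient is the paper's Step 2: every point of $Z$ lies in $\Crit(\mup^\beta)$, and on the $G$-homogeneous $\OO$ the slice theorem gives $T_x\Crit(\mup^\beta)=V_0=T_x(G^\beta\cdot x)$, so $G^\beta$-orbits are open and closed in $\Crit(\mup^\beta)$ (Corollary \ref{sugg1}); hence every $(G^\beta)^o$-orbit in $Z$ is closed and must coincide with Wolf's orbit. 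One also needs Proposition \ref{heinzner-maximun} to see that this compact orbit is a $(K^\beta)^o$-orbit. With these two inputs your proof of (d) closes.
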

Hence the unique closed orbit of $G^{\beta+}$ contained in $\mathcal O$ is well-adapted to $\tau$.
Observe that $\mup(\mathcal O)$ is a $K$-orbit but it is not true in general that $\mup$ defines a homeomorphism between $\mathcal O$ and $\mup(\mathcal O)$, as in the complex case,. Therefore, Theorem $1.2$ in \cite[pag. $582$]{biliotti-ghigi-heinzner-2} does not apply in our context.
The above result can be given in terms of the gradient flow of $\mup^\beta$ (see Theorem \ref{flow-parabolic}).

Let $\mup^\beta :\mathcal O \lra \R$ and let $W$ be the eigenspace associated to the maximum eigenvalue of $\beta$. $\mup^\beta$ is a Morse-Bott function and the unstable manifold relative to the maximum is the set  $W_1^\beta=\mathcal O \setminus \mathbb P (W^\perp )$. The gradient flow restricted to $W_1^\beta$ is given by
\[
\phi_\infty ([x])=\lim_{t\mapsto +\infty} \exp(t\beta)[x]=[\pi_W (x)],
\]
where $\pi_W:V\lra W$ is the orthogonal projection onto $W$. This means  $\phi_\infty (W_1^\beta)$ coincides with the unique closed orbit of $G^{\beta+}$ contained in $\mathcal O$. Now, we discuss about the relation between the parabolic subgroups of $G$ and the image of the gradient map.

The set $\mup(\mathcal O)$ is a $K$-orbit in $\liep$ and its convex hull, that we denote by $\mathcal E$, is a highly symmetric convex body in $\liep$. It  is called a \emph{polar orbitope} since the $K$-action on $\liep$ is polar \cite{dadok-polar,helgason}. The  convex hull of an orbit of an orthogonal representation is called an \emph{orbitope} \cite{orbitope}. These convex bodies have been largely studied in \cite{bgh-israel-p,gichev-polar,kobert-Scheiderer,orbitope} amongst many others.

Let  $P=\mathcal E\cap \lia$, where $\lia\subset\liep$ is a maximal Abelian subalgebra. By a theorem of Kostant $P=\pi_\lia(\mathcal E)=\mua(\mathcal O)$ and it is a polytope  \cite{kostant-convexity}. Hence any face of $P$ is exposed \cite{schneider-convex-bodies} and so any face of $\mathcal E$ is exposed as well.  The compact group $K$ acts on the set of the faces of $\mathcal E$ and the Weyl group $\mathcal W=N_K (\lia)/ K^{\lia}$ acts on the set of faces of $P$, where $N_K(\lia)=\{k\in K:\, \mathrm{Ad}(k)(\lia)=\lia\}$ is the normalizer of $\lia$ and $K^{\lia}=\{k\in K:\, \mathrm{Ad}(k)(z)=z,\, \mathrm{for\ any}\, z \in \lia\}$ is the centralizer of $\lia$ in $K$. Let  $\mathscr F(\mathcal E)$ and $\mathscr F(P)$ denote the sets of faces of $\mathcal E$ and $P$, respectively.
In \cite{biliotti-ghigi-heinzner-2} the authors proved $\mathscr F (P)/\mathcal W \cong \mathscr F (\mathcal E) /K$. This means that $P$ completely determines the boundary structure of $\mathcal E$ in the sense of convex geometry. In Section \ref{representation-gradient map},  we prove (see Theorem \ref{rappresentazioni-parabolici}) the following result.
\begin{thm}\label{pop}
There exists a bijection between $\mathscr F (P)/\mathcal W$ and the set of irreducible representations of parabolic subgroups of $G$ induced by $\tau$, up to equivalence.
\end{thm}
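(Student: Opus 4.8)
The plan is to build the bijection directly from Theorem~\ref{teo-parabolici}, using the known identification $\mathscr F(P)/\mathcal W \cong \mathscr F(\mathcal E)/K$ from \cite{biliotti-ghigi-heinzner-2} as a bookkeeping device, so that one may work with faces of $\mathcal E$ up to $K$ rather than with faces of $P$ up to $\mathcal W$. Since every face of $\mathcal E$ is exposed, each proper face $F$ is of the form $F = F_\beta := \{x \in \mathcal E : \langle x, \beta\rangle = \max_{\mathcal E}\langle \cdot,\beta\rangle\}$ for some $\beta \in \liep$; conversely every $\beta\in\liep$ determines such a face. So the first step is to set up the assignment $\beta \mapsto (G^{\beta+}, W_\beta)$, where $W_\beta \subset V$ is the maximal-eigenvalue eigenspace of $\beta$, and to quote Theorem~\ref{teo-parabolici}(b),(c): $G^{\beta+}$ preserves $W_\beta$, acts irreducibly on it, and $W_\beta$ is the unique subspace with this property. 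This gives a well-defined irreducible representation of the parabolic $G^{\beta+}$, and the $G$-conjugacy class of $G^{\beta+}$ together with the equivalence class of this representation is the output data.

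Next I would check that the assignment factors through faces. If $\beta, \beta'$ determine the same face $F$ of $\mathcal E$, then by Theorem~\ref{teo-parabolici}(a) the subsets $\mathbb P(W_\beta) = \{\mup^\beta = \max\} $ and $\mathbb P(W_{\beta'})$ are each the preimage of a face of $\mathcal E$ under $\mup$, and since $F_\beta = F_{\beta'}$ these coincide, forcing $W_\beta = W_{\beta'}$; and $G^{\beta+} = G^{\beta'+}$ because the face determines the parabolic (this is exactly the content of the face/parabolic correspondence underlying \cite{biliotti-ghigi-heinzner-2}, and can also be seen from $G^{\beta+}$ being the stabilizer of the closed orbit $\mathbb P(W_\beta)\cap\mathcal O$, cf. Theorem~\ref{teo-parabolici}(d)). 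Two faces in the same $K$-orbit, $F' = kF$, give $W_{\mathrm{Ad}(k)\beta} = k\cdot W_\beta$ and $G^{(\mathrm{Ad}(k)\beta)+} = kG^{\beta+}k^{-1}$, so the parabolic and its representation only move by conjugation by $k\in K \subset G$ — hence the induced representation is unchanged up to equivalence. This shows the map $\mathscr F(\mathcal E)/K \to \{\text{irreducible reps of parabolics}\}/\!\sim$ is well defined.

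Then I would prove injectivity and surjectivity. Surjectivity: every parabolic subgroup of $G$ is $G$-conjugate to some $G^{\beta+}$ with $\beta\in\liep$ \cite{biliotti-ghigi-heinzner-2,kirwan}; by Theorem~\ref{teo-parabolici}(b),(c) its unique induced irreducible subrepresentation is $W_\beta$, which is in the image; and any parabolic representation \emph{induced by $\tau$} means precisely a subquotient of this type, so it is hit. (Here one should be a little careful about what ``induced by $\tau$'' means — presumably the restriction to the parabolic of $\tau$ has a distinguished irreducible piece, namely $W_\beta$ by part (b), and this is the only one counted; I would make this definition explicit and then surjectivity is immediate.) Injectivity: suppose $F_\beta$ and $F_{\beta'}$ yield the same parabolic up to conjugacy and the same representation up to equivalence; after conjugating by an element of $G$ — and then, using that $\beta,\beta'\in\liep$ and any two parabolics conjugate in $G$ with representatives of this form are conjugate by an element of $K$, cf. the $\mathcal W$-action bookkeeping — we may assume $G^{\beta+} = G^{\beta'+}$ and $W_\beta \cong W_{\beta'}$ as $G^{\beta+}$-modules inside $V$; by the uniqueness in Theorem~\ref{teo-parabolici}(c), $W_\beta = W_{\beta'}$, hence by part (a) the faces $F_\beta$ and $F_{\beta'}$ of $\mathcal E$ coincide, and we track the conjugating element back to show the faces lie in the same $K$-orbit.

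The main obstacle is the step where conjugacy of parabolics in $G$ must be reduced to conjugacy by $K$ (equivalently, to the $\mathcal W$-action on $\mathscr F(P)$): a priori two parabolics $G^{\beta+}$ and $G^{\beta'+}$ could be conjugate by an element of $G$ not in $K$, and one must show this still moves the corresponding faces of $\mathcal E$ within a single $K$-orbit. This is where the identification $\mathscr F(P)/\mathcal W \cong \mathscr F(\mathcal E)/K$ and the Kostant-type description $P = \mua(\mathcal O)$ do the real work: one brings $\beta,\beta'$ into a fixed maximal abelian $\lia$ (using $K$-conjugacy, since $\liep$ is $\bigcup_{k}\mathrm{Ad}(k)\lia$), reduces to faces of the polytope $P$, and there the ambiguity is exactly $\mathcal W = N_K(\lia)/K^\lia$, which again is realized by elements of $K$. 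I would also need to verify that distinct $\mathcal W$-orbits of faces of $P$ genuinely give inequivalent parabolic representations and not merely non-conjugate parabolics — but this follows because the representation $W_\beta$ determines $\mathbb P(W_\beta)\cap\mathcal O$, hence the face $F_\beta$ by Theorem~\ref{teo-parabolici}(d) together with part (a).
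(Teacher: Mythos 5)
There is a genuine gap, and it sits at the step you treat as routine: the claim that if $F_\beta(\mathcal E)=F_{\beta'}(\mathcal E)$ then $G^{\beta+}=G^{\beta'+}$, ``because the face determines the parabolic'', and because ``$G^{\beta+}$ is the stabilizer of the closed orbit $\mathbb P(W_\beta)\cap\mathcal O$''. Both assertions are false. The stabilizer of $\mathbb P(W_\beta)\cap\mathcal O$ (equivalently of $\mathrm{ext}\,F_\beta$) is $Q(W_\beta)=Q_J$, and Proposition \ref{stabilizzatore-faccie} yields $G^{\beta+}=Q(W_\beta)$ only for $\beta$ in the $H_F$-fixed part $\CF^{H_F}$ of the normal cone; for a general $\beta$ with $F_\beta=F$ one merely has $G^{\beta+}\subseteq Q(W_\beta)$, and the inclusion can be strict. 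Already for $\mathrm{SL}(4,\R)$ acting on $\C^4$, the vertex $\{x_1\}$ of the simplex $P$ is cut out both by $\beta=\mathrm{diag}(3,-1,-1,-1)$, for which $G^{\beta+}$ is the maximal parabolic stabilizing $\C e_1$, and by $\beta'=\mathrm{diag}(3,1,-1,-3)$, for which $G^{\beta'+}$ is a minimal parabolic; these are not conjugate, although $W_\beta=W_{\beta'}=\C e_1$. Consequently your assignment ``face $\mapsto$ ($G$-conjugacy class of $G^{\beta+}$, its representation on $W_\beta$)'' is not well defined, and the well-definedness and injectivity arguments that lean on this identification do not go through as written.

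What is missing is precisely the content that makes ``up to equivalence'' work in the statement: a face $F_I$ corresponds not to a single parabolic but to a whole interval of parabolics $Q$ with $Q_I\subseteq kQk^{-1}\subseteq Q_J$ after a suitable $K$-conjugation, all sharing the same irreducible subspace $W_I$, and one must prove that every such $Q$ induces the \emph{same} representation on $W_I$. The paper does this by writing $kQk^{-1}=Q_{\tilde J}$ with $I\subseteq\tilde J\subseteq J$, setting $I'=\tilde J\setminus I$, and invoking \cite[Lemma I.4.25]{borel-ji-libro} to conclude that $M_{I'}$ acts trivially on $W_I$, so the $Q_{\tilde J}$-action is determined by the $Q_I$-action. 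Without this step the equivalence relation on the target set is not correctly identified and the bijection cannot be established. The remaining architecture of your proposal --- reduction of $G$-conjugacy to $K$-conjugacy via $G=KQ$, Kostant's theorem and $\mathscr F(P)/\mathcal W\cong\mathscr F(\mathcal E)/K$, and injectivity via the classical fact that $Q_{J_1}=kQ_{J_2}k^{-1}$ forces $J_1=J_2$ and $k\in Q_J$ --- is consistent with the paper's proof, so the gap is localized, but it is the essential one.
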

This correspondence between $\tau$ and the gradient map provides a geometrical description of the Satake compactification associated to $\tau$.

There are many different compactifications of symmetric spaces of noncompact type and they have different properties. For instance, Satake compactifications is used in the proof of the celebrated Mostow rigidity Theorem \cite{Mosr}. Other applications are given in harmonic analysis and global geometry \cite{borel-ji-libro,gjt} and in ergodic theory and probability theory in the context of sets of subgroups of $\mathrm{GL}(n,\R)$ \cite{fur,gm}.
The Satake compactifications of $X=G/K$ are obtained by embedding $X$ into some compact ambient space. These embeddings are given by  faithful projective representations of $G$. They are $G$-equivariant and so the $G$-action on $X$ extends to the compactifications.
We briefly recall the construction.

Let $\tau:G \lra \mathrm{PSL}(V)$ be a faithful irreducible projective representation. Then the map
\[
i_\tau: G/K \lra \mathbb P (\mathcal H (V)), \qquad gK \mapsto [gg^*],
\]
where $g^*$ denotes the adjoint of $g$ with respect to the Hermitian scalar product $h$ and $\mathcal H (V)$ denotes the set of Hermitian endomorphism of $V$, is well-defined and injective. $G$ acts on  $\mathbb P (\mathcal H (V))$ as follows $g[A]:=[gAg^*]$, so $i_\tau$ is $G$-equivariant. The closure $\XS=\overline{i_\tau(X)}$ in $\mathbb P (\mathcal H (V))$ is called the Satake compactification associated to $\tau$. Satake \cite{borel-ji-libro,Satake} gave a description of the boundary $\partial \XS:=\XS \setminus i_\tau (X)$ in terms of  $\mu_\tau$-connected subsets of simple roots \cite{Satake}. We replace $\mu_\tau$-connected subsets of simple roots with $\tau$-connected subspaces of $V$.  A $\tau$-connected subspace $W$ of $V$ is well-adapted to $\mathcal O$ and it is described in terms of the facial structure of $\mathcal E$. Indeed (see Section \ref{satake-new-description}), we prove that
$\mup(\mathbb P(W) \cap \mathcal O)=\mathrm{ext}\, F_W$, where $F_W$ is a face of $\mathcal E$ and $\mathrm{ext}\, F_W$ denotes the set of extreme points of $F_W$. We recall that $K$-action on a $K$-orbit in $\liep$ extends to a $G$-action \cite{heinzner-stoetzel-global}. Since $\mathrm{ext}\, F_W$ is contained in the $K$-orbit $\mup(\OO)$, we define
\[
Q(F_W):=\{a\in G:\, a\,\mathrm{ext}\, F_W =\mathrm{ext}\, F_W\}.
\]
We prove
\[
Q(W):=\{g\in G:\, gW=W\}=Q(F_W )
\]
and $Q(W)$ is a parabolic subgroup of $G$ acting irreducibly on $W$. If $\beta \in \CF^{H_F}$, i.e., $F_W=F_\beta (\mathcal E)$ is the exposed face of $\mathcal E$ defined by $\beta$, which is fixed by $H_F=K\cap Q(W)$ (see Subsection \ref{comvex-geometry}), then $Q(W)=G^{\beta+}$ and \[\mathbb P(W)\cap \mathcal O=\{z\in \mathcal O:\, \mup^\beta (z)=\mathrm{max}_{x\in \mathcal O}\, \mup^\beta \}\]
is the unique closed orbit of $Q(W)$ contained in $\mathcal O$.
This means that the information captured by $W$ only depends on the face $F_W$.

In Section \ref{Satake-sempre-Satake}, following the strategy of Biliotti and Ghigi \cite{biliotti-ghigi-American}, the  boundary components of Satake are described by $\tau$-connected subspaces (see Theorem \ref{Satake-mutauconnessi}). This allows the interpretation of $\XS$ in terms of rational self maps of $\mathcal O$ (see Lemma \ref{immagine-mappa-razionale} and Theorem \ref{Satake-razionale}). Roughly speaking, if $W$ is a $\tau$-connected subspace then the boundary component of $\XS$ corresponding to $W$ corresponds to rational maps $\mathcal O \dashrightarrow \mathcal O (Q(W))$, where $\mathcal O (Q(W))$ denotes the unique closed orbit of $Q(W)$ contained in $\mathcal O$. These maps are the composition of an automorphism of $\mathcal O (Q(W))$ and the gradient flow of $\mup^\beta$ restricted to the unstable manifold relative to the maximum, where $\beta \in \liep$ satisfies $F_W=F_\beta (\mathcal E)$. As a consequence, the Satake compactification associated to $\tau$ is completely described in terms of the facial structure of $\mathcal E$. This is new and the main difference between our paper and the papers \cite{biliotti-ghigi-American,korany,Satake}.
In Section \ref{srbly}, we apply these results to the Bourguignon-Li-Yau map.

Given a probability measure $\gamma$ on $\mathcal O$, we define
\[
\Psi_\gamma : G/K \lra \liep, \qquad gK \mapsto \int_{\mathcal O} \mup (\sqrt{gg^*} x) \mathrm d \gamma (x),
\]
which is called Bourguignon-Li-Yau map. The element $\rho(g)=\sqrt{gg^*}$ is the unique positive Hermitian endomorphism of $V$ such that $\rho(g)^{-1}g\in K$ and so $g=\rho(g)k$ is the polar decomposition of $g$.
This map has been studied at different levels of generality by Hersch \cite{hersch}, Millson and Zombro \cite{millson-zombro}, Bourguignon, Li and Yau \cite{bourguignon-li-yau} and Biliotti and Ghigi \cite{biliotti-ghigi-American},
to determine upper bounds for the first eigenvalue of the Laplacian acting on functions.
The image of the Bourguignon-Li-Yau lies in $\mathcal E$. We say that $\gamma$ is $\tau$-admissible if for any hyperplane $H$ of $\mathbb P(V)$, we have $\gamma(\mathcal O\cap H)=0$.  The interpretation of the elements of $\XS$ as rational maps gives a way to extend the Bourguignon-Li-Yau map to $\XS$. In Section \ref{srbly}, we prove the following result (Theorem \ref{bly-invariant-measure} and Theorem \ref{grado}).
\begin{thm}\label{stable-measure}
Let $\gamma$ be a $\tau$-admissible measure. Then $\Psi_\gamma$ extends to $\XS$ as a continuous map and $\Psi_\gamma (\partial \XS)\subseteq \partial \mathcal E$. Moreover, $\Psi_\gamma (\XS)=\mathcal E$ and $\Psi_\gamma (X)=\mathrm{Int}(\mathcal E)$.
\end{thm}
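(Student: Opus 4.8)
The plan is to prove four things in turn: (i) $\Psi_\gamma$ extends to a continuous map $\widehat\Psi_\gamma\colon\XS\to\liep$; (ii) $\widehat\Psi_\gamma(\partial\XS)\subseteq\partial\mathcal E$; (iii) $\Psi_\gamma(X)\subseteq\mathrm{Int}(\mathcal E)$; and (iv) $\Psi_\gamma(X)=\mathrm{Int}(\mathcal E)$, $\Psi_\gamma(\XS)=\mathcal E$; granting (i)--(iii), (iv) reduces to surjectivity of $\Psi_\gamma|_X$ onto $\mathrm{Int}(\mathcal E)$. For (i) I would work inside $\mathbb P(\mathcal H(V))$, where $\XS$ is a compact set of classes $[A]$ of positive semidefinite Hermitian operators; normalise $\tr A=1$. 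A boundary point $\xi=[A]$ is a limit $[g_ng_n^*]$ with $g_nK$ leaving every compact of $X$ (otherwise $\xi\in i_\tau(X)$); since the eigenvalues of $g_ng_n^*$ have product $|\det\tau(g_n)|^2=1$ while their maximum diverges, $\det A=0$, so $W_\xi:=\mathrm{im}\,A=(\ker A)^\perp$ is a proper nonzero subspace. Using continuity of $A\mapsto\sqrt A$ on positive semidefinite operators, set $\widehat\Psi_\gamma([A]):=\int_{\OO}\mup\bigl([\sqrt A\,\hat x]\bigr)\,\mathrm d\gamma([x])$ for lifts $\hat x$ of $[x]$; this is well defined because $[\sqrt A\,\hat x]$ is undefined only on $\mathbb P(\ker A)\cap\OO$, which --- as $\ker A\neq V$ --- is contained in a hyperplane section of $\OO$, hence $\gamma$-negligible by $\tau$-admissibility, while $\mup$ is bounded on the compact $\mathbb P(V)$. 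On $i_\tau(X)$ one has $\sqrt A=\rho(g)/\sqrt{\tr(gg^*)}$, so $\widehat\Psi_\gamma$ restricts to $\Psi_\gamma$; and if $\xi_n\to\xi$ in $\XS$ then $\sqrt{A_n}\to\sqrt A$, so $[\sqrt{A_n}\hat x]\to[\sqrt A\,\hat x]$ for $\gamma$-a.e.\ $[x]$, and dominated convergence (boundedness of $\mup$) yields $\widehat\Psi_\gamma(\xi_n)\to\widehat\Psi_\gamma(\xi)$. Since $i_\tau(X)$ is dense in $\XS$, $\widehat\Psi_\gamma$ is the required continuous extension.

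For (ii), fix $\xi=[A]\in\partial\XS$ and write $\psi_\xi([x]):=[\sqrt A\,\hat x]$. For $\gamma$-a.e.\ $[x]$ one has $\psi_\xi([x])\in\OO$ (it is the limit of the $\rho(g_n)x\in\OO$ and $\OO$ is closed) and $\psi_\xi([x])\in\mathbb P(W_\xi)$ (as $\sqrt A\,\hat x\in\mathrm{im}\,\sqrt A=W_\xi$). Invoking the description of $\partial\XS$ in terms of $\tau$-connected subspaces and rational maps from Sections \ref{Satake-sempre-Satake}--\ref{satake-new-description} (Theorems \ref{Satake-mutauconnessi}, \ref{Satake-razionale}), $W_\xi$ is a $\tau$-connected subspace, so there is $\beta\in\liep$, $\beta\neq0$, with $W_\xi$ the maximal eigenspace of $\beta$ and $F_{W_\xi}=F_\beta(\mathcal E)$ a proper face of $\mathcal E$; then by Theorem \ref{teo-parabolici}(a), $\mup^\beta(\psi_\xi([x]))=\max_{\OO}\mup^\beta=\max_{\mathcal E}\langle\cdot,\beta\rangle$ for $\gamma$-a.e.\ $[x]$. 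Integrating over the probability measure $\gamma$ gives $\langle\widehat\Psi_\gamma(\xi),\beta\rangle=\max_{\mathcal E}\langle\cdot,\beta\rangle$, and since $\widehat\Psi_\gamma(\xi)\in\mathcal E$ (a barycenter of points of $\mup(\OO)\subseteq\mathcal E$) this forces $\widehat\Psi_\gamma(\xi)\in F_\beta(\mathcal E)\subseteq\partial\mathcal E$. Assertion (iii) is the same computation run backwards: if $\Psi_\gamma(gK)\in\partial\mathcal E$ then, all faces of $\mathcal E$ being exposed, $\Psi_\gamma(gK)\in F_\beta(\mathcal E)$ for some $\beta\neq0$, so $\int_{\OO}\mup^\beta(\rho(g)x)\,\mathrm d\gamma([x])=\max_{\OO}\mup^\beta$; as $\rho(g)x\in\OO$ and $\mup^\beta\le\max_{\OO}\mup^\beta$ there, $\mup^\beta(\rho(g)x)=\max_{\OO}\mup^\beta$ for $\gamma$-a.e.\ $[x]$, i.e.\ by Theorem \ref{teo-parabolici} $[x]\in\mathbb P(\rho(g)^{-1}W_\beta)\cap\OO$ for $\gamma$-a.e.\ $[x]$, a hyperplane section, contradicting $\tau$-admissibility.

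For surjectivity I would use convexity rather than degree theory. The function $\mathcal F_\gamma(gK):=\int_{\OO}\log\langle gg^*\hat x,\hat x\rangle\,\mathrm d\gamma([x])$ (unit lifts) restricts along each geodesic $t\mapsto P_0^{1/2}e^{tS}P_0^{1/2}$ of $X$ to an average of log-sum-exp functions of $t$, hence is geodesically convex; moreover it is \emph{strictly} convex, since for each $S\neq0$ the set of $[x]$ along which strict convexity fails (those with $P_0^{1/2}\hat x$ in a single eigenspace of $S$) is a finite union of proper-subspace sections of $\OO$, $\gamma$-negligible by $\tau$-admissibility. As in \cite{bourguignon-li-yau,biliotti-ghigi-American}, $\Psi_\gamma$ is a positive multiple of the Riemannian gradient of $\mathcal F_\gamma$, so $\Psi_\gamma|_X$ is injective; by (i)--(iii) it is a proper continuous map $X\to\mathrm{Int}(\mathcal E)$ (a sequence leaving every compact of $X$ subconverges in $\XS$ to a boundary point, whose image lies in $\partial\mathcal E$). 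Since $\mathcal E$ is full-dimensional (an injection of $X$ cannot land in a proper affine subspace), $X$ and $\mathrm{Int}(\mathcal E)$ are both homeomorphic to $\R^{\dim\liep}$: invariance of domain makes $\Psi_\gamma(X)$ open, properness makes it closed in $\mathrm{Int}(\mathcal E)$, and connectedness of $\mathrm{Int}(\mathcal E)$ forces $\Psi_\gamma(X)=\mathrm{Int}(\mathcal E)$. Then $\Psi_\gamma(\XS)$ is a compact subset of $\mathcal E$ containing $\mathrm{Int}(\mathcal E)$, hence equals $\mathcal E$; and $\Psi_\gamma(X)=\mathrm{Int}(\mathcal E)$ follows once more from (ii)--(iii), since no point of $\mathrm{Int}(\mathcal E)$ is the image of a point of $\partial\XS$.

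The step I expect to be the main obstacle is the identification, for every $\xi\in\partial\XS$, of $W_\xi=\mathrm{im}\,A_\xi$ with a $\tau$-connected subspace. This rests on a $KAK$-analysis of the converging sequence $g_ng_n^*$ together with the rational-map picture of $\XS$ built in the earlier sections, and it is exactly what guarantees that $\mup^\beta$ is constant equal to its maximum along the \emph{entire} image $\psi_\xi(\OO)$ for a single $\beta$ --- without which the barycenter $\widehat\Psi_\gamma(\xi)$ of the boundary points $\mup(\psi_\xi([x]))\in\partial\mathcal E$ would have no reason to remain on $\partial\mathcal E$. Everywhere else, $\tau$-admissibility enters only to make the various hyperplane and proper-subspace sections of $\OO$ that occur $\gamma$-negligible, which is precisely its purpose; the convexity argument for surjectivity and the integrability of $\mathcal F_\gamma$ are then routine.
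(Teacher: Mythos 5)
Your steps (i)--(iii) are correct and, modulo presentation, are the paper's own argument: the paper packages the map $[x]\mapsto[\sqrt A\,\hat x]$ as the rational map $L_{\rho(g)}\circ\hat\pi_W$ attached to a boundary component $X_W$ (Lemma \ref{immagine-mappa-razionale}), gets continuity of the extension from Lemmas \ref{convergence} and \ref{convergense2}, and proves $\Psi_\gamma(X_W)\subseteq F_W$ together with $(\Psi_\gamma)^{-1}(F_W)=\overline{X_W}$ in Theorem \ref{bordo}; your direct computation with $\sqrt A$ and dominated convergence is the same argument in different clothing, and the identification of $\mathrm{im}\,A$ with a $\tau$-connected subspace, which you rightly flag as the crux, is exactly what Theorems \ref{Satakone} and \ref{Satake-mutauconnessi} provide.

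The gap is in the surjectivity step. Strict geodesic convexity of $\mathcal F_\gamma(gK)=\int_\OO\log\langle gg^*\hat x,\hat x\rangle\,\mathrm d\gamma$ does hold for $\tau$-admissible $\gamma$, and along the geodesic $\sigma(t)=P_0^{1/2}e^{tS}P_0^{1/2}$ one has $\left.\frac{d}{dt}\right|_{t=0}\mathcal F_\gamma(\sigma(t))=\langle\Psi_\gamma(P_0),S\rangle$. But at $t=1$ the same computation gives $\left.\frac{d}{dt}\right|_{t=1}\mathcal F_\gamma(\sigma(t))=\langle\Psi_\gamma(P_1),\mathrm{Ad}(u^{-1})S\rangle=\langle\mathrm{Ad}(u)\Psi_\gamma(P_1),S\rangle$, where $u\in K$ is the unitary factor in the polar decomposition $e^{S/2}P_0^{1/2}=u\,P_1^{1/2}$: the trivialization of $TX$ by $\liep$ built into $\Psi_\gamma$ through $\rho(g)=\sqrt{gg^*}$ is not parallel along geodesics. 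Strict convexity therefore yields only $\langle\Psi_\gamma(P_0),S\rangle<\langle\mathrm{Ad}(u)\Psi_\gamma(P_1),S\rangle$, which is perfectly compatible with $\Psi_\gamma(P_0)=\Psi_\gamma(P_1)$ whenever $u\neq e$, i.e.\ whenever $S$ and $P_0$ do not commute. Equivalently, the Kempf--Ness convexity controls $g\mapsto\int_\OO\mup(gx)\,\mathrm d\gamma$ along the curves $t\mapsto\exp(t\beta)g$, but that function does not descend to $G/K$, and $\rho(\exp(\beta)g)\neq\exp(\beta)\rho(g)$ in general; so neither injectivity nor the local injectivity needed for invariance of domain follows. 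This is precisely why the paper (and \cite{biliotti-ghigi-American}) claims only surjectivity for general $\tau$-admissible measures, reserving the homeomorphism statement for the $K$-invariant one.

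The paper's route avoids injectivity altogether: it first shows (Theorem \ref{bly-invariant-measure}, via \cite{biliotti-raffero}) that $\Psi_\nu:\XS\to\mathcal E$ is a homeomorphism for the $K$-invariant measure $\nu$, then considers $\gamma_t=t\gamma+(1-t)\nu$ and the homotopy $H(p,t)=\Psi_{\gamma_t}(p)$, which maps $\partial\XS\times[0,1]$ into $\partial\mathcal E$ by your step (ii); since $H(\cdot,0)$ has degree one, so does $H(\cdot,1)=\Psi_\gamma$, whence $\Psi_\gamma(\XS)=\mathcal E$, and $\Psi_\gamma(X)=\mathrm{Int}(\mathcal E)$ then follows from your (ii) and (iii). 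Everything you need for this repair is already in your write-up except the homeomorphism statement for $\nu$.
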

Since $0\in \mathrm{Int}\, \mathcal E$, see Lemma \ref{basic}, for any measure $\zeta$ defined by a Riemannian metric on $\mathcal O$, we can move the components of the gradient map with an automorphism of $\mathcal O$ in such a way these functions become functions of zero mean with respect to $\zeta$. Hence,  one may apply the Rayleigh Theorem to get an estimate of the first eigenvalue. We discuss this estimative in Section \ref{laplacian}. If $\nu$ is a $K$-invariant measure on $\OO$, see Theorem \ref{bly-invariant-measure}, we get the following result.
\begin{thm}
$\Psi_\nu$ defines an homeomorphism of $\XS$ onto $\mathcal E$.
\end{thm}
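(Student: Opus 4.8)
The plan is to argue by induction on $\dim X$, the case $\dim X=0$ being trivial; assume the statement proved for all symmetric spaces of strictly smaller dimension. First note that the unique $K$-invariant probability measure $\nu$ on $\OO$ is $\tau$-admissible: since $\tau$ is irreducible $\OO$ spans $\PP(V)$, so for every hyperplane $H$ the set $\OO\cap H$ is a proper real algebraic subset of the homogeneous space $\OO$, hence $\nu$-null. Thus Theorem \ref{stable-measure} applies: $\Psi_\nu$ extends to a continuous surjection $\XS\to\mathcal E$ with $\Psi_\nu(X)=\mathrm{Int}\,\mathcal E$ and $\Psi_\nu(\partial\XS)\subseteq\partial\mathcal E$. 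As $\XS$ is compact and $\mathcal E\subseteq\liep$ is Hausdorff, it is enough to prove that $\Psi_\nu$ is injective; I would then treat the interior $X$ and the boundary separately. Moreover, since $\nu$ is $K$-invariant and $g=\sqrt{gg^*}\,k$ with $k\in K$, we have $\Psi_\nu(gK)=\int_{\OO}\mup(gx)\,\mathrm d\nu(x)$.

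\emph{Step 1: $\Psi_\nu$ is a diffeomorphism of $X$ onto $\mathrm{Int}\,\mathcal E$.} Write $g=\exp(\zeta)$, $\zeta\in\liep$; this exhausts $X=G/K$. The linear map $\liep\to T_{gK}X$, $\eta\mapsto\eta^{\#}(gK)$, is an isomorphism: if $\eta^{\#}(gK)=0$ then $\mathrm{Ad}(g^{-1})\eta\in\liek$, and applying the Cartan involution $\theta$ (so $\theta(\exp\zeta)=\exp(-\zeta)$) gives $\mathrm{Ad}(\exp 2\zeta)\eta=-\eta$, impossible because $\mathrm{Ad}(\exp 2\zeta)=e^{2\ad\zeta}$ is positive definite for the inner product $-B(\cdot,\theta\cdot)$, $B$ the Killing form; hence $\eta=0$. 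Under this identification $\mathrm d(\Psi_\nu)_{gK}(\eta^{\#}(gK))=\left.\tfrac{\mathrm d}{\mathrm dt}\right|_{t=0}\int_{\OO}\mup(\exp(t\eta)gx)\,\mathrm d\nu(x)$, and using $\grad\mup^{\eta'}=(\eta')^{\#}$ one gets, for $\eta,\eta'\in\liep$,
\[
\big\langle\,\mathrm d(\Psi_\nu)_{gK}(\eta^{\#}(gK)),\eta'\,\big\rangle=\int_{\OO}\big\langle\eta^{\#}(gx),(\eta')^{\#}(gx)\big\rangle\,\mathrm d\nu(x),
\]
inner products on the right being taken in the Fubini--Study metric of $\PP(V)$. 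This form is symmetric and positive semidefinite, and its value at $(\eta,\eta)$, namely $\int_{\OO}|\eta^{\#}(gx)|^{2}\,\mathrm d\nu(x)$, vanishes only if $gx$ is a fixed point of $\exp(\R\eta)$ for $\nu$-almost every $x$; the fixed locus of $\exp(\R\eta)$ is a finite union of proper projective subspaces, so $\tau$-admissibility forces $\eta=0$. Hence $\mathrm d(\Psi_\nu)_{gK}$ is nonsingular for every $g$, i.e.\ $\Psi_\nu|_X$ is a local diffeomorphism. It is proper as a map $X\to\mathrm{Int}\,\mathcal E$: for $C\subseteq\mathrm{Int}\,\mathcal E$ compact, $\Psi_\nu^{-1}(C)$ is closed in the compact space $\XS$ and avoids $\partial\XS$ (because $\Psi_\nu(\partial\XS)\subseteq\partial\mathcal E$), hence is a compact subset of $X$. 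A proper local diffeomorphism onto the connected, simply connected manifold $\mathrm{Int}\,\mathcal E$ is a covering map, and $X$ is connected, so $\Psi_\nu|_X$ is a diffeomorphism onto $\mathrm{Int}\,\mathcal E$; in particular it is injective.

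\emph{Step 2: the boundary.} By the description of $\XS$ in Section \ref{Satake-sempre-Satake}, $\partial\XS$ is the union of the boundary strata attached to the proper $\tau$-connected subspaces $W\subsetneq V$; the closure in $\XS$ of the stratum of $W$ is $Q(W)$-equivariantly identified with the Satake compactification $\overline{X_W}^{\,S}$, for the induced irreducible representation $\tau_W$ on $W$, of the lower-dimensional symmetric space $X_W$ attached to $W$ (the symmetric space of the Levi factor $G^{\beta}$ of $Q(W)=G^{\beta+}$, where $\beta\in\liep$ satisfies $F_W=F_\beta(\mathcal E)$). By the description of the extension of $\Psi_\nu$ obtained in Section \ref{srbly} (Lemma \ref{immagine-mappa-razionale}, Theorem \ref{Satake-razionale}), under this identification $\Psi_\nu$ restricted to $\overline{X_W}^{\,S}$ equals, up to translation by the vector $\mathrm{c}_W\in(\liep^{\beta})^{\perp}$ with $F_W=\mathrm{c}_W+\mathcal E_W$, the Bourguignon--Li--Yau map $\Psi_{\nu_W}$ of $\tau_W$ for $\nu_W=(\phi_\infty)_*\nu$, where $\phi_\infty\colon\OO\dashrightarrow\OO(Q(W))$ is the gradient flow at $+\infty$ of $\mup^{\beta}$; $\phi_\infty$ is defined $\nu$-a.e.\ since its indeterminacy locus $\PP(W^{\perp})\cap\OO$ is a hyperplane section, and it is $(K^{\beta})^{o}$-equivariant, so $\nu_W$ is a $(K^{\beta})^{o}$-invariant probability measure on the single $(K^{\beta})^{o}$-orbit $\OO(Q(W))$ (Theorem \ref{teo-parabolici}), hence the invariant measure of that orbit. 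Since $\beta\neq 0$ forces $\liep^{\beta}\subsetneq\liep$ we have $\dim X_W<\dim X$, so by the inductive hypothesis $\Psi_{\nu_W}$ is a homeomorphism of $\overline{X_W}^{\,S}$ onto $\mathcal E_W$ mapping $X_W$ onto $\mathrm{Int}\,\mathcal E_W$; translating, $\Psi_\nu$ maps the stratum $X_W$ homeomorphically onto $\relint F_W$ (recall $\mup(\PP(W)\cap\OO)=\ext F_W$).

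\emph{Conclusion and main difficulty.} The strata $X$ and $X_W$ ($W\subsetneq V$ $\tau$-connected) partition $\XS$; the relatively open sets $\mathrm{Int}\,\mathcal E$ and $\relint F_W$ partition $\mathcal E$; distinct $\tau$-connected subspaces give distinct faces of $\mathcal E$ by Theorem \ref{pop}; and by Steps 1 and 2 $\Psi_\nu$ restricts to a bijection between each stratum and the corresponding relatively open subset of $\mathcal E$. Hence $\Psi_\nu\colon\XS\to\mathcal E$ is a continuous bijection from a compact space onto a Hausdorff space, so a homeomorphism, which closes the induction. I expect the two substantive points to be: (1) the positive-definiteness of $\mathrm d(\Psi_\nu)_{gK}$ in Step 1 --- where $\tau$-admissibility enters, and which relies on the identity $\Psi_\nu(gK)=\int_{\OO}\mup(gx)\,\mathrm d\nu$ valid because $\nu$ is $K$-invariant; and (2) the structural identification in Step 2 of $\Psi_\nu$ on a boundary component with the lower-rank Bourguignon--Li--Yau map for $\nu_W$, together with $\nu_W$ being again invariant under the relevant maximal compact, which is what makes the induction close. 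The concluding compactness argument is routine.
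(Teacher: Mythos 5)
Your proof is correct and follows essentially the same route as the paper: a homeomorphism (in fact diffeomorphism) of the open stratum $X$ onto $\mathrm{Int}\,\mathcal E$, the identification of $\Psi_\nu$ on each boundary stratum $X_W$ with the lower-dimensional Bourguignon--Li--Yau map for the pushforward measure $(\hat\pi_W)_*\nu$ (which is again invariant under the relevant maximal compact subgroup), and the conclusion via the partition of $\XS$ into strata and of $\mathcal E$ into the relative interiors of its faces. The one substantive difference is in your Step 1: the paper simply cites Corollary 5.3 of \cite{biliotti-raffero} for the fact that $g\mapsto\int_{\OO}\mup(gx)\,\mathrm d\nu(x)$ descends to a diffeomorphism of $G/K$ onto $\mathrm{Int}\,\mathcal E$, whereas you reprove it via the positive-definiteness of the differential (where $\tau$-admissibility of $\nu$ enters) together with a properness and covering-space argument; this is a correct, self-contained substitute for that reference, and your formal induction on $\dim X$ is just a reorganization of the paper's direct appeal to the interior statement on each stratum. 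One point to adjust: at the outset you invoke Theorem \ref{stable-measure} for the surjectivity $\Psi_\nu(\XS)=\mathcal E$ and for $\Psi_\nu(X)=\mathrm{Int}\,\mathcal E$, but in the paper those assertions (Theorem \ref{grado}) are deduced \emph{from} the present theorem via a degree argument, so quoting them here is circular; the circularity is harmless in your write-up, since Step 1 and the partition argument establish surjectivity independently, but you should only take from that theorem the continuity of the extension and the inclusion $\Psi_\nu(\partial\XS)\subseteq\partial\mathcal E$, which rest on the earlier, independent results (the extension theorem and Corollary \ref{bordo2}).
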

Satake compactification of a symmetric space of noncompact type is homeomorphic to a polar orbitope. This result is not new since Kor\'any \cite{korany} showed that $\XS$ is homeomorphic to $\mathcal E$. On the other hand the map used by Kor\'any is different from $\Psi_\nu$ and the techniques used by Kor\'any is different from ours. The above Theorems generalize results proved in \cite{biliotti-ghigi-American} for symmetric space of noncompact type of type $IV$. We point out that some parts of the paper \cite{biliotti-ghigi-American} are extremely technical (see Section $2.4$ p. $249-251$ and Section $2.5$) since when the paper \cite{biliotti-ghigi-American} was written the authors were not aware of the facial structure of invariant convex compact subsets of polar representations \cite{biliotti-ghigi-heinzner-1,biliotti-ghigi-heinzner-2,bgh-israel-p}. The results of this paper generalize and better clarify some results proved in \cite{biliotti-ghigi-American}.
Finally, we give a short proof of a Theorem of Moore \cite{moore} (see Theorem \ref{furst}), following the strategy of \cite{biliotti-ghigi-American}, stating that the Satake and Furstenberg compactifications are homeomorphic.

{\bfseries \noindent{Acknowledgements.}}   We wish to thank Alessandro Ghigi, Peter Heinzner and Lorenzo Nicolodi for interesting discussions. We would like to thank the anonymous referee for carefully reading our paper and for giving such constructive comments which substantially helped improving the quality of the paper.
\section{Preliminaries}
\subsection{Convex geometry}\label{comvex-geometry}
It is useful to recall a few definitions and results regarding convex
 sets. The reader may refer for instance  to \cite{schneider-convex-bodies} for more details.

 Let $V$ be a real vector
 space with a scalar product $\scalo$ and let $E\subset V$ be a
 compact convex subset.  The \emph{relative interior} of
 $E$, denoted $\mathrm{relint} E$, is the interior of $E$ in its affine hull.
 A face $F$ of $E$ is a convex subset $F\subset E$ with the following
 property: if $x,y\in E$ and $\mathrm{relint}[x,y]\cap F\neq \emptyset$, then
 $[x,y]\subset F$.  We say that a point $x\in E$ is an  \emph{extreme point}, and write $x\in \mathrm{ext}\, E$, if $\{x\}$ is a face. By a Theorem of Minkowski, $E$ is the convex hull of its extreme points \cite[p.19]{schneider-convex-bodies}. The faces of $E$
 are closed \cite[p. 62]{schneider-convex-bodies}.  A face distinct
 from $E$ and $\vacuo$ will be called a \enf{proper face}.  The
 \emph{support function} of $E$ is the function $ h_E : V \lra \R$, $
 h_E(u) = \max_{x \in E} \langle x, u \rangle$.  If $ u \neq 0$, the
 hyperplane $H(E, u) : = \{ x\in E : \langle x, u \rangle = h_E(u)\}$ is
 called the \emph{supporting hyperplane} of $E$ for $u$. The set
   \begin{gather}
     \label{def-exposed}
     F_u (E) : = E \cap H(E,u)
   \end{gather}
   is a face and it is called the \emph{exposed face} of $E$ defined by
   $u$.
In general not all faces of a convex subset are exposed.
  \begin{lemma}
[\protect{\cite[Lemma 3]{biliotti-ghigi-heinzner-1}}]
\label{ext-facce}
If $F$ is a face of a convex set $E$, then $\ext F = F \cap \ext E$.
 \end{lemma}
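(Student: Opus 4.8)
The plan is to deduce both inclusions of $\ext F = F \cap \ext E$ from a single structural fact: the face relation composes well. Concretely, I would first establish two auxiliary claims. \emph{Transitivity:} if $G$ is a face of $F$ and $F$ is a face of $E$, then $G$ is a face of $E$. \emph{Restriction:} if $G$ is a face of $E$ and $G \subseteq F \subseteq E$, then $G$ is a face of $F$. Both are immediate from unwinding the definition of a face given in the excerpt, which tests against segments $[x,y]$ whose relative interior meets the set; the only point to watch is which ambient convex set plays the role of ``$E$'' in each application of the definition.

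For transitivity, given $x,y \in E$ with $\relint[x,y]\cap G \neq \vacuo$, I would first use $G \subseteq F$ together with the face property of $F$ in $E$ to get $[x,y]\subseteq F$, and then apply the face property of $G$ in $F$ to conclude $[x,y]\subseteq G$. For the restriction claim, if $x,y\in F$ satisfy $\relint[x,y]\cap G\neq\vacuo$, then since $x,y\in E$ as well, the face property of $G$ in $E$ already gives $[x,y]\subseteq G$, so $G$ is a face of $F$.

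With these in hand the lemma follows by specializing to singletons, recalling that $x \in \ext E$ means exactly that $\{x\}$ is a face of $E$. If $x \in \ext F$, then $\{x\}$ is a face of $F$, hence a face of $E$ by transitivity, so $x\in \ext E$; since also $x\in F$, this gives $\ext F \subseteq F\cap\ext E$. Conversely, if $x\in F\cap\ext E$, then $\{x\}$ is a face of $E$ contained in $F$, hence a face of $F$ by the restriction claim, so $x\in\ext F$, giving the opposite inclusion.

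I do not anticipate any genuine obstacle: the argument is purely formal and uses neither compactness of $E$ nor finite-dimensionality of $V$. The only subtlety is bookkeeping --- keeping straight, in each invocation of the definition of a face, whether the relevant ambient convex set is $E$ or $F$, and being careful that the face condition quantifies over segments with relative interior meeting the set rather than over arbitrary segments lying in it.
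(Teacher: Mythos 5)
Your proof is correct. The paper itself states this lemma without proof, citing it from \cite[Lemma 3]{biliotti-ghigi-heinzner-1}; your argument via the two auxiliary claims (transitivity of the face relation, and restriction of a face of $E$ contained in $F$ to a face of $F$), specialized to singleton faces, is the standard proof of that cited result and is carried out correctly here.
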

 \begin{lemma}[\protect{\cite[Lemma 8]{biliotti-ghigi-heinzner-1}}]
   \label{face-chain}
   If $E$ is a compact convex set and $F\subset E$ is a face, then
   there is a chain of faces $ F_0=F \subsetneq F_1 \subsetneq \cds
   \subsetneq F_k=E $ which is maximal, in the sense that for any $i$
   there is no face of $E$ strictly contained between $F_{i-1}$ and
   $F_i$.
 \end{lemma}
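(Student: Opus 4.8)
The plan is to prove this by induction on the integer $\dim E - \dim F$, after recording two elementary facts about faces. \emph{Fact 1 (restriction):} if $F\subset F'\subset E$ and $F'$ is a face of $E$, then $F$ is a face of $E$ if and only if $F$ is a face of $F'$. This is immediate from the definition, since the defining property refers only to segments $[x,y]$ with endpoints in the ambient set and a relative interior point in $F$, and any such segment with endpoints in $E$ already lies in $F'$ once $F'$ is a face of $E$. \emph{Fact 2 (dimension drop):} a proper face of $E$ has strictly smaller dimension than $E$. Indeed, if $F$ is a face with $\dim F=\dim E$ then $\mathrm{aff}(F)=\mathrm{aff}(E)$, so $\relint F$ is a nonempty relatively open subset of $\mathrm{aff}(E)$ contained in $E$, whence $\relint F\subset \relint E$; and a face meeting $\relint E$ must equal $E$ (given $x\in F\cap\relint E$ and arbitrary $y\in E$, pick $z\in E$ with $x\in\relint[z,y]$, so $[z,y]\subset F$ and $y\in F$). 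Combining Fact 1 and Fact 2, whenever $F\subsetneq F'$ are faces of $E$ we get $\dim F<\dim F'$.

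Now the induction. We may assume $E\neq\vacuo$, and, adopting the convention $\dim\vacuo=-1$ together with the observation that no face lies strictly between $\vacuo$ and an extreme point of $E$, it is harmless to treat $F\neq\vacuo$. If $\dim E-\dim F=0$, then $F=E$ by Fact 2 and the trivial chain $F_0=F=E$ works. If $\dim E-\dim F\geq 1$, then $F\neq E$, so the collection of faces of $E$ strictly containing $F$ is nonempty (it contains $E$); since face dimensions are integers bounded by $\dim E$, choose among them a face $F_1\supsetneq F$ of minimal dimension. No face of $E$ lies strictly between $F$ and $F_1$: if $F\subsetneq F'\subsetneq F_1$ with $F'$ a face of $E$, then by Fact 1 $F'$ is a face of $F_1$ and $F$ is a face of $F'$, so Fact 2 gives $\dim F<\dim F'<\dim F_1$, contradicting minimality. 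Moreover $\dim F_1>\dim F$, so $\dim E-\dim F_1<\dim E-\dim F$, and the induction hypothesis yields a maximal chain $F_1=G_0\subsetneq\cds\subsetneq G_m=E$. Prepending $F$ gives $F=F_0\subsetneq F_1=G_0\subsetneq\cds\subsetneq G_m=E$, which is maximal: no face lies strictly between $F$ and $F_1$ by the claim just proved, and none between consecutive $G_i$ by the inductive maximality.

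The argument is essentially bookkeeping, and the only step requiring a genuine (if small) geometric observation is Fact 2 — namely that a face meeting the relative interior of $E$ is all of $E$, which forces proper faces to drop dimension and thereby guarantees the chain is finite. Equivalently, one may phrase the conclusion as: the poset of faces of $E$ containing $F$ has all chains of length at most $\dim E-\dim F$, so a maximal chain exists for finiteness reasons; the proof above just exhibits one explicitly.
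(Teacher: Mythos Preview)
Your proof is correct. The paper itself does not give a proof of this lemma; it merely states it with a citation to \cite[Lemma 8]{biliotti-ghigi-heinzner-1} and uses it later (in the remark following Corollary~\ref{tau-faccie}) without further argument. Your induction on $\dim E-\dim F$, resting on the observation that proper faces drop dimension, is the standard route and is entirely adequate here.
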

\begin{lemma}[\protect{\cite[Prop.5]{biliotti-ghigi-heinzner-1}}] \label{u-cono}
   If $F \subset E$ is an exposed face, the set $\CF : = \{ u\in V:
   F=F_u(E) \}$ is a convex cone. If $K$ is a compact subgroup of
   $O(V)$ that preserves both $E$ and $F$, then $\CF$ contains a fixed
   point of $K$.
 \end{lemma}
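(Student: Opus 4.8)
The plan is to treat the two assertions in turn, using only the homogeneity and additivity of the support function $h_E$ together with the equality case in the defining inequality $\sx x,u\xs\le h_E(u)$.

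\textbf{Step 1 (the cone property).} First I would note that $\CF$ is stable under multiplication by positive scalars: if $\lambda>0$ then $h_E(\lambda u)=\lambda h_E(u)$ and $H(E,\lambda u)=H(E,u)$, so $F_{\lambda u}(E)=F_u(E)$, whence $u\in\CF\Rightarrow\lambda u\in\CF$. For convexity, take $u_0,u_1\in\CF$, $t\in(0,1)$, and put $u=(1-t)u_0+tu_1$. For every $x\in E$,
\[
\sx x,u\xs=(1-t)\sx x,u_0\xs+t\sx x,u_1\xs\le (1-t)h_E(u_0)+t h_E(u_1),
\]
with equality exactly when $\sx x,u_0\xs=h_E(u_0)$ and $\sx x,u_1\xs=h_E(u_1)$, i.e. when $x\in F_{u_0}(E)\cap F_{u_1}(E)=F\cap F=F$. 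Since $F$ is nonempty (it is an exposed face of the nonempty compact set $E$), this maximum is attained, equals $(1-t)h_E(u_0)+t h_E(u_1)$, and the set on which it is attained is precisely $F$; hence $F_u(E)=F$, so $u\in\CF$. This proves $\CF$ is a convex cone. The same computation gives, more generally, $F_{\sum_i t_i u_i}(E)=\bigcap_{i:\,t_i>0}F_{u_i}(E)$.

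\textbf{Step 2 (a $K$-fixed point in $\CF$).} Now assume $K\subset O(V)$ is compact and preserves both $E$ and $F$. I would first record that $\CF$ is $K$-invariant: since each $k\in K$ is orthogonal and $kE=E$, one gets $h_E(ku)=h_E(u)$ and $F_{ku}(E)=k\,F_u(E)$, so $u\in\CF$ forces $F_{ku}(E)=kF=F$, i.e. $ku\in\CF$. Fix any $u_0\in\CF$ and set $\bar u:=\int_K k u_0\,\mathrm{d}k$, the average over the normalized Haar measure of $K$; then $g\bar u=\bar u$ for all $g\in K$ by invariance of Haar measure. It remains to check $\bar u\in\CF$. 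For $x\in E$,
\[
\sx x,\bar u\xs=\int_K \sx x,k u_0\xs\,\mathrm{d}k\le \int_K h_E(k u_0)\,\mathrm{d}k=h_E(u_0),
\]
with equality iff $\sx x,k u_0\xs=h_E(ku_0)=h_E(u_0)$ for every $k\in K$ (the integrand is continuous and nonnegative), i.e. iff $x\in\bigcap_{k\in K}F_{ku_0}(E)=\bigcap_{k\in K}kF=F$. Again $F\ne\vacuo$, so this common value $h_E(u_0)$ is attained, equals $h_E(\bar u)$, and cuts out exactly $F$; therefore $F_{\bar u}(E)=F$ and $\bar u\in\CF$. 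If $F$ is a proper face this also forces $\bar u\ne0$, because $\bar u=0$ would make $\sx\cdot,u_0\xs$ constant on $E$ and hence $F=E$; and in the improper case $F=E$ one may simply take the $K$-fixed point $0$.

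\textbf{Main obstacle.} The only delicate point is that $\CF$ need not be closed, so one cannot deduce $\bar u\in\CF$ from abstract convexity alone (the intersection of $\CF$ with a ball is $K$-invariant and convex but contains $0$, so its barycenter conveys nothing). This is exactly why the membership $\bar u\in\CF$ must be verified directly, through the equality case of the averaging inequality above, rather than by appealing to a general fixed-point statement for convex sets.
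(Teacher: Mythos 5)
Your proof is correct and follows essentially the same route as the cited source \cite[Prop.~5]{biliotti-ghigi-heinzner-1} (the paper itself only quotes the statement): positive homogeneity of $h_E$ together with the equality case of $\sx x,u\xs\le h_E(u)$ gives the convex‐cone property, and averaging $u_0$ over the Haar measure of $K$, with membership in $\CF$ again verified through the equality case of the averaged inequality, produces the fixed point. The only caveat is the parenthetical identity $F_{\sum_i t_iu_i}(E)=\bigcap_{i:\,t_i>0}F_{u_i}(E)$, which holds only when the right-hand side is nonempty (otherwise the maximum is attained elsewhere); this does not affect your argument, since all faces involved there equal $F\neq\vacuo$.
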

We denote by $\CF^K$ the elements of $\CF$ fixed by a compact group $K$.

The following result is well-known and a proof is given in \cite[p. 62]{schneider-convex-bodies}
\begin{thm} \label{schneider-facce} If $E$ is a compact convex set and
   $F_1,F_2$ are distinct faces of $E$, then $\relint F_1 \cap \relint
   F_2=\vacuo$. If $G$ is a nonempty convex subset of $ E$ which is
   open in its affine hull, then $G \subset\relint F$ for some face
   $F$ of $E$. Therefore $E$ is the disjoint union of the
     relative interiors of its faces.
 \end{thm}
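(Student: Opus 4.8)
The plan is to deduce all three assertions from one structural fact: \emph{every $x\in E$ lies in the relative interior of a unique face, namely the smallest face of $E$ containing it.} I would first record three elementary consequences of the definition of a face, all of which are also contained in \cite{schneider-convex-bodies}. (i) An arbitrary intersection $\bigcap_i F_i$ of faces of $E$ is a face of $E$: it is convex, and if $\relint[x,y]$ meets it then $\relint[x,y]$ meets each $F_i$, so $[x,y]\subset F_i$ for every $i$. (ii) A face $F'$ of a face $F$ of $E$ is a face of $E$: if $x,y\in E$ and $\relint[x,y]$ meets $F'\subset F$, then $[x,y]\subset F$, hence $x,y\in F$ and thus $[x,y]\subset F'$. (iii) A proper face $F'$ of a compact convex set $C$ is disjoint from $\relint C$: if some $p$ lay in $\relint C\cap F'$, pick $q\in C\setminus F'$ and $\varepsilon>0$ small with $r:=p+\varepsilon(p-q)\in C$; then $p\in\relint[q,r]$, so $[q,r]\subset F'$ and $q\in F'$, a contradiction.

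Next, for $x\in E$ put $F_x:=\bigcap\{F: F \text{ is a face of }E,\ x\in F\}$; by (i) this is the smallest face of $E$ containing $x$. I claim $x\in\relint F_x$. If not, $x$ lies on the relative boundary of the compact convex set $F_x$, so the supporting hyperplane theorem applied inside the affine hull of $F_x$ yields some $u\neq 0$ with $\langle x,u\rangle=h_{F_x}(u)$ and $F_x\not\subset H(F_x,u)$; then $F_u(F_x)=F_x\cap H(F_x,u)$ is an exposed face of $F_x$ strictly smaller than $F_x$ and still containing $x$, and by (ii) it is a face of $E$ — contradicting the minimality of $F_x$. Hence $x\in\relint F_x$. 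Moreover, if $F$ is any face of $E$ with $x\in\relint F$, then $x\in F$ forces $F_x\subset F$; as $F_x$ is then a face of $F$ it cannot be a proper one, since otherwise (iii) would put $x\in F_x$ outside $\relint F$. Therefore $F=F_x$, which gives the uniqueness.

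The theorem now follows. First assertion: if $x\in\relint F_1\cap\relint F_2$, then $F_1=F_x=F_2$, so distinct faces have disjoint relative interiors. Third assertion: every $x\in E$ lies in $\relint F_x$, so $E$ is the union of the relative interiors of its faces, and that union is disjoint by the first assertion. Second assertion: let $G\subset E$ be nonempty, convex, and open in its affine hull, fix $x_0\in G$, and set $F:=F_{x_0}$. For any $y\in G$, openness of $G$ in its affine hull gives $\varepsilon>0$ with $z:=x_0+\varepsilon(x_0-y)\in G\subset E$, and then $x_0\in\relint[y,z]\cap F$ forces $[y,z]\subset F$, so $y\in F$; hence $G\subset F$. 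Using openness once more, for small $\varepsilon>0$ we have $w:=y+\varepsilon(y-x_0)\in G\subset F$, and since $x_0\in\relint F$, $w\in F$ and $y$ lies in the open segment $(x_0,w)$, the standard fact $[x_0,w)\subset\relint F$ gives $y\in\relint F$; thus $G\subset\relint F$. The only non-formal input is the supporting hyperplane theorem inside the affine hull of $F_x$ used in the second paragraph to manufacture a proper exposed subface; everything else is bookkeeping with the definition of a face, so I expect that to be the single step worth stating carefully — or, as the paper does, one can simply quote \cite{schneider-convex-bodies}.
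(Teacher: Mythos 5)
Your proof is correct and complete; the paper itself offers no argument for this theorem, simply citing \cite[p.~62]{schneider-convex-bodies}, and what you have written is essentially the standard proof found there: reduce everything to the fact that each point of $E$ lies in $\relint F_x$ for the smallest face $F_x$ containing it, which you establish correctly via the supporting hyperplane theorem inside the affine hull of $F_x$ together with the facts that intersections of faces are faces and faces of faces are faces. The only remark worth making is that the first assertion also admits a shorter direct argument (if $z\in\relint F_1\cap\relint F_2$ and $x\in F_1$, one can push past $z$ inside $F_1$ to see $x\in F_2$, so $F_1\subset F_2$ and symmetrically), but your route through the uniqueness of the face whose relative interior contains a given point is equally valid and is in any case needed for the other two assertions.
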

The following result will be used to determine the image of the gradient map.
\begin{prop}\label{convex-criterium}
Let $C_1 \subseteq C_2$ be two compact convex subsets of $V$. Assume that for any $\beta \in V$ we have
\[
\mathrm{max}_{y\in C_1} \langle y , \beta \rangle=\mathrm{max}_{y\in C_2} \langle y , \beta \rangle.
\]
Then $C_1=C_2$.
\end{prop}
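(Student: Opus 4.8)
The plan is to prove this by contradiction, using a standard separation argument from convex geometry. Suppose $C_1 \subsetneq C_2$, so there exists a point $p \in C_2 \setminus C_1$. Since $C_1$ is compact and convex and $p \notin C_1$, the Hahn--Banach separation theorem (or the finite-dimensional separating hyperplane theorem, see \cite{schneider-convex-bodies}) provides a vector $\beta \in V$ and a real number $c$ such that $\langle y, \beta \rangle \le c$ for all $y \in C_1$ while $\langle p, \beta \rangle > c$.

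From this I would conclude that $\max_{y \in C_1}\langle y, \beta\rangle \le c < \langle p, \beta\rangle \le \max_{y \in C_2}\langle y, \beta\rangle$, where the last inequality holds because $p \in C_2$. This strict inequality contradicts the hypothesis that the two maxima agree for every $\beta \in V$, in particular for this one. Hence no such $p$ exists, and $C_1 = C_2$.

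The argument is essentially routine; the only point requiring a modicum of care is that the hyperplane separating a point from a compact convex set can be taken \emph{strictly} separating (which is exactly where compactness of $C_1$ is used), so that the inequality $\max_{y\in C_1}\langle y,\beta\rangle < \langle p,\beta\rangle$ is strict. No serious obstacle is expected. Equivalently, one could phrase the whole thing in terms of support functions: the hypothesis says $h_{C_1} = h_{C_2}$ as functions on $V$, and a compact convex set is uniquely determined by its support function, so $C_1 = C_2$; but the contradiction argument above is self-contained and is the version I would write out.
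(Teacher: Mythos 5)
Your argument is correct, and it is a genuinely different (and in fact more economical) route than the one in the paper. You separate a putative point $p\in C_2\setminus C_1$ strictly from the compact convex set $C_1$ and read off a vector $\beta$ for which the two support values differ; the only ingredient is the strict separation theorem, and your remark that the whole statement is just ``a compact convex set is determined by its support function'' is exactly the right way to see it. The paper instead argues from the inside: after normalizing so that the affine hull of $C_2$ is $V$, it produces a point $p\in\partial C_1$ lying in the interior of $C_2$, uses the fact that every proper face of $C_1$ sits inside an exposed face to find $\beta$ with $\max_{y\in C_1}\langle y,\beta\rangle=\langle p,\beta\rangle$, and then derives a contradiction because the hypothesis would force the linear functional $\langle\cdot,\beta\rangle$ to attain its maximum over $C_2$ at the interior point $p$. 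The paper's version thus leans on facial structure (consonant with the rest of the article, which is all about exposed faces of orbitopes) and requires the mildly delicate auxiliary claim that such a boundary-of-$C_1$, interior-of-$C_2$ point exists; your version avoids both of these and is the more self-contained proof. No gap in your argument.
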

\begin{proof}
We may assume without loss of generality  that the affine hull of $C_2$ is $V$.
Assume by contradiction that $C_1 \subsetneq C_2$. Since $C_1$ and $C_2$ are both compact, it follows that there exists $p\in \partial C_1$ such that $p\in \stackrel{o}{C_2}$. Since every face of a compact convex set is contained in an exposed face \cite{schneider-convex-bodies}, there exists $\beta \in V$ such that
\[
\mathrm{max}_{y\in C_1} \langle y , \beta \rangle=\langle p, \beta \rangle.
\]
This means the linear function $x\mapsto \langle x, \beta \rangle$ restricted on $C_2$ achieves its maximum at an interior point which is a contradiction.
\end{proof}
\subsection{Compatible subgroups and parabolic subgroups}\label{compatible-parabolic}
In the sequel we always refer to \cite{borel-ji-libro,gjt,heinzner-schwarz-stoetzel}, see also \cite{borel-book,knapp-beyond}.

Let $U$ be compact connected Lie group. Let $U^\C$ be its universal complexification which
 is a linear reductive complex algebraic group \cite{akhiezer}. We
   denote by $\theta$ both the conjugation map $\theta : \liu^\C \lra
   \liu^\C$ and the corresponding group isomorphism $\theta : U^\C \lra
   U^\C$.  Let $f: U \times i\liu \lra U^\C$ be the diffeomorphism
 $f(g, \xi) = g \exp \xi$.  Let $G\subset U^\C$ be a closed
 subgroup. Set $K:=G\cap U$ and $\liep:= \lieg \cap i\liu$.  We say
 that $G$ is \emph{compatible} if $f (K \times \liep) = G$.  The
 restriction of $f$ to $K\times \liep$ is then a diffeomorphism onto
 $G$. Hence $\lieg=\liek\oplus \liep$ is the familiar Cartan decomposition  and so  $K$ is a maximal compact subgroup of $G$.  Note that $G$ has finitely many connected components. Since $U$ can be embedded in $\Gl(N,\C)$ for
 some $N$, and any such embedding induces a closed embedding of
 $U^\C$, any compatible subgroup is a closed linear group. Moreover
 $\lieg$ is a real reductive Lie algebra, hence $\lieg =
 \liez(\lieg)\oplus [\lieg, \lieg]$. Denote by $G_{ss}$ the analytic
 subgroup tangent to $[\lieg, \lieg]$. Then $G_{ss}$ is closed and
 $G^o=Z(G)^o \cd G_{ss}$ \cite[p. 442]{knapp-beyond}, where $G^o$, respectively $Z(G)^o$, denotes the connected component of  $G$, respectively of $Z(G)$, containing $e$.
The following lemma is well-known.
 \begin{lemma}$\, $ \label{lemcomp}
   \begin{enumerate}
   \item \label {lemcomp1} If $G\subset U^\C$ is a compatible
     subgroup, and $H\subset G$ is closed and $\theta$-invariant,
     then $H$ is compatible if and only if $H$ has only finitely many connected components.
   \item \label {lemcomp2} If $G\subset U^\C$ is a connected
     compatible subgroup, then $G_{ss}$ is compatible.
     \item \label{lemcomp3} If $G\subset U^\C$ is a compatible
       subgroup, and $E\subset \liep$ is any subset, then \[G^E=\{g\in G:\, \mathrm{Ad}(g)(z)=z,\, \forall z\in E\}\] is
       compatible. Indeed,  $G^{E}=K^{E} \exp(\liep^E)$,
where \[K^{E}=K\cap G^{E}=\{g\in K:\, \mathrm{Ad}(g)(z)=z,\, \forall z \in E\}\] and
$\liep^{E}=\{v\in \liep:\, [v,E]=0\}$.
   \end{enumerate}
 \end{lemma}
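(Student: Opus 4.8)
The plan is to prove part~\ref{lemcomp3} directly and independently, and to deduce parts~\ref{lemcomp2} and~\ref{lemcomp1} from one structural fact about connected subgroups. Before anything I would record the elementary remarks I will use repeatedly: the decomposition $g=k\exp\xi$ furnished by the diffeomorphism $f$ is \emph{unique}; $G$ itself is $\theta$-invariant, since $\theta(k\exp\xi)=k\exp(-\xi)\in G$; and for $\xi\in i\liu$ the operator $\ad\xi$ (on $\lieg$, or on $\liu^\C$) is semisimple with real eigenvalues, so $e^{2\ad\xi}v=v$ forces $\ad\xi\, v=0$. The forward implication of~\ref{lemcomp1} is then immediate: if $H$ is compatible, $f$ restricts to a diffeomorphism $K_H\times\liep_H\to H$ with $\liep_H$ contractible, so $\pi_0(H)\cong\pi_0(K_H)$, which is finite because $K_H=H\cap U$ is compact.

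For~\ref{lemcomp3}: set $\lieg^E=\mathrm{Lie}(G^E)=\{X\in\lieg:[X,z]=0\ \forall z\in E\}$ and write $\liek^E:=\liek\cap\lieg^E$, $\liep^E:=\liep\cap\lieg^E$. Since $[\liek,\liep]\subseteq\liep$ and $[\liep,\liep]\subseteq\liek$, writing $X=X_\liek+X_\liep$ shows that $[X,z]=0$ for $z\in E\subseteq\liep$ is equivalent to $[X_\liek,z]=0$ and $[X_\liep,z]=0$, so $\lieg^E=\liek^E\oplus\liep^E$ is $\theta$-invariant. Similarly $G^E$ is $\theta$-invariant: applying the automorphism $\theta$ to $\Ad(g)z=z$ and using $\theta z=-z$ for $z\in\liep$ gives $\Ad(\theta(g))z=z$. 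Now let $g\in G^E$ and write $g=k\exp\xi$ with $k\in K$, $\xi\in\liep$; since $g$ and $\theta(g)=k\exp(-\xi)$ both lie in $G^E$, comparing $\Ad(k)e^{\ad\xi}z=z$ with $\Ad(k)e^{-\ad\xi}z=z$ gives $e^{2\ad\xi}z=z$ for every $z\in E$, hence $[\xi,z]=0$ for all $z\in E$, i.e. $\xi\in\liep^E$. Then $\exp\xi\in G^E$, so $k=g\exp(-\xi)\in G^E\cap K=K^E$ and $g\in K^E\exp(\liep^E)$; the reverse inclusion is clear and injectivity is inherited from $f$, so $G^E=K^E\exp(\liep^E)$ is compatible.

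The decisive step, which I regard as the main obstacle, is the following claim: \emph{a closed, connected, $\theta$-invariant subgroup $S\subseteq U^\C$ is compatible}, i.e. $S=(S\cap U)\exp(\lies\cap i\liu)$. I would prove it by embedding $U^\C\hookrightarrow\Gl(N,\C)$ with $U\subseteq\U(N)$ and $\theta$ realized as $g\mapsto(g^*)\meno$; then $S$ is a closed subgroup of $\Gl(N,\C)$ invariant under $g\mapsto g^*$, hence reductive, and the polar decomposition of $\Gl(N,\C)$ restricts to a global Cartan decomposition $S=(S\cap\U(N))\exp(\lies\cap i\,\mathfrak{u}(N))$ by the standard structure theory of self-adjoint linear groups (cf.~\cite{borel-book,knapp-beyond}); one then identifies $S\cap\U(N)=S\cap U$ and $\lies\cap i\,\mathfrak{u}(N)=\lies\cap i\liu$. (Alternatively one argues directly that $(S\cap U)\exp(\lies\cap i\liu)=f\bigl((S\cap U)\times(\lies\cap i\liu)\bigr)$ is a closed subset of $U^\C$ containing a neighbourhood of $e$ in the connected group $S$, hence all of $S$.)

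Granting the claim, part~\ref{lemcomp2} is immediate: $[\lieg,\lieg]$ is a $\theta$-invariant characteristic ideal and $G_{ss}$, its analytic subgroup, is closed (as stated) and connected, hence compatible. For the converse in~\ref{lemcomp1}, let $H\subseteq G$ be closed, $\theta$-invariant, with $\#\pi_0(H)=N<\infty$; then $\lieh$ is $\theta$-invariant, write $\liep_H:=\lieh\cap\liep$, $K_H:=H\cap U$, and the claim gives $H^o=(H^o\cap U)\exp(\lieh\cap i\liu)$ with $\lieh\cap i\liu=\liep_H$. For $h\in H\subseteq G$ write $h=k\exp\xi$ ($k\in K$, $\xi\in\liep$); $\theta$-invariance gives $\exp(2\xi)=\theta(h)\meno h\in H$, so $\exp(2N\xi)\in H^o$, and matching this against the compatible decomposition of $H^o$ via the uniqueness of $f$ forces its $U$-part to be trivial, so $2N\xi\in\lieh\cap i\liu=\liep_H$, whence $\xi\in\liep_H$. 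Then $\exp\xi\in H^o$, $k=h\exp(-\xi)\in H\cap U=K_H$, and $h\in K_H\exp(\liep_H)$; the reverse inclusion being obvious, $H$ is compatible. Everything outside the connected-case claim is bookkeeping with $f$ and the $\theta$-eigenspace splitting.
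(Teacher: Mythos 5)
The paper does not actually prove this lemma --- it is stated as ``well-known'' with the burden deferred to the references (Heinzner--Schwarz--St\"otzel, Knapp) --- so there is no in-paper argument to compare against. Your proof is the standard one and is correct: the computation $\theta(h)^{-1}h=\exp(2\xi)$ together with $\theta$-invariance, the semisimplicity of $\ad\xi$ for $\xi\in i\liu$ (so that $e^{2\ad\xi}z=z$ forces $[\xi,z]=0$), and the uniqueness of the decomposition $g=k\exp\xi$ are exactly the right tools, and your reduction of (a) and (b) to the connected case via Mostow's theorem on closed self-adjoint linear groups is the standard route. The passage from $\exp(2\xi)\in H$ to $\exp(2N\xi)\in H^o$ and then to $\xi\in\liep_H$ by uniqueness of the polar parts is clean and complete.

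One caveat: the parenthetical ``alternative'' argument for the key claim does not work as stated. The set $f\bigl((S\cap U)\times(\lies\cap i\liu)\bigr)$ is closed and contains a neighbourhood of $e$ in $S$, but a closed subset of a connected group containing an identity neighbourhood need not be the whole group unless it is (at least) a subgroup, and $(S\cap U)\exp(\lies\cap i\liu)$ is not obviously closed under multiplication before one already knows the Cartan decomposition. So you should rely on the Mostow/self-adjoint-group argument (your primary route), which is sound, and drop or repair the parenthetical.
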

A subalgebra $\lieq \subset \lieg$ is \enf{parabolic} if
$\lieq^\C$ is a parabolic subalgebra of $\lieg^\C$.  One way to
describe the parabolic subalgebras of $\lieg$ is by means of
restricted roots.  If $\lia \subset \liep$ is a maximal subalgebra,
let $\roots(\lieg, \lia)$ be the (restricted) roots of $\lieg$ with
respect to $\lia$, let $\lieg_\la$ denote the root space corresponding
to $\la$ and let $\lieg_0 = \liem \oplus \lia$, where $\liem =
\liez_\liek(\lia)=\liez (\lia)\cap \liek$. We denote by $\mathfrak z (\lia)=\{x\in \lieg:\, [x,\lia]=0\}$.  Let $\simple \subset \roots(\lieg, \lia)$ be a
base and let $\roots_+$ be the set of positive roots. If $I\subset
\simple$, set $\roots_I : = \spam(I) \cap \roots$. Then
\begin{gather}
  \label{para-dec}
  \lieq_I:= \lieg_0 \oplus \bigoplus_{\la \in \roots_I \cup \roots_+}
  \lieg_\la
\end{gather}
is a parabolic subalgebra. Conversely, if $\lieq \subset \lieg$ is a
parabolic subalgebra, then there are a maximal subalgebra $\lia
\subset \liep$ contained in $\lieq$, a base $\simple \subset
\roots(\lieg, \lia)$ and a subset $I\subset \simple $ such that $\lieq
= \lieq_I$.  We can further introduce
\begin{gather}
\label{notaz-I}
\begin{gathered}
  \lia_I : = \bigcap_{\la \in I} \ker \la \qquad \lia^I := \lia_I^\perp \\
  \lien_I = \bigoplus_{\la \in \roots_+ \setminus \roots_I} \lieg_\la
  \qquad \liem_I : = \liem \oplus \lia^I \oplus \bigoplus_{\la \in
    \roots_I}\lieg_\la.
\end{gathered}
\end{gather}
Then $\lieq_I = \liem_I \oplus \lia_I \oplus \lien_I$. Since
$\theta\lieg _ \la = \lieg_{-\la}$, it follows that $ \lieq_I \cap
\theta\lieq_I = \lia_I \oplus \liem_I$.  This latter Lie algebra coincides
with the centralizer of $\lia_I$ in $\lieg$. It is a Levi factor
of $\lieq_I$ and
\begin{gather}
  \label{liaI}
  \lia_I =\liez (\lieq_I \cap \theta\lieq_I) \cap \liep.
\end{gather}
If we denote by $\Delta_{-}$ the set of negative root, then $\mathfrak n_I^{-}= \bigoplus_{\la \in \roots_{-} \setminus \roots_I} \lieg_\la$ is a subalgebra.
It follows from standard commutation relations that $\mathfrak z (\lia_I)$ normalizes $\mathfrak n_I$ and $\mathfrak n_I^{-}$ and the centralizer of $\lia^I$ in either is reduced to zero. Then, keeping in mind $\lieg=\mathfrak n_I^{-}\oplus \lieq_I$, $\lieq_I$ is self-normalizing.
\begin{defin}
A subgroup $Q$ of $G$ is called parabolic if it is the normalizer of a parabolic subalgebra in $\lieg$.
\end{defin}
The normalizer of $\lieq_I$ is the \emph{standard parabolic subalgebra} $Q_I$. Let $R_I$ and let $A_I$ be the unique connected Lie subgroups of $G$ with Lie algebra equals to $\mathfrak n_I$ and $\lia_I$ respectively. $R_I$ is the unipotent radical of  $Q_I$. The group $Q_I$ is the semidirect product of $R_I$ and of $Z(A_I)$, i.e., the centralizer of $A_I=\exp(a_I)$ in $G$. Moreover, $Z(A_I)=A_I \times M_I$, where $M_I$ is a closed Lie group whose Lie algebra is $\mathfrak m_I$. It is not connected in general but it is compatible. Since $M_I$ is stable with respect to the Cartan involution, $K_I=M_I\cap K$ is  maximal compact in $M_I$. It is also maximal compact in $Q_I$ and the quotient
\[
X_I=M_I/K_I=Q_I/ K_I A_I N_I
\]
is a symmetric space of noncompact type for $M_I$. Finally, as a consequence of the Iwasawa decomposition $G=NAK$, where $N=\exp(\mathfrak n)$, $\mathfrak n=\mathfrak n_{\emptyset}$, and $NA\subset Q_I$, and so the following result holds
\begin{prop}\label{parabolic-group}
$G=KQ_I$.
\end{prop}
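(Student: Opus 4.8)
The plan is to deduce $G=KQ_I$ directly from the Iwasawa decomposition together with the inclusion $NA\subset Q_I$ recorded just above the statement. First I would confirm that inclusion from the notation in \eqref{notaz-I}: since $\lia=\lia^I\oplus\lia_I\subset\liem_I\oplus\lia_I\subset\lieq_I$, and since $\lien=\lien_\vacuo=\bigoplus_{\la\in\roots_+}\lieg_\la$ decomposes as $\bigoplus_{\la\in\roots_+\cap\roots_I}\lieg_\la$ (which lies in $\liem_I$) together with $\lien_I$, we get $\lia\oplus\lien\subset\lieq_I$; exponentiating, $A$ and $N$ both lie in the subgroup $Q_I$, whence $NA\subset Q_I$ and also $AN\subset Q_I$. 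Equivalently, every standard parabolic contains the minimal one $\lieq_\vacuo=\liem\oplus\lia\oplus\lien$.

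Given this, the argument is purely formal. From the Iwasawa decomposition $G=NAK$, applying the inverse map yields $G=G^{-1}=(NAK)^{-1}=K^{-1}A^{-1}N^{-1}=KAN\subseteq KQ_I$, using $AN\subset Q_I$. Since the reverse inclusion $KQ_I\subseteq G$ is trivial, we conclude $G=KQ_I$. (Equivalently one may write $G=NAK\subseteq Q_IK$, deduce $G=Q_IK$, and then invert.)

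I do not expect any genuine obstacle here. The only substantive input is the Iwasawa decomposition $G=NAK$ itself, which is the standard one for the reductive compatible group $G$ and extends from the identity component because $K=G\cap U$ is maximal compact and meets every connected component of $G$ (so that $G=G^o K$ with $G^o=NAK^o$); everything else is bookkeeping with the root-space description of $\lieq_I$ in \eqref{para-dec} and \eqref{notaz-I}.
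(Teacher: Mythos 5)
Your proposal is correct and follows exactly the paper's argument: the paper derives $G=KQ_I$ as an immediate consequence of the Iwasawa decomposition $G=NAK$ together with the inclusion $NA\subset Q_I$, which is precisely what you do. Your extra care in checking $\lia\oplus\lien\subset\lieq_I$ from \eqref{para-dec}--\eqref{notaz-I} and in passing from $G=Q_IK$ to $G=KQ_I$ by inversion just fills in details the paper leaves implicit.
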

Another way to describe parabolic subgroups of $G$ is the following.

If $\beta
\in \liep$, the endomorphism $\mathrm{ad}(\beta)$ is diagonalizable over $\R$. Denote by $V_\lambda (\mathrm{ad}(\beta)$
the eigenspace of $\mathrm{ad}(\beta)$ corresponding to the eigenvalue $\lambda$. Set
\begin{gather*}
\lieg^{\beta+}:=\bigoplus_{\lambda \geq 0} V_\lambda (\mathrm{ad}(\beta),\\
\lier^{\beta+}: = \bigoplus_{\la > 0} V_\la (\ad \beta),
\end{gather*}
\begin{gather*}
  G^{\beta+} :=\{g \in G : \lim_{t\to - \infty} \exp({t\beta}) g
  \exp({-t\beta}) \text { exists} \},\\
  R^{\beta+} :=\{g \in G : \lim_{t\to - \infty} \exp({t\beta})
  g \exp({-t\beta}) =e \}. \\
 \end{gather*}
The following result characterizes completely the parabolic subgroups of $G$. The result is classical and a proof is given in \cite{biliotti-ghigi-heinzner-2}.
\begin{lemma}
  $G^{\beta +} $ is a parabolic subgroup of $G$ with Lie algebra $\lieg^{\beta+}$ and it is the semidirect product of $G^{\beta}$ with $R^{\beta+}$.  Moreover, $G^\beta$ is a Levi factor, $R^{\beta+}$ is connected with Lie algebra $\mathfrak r^{\beta+}$ and it is the
  unipotent radical of $G^{\beta+}$. Every parabolic subgroup of $G$ equals
  $G^{\beta+}$ for some $\beta \in \liep$.
\end{lemma}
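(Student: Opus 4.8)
The plan is to identify the Lie algebras $\lieg^{\beta+}$, $\lieg^\beta$ and $\lier^{\beta+}$ with the pieces of a standard parabolic subalgebra, as in the root--space description \eqref{para-dec}--\eqref{liaI}, and then to lift this picture to the group level by combining the compatibility statements of Lemma \ref{lemcomp} with a direct analysis of the limit that defines $G^{\beta+}$.

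First I would reduce to the standard form. Choose a maximal Abelian subalgebra $\lia\subset\liep$ with $\beta\in\lia$, and a base $\simple$ of $\roots(\lieg,\lia)$ for which $\beta$ is dominant, i.e.\ $\lambda(\beta)\ge 0$ for all $\lambda\in\roots_+$; set $I=\{\alpha\in\simple:\alpha(\beta)=0\}$. Since $\ad(\beta)$ acts on $\lieg_\lambda$ by the scalar $\lambda(\beta)$ and on $\lieg_0=\liem\oplus\lia$ by $0$, and since dominance gives $\{\lambda:\lambda(\beta)=0\}=\roots_I$ and $\{\lambda:\lambda(\beta)>0\}=\roots_+\setminus\roots_I$, a comparison with \eqref{notaz-I} yields $\lieg^\beta=V_0(\ad(\beta))=\liem_I\oplus\lia_I$, $\lier^{\beta+}=\bigoplus_{\lambda>0}V_\lambda(\ad(\beta))=\lien_I$, and hence $\lieg^{\beta+}=\lieg^\beta\oplus\lier^{\beta+}=\lieq_I$. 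In particular $\lieg^{\beta+}$ is a parabolic subalgebra, $\lieg^\beta$ is its Levi factor --- the centralizer of $\lia_I$, cf.\ \eqref{liaI} --- and $\lier^{\beta+}$ is its nilradical.

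Next I would pass to the groups, writing $c_t(g)=\exp(t\beta)g\exp(-t\beta)$. If $g\in G^{\beta+}$ then $g_0:=\lim_{t\to-\infty}c_t(g)$ is fixed by every $c_s$, so $g_0\in G^\beta$, and $c_t(g_0\meno g)=g_0\meno c_t(g)\to e$, so $g_0\meno g\in R^{\beta+}$; conversely $G^\beta\subset G^{\beta+}$ and $c_t|_{G^\beta}=\Id$. Hence $G^{\beta+}=G^\beta R^{\beta+}$, and this product is semidirect: $G^\beta\cap R^{\beta+}=\{e\}$ (an element of the intersection equals its own limit $e$), $R^{\beta+}$ is normal, and $G^\beta$ normalizes $R^{\beta+}$ (conjugation commutes with $c_t$); in particular $G^{\beta+}$ is closed. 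Differentiating $c_t$ and reading off the signs of the eigenvalues of $\ad(\beta)$ gives $\Lie(R^{\beta+})=\lier^{\beta+}$ and $\Lie(G^\beta)=\lieg^\beta$, hence $\Lie(G^{\beta+})=\lieg^{\beta+}$. Moreover the elements of $R^{\beta+}$ are unipotent in $\Gl(V)$ --- their eigenvalues are $c_t$-invariant and tend to those of $e$ --- so $\exp\colon\lier^{\beta+}\to R^{\beta+}$ is a diffeomorphism and $R^{\beta+}$ is connected; and by the third part of Lemma \ref{lemcomp}, $G^\beta=K^\beta\exp(\liep^\beta)$ is compatible and reductive. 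Therefore $G^{\beta+}=G^\beta\ltimes R^{\beta+}$ is its Levi decomposition, with $R^{\beta+}$ the unipotent radical and $G^\beta$ a Levi factor.

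It remains to match $G^{\beta+}$ with the standard parabolic $Q_I=Z(A_I)\ltimes R_I$ and to treat the converse. Since $\beta\in\lia_I$, the centralizer $Z(A_I)$ commutes with $\exp(\R\beta)$, so $Z(A_I)\subset G^\beta$; since $\lien_I\subset\lier^{\beta+}$ and $R_I$ is connected, $R_I\subset R^{\beta+}$; hence $Q_I\subset G^{\beta+}$. On the other hand $G^{\beta+}$ normalizes its own Lie algebra $\lieg^{\beta+}=\lieq_I$, so $G^{\beta+}\subset N_G(\lieq_I)=Q_I$. Thus $G^{\beta+}=Q_I$ is parabolic; comparing unipotent radicals gives $R^{\beta+}=R_I$, and the projection $Q_I\to Q_I/R_I$, which is an isomorphism on $Z(A_I)\subset G^\beta$ and injective on $G^\beta$, then forces $G^\beta=Z(A_I)$. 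For the last assertion, let $Q=N_G(\lieq)$ be an arbitrary parabolic subgroup. Using $G=KQ_\emptyset$ (Proposition \ref{parabolic-group}) one sees that all minimal parabolic subalgebras are $\Ad(K)$-conjugate, so $\lieq$, which contains one of them, equals $\Ad(k)\lieq_I$ for some $k\in K$ and some $I\subset\simple$. Picking $\beta\in\lia_I$ with $\{\alpha\in\simple:\alpha(\beta)=0\}=I$ and setting $\beta'=\Ad(k)\beta\in\liep$, the identity $\ad(\Ad(k)\beta)=\Ad(k)\,\ad(\beta)\,\Ad(k)\meno$ gives $\lieg^{\beta'+}=\Ad(k)\lieg^{\beta+}=\Ad(k)\lieq_I=\lieq$, whence $Q=N_G(\lieg^{\beta'+})=G^{\beta'+}$ by the part already proved.

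The Lie-algebra computation is routine; the genuine difficulty is the group-theoretic bookkeeping. One has to track $G^\beta$, $R^{\beta+}$, $Q_I$ and $M_I$ together with their (in general nontrivial) component groups --- this is exactly where the compatibility statements of Lemma \ref{lemcomp} and the compatibility of $M_I$ enter --- and one has to upgrade the mere existence of $\lim_{t\to-\infty}c_t(g)$ to algebraic statements about $G^{\beta+}$; the key point here is the unipotence of $R^{\beta+}$ in $\Gl(V)$, which turns the vector-space identity $\lieg^{\beta+}=\lieg^\beta\oplus\lier^{\beta+}$ into an honest semidirect product of groups and makes $R^{\beta+}$ connected.
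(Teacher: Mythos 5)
The paper does not actually prove this lemma: it records it as classical and refers the reader to \cite{biliotti-ghigi-heinzner-2}, so there is no in-paper argument to compare with. Your write-up is a correct, self-contained version of the standard proof and follows the same route as the cited reference: identify $\lieg^{\beta+}$, $\lieg^{\beta}$, $\lier^{\beta+}$ with $\lieq_I$, $\liem_I\oplus\lia_I$, $\lien_I$ for $\beta$ dominant with $I=\{\alpha\in\simple:\alpha(\beta)=0\}$, obtain the semidirect decomposition $G^{\beta+}=G^{\beta}\ltimes R^{\beta+}$ from the limit $g_0=\lim_{t\to-\infty}c_t(g)$, use unipotence of $R^{\beta+}$ in $\Gl(V)$ to get connectedness, sandwich $Q_I\subseteq G^{\beta+}\subseteq N_G(\lieq_I)=Q_I$, and conjugate by $K$ for the converse. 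Two small points deserve tightening. First, the parenthetical ``in particular $G^{\beta+}$ is closed'' does not follow from the semidirect decomposition alone; closedness only drops out once you have identified $G^{\beta+}$ with the normalizer $N_G(\lieq_I)$, so that remark should be deferred to the final step. Second, the claim that $\exp:\lier^{\beta+}\to R^{\beta+}$ is a diffeomorphism needs the observation that for a unipotent $r\in R^{\beta+}$ the logarithm $N=\log r$ lies in $\lieg$ (this uses that $G$ is a closed linear, hence real algebraic-type, group) and that $c_t(r)\to e$ forces $e^{t\ad\beta}N\to 0$, whence $N\in\lier^{\beta+}$; as written the unipotence argument only shows the eigenvalues are $1$. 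Your final deduction $G^{\beta}=Z(A_I)$ is fine precisely because you invoke the containment $Z(A_I)\subseteq G^{\beta}$ --- two complements of the same normal subgroup need not coincide in general, so that containment is doing real work there.
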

\subsection{Basic properties of the gradient map}\label{subsection-gradient-moment}
Let $(Z, \omega)$ be a K\"ahler manifold. Assume that $U^\C$ acts
holomorphically on $Z$, that $U$ preserves $\om$ and that there is a
momentum map $\mu: Z \lra \liu$.  If $\xi \in \liu$ we denote by $\xi^\#$
the induced vector field on $Z$, i.e., $\xi^\# (p)=\desudtzero \exp(t\xi) p$, and we let $\mu^\xi \in C^\infty (Z)$ be
the function $\mu^\xi(z) := \langle \mu(z),\xi\rangle$, where $\langle \cdot,\cdot \rangle$ is an $\Ad(U)$-invariant scalar product on $\liu$.  That $\mu$ is the
momentum map means that it is $U$-equivariant and that $d\mu^\xi =
i_{\xi^\#} \omega$.

Let $G \subset U^\C$ be compatible.
If $z \in Z$, let $\mup (z) \in \liep$ denote $-i$ times the component
of $\mu(z)$ in the direction of $i\liep$.  In other words, if we also denote by $\langle \cdot,\cdot \rangle$ the $\mathrm{Ad}(U)$-invariant scalar product on $i\liu$ requiring the multiplication by $i$ is an orthogonal map from $\liu$ onto $i\liu$, then the formula $\langle \mup (z) , \beta \rangle = \langle i\mu(z),\beta \rangle=\langle \mu(z) , -i\beta\rangle$ for any
$\beta \in \liep$ defines the \emph{gradient map}
\begin{gather*}
  \mu_\liep : Z \lra \liep.
\end{gather*}
Let $\mup^\beta \in C^\infty (Z)$ be the function $ \mup^\beta(z) = \langle
\mup(z) , \beta\rangle = \mu^{-i\beta}(z)$.  Let $\metrica$ be the \Keler
metric associated to $\om$, i.e. $(v, w) = \om (v, Jw)$. Then
$\beta^\#$ is the gradient of $\mup^\beta$. If $M \subset Z$ is a
locally closed $G$-invariant submanifold, then $\beta^\#$ is the
gradient of $\mup^\beta \restr{M}$ with respect to the induced
Riemannian structure on $M$. From now on we always assume that $M$ is compact and connected.
\begin{thm}\label{line}[Slice Theorem \protect{\cite[Thm. 3.1]{heinzner-schwarz-stoetzel}}]
  If $x \in M$ and $\mup(x) = 0$, there are a $G_x$-invariant
  decomposition $T_x M = \lieg \cd x \oplus W$, open $G_x$-invariant
  subsets $S \subset W$, $\Omega \subset M$ and a $G$-equivariant
  diffeomorphism $\Psi : G \times^{G_x}S \ra \Omega$, such that $0\in
  S, x\in \Omega$ and $\Psi ([e, 0]) =x$.
\end{thm}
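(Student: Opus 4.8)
The plan is to reduce the statement to three structural facts and then to assemble $\Psi$ from the ordinary slice for the compact group $K$. The three facts are: (i) the stabilizer $G_x$ is a compatible (hence reductive) subgroup; (ii) the tangent space $\lieg\cd x$ to the orbit splits as an orthogonal direct sum along the Cartan decomposition of $\lieg$; and (iii) the normal space $W$ to the orbit at $x$, taken with respect to the induced Riemannian metric, is $G_x$-invariant. Once these are in place, $\Psi$ will be produced by starting from the compact slice through $x$ and saturating it under $G$.

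First I would prove (i) in the sharp form $G_x = K_x\exp(\liep_x)$, where $K_x = G_x\cap K$ and $\liep_x = \liep\cap\lieg_x$. Writing $g\in G_x$ as $g = k\exp(\beta)$ with $k\in K$ and $\beta\in\liep$ (using $G = K\exp(\liep)$), one observes that the function $t\mapsto\mup^\beta(\exp(t\beta)x)$ has nonnegative derivative $\|\beta^\#(\exp(t\beta)x)\|^2$ and is therefore nondecreasing; its value at $t=0$ is $\langle\mup(x),\beta\rangle = 0$, and, since $\mup$ is $K$-equivariant and $\exp(\beta)x = k\meno x$, its value at $t=1$ is $\langle\Ad(k\meno)\mup(x),\beta\rangle = 0$ as well. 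A monotone function equal at two points is constant, so $\beta^\#(x) = 0$, i.e.\ $\beta\in\lieg_x$; hence $\exp(\beta)\in G_x$ and $k = g\exp(-\beta)\in G_x\cap K = K_x$. Thus $G_x = K_x\exp(\liep_x)$; as $K_x$ is closed in the compact group $K$ it has finitely many components, so $G_x$ is $\theta$-stable with finitely many components and is compatible by Lemma \ref{lemcomp}.

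Next I would establish (ii) and (iii). For $\xi\in\liek$ and $\beta\in\liep$, the $K$-equivariance of $\mup$ together with the fact that $\beta^\#$ is the gradient of $\mup^\beta$ gives
\[
(\xi^\#(x),\beta^\#(x)) = d\mup^\beta(x)(\xi^\#(x)) = \langle[\xi,\mup(x)],\beta\rangle = 0,
\]
so $\liek\cd x\perp\liep\cd x$ in $T_xM$; moreover the sum $\liek\cd x + \liep\cd x$ is direct, since $\xi^\#(x) = \beta^\#(x)$ forces $\xi-\beta\in\lieg_x = \liek_x\oplus\liep_x$, whence $\xi\in\liek_x$, $\beta\in\liep_x$ and $\xi^\#(x) = \beta^\#(x) = 0$. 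Therefore $\lieg\cd x = \liek\cd x\oplus\liep\cd x$ orthogonally, and I set $W := (\lieg\cd x)^\perp$ inside $T_xM$, so that $T_xM = \lieg\cd x\oplus W$. For (iii): $W$ is $K_x$-invariant because $K$ preserves the induced metric and $\lieg\cd x$ is $K_x$-invariant. For $\beta\in\liep_x$ write $\beta = i\gamma$ with $\gamma = -i\beta\in\liu$; then $\gamma^\#(x) = 0$ and $\beta^\# = J\gamma^\#$ up to sign, so on $T_xZ$ the linearization $d_x\beta^\#$ equals $\pm J\circ d_x\gamma^\#$. Since $d_x\gamma^\#$ is the differential at the fixed point $x$ of a one-parameter subgroup of $U$, it is skew-Hermitian and commutes with $J$ for the \Keler metric, so $d_x\beta^\#$ restricts to a self-adjoint endomorphism of $T_xM$; it preserves $\lieg\cd x$ because $d_x\beta^\#(\eta^\#(x)) = -[\beta,\eta]^\#(x)\in\lieg\cd x$, and a self-adjoint endomorphism preserving a subspace preserves its orthogonal complement, so $d_x\beta^\#$ preserves $W$. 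Exponentiating, $\exp(\liep_x)$ acts on $W$, hence $W$ is $G_x$-invariant.

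Finally I would assemble $\Psi$. Since $\mup(x) = 0$, the geometric invariant theory for gradient maps shows that $G\cd x$ is closed in $M$, hence an embedded submanifold with $T_x(G\cd x) = \lieg\cd x$ whose normal bundle is $G$-equivariantly isomorphic to $G\times^{G_x}W$. I would apply the ordinary slice theorem for the compact group $K$ to obtain a $K$-equivariant tubular neighbourhood of $K\cd x$, pass to its $G$-saturation $\Omega$, and use that $x$ is an absolute minimum of $\|\mup\|^2$ on $M$ to retract $\Omega$ $G$-equivariantly, via the negative gradient flow of $\|\mup\|^2$, onto the part of the $K$-slice lying in $\mup^{-1}(0)$; transporting the $K$-slice along this retraction yields a $G_x$-invariant neighbourhood $S$ of $0$ in $W$ and a $G$-equivariant diffeomorphism $\Psi\colon G\times^{G_x}S\to\Omega$ with $\Psi([e,0]) = x$ and $d_{[e,0]}\Psi = \Id$. (Alternatively one can deduce the statement from the \Keler slice theorem for holomorphic actions of reductive complex groups on $Z$ and restrict the resulting slice to $M$.) The main obstacle is precisely this last step: the Riemannian exponential map of $M$ provides only a $K_x$-equivariant slice, so genuinely new input — the reductivity of $G_x$, the closedness of $G\cd x$, and the gradient-flow retraction, all ultimately consequences of $\mup(x) = 0$ — is needed both to upgrade it to a $G_x$-equivariant slice and to verify that $\Psi$ is globally injective.
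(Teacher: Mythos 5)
The paper does not prove this statement: it is quoted directly from \cite[Thm.~3.1]{heinzner-schwarz-stoetzel} and used as a black box, so there is no internal proof to compare against. Your preliminary reductions are correct and are the standard ones from that reference: the monotonicity argument for $t\mapsto\mup^\beta(\exp(t\beta)x)$ giving $G_x=K_x\exp(\liep_x)$ and hence compatibility of $G_x$; the orthogonality $\liek\cd x\perp\liep\cd x$ from $\beta^\#=\grad\mup^\beta$ together with $K$-equivariance of $\mup$; and the $G_x$-invariance of $W=(\lieg\cd x)^\perp$ via the self-adjointness of $d_x\beta^\#$ for $\beta\in\liep_x$ (which is exactly the paper's Proposition \ref{linearization1}). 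These steps go beyond what the paper records and I have no objection to them.

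The gap is in the final assembly, and it is not merely a matter of unspooled detail. First, the negative gradient flow of the norm square function $\nu_\liep$ is only $K$-equivariant: the metric and the function are $K$-invariant but not $G$-invariant, so the proposed ``$G$-equivariant retraction of $\Omega$ onto the part of the $K$-slice in $\mup^{-1}(0)$'' does not exist as described. In Kirwan--Ness type arguments the flow retracts a saturated neighbourhood onto $\mup^{-1}(0)$ while only preserving orbit \emph{closures}, which is strictly weaker and does not by itself produce a $G$-map. Second, even granting such a retraction, nothing in the sketch addresses why the map $\Psi:G\times^{G_x}S\to M$ is injective or why its image is open: two distinct points of the Riemannian $K$-slice could a priori lie on one $G$-orbit via a group element far from $KG_x$, and excluding this is precisely where the closedness of $G\cd x$ and a local separation/properness argument -- or, as in \cite{heinzner-schwarz-stoetzel}, a reduction to the holomorphic slice theorem for the $U^\C$-action followed by a nontrivial restriction to $M$ -- must enter. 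You correctly flag this as ``the main obstacle,'' but flagging it is not overcoming it: as written, the proposal establishes the linear-algebraic preliminaries and leaves the theorem's actual content unproven.
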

$\ $ \\
Here $G \times^{G_x}S$ denotes the associated bundle with principal
bundle $G \ra G/G_x$.
\begin{cor} \label{slice-cor} If $x \in M$ and $\mup(x) = \beta$,
  there are a $G^\beta$-invariant decomposition $T_x M = \lieg^\beta
  \cd x \, \oplus W$, open $G^\beta$-invariant subsets $S \subset W$,
  $\Omega \subset M$ and a $G^\beta$-equivariant diffeomorphism $\Psi
  : G^\beta \times^{G_x}S \ra \Omega$, such that $0\in S, x\in \Omega$
  and $\Psi ([e, 0]) =x$.
\end{cor}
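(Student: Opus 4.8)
The plan is to reduce the statement at an arbitrary value $\beta \in \liep$ to the case $\beta = 0$ handled by the Slice Theorem (Theorem \ref{line}), by passing to the Levi subgroup $G^\beta$ and a suitable shifted gradient map. First I would observe that $G^\beta$ is itself a compatible subgroup of $U^\C$: it is closed and $\theta$-invariant (it is defined by an $\Ad$-condition on the element $\beta \in \liep$, and $\theta$ fixes $K^\beta$ and negates the relevant part of $\liep$), and it has finitely many connected components, so Lemma \ref{lemcomp}\eqref{lemcomp1} applies. Its Cartan decomposition is $\lieg^\beta = \liek^\beta \oplus \liep^\beta$ with $\liek^\beta = \liek \cap G^\beta$ and $\liep^\beta = \{v \in \liep : [v,\beta] = 0\}$, exactly as in Lemma \ref{lemcomp}\eqref{lemcomp3}. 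Moreover $M$ is a compact connected $G^\beta$-invariant submanifold of $Z$.

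Next I would produce a gradient map for the $G^\beta$-action with a zero at $x$. The $G$-momentum map $\mu : Z \to \liu$ restricts (after composing with the orthogonal projection $\liu \to \liu^\beta$, equivalently $\mathfrak{p} \to \mathfrak{p}^\beta$ on the relevant component) to a momentum/gradient map $\mu_{\liep^\beta}$ for the $G^\beta$-action; concretely $\mu_{\liep^\beta} = \pi_{\liep^\beta} \circ \mup$ where $\pi_{\liep^\beta}$ is the orthogonal projection of $\liep$ onto $\liep^\beta$, and one checks from $d\mu^\xi = i_{\xi^\#}\omega$ that this is indeed the $G^\beta$-gradient map, restricting the earlier remark that $\mu_\lia = \pi_\lia \circ \mup$ is a gradient map to the (non-abelian) subalgebra situation. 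Since $\beta \in \liez(\lieg^\beta) \cap \liep$, the shifted map $\nu := \mu_{\liep^\beta} - \beta$ is again a $G^\beta$-gradient map (translating the gradient map by a central element of the acting group preserves the defining properties, because $\beta^\#$ vanishes identically is not needed — rather $\langle \nu(z),\xi\rangle$ and $\langle \mu_{\liep^\beta}(z),\xi\rangle$ differ by the constant $\langle \beta,\xi\rangle$, so they have the same differential). By hypothesis $\mup(x) = \beta$, and $\beta \in \liep^\beta$, so $\pi_{\liep^\beta}(\mup(x)) = \beta$ and hence $\nu(x) = 0$.

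Now I would apply Theorem \ref{line} to the compatible subgroup $G^\beta \subset U^\C$, its maximal compact $K^\beta$, the submanifold $M$, the gradient map $\nu$, and the point $x$ with $\nu(x) = 0$. This yields a $(G^\beta)_x$-invariant splitting $T_x M = \lieg^\beta \cdot x \oplus W$, open $(G^\beta)_x$-invariant sets $S \subset W$ and $\Omega \subset M$, and a $G^\beta$-equivariant diffeomorphism $\Psi : G^\beta \times^{(G^\beta)_x} S \to \Omega$ with $0 \in S$, $x \in \Omega$, $\Psi([e,0]) = x$. It only remains to identify the isotropy: I claim $(G^\beta)_x = G_x$. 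One inclusion is trivial. For the other, if $g \in G_x$ then since $\mup$ is $G$-equivariant in the appropriate sense on the $K$-orbit structure (more precisely $K$ acts on $\mup(M)$ and the $G$-action extends it, cf. the discussion around Theorem \ref{line}), $g$ fixes $\mup(x) = \beta$, so $\Ad(g)\beta = \beta$, i.e. $g \in G^\beta$; thus $G_x \subseteq (G^\beta)_x$. Hence $(G^\beta)_x = G_x$ and the statement follows verbatim.

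The main obstacle I anticipate is the verification that $\nu = \pi_{\liep^\beta}\circ\mup - \beta$ really is a gradient map for $G^\beta$ in the precise sense required by Theorem \ref{line} (i.e. that it comes from a genuine $U^\beta$-momentum map on the Kähler manifold $Z$, not merely that its components have the right gradients on $M$). This is where one must be careful: one should exhibit the $U^\beta$-momentum map as the composition of $\mu$ with the projection $\liu \to (\liu^\beta)$ dual to the inclusion, check $U^\beta$-equivariance, and then the $-i$-component and the central shift by $\beta$ as above; the compactness and connectedness of $M$, together with $M$ being $G^\beta$-invariant, let one invoke Theorem \ref{line} for $M$ rather than $Z$. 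The identification $(G^\beta)_x = G_x$ is the other point that needs the equivariance of $\mup$ to be used honestly, but it is routine once stated correctly.
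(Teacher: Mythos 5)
Your main construction is exactly the paper's: project the momentum map onto $\liu^\beta$ (equivalently, project $\mup$ onto $\liep^\beta$), shift by the central element $\beta$ so that the new gradient map vanishes at $x$, check that $G^\beta$ is compatible (Lemma \ref{lemcomp}), and apply Theorem \ref{line} to the $G^\beta$-action. Up to that point the argument is correct and is essentially verbatim what the paper does.

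The last step, however, contains a genuine error: the claim $(G^\beta)_x = G_x$ is false in general, and the justification offered for it --- that $g\in G_x$ ``fixes $\mup(x)=\beta$'' --- tacitly assumes $G$-equivariance of $\mup$, whereas $\mup$ is only $K$-equivariant. A counterexample: let $G=\mathrm{SL}(2,\R)$ act on $\PP^1(\C)$ via the standard representation and take $x=[e_1]$. Then $\mup(x)$ is a positive multiple of $\beta=\mathrm{diag}(1,-1)$, so $G^\beta$ is the diagonal torus, while $G_x$ is the full group of real upper-triangular matrices; the unipotent element $g=\left(\begin{smallmatrix}1&1\\0&1\end{smallmatrix}\right)$ lies in $G_x$ but $\Ad(g)\beta\neq\beta$. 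So the inclusion $G_x\subseteq G^\beta$ fails. The correct reading of the corollary (forced on you, since otherwise the associated bundle $G^\beta\times^{G_x}S$ is not even defined) is that the subscript denotes the isotropy of $x$ in the acting group $G^\beta$, i.e.\ $(G^\beta)_x=G_x\cap G^\beta$; with that reading your application of Theorem \ref{line} already gives the statement and no identification of isotropy groups is needed. Delete the final identification step, or replace it by the remark that the slice theorem is being applied to $G^\beta$ and its own isotropy group at $x$.
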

This follows applying the previous theorem to the action of $G^\beta$
with the gradient map $\widehat{\mu_{\liu^\beta}} := \mu_{\liu^\beta} -
i\beta$, where $\mu_{\liu^\beta}$ denotes the projection of $\mu$ onto
$\liu^\beta$.
See \cite[p.$169$]{heinzner-schwarz-stoetzel} and \cite{sjamaar} for more details.
\begin{cor} \label{slice-cor-2}
  If $\beta \in \liep $ and $x \in M$ is a critical point of $\mupb$,
  then there are open invariant neighborhoods $S \subset T_x M$ and
  $\Omega \subset M$ and an $\R$-equivariant diffeomorphism $\Psi : S
  \ra \Omega$, such that $0\in S, x\in \Omega$, $\Psi ( 0) =x$. Here
  $t\in \R$ acts as $d\phi_t(x)$ on $S$ and as $\phi_t$ on $\Omega$.)
\end{cor}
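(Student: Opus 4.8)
The plan is to reduce the statement to Corollary \ref{slice-cor} by a standard linearization/averaging argument applied at a critical point. First I would recall that, by the variational characterization of the gradient map, $x \in M$ is a critical point of $\mupb$ if and only if $\beta^\#(x) = 0$, i.e. $\beta \in \lieg_x$ (here $\lieg_x$ is the isotropy algebra of $x$, which is $\theta$-stable since one may choose coordinates adapted to the Cartan decomposition). In particular $\beta$ lies in the Levi factor $\lieg^\beta$ and also fixes $x$, so the one-parameter group $\exp(t\beta)$ acts on $M$ fixing $x$. The point is to produce an $\R$-equivariant model for the local dynamics of $\phi_t = \exp(t\beta)$ near $x$, where on the model side $\R$ acts linearly by $d\phi_t(x)$ on $T_xM$.

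Next I would invoke Corollary \ref{slice-cor} with the same $\beta$: since $\mup(x) = \mup(x)$ need not be $\beta$, one first translates. Concretely, set $\gamma := \mup(x)$; then $\beta \in \lieg_x \subset \lieg^\gamma$ (because $\mup$ is equivariant and $x$ is fixed by $\exp(t\beta)$, so $\gamma$ is fixed by $\Ad(\exp t\beta)$, giving $[\beta,\gamma]=0$). Apply Corollary \ref{slice-cor} at $x$ to get a $G^\gamma$-equivariant diffeomorphism $\Psi : G^\gamma \times^{G_x} S \to \Omega$ with $\Psi([e,0]) = x$. Since $\exp(t\beta) \subset G^\gamma$ and also $\exp(t\beta) \subset G_x$, the action of $\exp(t\beta)$ on the associated bundle $G^\gamma \times^{G_x} S$ is $[g, s] \mapsto [\exp(t\beta) g, s] = [g\,(g^{-1}\exp(t\beta) g), s]$; restricting to a slice transverse to the orbit through $[e,0]$ and using that the orbit $G^\gamma\cdot x$ through the critical point is itself fixed pointwise by $\exp(t\beta)$ near $x$ up to first order, one sees the $\exp(t\beta)$-action is conjugate near $x$ to its linearization on a neighborhood of $0$ in $S \subset W \subset T_xM$. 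Shrinking $S$ and $\Omega$ to $\exp(t\beta)$-invariant neighborhoods (possible because $\phi_t$ fixes $x$ and the generator is complete on the compact $M$) yields the desired $\Psi : S \to \Omega$ with $t$ acting as $d\phi_t(x)$ on $S$ and as $\phi_t$ on $\Omega$.

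The cleanest route, and the one I would actually write, avoids fussing with the bundle and instead uses the slice theorem directly in linearized form: because $x$ is a critical point of the Morse-Bott-type function $\mupb$ (this is consistent with the discussion following the corollary in the paper), the gradient flow $\phi_t$ of $\mupb$ has $x$ as a rest point, and the Hartman-Grobman-type statement one needs is genuinely just the equivariant tubular neighborhood theorem for the $\R$-action generated by $\beta^\#$ — the flow preserves $M$, is generated by a Killing-type vector field for the adapted metric near $x$, hence is linearizable in geodesic normal coordinates centered at $x$ after averaging the metric over the (relatively compact) closure of $\{\phi_t\}$ acting on a neighborhood. That average exists since $\overline{\{\exp(t\beta)\}}$ is a torus in $\Aut(M)$, compact. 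Taking $S$ a ball in $T_xM$ for the averaged metric and $\Psi = \exp_x$ gives $\R$-equivariance on the nose.

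\textbf{Main obstacle.} The subtle point is not the linearization itself but ensuring the neighborhoods $S$ and $\Omega$ can be taken \emph{invariant} under the full $\R = \{\phi_t\}$ rather than merely under $\exp(t\beta)$ for $t$ in a compact interval: $\phi_t$ for the gradient flow is not the same as $\exp(t\beta)$ globally, but near the rest point $x$ the two agree up to reparametrization in the sense that both have $x$ as a fixed point with the same linearization (the linearization of $d\phi_t(x)$ is $e^{t\,\mathrm{Hess}}$, conjugate to the torus action's linear part on the unstable/stable splitting). Reconciling these — that is, checking that the gradient flow of $\mupb$ near a critical point is, after the diffeomorphism, exactly the linear flow $d\phi_t(x)$ and that one may genuinely shrink to invariant neighborhoods — is where the real work lies, and it is handled by the same slice-theoretic machinery of \cite{heinzner-schwarz-stoetzel} together with completeness of $\beta^\#$ on the compact manifold $M$.
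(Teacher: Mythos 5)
There is a genuine gap. Your ``cleanest route'' --- averaging a metric over the closure of $\{\exp(t\beta)\}$ --- cannot work, because for $\beta\in\liep$ the one-parameter group $\exp(t\beta)$ is \emph{not} relatively compact: $\exp\restr{\liep}$ is a proper embedding, and, more to the point, the linearization of the flow at the critical point is $d\phi_t(x)=e^{\,t\,D^2\mupb(x)}$ (Proposition \ref{linearization1}), which has real eigenvalues $e^{t\lambda}$ with $\lambda\neq 0$ whenever the critical submanifold has positive index or coindex. Such a linear flow preserves no inner product on $T_xM$, so no invariant metric exists and the closure of the flow in $\Aut(M)$ is not a torus. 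This is exactly the situation (hyperbolic, non-compact symmetry) that the slice theorem of Heinzner--Schwarz--St\"otzel is designed to handle and that averaging cannot. Your first route also leans on unjustified steps: $\mup$ is only $K$-equivariant, so ``$x$ fixed by $\exp(t\beta)$ implies $[\beta,\mup(x)]=0$'' does not follow, and the passage from the $G^\gamma$-slice to the $\R$-linearization (``restricting to a slice transverse to the orbit \dots up to first order'') is precisely the content you need to prove, not a remark.

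The paper's argument is a one-liner that you circle around but never quite land on: take $H:=\exp(\R\beta)$, which is a compatible subgroup of $U^\C$ (it is $\theta$-invariant, closed, connected, with $K_H=\{e\}$ and $\liep_H=\R\beta$), and apply Corollary \ref{slice-cor} to the $H$-action at $x$. Since $\beta^\#(x)=0$ one has $H_x=H$ and $H^{\mu_{\R\beta}(x)}=H$ (as $H$ is abelian), so the associated bundle $H\times^{H_x}S$ collapses to $S$ with $W=T_xM$, and the slice diffeomorphism is exactly the asserted $\R$-equivariant $\Psi:S\to\Omega$ intertwining $d\phi_t(x)$ with $\phi_t$. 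Finally, your ``main obstacle'' about reconciling the gradient flow with $\exp(t\beta)$ is vacuous in this setting: by construction $\beta^\#$ is simultaneously the fundamental vector field and the gradient of $\mupb$, so $\phi_t(p)=\exp(t\beta)p$ identically.
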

\begin{proof}
  The subgroup $H:=\exp(\R \beta)$ is compatible.  It is enough to
  apply the previous corollary to the $H$-action at $x$.
\end{proof}
Let $x \in
\Crit(\mu^\beta_\liep)=\{y\in M:\, \beta^\# (y)=0\}$. Let $D^2\mup^\beta(x) $ denote the Hessian,
which is a symmetric operator on $T_x M$. Denote by $V_-$ (respectively $V_+$) the sum of the eigenspaces of the
Hessian of $\mupb$ corresponding to negative (resp. positive)
eigenvalues. Denote by $V_0$ the kernel.  Since the Hessian is
symmetric we get an orthogonal decomposition
\begin{gather}
  \label{Dec-tangente}
  T_x M = V_- \oplus V_0 \oplus V_+.
\end{gather}
Let $\alfa : G \ra M$ be the orbit map: $\alfa(g) :=gx$.  The
differential $d\alfa_e$ is the map $\xi \mapsto \xi^\# (x)$. The following result is well-know. A proof is given in \cite{biliotti-ghigi-heinzner-2}.
\begin{prop}\label{linearization1}
  \label{tangent}
  If $\beta \in \liep$ and $x \in \Crit(\mu^\beta_\liep)$ then
  \begin{gather*}
    D^2\mup^\beta(x) = d \beta^\# (x).
  \end{gather*}
  Moreover $d\alfa_e (\mathfrak r^{\beta\pm} ) \subset V_\pm$ and $d\alfa_e(
  \lieg^\beta) \subset V_0$.  If $M$ is $G$-homogeneous these are
  equalities.
\end{prop}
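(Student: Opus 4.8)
The plan is to work entirely inside the tangent space at the critical point $x$, using the three tools already available: the Slice Theorem and its corollaries (Theorem \ref{line}, Corollary \ref{slice-cor}, Corollary \ref{slice-cor-2}), the identification of $\beta^\#$ with the gradient of $\mup^\beta$, and the eigenspace decomposition \eqref{Dec-tangente}. First I would establish the formula $D^2\mup^\beta(x) = d\beta^\#(x)$. Since $\beta^\#$ is the gradient vector field of $\mup^\beta$ on $M$ and $x$ is a critical point (so $\beta^\#(x)=0$), the covariant derivative of $\beta^\#$ at $x$ is independent of the connection and coincides with the intrinsic Hessian of $\mup^\beta$; evaluated on $T_xM$ this is exactly the linearization $d\beta^\#(x)$, viewed as the endomorphism $v\mapsto \nabla_v\beta^\#$. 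This is a general fact about gradient vector fields at their zeros, so I would just spell out the one-line argument: at a zero of a vector field its linearization is a well-defined endomorphism of the tangent space, and for a gradient field this endomorphism is the Hessian of the potential.

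Next I would prove the inclusions $d\alfa_e(\mathfrak r^{\beta\pm})\subset V_\pm$ and $d\alfa_e(\lieg^\beta)\subset V_0$. For the $V_0$ part: if $\xi\in\lieg^\beta$, then $\Ad(\exp t\beta)\xi=\xi$, so the flow $\phi_t=\exp(t\beta)$ (acting on $M$) commutes with the one-parameter group generated by $\xi^\#$; differentiating the identity $\phi_t\circ\exp(s\xi)=\exp(s\xi)\circ\phi_t$ at $s=0$ and using $\phi_t(x)=x$ shows $d\phi_t(x)$ fixes the vector $\xi^\#(x)=d\alfa_e(\xi)$, hence $\xi^\#(x)$ lies in the zero-eigenspace of $d\phi_t(x)=\exp(t\,D^2\mup^\beta(x))$, i.e.\ in $\ker D^2\mup^\beta(x)=V_0$. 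For the $V_\pm$ part: if $\xi\in\mathfrak r^{\beta+}$ then $\lim_{t\to-\infty}\Ad(\exp t\beta)\xi$ involves only positive $\ad\beta$-eigenvalues, so $\Ad(\exp t\beta)\xi\to 0$ as $t\to-\infty$; applying $d\phi_t(x)$ to $\xi^\#(x)=d\alfa_e(\xi)$ gives $d\phi_t(x)\bigl(\xi^\#(x)\bigr)=\bigl(\Ad(\exp t\beta)\xi\bigr)^\#(x)$, and since this tends to $0$ as $t\to-\infty$ while $d\phi_t(x)=\exp(t\,D^2\mup^\beta(x))$, the vector $\xi^\#(x)$ must have no component in $V_-\oplus V_0$, hence lies in $V_+$. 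The argument for $\mathfrak r^{\beta-}$ is symmetric (send $t\to+\infty$), giving $d\alfa_e(\mathfrak r^{\beta-})\subset V_-$.

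Finally, for the homogeneous case I would argue that the three inclusions are forced to be equalities by a dimension count. When $M$ is $G$-homogeneous, $d\alfa_e\colon\lieg\to T_xM$ is surjective with kernel $\lieg_x$, and the decomposition $\lieg=\mathfrak r^{\beta-}\oplus\lieg^\beta\oplus\mathfrak r^{\beta+}$ (Levi decomposition of the parabolic $\lieg^{\beta+}$ together with its opposite) maps onto $T_xM=V_-\oplus V_0\oplus V_+$ respecting the three summands by the inclusions just proved. Since the images of the three pieces lie in the three respective summands and together span $T_xM$, each inclusion $d\alfa_e(\text{piece})\subseteq(\text{summand})$ must in fact be an equality — an inclusion of subspaces whose direct sums coincide is an equality in each factor. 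The one point to handle with care here is that $\lieg_x$ is $\ad\beta$-invariant (because $x$ is $\phi_t$-fixed, $G_x$ is $\phi_t$-normalized), so it splits compatibly across the three pieces and the quotient map remains "block diagonal"; I would note this explicitly. The main obstacle, such as it is, is bookkeeping rather than depth: making sure the intertwining relation $d\phi_t(x)\circ d\alfa_e=d\alfa_e\circ\Ad(\exp t\beta)$ is stated cleanly and that the limits $t\to\pm\infty$ are used with the correct sign conventions for $G^{\beta+}$ as defined in the excerpt.
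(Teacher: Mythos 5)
Your argument is correct and is essentially the standard one (the paper itself gives no proof, deferring to \cite{biliotti-ghigi-heinzner-2}, where the same linearization is used): identify $d\beta^\#(x)$ with the Hessian of the potential at a zero of the gradient field, use the intertwining $d\phi_t(x)\circ d\alfa_e=d\alfa_e\circ\Ad(\exp t\beta)$ together with $d\phi_t(x)=\exp\bigl(t\,D^2\mup^\beta(x)\bigr)$ to sort $\mathfrak r^{\beta\pm}$ and $\lieg^\beta$ into $V_\pm$ and $V_0$, and conclude equality in the homogeneous case by surjectivity of $d\alfa_e$ plus directness of the sum $V_-\oplus V_0\oplus V_+$ (your worry about $\lieg_x$ is harmless but not needed for that last step).
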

\begin{cor}\label{MorseBott}
For every $\beta \in \liep$, $\mupb$ is a Morse-Bott function.
\end{cor}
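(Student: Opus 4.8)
The statement asserts that for every $\beta\in\liep$ the function $\mupb$ on the compact connected $G$-invariant submanifold $M$ is Morse-Bott, i.e. its critical set is a disjoint union of closed submanifolds and along each such component the Hessian is nondegenerate in the normal directions. The plan is to reduce everything to the slice description already proved and to the linearization of the Hessian in Proposition \ref{linearization1}. First I would recall that $\Crit(\mupb)=\{y\in M:\beta^\#(y)=0\}$ is exactly the fixed point set of the $\R$-action $H=\exp(\R\beta)$ on $M$; since $H$ is compatible (it is closed, $\theta$-invariant, with finitely many components) and acts smoothly on the compact manifold $M$, this fixed point set is a closed submanifold, and in a neighborhood of each component $M$ decomposes $H$-equivariantly. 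The key local input is Corollary \ref{slice-cor-2}: near a critical point $x$ there is an $\R$-equivariant diffeomorphism $\Psi:S\to\Omega$ from an invariant neighborhood $S\subset T_xM$ onto a neighborhood $\Omega$ of $x$ in $M$, with $t\in\R$ acting linearly as $d\phi_t(x)$ on $S$.

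Next I would identify the tangent space to the critical manifold at $x$ with the kernel $V_0$ of the Hessian $D^2\mupb(x)$. By Proposition \ref{linearization1} we have $D^2\mupb(x)=d\beta^\#(x)$, so the kernel of the Hessian is precisely the kernel of $d\beta^\#(x)$, which is the tangent space at $x$ to the fixed point set of $\exp(\R\beta)$; hence $T_x\Crit(\mupb)=V_0$. Using the orthogonal decomposition $T_xM=V_-\oplus V_0\oplus V_+$ from \eqref{Dec-tangente}, the Hessian restricted to the normal space $V_-\oplus V_+$ is nondegenerate by the very definition of $V_\pm$ as the sums of negative and positive eigenspaces. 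Therefore $x$ lies on a critical submanifold of dimension $\dim V_0$ and the Hessian is nondegenerate transverse to it; since $x$ was an arbitrary critical point, this proves the Morse-Bott property.

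The one point needing a little care — and the place I expect to be the main (though still routine) obstacle — is verifying that $\Crit(\mupb)$ is genuinely a \emph{submanifold} near $x$, with tangent space exactly $V_0$, rather than merely a closed invariant set whose tangent cone is contained in $V_0$. This is where Corollary \ref{slice-cor-2} does the work: transporting via $\Psi$, the critical set of $\mupb$ near $x$ corresponds to the fixed point set of the \emph{linear} $\R$-action $t\mapsto d\phi_t(x)$ on the vector space $S\subset T_xM$, and the fixed point set of a linear flow $e^{t\,d\beta^\#(x)}$ is exactly the linear subspace $\ker d\beta^\#(x)=V_0$. (One uses here that $\mupb$ has nonzero gradient $\beta^\#$ off its critical set, so $\Crit(\mupb\restr\Omega)=\Omega^{\R}$ corresponds under $\Psi$ to $S\cap V_0$.) Thus $\Crit(\mupb)$ is locally a linear subspace in suitable coordinates, hence a submanifold, the components are the connected components of this closed submanifold, and the transverse nondegeneracy of the Hessian follows from the decomposition \eqref{Dec-tangente}, completing the proof.
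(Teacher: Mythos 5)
Your proposal is correct and follows essentially the same route as the paper: the paper's proof likewise invokes Corollary \ref{slice-cor-2} to see that $\Crit(\mupb)$ is a smooth submanifold with $T_x\Crit(\mupb)=V_0$, and then Proposition \ref{linearization1} together with the decomposition \eqref{Dec-tangente} to get nondegeneracy of the Hessian in the normal directions. You merely spell out in more detail the step the paper leaves implicit, namely that under the slice diffeomorphism the critical set corresponds to the fixed point set of the linearized flow, which is exactly $\ker d\beta^\#(x)=V_0$.
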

\begin{proof}
Corollary \ref{slice-cor-2} implies that $\Crit(\mu^\beta_\liep)$ is a smooth
    submanifold. Since $T_x \Crit(\mu^\beta_\liep) = V_0$ for $x\in \Crit(\mu^\beta_\liep)$, the
  first statement of Proposition \ref{tangent} shows that the Hessian
  is nondegenerate in the normal directions.
\end{proof}
 Let $g\in G$ and  let $\xi \in \liep$. It is easy check that
\[
(\mathrm{d} g)_p (\xi^\#)=(\mathrm{Ad}(g)(\xi) )^\# (gp ).
\]
Therefore $G^\beta$ preserves $\Crit(\mu^\beta_\liep)$.
 \begin{cor}\label{sugg1}
If $M$ is $G$-homogeneous then $G^\beta$-orbits are open and closed in $ \Crit(\mu^\beta_\liep)$.
\end{cor}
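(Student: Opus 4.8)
The plan is to show that every $G^\beta$-orbit contained in $\Crit(\mu^\beta_\liep)$ is open in $\Crit(\mu^\beta_\liep)$; since $\Crit(\mu^\beta_\liep)$ is the disjoint union of such orbits, each orbit will then automatically be closed as well, being the complement of a union of open sets.

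First I would fix $x\in \Crit(\mu^\beta_\liep)$. By Corollary \ref{slice-cor-2} the set $\Crit(\mu^\beta_\liep)$ is a smooth submanifold of $M$ and, as observed in the proof of Corollary \ref{MorseBott}, $T_x \Crit(\mu^\beta_\liep) = V_0$, the kernel of the Hessian $D^2\mup^\beta(x)$. Consider the orbit map $\alfa^\beta : G^\beta \lra \Crit(\mu^\beta_\liep)$, $\alfa^\beta(g) = gx$, which is well defined precisely because $G^\beta$ preserves $\Crit(\mu^\beta_\liep)$. Its differential at $e$ is $\xi \mapsto \xi^\#(x)$, so its image is $d\alfa_e(\lieg^\beta)$. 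Since $M$ is $G$-homogeneous, Proposition \ref{linearization1} gives the equality $d\alfa_e(\lieg^\beta) = V_0 = T_x \Crit(\mu^\beta_\liep)$. Hence $d\alfa^\beta_e$ is surjective onto $T_x\Crit(\mu^\beta_\liep)$, and by the submersion theorem the image of $\alfa^\beta$ contains a neighbourhood of $x$ in $\Crit(\mu^\beta_\liep)$; that is, $G^\beta \cd x$ contains a $\Crit(\mu^\beta_\liep)$-neighbourhood of $x$. Running the same argument at an arbitrary point $gx$ of the orbit (which again lies in $\Crit(\mu^\beta_\liep)$, since $G^\beta$ preserves this set) shows that $G^\beta \cd x$ is open in $\Crit(\mu^\beta_\liep)$.

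Finally I would conclude: the $G^\beta$-orbits partition $\Crit(\mu^\beta_\liep)$, and each of them is open by the step above; therefore each orbit equals $\Crit(\mu^\beta_\liep)$ with the union of the remaining orbits removed, hence is also closed. There is no serious obstacle here: the whole argument is a dimension count via the rank theorem, and the only point requiring a little care is the simultaneous identification $T_x\Crit(\mu^\beta_\liep) = V_0$ and $d\alfa_e(\lieg^\beta) = V_0$, the latter being exactly where $G$-homogeneity enters through Proposition \ref{linearization1}.
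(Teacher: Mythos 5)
Your proposal is correct and is essentially the paper's own argument, merely written out in full: the paper's one-line proof rests on exactly the identity $T_x \Crit(\mu^\beta_\liep) = V_0 = T_x (G^\beta \cdot x)$ coming from Proposition \ref{linearization1} in the homogeneous case, from which openness (and hence closedness, by the partition into orbits) follows. No differences worth noting.
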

\begin{proof}
Since $T_x \Crit(\mu^\beta_\liep) = V_0=T_x G^\beta \cdot x$ for $x\in \Crit(\mu^\beta_\liep)$, the result follows.
 \end{proof}

Let $c_1 > \cds > c_r$ be the critical
  values of $\mup^\beta$. The corresponding level sets of $\mup^\beta$, $C_i:=(\mup^\beta)\meno ( c_i)$ are submanifolds which are
 union of components of $\Crit(\mup^\beta)$. The function $\mup^\beta$ defines a gradient flow generated by its gradient which is given by $\beta^\#$.
By Theorem \ref{line}, it follows that for any $x\in M$ the limit:
  \begin{gather*}
    \phi_\infty (x) : = \lim_{t\to +\infty }
    \exp(t\beta ) x,
  \end{gather*}
  exists.
Let us denote by $W_i^\beta$ the \emph{unstable manifold} of the critical component $C_i$
  for the gradient flow of $\mup^\beta$:
  \begin{gather}
    \label{def-wu}
    W_i^\beta := \{ x\in M: \phi_\infty (x) \in C_i \}.
  \end{gather}
Applying Theorem \ref{line}, we have the following well-known decomposition of $M$ into unstable manifolds with respect to $\mup^\beta$.
\begin{thm}\label{decomposition}
In the above assumption, we have
\begin{gather}
    \label{scompstabile}
    M = \bigsqcup_{i=1}^r W_i^\beta,
  \end{gather}
and for any $i$ the map:
  \begin{gather*}
    (\phi_\infty)\restr{W_i} : W_i^\beta \ra C_i,
  \end{gather*}
  is a smooth fibration with fibres diffeomorphic to $\R^{l_i}$ where
  ${l_i}$ is the index (of negativity) of the critical submanifold
  $C_i$
\end{thm}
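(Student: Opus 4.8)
The plan is to assemble the decomposition from three facts already established: the Morse--Bott property of $\mupb$ (Corollary \ref{MorseBott}), the $\R$-equivariant linearization of the gradient flow near a critical point (Corollary \ref{slice-cor-2} together with Proposition \ref{linearization1}), and the convergence of the flow $\phi_t=\exp(t\beta)$ on the compact manifold $M$. First I would settle the set-theoretic splitting \eqref{scompstabile}. Since $\beta^\#$ is the gradient of $\mupb$, the function $t\mapsto\mupb(\phi_t(x))$ is nondecreasing, and strictly increasing unless $x\in\Crit(\mupb)=\{y\in M:\beta^\#(y)=0\}$, in which case $\phi_t(x)\equiv x$. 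Because $M$ is compact the limit $\phi_\infty(x)$ exists (as recorded above from Theorem \ref{line}) and satisfies $\beta^\#(\phi_\infty(x))=0$, so $\phi_\infty(x)$ lies in exactly one of the pairwise disjoint level sets $C_1,\dots,C_r$; hence $M=\bigsqcup_{i=1}^r W_i^\beta$.

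Next I would analyse a neighbourhood of a critical point $x\in C_i$ using Corollary \ref{slice-cor-2}: there are invariant neighbourhoods $S\subset T_xM$, $\Omega\subset M$ and an $\R$-equivariant diffeomorphism $\Psi\colon S\to\Omega$, $\Psi(0)=x$, the $\R$-action on $S$ being the linear flow $d\phi_t(x)=\exp\big(tD^2\mupb(x)\big)$ coming from Proposition \ref{linearization1}. In the splitting $T_xM=V_-\oplus V_0\oplus V_+$ of \eqref{Dec-tangente} this linear flow is a contraction on $V_-$ as $t\to+\infty$, the identity on $V_0=\ker D^2\mupb(x)=T_xC_i$, and an expansion on $V_+$. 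Shrinking $\Omega$, one checks that $\Psi(v_-,v_0,0)$ flows forward to $\Psi(0,v_0,0)\in C_i$, while if $v_+\neq 0$ the forward trajectory exits $\Omega$ through the $V_+$-boundary at a value of $\mupb$ bounded away from $c_i$ (using strict monotonicity off $\Crit$ and positivity of the Hessian on $V_+$), hence converges to some $C_j$ with $c_j>c_i$. Thus locally $W_i^\beta=\Psi\big(S\cap(V_-\oplus V_0)\big)$ is a submanifold, $\phi_\infty|_{W_i^\beta}$ is the linear projection onto $V_0=T_xC_i$, and the fibre over $x$ is $V_-\cong\R^{l_i}$, with $l_i=\dim V_-$ the index of $C_i$.

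Finally I would globalize. Let $\nu_i^-\to C_i$ be the negative normal bundle, the rank-$l_i$ subbundle of $TM|_{C_i}$ with fibre $V_-$ at each point (smooth since $C_i$ is a nondegenerate critical submanifold). Combining the local charts of the previous step, the exponential map of an auxiliary $K$-invariant Riemannian metric on $M$, and the flow $\phi_t$, one builds a smooth map $\nu_i^-\to M$ with image $W_i^\beta$ that intertwines the bundle projection with $\phi_\infty|_{W_i^\beta}$: any $y\in W_i^\beta$ eventually enters a chart around $\phi_\infty(y)\in C_i$, where by the previous step its $V_+$-component vanishes, so it sits on a flowed-out negative normal disk and $y$ is recovered by applying $\phi_{-t}$; injectivity comes from that of $\phi_t$ and of the charts. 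Hence this map is a diffeomorphism, $W_i^\beta$ is a smooth submanifold, and $\phi_\infty|_{W_i^\beta}\colon W_i^\beta\to C_i$ is a smooth fibre bundle with fibre $\R^{l_i}$.

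The step I expect to be the real obstacle is this globalization: patching the equivariant slices of Corollary \ref{slice-cor-2} over a possibly disconnected $C_i$ and proving that the resulting flow-out map is a \emph{global} diffeomorphism rather than merely a local one --- equivalently, that no trajectory accumulates on $C_i$ transversally to $V_-$ and that trajectories issuing from different local slices never collide. Both follow from strict monotonicity of $\mupb$ along nonconstant trajectories, but organizing the patching cleanly is the only genuinely technical part; the rest is the formal Morse--Bott machinery, for which the Slice Theorem supplies all the analysis.
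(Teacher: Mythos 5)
Your argument is correct and is exactly what the paper intends: the paper gives no proof of Theorem \ref{decomposition}, merely asserting that the decomposition is ``well-known'' and follows from the Slice Theorem \ref{line}, and your proposal supplies precisely the standard Morse--Bott stable-manifold construction (set-theoretic splitting from convergence of the flow, the $\R$-equivariant linearization of Corollary \ref{slice-cor-2} and Proposition \ref{linearization1} identifying the local model $V_-\oplus V_0$, and the backward flow-out of the negative normal disk bundle to globalize). The only imprecision is the intermediate claim that a trajectory with $v_+\neq 0$ exits $\Omega$ at a value of $\mupb$ bounded away from $c_i$; this is not needed, since your globalization paragraph already contains the correct characterization --- the forward tail of any $y\in W_i^\beta$ eventually lies entirely in a linearizing chart, where the expanding component must vanish.
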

Let $\beta \in \liep$. Proposition \ref{linearization1} implies that $\mathrm{Max}(\beta)=\{x\in M:\, \mup^\beta(x)=\mathrm{max}_{y\in M} \mup^\beta\}$ is a smooth possibly disconnected submanifold of $M$.
\begin{lemma}\label{parabolic-preserve-maximun}
$\mathrm{Max}(\beta)$ is $G^{\beta+}$-invariant.
\end{lemma}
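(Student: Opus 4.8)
The plan is to show that $\mathrm{Max}(\beta)$ is preserved under both the Levi factor $G^\beta$ and the unipotent radical $R^{\beta+}$, since $G^{\beta+}$ is generated by these two subgroups. First I would observe that by Proposition \ref{linearization1}, $\mathrm{Max}(\beta)$ is a union of components of $\Crit(\mu^\beta_\liep)$, namely the component(s) $C_1$ on which $\mup^\beta$ attains its global maximum $c_1$. Since $G^\beta$ preserves $\Crit(\mu^\beta_\liep)$ (as noted in the excerpt, because $(\mathrm d g)_p(\xi^\#) = (\mathrm{Ad}(g)(\xi))^\#(gp)$ and $\mathrm{Ad}(g)(\beta)=\beta$ for $g\in G^\beta$), and since $G^\beta$ acts by diffeomorphisms it preserves the value of $\mup^\beta$ on $\Crit(\mu^\beta_\liep)$: indeed for $x$ critical and $g\in G^\beta$, $\mup^\beta(gx) = \langle \mup(gx),\beta\rangle = \langle \mathrm{Ad}(g)\mup(x) - (\text{something}),\beta\rangle$; more cleanly, $\mup^\beta$ is constant along $G^\beta$-orbits in $\Crit(\mu^\beta_\liep)$ because these orbits are connected (Corollary \ref{sugg1}) and contained in level sets. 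Hence $G^\beta$ preserves $\mathrm{Max}(\beta)$.

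Next I would handle the unipotent radical $R^{\beta+}$. The key is the gradient flow interpretation: for $x\in M$, $\phi_\infty(x) = \lim_{t\to+\infty}\exp(t\beta)x$ exists and lies in some critical component. I claim $\mathrm{Max}(\beta)$ coincides with the unstable manifold $W_1^\beta$ of the maximal critical component $C_1$ — no, that is not right either; rather $\mathrm{Max}(\beta) = C_1 \subseteq W_1^\beta$, and $W_1^\beta$ is the set of points flowing up to the maximum. The cleaner route: for $x \in \mathrm{Max}(\beta)$ and $g\in R^{\beta+}$, we want $\mup^\beta(gx) = c_1$. Since $x$ realizes the maximum, $\mup^\beta(gx)\le c_1$ automatically. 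For the reverse inequality, use that $\lim_{t\to -\infty}\exp(t\beta)g\exp(-t\beta) = e$ for $g\in R^{\beta+}$, together with $\exp(-t\beta)x$... Actually the clean statement: since $x\in\mathrm{Max}(\beta)\subset\Crit(\mu^\beta_\liep)$, we have $\exp(-t\beta)x = x$ for all $t$. Therefore $\exp(t\beta)gx = \exp(t\beta)g\exp(-t\beta)x$, and as $t\to -\infty$ this converges to $ex = x$. Thus $\phi_{-\infty}(gx) = x \in C_1$. Since $c_1$ is the maximal critical value and $\mup^\beta$ is nondecreasing along the forward flow $t\mapsto \exp(t\beta)(gx)$ (its derivative is $|\beta^\#|^2\ge 0$), the limit as $t\to -\infty$ is the infimum of $\mup^\beta$ along the trajectory, so $\mup^\beta(gx) \ge \mup^\beta(\phi_{-\infty}(gx)) = \mup^\beta(x) = c_1$. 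Combined with the automatic upper bound, $\mup^\beta(gx) = c_1$, i.e. $gx\in\mathrm{Max}(\beta)$.

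Finally I would combine the two: every element of $G^{\beta+}$ can be written as a product of an element of $G^\beta$ and an element of $R^{\beta+}$ (the semidirect product decomposition recalled just before Lemma \ref{parabolic-preserve-maximun}), so $\mathrm{Max}(\beta)$ is $G^{\beta+}$-invariant. The main obstacle I anticipate is justifying cleanly that $\mup^\beta$ is monotone along the gradient trajectories with the right sign convention — one must be careful that $\beta^\#$ is the gradient of $\mup^\beta$ (stated in Subsection \ref{subsection-gradient-moment}), so $\frac{\mathrm d}{\mathrm dt}\mup^\beta(\exp(t\beta)x) = \langle\beta^\#, \beta^\#\rangle \ge 0$, giving monotone increase forward in $t$; and that the backward limit $\phi_{-\infty}$ exists and is critical, which follows by applying Theorem \ref{line} to $-\beta$. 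Alternatively, one avoids the flow entirely by the slice argument (Corollary \ref{slice-cor-2}) but the flow argument is the most transparent.
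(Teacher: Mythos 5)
Your overall strategy (decompose $G^{\beta+}$ as the semidirect product of $G^{\beta}$ and $R^{\beta+}$ and treat each factor separately) is exactly the paper's, and your treatment of the unipotent radical is correct: the backward-flow argument $\exp(t\beta)gx=\exp(t\beta)g\exp(-t\beta)x\to x$ as $t\to-\infty$, combined with the monotonicity of $\mup^\beta$ along its gradient flow, is a valid and self-contained substitute for the paper's appeal to Proposition \ref{linearization1}, which instead observes that the fundamental vector fields of $\mathfrak r^{\beta+}$ lie in $V_+=\{0\}$ at a maximum, so that $R^{\beta+}$ acts trivially on $\mathrm{Max}(\beta)$.

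The gap is in the $G^{\beta}$ step. Your first justification would need $G$-equivariance of $\mup$, which fails ($\mup$ is only $K$-equivariant), and you rightly abandon it. Your ``cleaner'' justification asserts that $G^{\beta}$-orbits in $\Crit(\mup^\beta)$ are connected, citing Corollary \ref{sugg1}; but that corollary only states that the orbits are open and closed in $\Crit(\mup^\beta)$, and only under the hypothesis that $M$ is $G$-homogeneous, which is not assumed in Lemma \ref{parabolic-preserve-maximun}. Moreover $G^{\beta}=K^{\beta}\exp(\liep^{\beta})$ can be disconnected (through $K^{\beta}$), so its orbits need not be connected, and ``contained in level sets'' is precisely what has to be proved. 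The short repair is the paper's: for $k\in K^{\beta}$, $K$-equivariance gives $\mup^\beta(ky)=\langle \mathrm{Ad}(k)\mup(y),\beta\rangle=\langle \mup(y),\mathrm{Ad}(k^{-1})\beta\rangle=\mup^\beta(y)$, so $K^{\beta}$ preserves $\mathrm{Max}(\beta)$; and for $\xi\in\liep^{\beta}$ one uses $[\xi,\beta]=0$: since $y\in\mathrm{Max}(\beta)$ is critical, $\exp(s\beta)$ fixes $y$ and hence fixes $\exp(t\xi)y=\exp(t\xi)\exp(s\beta)y$ as well, so $\beta^{\#}$ vanishes along $\gamma(t)=\exp(t\xi)y$ and $\mup^\beta(\gamma(t))$ is constant. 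Combining this with $G^{\beta}=K^{\beta}\exp(\liep^{\beta})$ closes the argument.
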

\begin{proof}
$G^\beta$ preserves $\Crit(\mu^\beta_\liep)$. By Proposition
  \ref{linearization1}, $R^{\beta+}$ acts trivially on $\mathrm{Max}(\beta)$. Therefore, it is enough to prove that $G^\beta$ preserves $\mathrm{Max}(\beta)$.

Let $y\in \mathrm{Max}(\beta)$. Since $\mup$ is $K$-equivariant,  $K^\beta$ preserves $\mathrm{Max}(\beta)$. Let $\xi\in \liep^\beta$ and let $\gamma(t)=\exp(t \xi) y$. Since $\beta_\# (\gamma(t))=0$ it follows that $\mup^\beta (\gamma(t) )$ is constant and so $\exp(t\xi)y\in \mathrm{Max}(\beta)$. Hence, keeping in mind that $G^\beta=K^\beta \exp(\liep^\beta)$, the result follows.
\end{proof}
\begin{prop}\label{closed-orbit-parabolic}
$\mathrm{Max}(\beta)$ contains a closed orbit of $G^{\beta+}$ which coincides with a $K^\beta$-orbit.
\end{prop}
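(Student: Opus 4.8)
The plan is to produce the closed $G^{\beta+}$-orbit by a standard argument: among all $G^{\beta+}$-orbits contained in the compact set $\mathrm{Max}(\beta)$, pick one of minimal dimension; such an orbit must be closed. By Lemma \ref{parabolic-preserve-maximun} we know $\mathrm{Max}(\beta)$ is $G^{\beta+}$-invariant, so this selection makes sense. Let me write $P=G^{\beta+}$, $L=G^\beta$ (its Levi factor) and $R=R^{\beta+}$ (its unipotent radical), so $P=L\ltimes R$. First I would choose $\mathcal{N}\subseteq\mathrm{Max}(\beta)$ a $P$-orbit whose closure $\overline{\mathcal{N}}$ has minimal dimension among closures of $P$-orbits in $\mathrm{Max}(\beta)$; then the boundary $\overline{\mathcal{N}}\setminus\mathcal{N}$, being $P$-invariant, compact, of strictly smaller dimension, and a union of $P$-orbits, must be empty, for otherwise any orbit in it would contradict minimality. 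Hence $\mathcal{N}$ is closed in $\mathrm{Max}(\beta)$, and since $\mathrm{Max}(\beta)$ is closed in the compact manifold $M$, $\mathcal{N}$ is compact.

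Next I would identify $\mathcal{N}$ with a $K^\beta$-orbit. The unipotent radical $R=R^{\beta+}$ is a connected group acting on the compact set $\mathcal{N}$; by the Borel fixed point theorem (or more elementarily, because $R$ is unipotent and any unipotent group acting on a nonempty compact space has a fixed point), there is a point $y\in\mathcal{N}$ fixed by $R$. Actually $R$ already acts trivially on all of $\mathrm{Max}(\beta)$ — this is recorded inside the proof of Lemma \ref{parabolic-preserve-maximun} (the statement $d\alfa_e(\mathfrak r^{\beta+})\subset V_+$ from Proposition \ref{linearization1} combined with the fact that $\mathrm{Max}(\beta)$ is a critical manifold forces $\mathfrak r^{\beta+}\cdot x=0$) — so every $P$-orbit in $\mathrm{Max}(\beta)$ is in fact an $L$-orbit. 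Thus $\mathcal{N}=L\cdot y$ for any $y\in\mathcal{N}$. Now using the compatibility decomposition $L=G^\beta=K^\beta\exp(\liep^\beta)$ from Lemma \ref{lemcomp} \eqref{lemcomp3}, and the argument already used at the end of the proof of Lemma \ref{parabolic-preserve-maximun}: for $\xi\in\liep^\beta$ the curve $\exp(t\xi)y$ stays in $\mathrm{Max}(\beta)$ because $\beta^\#$ vanishes along it, so $\mup^\beta$ is constant; but on the compact orbit $\mathcal{N}=L\cdot y$ this same function $\mup^\beta$ is constant (it equals the global maximum), hence $\xi^\#(z)=\grad\mup^\beta(z)=0$ for all $z\in\mathcal{N}$, i.e. $\exp(\liep^\beta)$ fixes $\mathcal{N}$ pointwise. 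Therefore $\mathcal{N}=K^\beta\cdot y$, a $K^\beta$-orbit, which is automatically compact and hence closed.

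The one point requiring a little care is the claim that $\exp(t\xi)y$ returns to $\mathcal{N}$ rather than merely staying in $\mathrm{Max}(\beta)$: what we genuinely get for free is that $\exp(\liep^\beta)y\subseteq\mathrm{Max}(\beta)$, and that $\mathcal{N}=L\cdot y\supseteq\exp(\liep^\beta)y$, so in fact $\exp(\liep^\beta)y\subseteq\mathcal{N}$ trivially and the argument closes without circularity — the function $\mup^\beta$ is constant on $\mathcal{N}$ because $\mathcal{N}\subseteq\mathrm{Max}(\beta)$. So the genuine content is just: (i) minimal-dimension $P$-orbits in a compact invariant set are closed; (ii) $R^{\beta+}$ acts trivially on $\mathrm{Max}(\beta)$; (iii) $\exp(\liep^\beta)$ acts trivially on any subset of $\mathrm{Max}(\beta)$ because $\mup^\beta$ is constant there and $\beta^\#=\grad\mup^\beta$ must then vanish. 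I expect the main (very mild) obstacle to be checking that $R^{\beta+}$ acts trivially — one should cite Proposition \ref{linearization1} carefully, noting that at a point of $\mathrm{Max}(\beta)$ the tangent space to $M$ decomposes as $V_-\oplus V_0$ with $V_+=0$, so $d\alfa_e(\mathfrak r^{\beta+})\subseteq V_+=0$ and, $R^{\beta+}$ being connected, it fixes the whole connected component, hence all of $\mathrm{Max}(\beta)$ since it is $R^{\beta+}$-invariant and $R^{\beta+}$ acts trivially on each component.
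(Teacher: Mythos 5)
The identification of the closed orbit with a $K^\beta$-orbit is where your argument breaks down. You claim that for $\xi\in\liep^\beta$ one has $\xi^\#(z)=\grad\mup^\beta(z)=0$ on $\mathcal N$ because $\mup^\beta$ is constant there. But $\grad\mup^\beta=\beta^\#$, not $\xi^\#$: the vector field $\xi^\#$ is the gradient of $\mup^\xi$, and $\mup^\xi$ has no reason to be constant on $\mathrm{Max}(\beta)$. The conclusion is also simply false: $\exp(\liep^\beta)$ does not fix $\mathrm{Max}(\beta)$ pointwise. Take $G=\mathrm{SL}(3,\R)$ acting on $\C^3$ and $\beta=\mathrm{diag}(1,1,-2)$, so that $\mathrm{Max}(\beta)=\mathbb P(V_1)$ with $V_1=\mathrm{span}(e_1,e_2)$; the closed $G^{\beta+}$-orbit is $\mathbb P^1(\R)\subset\mathbb P(V_1)$, which is indeed a $(K^\beta)^o=\mathrm{SO}(2)$-orbit, yet $\exp(\liep^\beta)$ contains the exponentials of the symmetric traceless matrices of the upper $2\times 2$ block and these move points of $\mathbb P^1(\R)$. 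What is true --- and all that the statement asserts --- is that the $G^\beta$-orbit coincides with the $K^\beta$-orbit \emph{as a set}, not that $\exp(\liep^\beta)$ acts trivially. That set-theoretic coincidence is exactly the content of Proposition \ref{heinzner-maximun} (a compact orbit of a compatible group equals the corresponding $K$-orbit), which is what the paper invokes, together with Corollary 6.11 of \cite{heinzner-schwarz-stoetzel} for the existence of a closed $(G^\beta)^o$-orbit and a finite-components argument to pass from $(G^\beta)^o$ to $G^\beta$. Your reduction from $G^{\beta+}$-orbits to $G^\beta$-orbits via the triviality of the $R^{\beta+}$-action on $\mathrm{Max}(\beta)$ is correct and matches the paper.

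A secondary issue is the existence step. The claim that a $P$-orbit of minimal dimension in a compact invariant set is closed presupposes that orbits are locally closed and that $\overline{\mathcal N}\setminus\mathcal N$ is a union of orbits of strictly smaller dimension. This is standard for algebraic actions but fails for general smooth actions of noncompact Lie groups (for the irrational flow on the torus every orbit has minimal dimension and none is closed), and Proposition \ref{closed-orbit-parabolic} is stated in the general gradient-map setting of Section 2.3, where $M$ is an arbitrary compact $G$-invariant submanifold. The paper avoids this by quoting the existence result of Heinzner--Schwarz--St\"otzel, which is obtained from the norm-square function rather than from dimension counting. If you want a self-contained argument you must either restrict to the projective-algebraic situation and justify local closedness of orbits, or use $\nu_\liep$ as the paper does.
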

\begin{proof}
$(G^{\beta})^o$ preserves any connected component of $\mathrm{Max}(\beta)$. The restriction of $\mup$ on any connected component defines a $(G^{\beta})^o$-gradient map \cite{heinzner-schwarz-stoetzel}. By Corollary 6.11 in \cite{heinzner-schwarz-stoetzel} p. $21$, see also Proposition \ref{heinzner-maximun}, $(G^{\beta})^o$ has a closed orbit which coincides with a $(K^{\beta})^o$-orbit. Since $G^{\beta}$ has a finite number of connected components and any connected component of $G^\beta$ intersects $K^\beta$, it follows that $G^{\beta}$ has a closed orbit which coincides with a $K^{\beta}$-orbit. This orbit is a closed orbit of $G^{\beta+}$ since $R^{\beta+}$ acts trivially on $\mathrm{Max}(\beta)$, concluding the proof.
\end{proof}
Using an $\mathrm{Ad}(K)$-invariant  inner product of  $\liep$, we define $\nu_\liep (z):=\frac{1}{2}\parallel \mup(z)\parallel^2$. The function $\nu_\liep$ is $K$-invariant and it is called \emph{the norm square function}.
The following result is proved in \cite{heinzner-schwarz-stoetzel} (see Corollary 6.11 and Corollary 6.12 p. $21$).
\begin{prop}\label{heinzner-maximun}
Let $x\in M$. Then:
\begin{itemize}
\item if $\nu_\liep$ restricted to $G\cdot x$ has a local maximum at $x$, then $G\cdot x=K\cdot x$
\item if $G\cdot x$ is compact, then $G\cdot x =K\cdot x$
\end{itemize}
\end{prop}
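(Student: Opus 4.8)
The plan is to reduce the statement to a rigidity property of the Cauchy--Schwarz inequality, exploiting the Morse theory of the functions $\mup^\beta$ developed above. Set $\nu:=\nu_\liep=\frac12\|\mup\|^2$. A standard computation from the identity $\grad\mup^\beta=\beta^\#$ shows that $\grad\nu(z)=(\mup(z))^\#(z)$ for every $z\in M$. So if $\nu|_{G\cdot x}$ has a local maximum at $x$ and $\beta:=\mup(x)$, then, since $\beta^\#(x)$ is tangent to $G\cdot x$, it must vanish; thus $x\in\Crit(\mup^\beta)$. Moreover for $y$ near $x$ on $G\cdot x$ one has $\|\mup(y)\|^2=2\nu(y)\le 2\nu(x)=\|\beta\|^2$, whence $\mup^\beta(y)=\langle\mup(y),\beta\rangle\le\|\beta\|^2=\mup^\beta(x)$ by Cauchy--Schwarz, with equality only if $\mup(y)=\beta$. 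Hence $x$ is also a local maximum of $\mup^\beta|_{G\cdot x}$.

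First I would locate $x$ with respect to the parabolic $G^{\beta+}$. By Proposition~\ref{linearization1}, $D^2\mup^\beta(x)=d\beta^\#(x)$ is positive definite on $d\alfa_e(\lier^{\beta+})\subset V_+$, while its restriction to $T_x(G\cdot x)$ is negative semidefinite because $x$ is a local maximum of $\mup^\beta$ on the orbit; therefore $d\alfa_e(\lier^{\beta+})=0$, i.e. $R^{\beta+}\subseteq G_x$. Next, $G^\beta$ preserves $\Crit(\mup^\beta)$ (as recorded before Corollary~\ref{sugg1}), so $G^\beta\cdot x\subseteq\Crit(\mup^\beta)$ and $\beta^\#$ vanishes on $G^\beta\cdot x$; combined with the $K^\beta$-equivariance of $\mup$ (which gives $\mup^\beta\equiv\|\beta\|^2$ on $K^\beta\cdot x$, a subset meeting every connected component of $G^\beta\cdot x$, since every component of $G^\beta$ meets $K^\beta$) this forces $\mup^\beta\equiv\|\beta\|^2$ on $G^\beta\cdot x$. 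Writing $\mu_{\liep^\beta}$ for the $\liep^\beta$-component of $\mup$ --- the $G^\beta$-gradient map --- one gets $\|\mu_{\liep^\beta}(y)\|\le\|\mup(y)\|\le\|\beta\|$ for $y$ near $x$ on $G^\beta\cdot x$, while $\langle\mu_{\liep^\beta}(y),\beta\rangle=\mup^\beta(y)=\|\beta\|^2$; the equality case of Cauchy--Schwarz then yields $\mu_{\liep^\beta}(y)=\beta$ near $x$, hence on all of $G^\beta\cdot x$ by connectedness of components together with $K^\beta$-equivariance.

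To conclude, for $\xi\in\liep^\beta$ the function $t\mapsto\mup^\xi(\exp(t\xi)x)$ equals $\langle\beta,\xi\rangle$ by the previous step, so its derivative $\|\xi^\#(\exp(t\xi)x)\|^2$ vanishes identically; in particular $\xi^\#(x)=0$ and $\exp(\R\xi)$ fixes $x$. Since $G^\beta=K^\beta\exp(\liep^\beta)$ by Lemma~\ref{lemcomp3}, this gives $G^\beta\cdot x=K^\beta\cdot x$; and since $G=KG^{\beta+}$ (Proposition~\ref{parabolic-group}) with $G^{\beta+}=G^\beta R^{\beta+}$ and $R^{\beta+}\subseteq G_x$, we obtain $G\cdot x=KG^\beta\cdot x=K K^\beta\cdot x=K\cdot x$. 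For the second bullet: if $G\cdot x$ is compact then $\nu|_{G\cdot x}$ attains its maximum at some $x_0=g_0x$, and applying the first part at $x_0$ gives $G\cdot x=G\cdot x_0=K\cdot x_0=K\cdot x$. The main obstacle, and the one place where the hypothesis is genuinely used, is the Cauchy--Schwarz rigidity in the first two paragraphs, which upgrades ``local maximum of $\nu$ on the orbit'' to ``$\mup$ is locally constant along the $G^\beta$-suborbit''; everything else is bookkeeping with the linearization of Proposition~\ref{linearization1} and the decomposition $G^{\beta+}=G^\beta\ltimes R^{\beta+}$.
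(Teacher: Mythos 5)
Your proof is correct. Bear in mind that the paper does not actually prove Proposition \ref{heinzner-maximun}: it is imported from \cite{heinzner-schwarz-stoetzel} (Corollaries 6.11 and 6.12 there), so there is no in-text argument to compare against; what you have written is a self-contained derivation from the tools the paper does set up, namely the identity $\grad \nu_\liep(z)=\mup(z)^{\#}(z)$, the linearization statement of Proposition \ref{linearization1}, and the decompositions $G^{\beta+}=G^{\beta}R^{\beta+}$, $G^{\beta}=K^{\beta}\exp(\liep^{\beta})$, $G=KG^{\beta+}$. Every step checks out: $\beta^{\#}(x)=0$ for $\beta=\mup(x)$; Cauchy--Schwarz turns the local maximum of $\nu_\liep$ into one of $\mup^{\beta}$ on the orbit; positive definiteness of the Hessian on $d\alfa_e(\lier^{\beta+})$ against its negative semidefiniteness on $T_x(G\cdot x)$ forces $R^{\beta+}\subseteq G_x$; and the equality case of Cauchy--Schwarz along $t\mapsto \exp(t\xi)x$ kills $\xi^{\#}(x)$ for $\xi\in\liep^{\beta}$. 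Two cosmetic points. First, the claim that $\mu_{\liep^{\beta}}\equiv\beta$ ``on all of $G^{\beta}\cdot x$ by connectedness'' is not justified at the point where you assert it (an identity verified near $x$ does not propagate along a connected set by itself), but it is also not needed: your final step only uses the identity at $\exp(t\xi)x$ for small $t$, which the local Cauchy--Schwarz argument supplies, and the global version follows a posteriori once $G^{\beta}\cdot x=K^{\beta}\cdot x$ is established. Second, the degenerate case $\beta=\mup(x)=0$ deserves a word, though it is the easiest one: the local maximal value of $\nu_\liep$ is then $0$, so $\mup$ vanishes near $x$ on the orbit and your last paragraph applies with $\liep^{\beta}=\liep$. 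Your route is essentially the norm-square, Morse-theoretic argument of the cited source, and its benefit is that it makes the proposition independent of the external reference, using only facts already recorded in the paper.
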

A strategy to analyzing the $G$-action on $M$ is to view $\nu_\liep$ as generalized Morse function. In \cite{heinzner-schwarz-stoetzel} the authors proved the existence of a smooth $G$-invariant stratification of $M$ and they studied its properties.
\subsection{Satake compactifications}\label{Satake-section}
In this section we always refer to \cite{borel-ji-libro,gjt,Satake}.

Let $G$ be a real noncompact connected semisimple Lie group $G$ with finite center and let $\lieg=\liek \oplus \liep$ be a Cartan decomposition of its Lie algebra. Let $K$ be a maximal compact subgroup of $G$ with Lie algebra $\liek$ and let $\tau$ be an irreducible representation of $G$ with finite kernel on a complex vector space $V$. We also assume there exists a $K$-invariant Hermitian product $h$ on $V$ such that $\tau(G)\subset \mathrm{U}(V,h)^\C$ is compatible.
With these data Satake \cite{Satake} constructed a compactification $\XS$
by the symmetric space $X=G/K$. A good references of symmetric spaces is \cite{helgason}.    We wish
to recall the construction of the Satake compactifications and some of
their relevant properties.  Proofs
can be found in the book \cite[\S I.1]{borel-ji-libro}, see also \cite{gjt}, which we follow for most of the notation.

Put
\begin{gather*}
  \Herm (V) = \{ A\in \End(V) : A=A^*\},
\end{gather*}
where $A^*$ denotes the adjoint of $A$ with respect to $h$.
Denote by $\pi : \Herm(V)\setminus \{0\} \ra \PP(\Herm(V))$ the
canonical projection and set
\begin{gather*}
  \Pos(V) = \pi(\{A\in \Herm (V) : A >0\}) \subset \PP(\Herm(V)).
\end{gather*}
$\Pos(V)$ consists of points $[A]$ such that $A$ is invertible and all
its eigenvalues have the same sign. The following result is easy to check.
\begin{lemma}
  \label{prop-PV}
  (a) $ \overline { \Pos(V) } =\pi ( \{A\in \Herm (V) : A\neq 0 ,
  A\geq 0\})$.  (b) The restriction of $\pi$ to $ \{A\in \Herm (V) : A
  >0, \det A=1\}$ is a homeomorphism onto $\Pos(V)$.  (c) The
  restriction of $\pi$ to $ \{A\in \Herm (V) : A \geq 0, \tr A=1\}$ is
  a homeomorphism onto $\overline{\Pos(V)}$.
\end{lemma}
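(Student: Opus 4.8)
The plan is to reduce everything to elementary linear algebra of positive semidefinite Hermitian operators together with the compactness of a single cross-section of the cone $\{A\in\Herm(V):A\geq 0\}$, and then to obtain (b) and (c) by standard point-set topology.

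First I would analyze the set $C:=\{A\in\Herm(V):A\geq 0,\ \tr A=1\}$. It is closed (intersection of the closed cone $\{A\geq 0\}$ with the affine hyperplane $\{\tr A=1\}$) and bounded (if $A\geq 0$ then $\tr A=1$ confines all eigenvalues to $[0,1]$), hence compact; and $0\notin C$, so $\pi$ is defined and continuous on $C$. The restriction $\pi|_C$ is injective: if $A,B\in C$ with $\pi(A)=\pi(B)$, then $B=\lambda A$ with $\lambda\neq 0$; evaluating on a vector $v$ with $\langle Av,v\rangle>0$ forces $\lambda>0$, and then $1=\tr B=\lambda\,\tr A=\lambda$, so $A=B$. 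Every ray of $\{A\geq 0,\ A\neq 0\}$ meets $C$ exactly once, namely at $A/\tr A$ (here $\tr A>0$), so $\pi(C)=\pi(\{A\geq 0,\ A\neq 0\})$; in particular $\pi(C)\supseteq\Pos(V)$, since $A>0$ gives $A/\tr A\in C$ with $\pi(A/\tr A)=[A]$. Being the image of a compact set under a continuous map into $\PP(\Herm(V))$, $\pi(C)$ is closed, hence $\pi(C)\supseteq\overline{\Pos(V)}$; conversely, for $A\in C$ we have $A+\tfrac1k I>0$ and $A+\tfrac1k I\to A\neq 0$, so $[A]=\lim_k[A+\tfrac1k I]\in\overline{\Pos(V)}$. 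Thus $\pi(C)=\pi(\{A\geq 0,\ A\neq 0\})=\overline{\Pos(V)}$, which is statement (a); and $\pi|_C$, a continuous bijection from a compact space onto the (Hausdorff) subspace $\overline{\Pos(V)}$, is a homeomorphism, which is statement (c).

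For (b), write $S_1:=\{A\in\Herm(V):A>0,\ \det A=1\}$ and $C^+:=\{A\in\Herm(V):A>0,\ \tr A=1\}$, and set $n=\dim_\C V$. The maps $s\colon S_1\to C^+$, $s(B)=B/\tr B$, and $r\colon C^+\to S_1$, $r(A)=A/(\det A)^{1/n}$, are well defined (since $\tr B>0$ on $S_1$ and $\det A>0$ on $C^+$), continuous, and mutually inverse, hence homeomorphisms; moreover each sends an operator to a positive multiple of itself, so it preserves the $\pi$-class, whence $\pi|_{S_1}=(\pi|_{C^+})\circ s$. Now $C^+=C\cap\{A>0\}=(\pi|_C)^{-1}(\Pos(V))$ — because an $A\in C$ has $[A]\in\Pos(V)$ iff $A$ is definite, and $A\geq 0$ then forces $A>0$ — and $\pi(C^+)=\Pos(V)$; so $\pi|_{C^+}$, being the restriction of the homeomorphism $\pi|_C$ to the subset $C^+$ with image $\Pos(V)$, is a homeomorphism onto $\Pos(V)$. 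Composing, $\pi|_{S_1}$ is a homeomorphism of $S_1$ onto $\Pos(V)$, which is (b). (Injectivity of $\pi|_{S_1}$ may also be checked directly: $B=\lambda A$ with $A,B$ positive definite of determinant $1$ forces $\lambda>0$ and $\lambda^n=1$, so $\lambda=1$.)

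There is no real obstacle here — as the paper notes, the statement is easy to check — but the one point worth being careful about is not to attempt a compactness argument directly on $S_1$ or $\Pos(V)$, neither of which is compact. Instead one proves (c) by compactness and transports it to (b) through the explicit homeomorphism between the trace-normalized and the determinant-normalized cross-sections of the open cone $\{A>0\}$.
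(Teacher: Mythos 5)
Your argument is correct and complete: the compactness of the trace-one cross-section $C$ of the positive semidefinite cone, the injectivity of $\pi|_C$, and the transport of (c) to (b) via the determinant/trace normalization maps are all verified accurately. The paper itself offers no proof (it labels the lemma ``easy to check''), and your write-up is exactly the standard verification one would supply; the one point you rightly flag --- that one must normalize by trace rather than argue compactness on $\Pos(V)$ or on the determinant-one slice directly --- is the only place where care is needed.
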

\begin{defin}
  For $G,K, \tau, h $ as before, set
  \begin{gather}
    \label{eq:imbeddoinpos}
    i_\tau : X: =G/K \ra \Pos(V) \qquad i_\tau(gK) =[ \tau(g)\tau(g)^*].
  \end{gather}
  The \enf{Satake compactification} of $X$ associated to $\tau$ and
  $\scalo$ is the space $ \XS:=\overline {i_\tau (X) } $.  The closure
  is taken in $\PP(\Herm(V))$.
\end{defin}
Since $\Sl(V)$ and hence $G$ acts on $\PP(\Herm(V))$ by
\[
g\cdot [A]:=[gAg^*],
\]
$\XS$ is a $G$-compact\-if\-ication.  We stress that $\XS$ depends only on
$G,K, \tau$ and $h$. Satake gave a thorough description of the boundary $\partial\XS :=\XS
- i_\tau (X)$ in terms of root data.

Let $\roots(\lieg, \lia)$ be the (restricted) roots of $\lieg$ with
respect to $\lia$. Let $\Delta(\lieg,\lia)$ be the set of simple roots in $\roots(\lieg,\lia)$ determined by the positive chamber $\lia^{+}$. Let $\mu_\tau$ be the \emph{highest weight} of $\tau$ with respect to the partial ordering determined by $\Delta$. If $\lambda$ is another weight of $\tau$, then it has the form
\[
\lambda=\mu_\tau-\sum_{\alpha \in \Delta(\lieg,\lia)} c_{\alpha,\lambda} \alpha,
\]
where $c_{\alpha,\lambda}$ are non negative integers. The support of $\lambda$ is the
set $ \supp (\lambda) : = \{\alfa \in \simple: c_{\alpha,\lambda} >0\}$.
\begin{defin}
  A subset $I\subset \simple$ is $\mu_\tau$-\enf{connected} if
  $I\cup\{\mu_\tau \}$ is connected, i.e., it is not the union of subsets orthogonal with respect to the Killing form.
\end{defin}
Connected components of $I$ are defined as usual.  The $I$ is $\mu_\tau$-connected subset if and only if any connected component of $I$ contains at least one element $\alpha$  which is not orthogonal to $\mu_\tau$.
\begin{lemma}
  [\protect{\cite[Lemma 5 p. 87]{Satake}}] $I\subset
  \simple$ is $\mu_\tau$-connected if and only if $I=\supp (\lambda)$ for
  some weight $\lambda$ of $\tau$.
\end{lemma}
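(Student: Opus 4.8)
The plan is to prove the two implications of the equivalence separately, using three standard properties of the (restricted) weight system of the irreducible representation $\tau$: (i) the set of weights is invariant under the Weyl group $\mathcal W$; (ii) if $\nu$ is a weight and $\langle\nu,\alpha^\vee\rangle>0$ for a simple root $\alpha$, then $\nu-\alpha$ is again a weight (the $\alpha$-string through $\nu$ reaches one step lower); (iii) $\mu_\tau$ is dominant, so $\langle\mu_\tau,\alpha^\vee\rangle\ge0$ for all $\alpha\in\simple$, and every weight $\lambda$ satisfies $\mu_\tau-\lambda=\sum_{\alpha\in\simple}c_{\alpha,\lambda}\,\alpha$ with $c_{\alpha,\lambda}\ge0$. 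For a weight $\lambda$ I write $\nu(\lambda):=\mu_\tau-\lambda$, so $\supp(\lambda)=\supp\nu(\lambda)$; since $\nu(\lambda)$ has nonnegative coordinates, the support of a sum of such vectors is the union of the supports. I will also use the characterization recalled just before the lemma: $I$ is $\mu_\tau$-connected exactly when every connected component of $I$ contains a simple root non-orthogonal to $\mu_\tau$.

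For the implication ``$I$ $\mu_\tau$-connected $\Rightarrow$ $I=\supp(\lambda)$ for some $\lambda$'' I would proceed as follows. Since the graph on $I\cup\{\mu_\tau\}$ whose edges join non-orthogonal pairs (Killing form) is connected, one may list $I=\{\alpha_1,\dots,\alpha_m\}$ so that each $\alpha_j$ is non-orthogonal to at least one element of $\{\mu_\tau,\alpha_1,\dots,\alpha_{j-1}\}$ (traverse a spanning tree rooted at $\mu_\tau$). Put $\lambda_0=\mu_\tau$ and $\lambda_j=\lambda_{j-1}-\alpha_j$; I claim each $\lambda_j$ is a weight, by induction on $j$. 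Indeed $\lambda_{j-1}=\mu_\tau-\alpha_1-\cdots-\alpha_{j-1}$, and $\langle\lambda_{j-1},\alpha_j^\vee\rangle=\langle\mu_\tau,\alpha_j^\vee\rangle-\sum_{i<j}\langle\alpha_i,\alpha_j^\vee\rangle$ is a sum of nonnegative terms ($\langle\mu_\tau,\alpha_j^\vee\rangle\ge0$ by (iii), $\langle\alpha_i,\alpha_j^\vee\rangle\le0$ for distinct simple roots), at least one of which is strictly positive by the choice of the ordering; hence $\langle\lambda_{j-1},\alpha_j^\vee\rangle>0$ and $\lambda_j=\lambda_{j-1}-\alpha_j$ is a weight by (ii). Then $\lambda_m=\mu_\tau-\sum_{\alpha\in I}\alpha$ is a weight with $\supp(\lambda_m)=I$.

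For the converse, ``$I=\supp(\lambda)$ $\Rightarrow$ $I$ $\mu_\tau$-connected'', I would induct on the height of $\nu(\lambda)$, the case $\nu(\lambda)=0$ ($I=\vacuo$) being trivial. If $\lambda$ is dominant, suppose a connected component $J$ of $I$ were orthogonal to $\mu_\tau$; write $\nu(\lambda)=\nu_J+\nu'$ with $\nu_J=\sum_{\beta\in J}c_\beta\beta\neq0$ ($c_\beta>0$) and $\nu'$ supported on $I\setminus J$. As $J$ is a whole component, no simple root of $\supp(\nu')$ is adjacent to $J$, so $\langle\nu',\beta^\vee\rangle=0$ for $\beta\in J$; with $\langle\mu_\tau,\beta^\vee\rangle=0$ this gives $\langle\lambda,\beta^\vee\rangle=-\langle\nu_J,\beta^\vee\rangle$, and dominance of $\lambda$ forces $\langle\nu_J,\beta^\vee\rangle\le0$, hence $\langle\nu_J,\beta\rangle\le0$, for all $\beta\in J$; then $\|\nu_J\|^2=\sum_{\beta}c_\beta\langle\nu_J,\beta\rangle\le0$, a contradiction. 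If instead $\lambda$ is not dominant, pick $\alpha\in\simple$ with $\langle\lambda,\alpha^\vee\rangle=-m<0$; then $s_\alpha\lambda=\lambda+m\alpha$ is a weight by (i), and $\nu(s_\alpha\lambda)=\nu(\lambda)-m\alpha$ still has nonnegative coordinates, so it is obtained from $\nu(\lambda)$ by lowering only the $\alpha$-coordinate: its height is strictly smaller and $\supp(s_\alpha\lambda)\subseteq\supp(\lambda)$, with equality unless that coordinate equals $m$. By the inductive hypothesis $\supp(s_\alpha\lambda)$ is $\mu_\tau$-connected; if $\supp(\lambda)=\supp(s_\alpha\lambda)$ we are done, and otherwise $\alpha\notin\supp(s_\alpha\lambda)$ and substituting $\lambda=\mu_\tau-\nu(s_\alpha\lambda)-m\alpha$ into $\langle\lambda,\alpha^\vee\rangle=-m$ gives $\langle\mu_\tau,\alpha^\vee\rangle-\langle\nu(s_\alpha\lambda),\alpha^\vee\rangle=m\ge1$, so $\alpha$ is not orthogonal to all of $\{\mu_\tau\}\cup\supp(s_\alpha\lambda)$; adjoining such an $\alpha$ to the $\mu_\tau$-connected set $\supp(s_\alpha\lambda)$ keeps every component meeting a simple root non-orthogonal to $\mu_\tau$, so $\supp(\lambda)$ is $\mu_\tau$-connected.

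The combinatorics above is routine; the point I expect to need the most care is ensuring that properties (i)--(ii) — the $\mathcal W$-invariance of the weight multiplicities and the unbroken $\alpha$-strings — are quoted in the correct generality, namely for the \emph{restricted} weights (the $\lia$-weights) of a representation of the real semisimple group $G$, and not just for a complex semisimple Lie algebra; this is classical but should be referenced precisely, the more so if the restricted root system is non-reduced. A secondary, purely bookkeeping, point is the one isolated above: a single reflection $s_\alpha$ alters $\supp(\lambda)$ only by possibly deleting $\alpha$ and never by adding a simple root, which is exactly what makes the inductive step legitimate.
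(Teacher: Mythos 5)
Your argument is correct. Note first that the paper does not prove this statement at all: it is quoted verbatim from Satake (Lemma 5, p.\ 87 of \cite{Satake}), so there is no in-paper proof to compare against; what you have written is a self-contained reconstruction of the classical argument. Both directions check out: the spanning-tree ordering of $I\cup\{\mu_\tau\}$ together with $\langle\mu_\tau,\alpha_j^\vee\rangle\ge 0$ and $\langle\alpha_i,\alpha_j^\vee\rangle\le 0$ for distinct simple roots correctly forces $\langle\lambda_{j-1},\alpha_j^\vee\rangle>0$ at each step, and the converse induction on the height of $\mu_\tau-\lambda$ (direct orthogonality/positivity contradiction in the dominant case, reflection by $s_\alpha$ otherwise, with the observation that a single reflection can only delete $\alpha$ from the support, never add a root) is sound. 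The one point you rightly isolate is the only real one: properties (i) and (ii) must be invoked for \emph{restricted} weights of the real group, including the non-reduced case; these are exactly the preliminary lemmas Satake establishes before his Lemma 5 (via the rank-one $\mathfrak{sl}_2$-subalgebras attached to restricted roots), so in a full write-up you should cite them rather than the complex-semisimple versions. With that reference supplied, the proof is complete.
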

For example $\emptyset = \supp(\mu_\tau)$ and $\simple$ is
$\mu_\tau$-connected since $\tau$ is nontrivial on any simple factor of
$G$.  Given $\lambda$ a weight of $\tau$, we denote by $V_\lambda$ the corresponding eigenspace.

Let $I$ be a $\mu_\tau$-connected subset of $\Delta(\lieg,\lia)$ and let
\begin{gather*}
  V_I=\bigoplus_{\supp(\lambda) \subset I} V_\lambda.
\end{gather*}\label{def-VI-SI}
Let $Q_I$ be the normalizer of $\mathfrak q_I$, see Section \ref{compatible-parabolic}. $Q_I$ is a parabolic subgroup of $G$ given by
\[
Q_I=N_I A_I M_I.
\]
$M_I$ is a Levi factor of $Q_I$ that it is non connected in general.
\begin{lemma}
  [{\cite[Lemma 8 p. 89]{Satake}}]
  \label{lemmetto-di-Satake-taui}
 The subspace  $V_I$ is invariant under $\tau(g)$, $g\in Q_I$ and the induced representation of $M_I$ on $V_I$, denoted $\tau_I : M_I \ra \Gl(V_I)$, is a multiple of an irreducible faithful one.
\end{lemma}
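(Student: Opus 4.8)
The plan is to prove the two assertions — invariance of $V_I$ under $\tau(Q_I)$, and the isotypic structure of the induced action of $M_I$ — almost independently, both resting on one elementary observation: $V_I$ is exactly the weight space of the $\lia_I$-action on $V$ belonging to the extreme weight $\mu_\tau|_{\lia_I}$. Granting this, the invariance is nearly formal and the structural statement reduces to a highest-weight argument for the reductive group $M_I$; the one genuinely delicate point, where Satake's combinatorial hypothesis enters, is faithfulness.

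First I would establish the observation. For a weight $\lambda$ of $\tau$ write $\mu_\tau-\lambda=\sum_{\beta\in\simple}c_{\beta,\lambda}\,\beta$ with $c_{\beta,\lambda}\ge 0$. Since $\lia_I=\bigcap_{\beta\in I}\ker\beta$, one has $\beta|_{\lia_I}=0$ exactly when $\beta\in I$, and the functionals $\{\beta|_{\lia_I}:\beta\in\simple\setminus I\}$ are linearly independent. Therefore
\[
\supp(\lambda)\subseteq I\iff\lambda|_{\lia_I}=\mu_\tau|_{\lia_I},
\]
the nontrivial implication following by restricting $\mu_\tau-\lambda=\sum c_{\beta,\lambda}\beta$ to $\lia_I$ and using linear independence together with $c_{\beta,\lambda}\ge 0$. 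Hence $V_I=\bigoplus_{\supp\lambda\subseteq I}V_\lambda$ is the $\mu_\tau|_{\lia_I}$-eigenspace of $\lia_I$ on $V$. Moreover $\mu_\tau|_{\lia_I}$ is the highest $\lia_I$-weight of $\tau$ (for any weight $\lambda$, $\mu_\tau|_{\lia_I}-\lambda|_{\lia_I}=\sum_{\beta\notin I}c_{\beta,\lambda}\,\beta|_{\lia_I}$), so for $\alpha\in\roots$ with $\alpha|_{\lia_I}\neq 0$ a nonnegative sum of the $\beta|_{\lia_I}$, $\beta\in\simple\setminus I$, the root space $\lieg_\alpha$ maps $V_I$ into the $(\mu_\tau|_{\lia_I}+\alpha|_{\lia_I})$-eigenspace, which is $0$; thus $\lieg_\alpha$ annihilates $V_I$.

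Invariance under $Q_I=N_IA_IM_I$ follows at once: $A_I=\exp(\lia_I)$ preserves $V_I$ and acts there through the single character $\exp(\mu_\tau|_{\lia_I})$, that is, by scalars; the Levi $A_IM_I=Z_G(\lia_I)$ commutes with the $\lia_I$-action, hence preserves each of its eigenspaces, in particular $V_I$; and $\lien_I=\bigoplus_{\alpha\in\roots_+\setminus\roots_I}\lieg_\alpha$ is a sum of root spaces of the type just discussed, so it annihilates $V_I$ and $N_I$ fixes $V_I$ pointwise. This gives invariance and shows that, up to the scalars coming from $A_I$, the action of $Q_I$ on $V_I$ is the restriction of $\tau$ to $M_I$; this is $\tau_I:M_I\to\Gl(V_I)$. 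For the structural statement I would complexify: $\liem_I^\C$ is reductive and $V_I$ is a finite-dimensional module, hence completely reducible. The pair $(\liem_I,\lia^I)$ has restricted root system $\roots_I$ with simple system $I$, the $\lia^I$-weights of $\tau_I$ are the $\lambda|_{\lia^I}$ with $\supp\lambda\subseteq I$, and $\mu_\tau|_{\lia^I}-\lambda|_{\lia^I}=\sum_{\beta\in I}c_{\beta,\lambda}\,\beta|_{\lia^I}$ is a nonnegative integral combination of the simple roots in $I$; so $\mu_\tau|_{\lia^I}$ is the unique highest $\lia^I$-weight of $\tau_I$. Its highest weight space is $\{v\in V_I:\lieg_\alpha v=0\ \text{for all}\ \alpha\in\roots_I\cap\roots_+\}$, which coincides with $\{v\in V_I:\lien v=0\}=V_{\mu_\tau}$ because $\lien_I$ already annihilates $V_I$ (and the equality $\{v\in V:\lien v=0\}=V_{\mu_\tau}$ is classical, a consequence of irreducibility of $V$: such a $v$ generates $V$ under $\lieg^\C$, forcing its weight to be maximal). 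Consequently every irreducible $M_I$-submodule of $V_I$ has its highest weight line inside $V_{\mu_\tau}$, so each is isomorphic to the irreducible $M_I$-module $\sigma$ of highest weight $\mu_\tau|_{\lia^I}$, and by complete reducibility $\tau_I\cong\sigma^{\oplus m}$ for some $m\ge1$.

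The main obstacle is faithfulness of $\sigma$, and this is the only step using that $I$ is $\mu_\tau$-connected. The simple ideals of $\liem_I^\C$ are indexed by the connected components of the Dynkin diagram of $I$; $\mu_\tau$-connectedness says each such component contains a simple root not orthogonal to $\mu_\tau$, so $\mu_\tau|_{\lia^I}$ is nonzero on every simple ideal, whence $\sigma$ is nontrivial — hence of finite kernel — on each simple factor of $M_I$. The remaining part of $M_I$ is its center, which is compact (a torus), since a central element of $\liem_I$ lying in $\liep$ would belong to $\lia^I\cap\bigcap_{\beta\in I}\ker\beta=\lia^I\cap\lia_I=0$, so $\liez(\liem_I)\subseteq\liek$; that this torus also appears faithfully in $\tau_I$ is forced by $\tau$ having finite kernel on $G\supseteq M_I$. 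Together these give $\ker\sigma$ finite, i.e.\ $\sigma$ faithful. The careful bookkeeping on the central torus is the subtle point; it is carried out in \cite{Satake}, which I would follow, the remainder of the proof being routine once $V_I$ is identified with the $\mu_\tau|_{\lia_I}$-weight space.
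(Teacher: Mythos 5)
First, a point of reference: the paper does not prove this lemma at all --- it is quoted from Satake (Lemma 8, p.~89; see also Borel--Ji, \S I.4) --- so there is no in-paper argument to compare yours against, and I am judging your reconstruction on its own merits. Your first half is correct and is the standard argument: identifying $V_I$ with the $\mu_\tau|_{\lia_I}$-eigenspace of $\lia_I$ (via nonnegativity of the $c_{\beta,\lambda}$ and linear independence of $\{\beta|_{\lia_I}:\beta\in\simple\setminus I\}$) gives at once that $Z_G(\lia_I)=A_IM_I$ preserves $V_I$, that $\lien_I$ annihilates it, and hence that $Q_I$ leaves it invariant.

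The genuine gap is in the isotypic step. You deduce that every irreducible $M_I$-summand of $V_I$ has a highest weight vector in $V_{\mu_\tau}$ and conclude that all summands are isomorphic to ``the'' irreducible of highest $\lia^I$-weight $\mu_\tau|_{\lia^I}$. But $M_I$ is a real, generally non-split and disconnected, reductive group: its irreducible finite-dimensional representations are \emph{not} classified by their highest restricted weight. Since $\liem\subset\liem_I$ and $V_{\mu_\tau}$ is in general not a line, two summands with highest weight vectors in $V_{\mu_\tau}$ could a priori differ in the $\liem$-action there (equivalently, in the component of the full Cartan weight along the compact part of the Cartan subalgebra), so the isomorphism of the summands does not follow from what you prove. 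The standard repair --- essentially Satake's argument --- is cyclicity: write $V=\mathscr U(\lien^{-})v_+$ where $v_+\in V_{\mu_\tau}\subseteq V_I$ is a $\lieg^\C$-highest weight vector and $\lien^-$ is the sum of all negative root spaces of $\lieg^\C$; a PBW monomial lies in the $\lia_I$-eigenspace $V_I$ only if every root vector in it centralizes $\lia_I$, i.e.\ lies in $\liem_I^\C$, so $V_I=\mathscr U(\liem_I^\C)v_+$ is a finite-dimensional cyclic highest-weight module and therefore irreducible over $\liem_I^\C$, hence over $M_I^o$ --- stronger than the isotypic claim and reducing ``multiple of'' to bookkeeping on the semisimple factor and the components. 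Finally, your faithfulness argument for the central torus (``forced by $\tau$ having finite kernel on $G$'') is not valid as stated: a subgroup can act with finite kernel on $V$ yet trivially on the subspace $V_I$ (this happens, e.g., for $I=\emptyset$, $G=\mathrm{SL}(2,\C)$ acting on $\C^2\otimes\overline{\C^2}$, where $M_\emptyset\cong \mathrm U(1)$ kills $V_\emptyset$); you rightly concede this step to Satake, but it is exactly where $\mu_\tau$-connectedness and the structure of $M_I$ have to be used carefully, so it cannot be waved through.
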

\begin{defin}
    If $I\subset \simple$ is $\mu_\tau$-connected, denote by $I'$ the
    collection of all simple roots orthogonal to $\{\mu_\tau \}\cup I$.
    The set $J:=I\cup I'$ is called the $\mu_\tau$-\enf{saturation} of
    $I$.
\end{defin}
$I$ is the largest $\mu_\tau$-connected subset contained in $J$. Observe that union of $\tau$-connected subspaces is a $\tau$-connected subspace. Then $I$ is the union of all $\tau$-connected subspaces contained in $J$ and so the largest $\mu_\tau$-connected of $J$ is unique.
\begin{lemma}
    [{\cite[Prop. I.4.29 p. 70]{borel-ji-libro}}]
    \label{stab-VI}
    If $I$ is $\mu_\tau$-connected, then $ Q_J=\{g\in G: \tau(g)V_I=V_I\}$.
  \end{lemma}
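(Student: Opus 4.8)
The plan is to realize $V_I$ as the maximal eigenspace of a suitable $\beta\in\liep$ for which $Q_J=G^{\beta+}$, and then to read off both inclusions from Theorem~\ref{teo-parabolici}. First I would keep the maximal abelian $\lia\subset\liep$, the base $\simple$ and the ordering that define $\mu_\tau$ and $V_I$, and pick $\beta\in\overline{\lia^+}$ with $\alpha(\beta)=0$ for $\alpha\in J$ and $\alpha(\beta)>0$ for $\alpha\in\simple\setminus J$ (such $\beta$ exists, being the relative interior of the corresponding face of $\overline{\lia^+}$); then $\delta(\beta)=0$ exactly for $\delta\in\roots_J$ and $\delta(\beta)>0$ exactly for $\delta\in\roots_+\setminus\roots_J$. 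Hence $\lieg^{\beta+}=\lieg_0\oplus\bigoplus_{\delta\in\roots_J\cup\roots_+}\lieg_\delta=\lieq_J$, and since in this section $G$ is connected and a connected parabolic subgroup is determined by its Lie algebra, $G^{\beta+}=Q_J$. If $\lambda=\mu_\tau-\sum_{\gamma\in\simple}c_{\gamma,\lambda}\gamma$ is a weight of $\tau$, the Hermitian operator $\tau(\beta)$ acts on $V_\lambda$ by the scalar $\lambda(\beta)=\mu_\tau(\beta)-\sum_{\gamma\in\simple\setminus J}c_{\gamma,\lambda}\gamma(\beta)\le\mu_\tau(\beta)$, with equality exactly when $\supp(\lambda)\subseteq J$; since $\supp(\lambda)$ is $\mu_\tau$-connected and $I$ is the largest $\mu_\tau$-connected subset of $J$, equality holds exactly when $\supp(\lambda)\subseteq I$. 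Thus the maximal eigenspace $W$ of $\tau(\beta)$ on $V$ is $\bigoplus_{\supp(\lambda)\subseteq I}V_\lambda=V_I$.

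The inclusion $Q_J\subseteq\{g\in G:\tau(g)V_I=V_I\}$ is then immediate from Theorem~\ref{teo-parabolici}(b), which says that $G^{\beta+}=Q_J$ preserves $W=V_I$. For the reverse inclusion I would set $P:=\{g\in G:\tau(g)V_I=V_I\}$, a closed subgroup containing the parabolic $Q_J\supseteq Q_\emptyset$; since $G/Q_J$ is compact (Proposition~\ref{parabolic-group}) and a closed overgroup of a parabolic subgroup is again parabolic, $P=Q_{J'}$ for some standard parabolic with $J\subseteq J'\subseteq\simple$. Let $I_1$ be the largest $\mu_\tau$-connected subset of $J'$; repeating the construction above with $J'$ in place of $J$ yields $\beta'\in\liep$ with $G^{\beta'+}=Q_{J'}=P$ whose maximal eigenspace on $V$ is $V_{I_1}$. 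Because $I$ is $\mu_\tau$-connected and $I\subseteq J\subseteq J'$ we get $I\subseteq I_1$, hence $V_I\subseteq V_{I_1}$; but $V_I\neq 0$ is $P$-invariant and $P$ acts irreducibly on $V_{I_1}$ by Theorem~\ref{teo-parabolici}(b), so $V_I=V_{I_1}$. Choosing (by Satake's Lemma) a weight $\eta$ with $\supp(\eta)=I_1$, the inclusion $V_\eta\subseteq V_{I_1}=V_I$ forces $\supp(\eta)\subseteq I$, i.e. $I_1\subseteq I$, so $I=I_1$. Finally, if some $\alpha\in J'\setminus I$ were not orthogonal to $\{\mu_\tau\}\cup I$, then $I\cup\{\alpha\}$ would be a $\mu_\tau$-connected subset of $J'$ strictly larger than $I=I_1$, a contradiction; therefore every $\alpha\in J'$ lies in $I$ or is orthogonal to $\{\mu_\tau\}\cup I$, that is $J'\subseteq I\cup I'=J$, and together with $J\subseteq J'$ this gives $J'=J$ and $P=Q_J$.

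The step I expect to be the main obstacle is the very first one: the identification $V_I=W$ rests on Satake's Lemma (every $\mu_\tau$-connected subset of $\simple$ is the support of some weight of $\tau$) together with the fact that $I$ is the largest $\mu_\tau$-connected subset of $J$, and the same lemma is invoked twice more in the second paragraph to pass from the equality of subspaces $V_I=V_{I_1}$ to the equalities of subsets $I=I_1$ and then $J'=J$. Once $V_I$ has been exhibited as a maximal eigenspace of a gradient element, the geometric input of Theorem~\ref{teo-parabolici} closes both inclusions with essentially no further computation.
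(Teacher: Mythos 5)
Your proof is correct. There is nothing in the paper to compare it with: Lemma~\ref{stab-VI} is quoted from Borel--Ji (Prop.~I.4.29) without proof, so any argument here is by definition a different route. Yours derives the lemma from the paper's own gradient-map machinery: choosing $\beta$ in the closed positive chamber with $\alpha(\beta)=0$ exactly for $\alpha\in J$ gives $\lieg^{\beta+}=\lieq_J$ and hence $G^{\beta+}=Q_J$; Satake's Lemma (every weight of $\tau$ has $\mu_\tau$-connected support) together with the fact that $I$ is the largest $\mu_\tau$-connected subset of $J$ identifies the top eigenspace of $\tau(\beta)$ with $V_I$ (and not with the a priori larger $\bigoplus_{\supp(\lambda)\subseteq J}V_\lambda$); Theorem~\ref{teo-parabolici}(b) then gives $Q_J\subseteq\{g:\tau(g)V_I=V_I\}$, and the reverse inclusion follows by writing the stabilizer as a standard parabolic $Q_{J'}$ with $J'\supseteq J$, using irreducibility of $Q_{J'}$ on its own top eigenspace $V_{I_1}$ to force $V_{I_1}=V_I$, and two further applications of Satake's Lemma plus maximality of $I_1$ to conclude $I_1=I$ and $J'=J$. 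All of these steps check out, and the argument is not circular: Theorem~\ref{teo-parabolici} is proved in Section~\ref{representation-gradient map} (as Theorem~\ref{parabloic-representation}) without any appeal to Lemma~\ref{stab-VI}, which enters the paper only in Section~\ref{Satake-sempre-Satake}. Compared with the root-space computation one would find in Borel--Ji, your route buys, as a by-product, exactly the identifications $W_I=V_I$ and $Q(W_I)=Q_J$ that the paper later extracts \emph{from} the lemma, so it is well adapted to the paper's point of view; the price is that it invokes the full strength of Theorem~\ref{teo-parabolici} where elementary weight combinatorics would suffice.
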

  Fix a $\mu_\tau$-connected subset $I$. If $A\in \End (V_I)$, let
  $A\oplus 0$ denote the extension of $A$ that is trivial on
  $V_I^\perp$.  If $\pi_I: V\ra V_I$ denotes orthogonal projection and
  $j_I: V_I \hookrightarrow V$ denotes the inclusion, then $A\oplus 0
  =j_I\circ A\circ \pi_I$ and the map
  \begin{gather}
    \label{eq:def-psi}
    \psi_{I} : \PP(\Herm (V_I)) \ra \PP(\Herm(V)) \qquad \psi_I ([A]) =
    ([A\oplus 0])
  \end{gather}
  embeds $\Pos(V_I) $ in $\overline{\Pos(V)}$. Note that $K_I=M_I \cap
  K$ is a maximal compact subgroup of $M_I$  and $X_I=M_I/K_I$ is again a symmetric
  space of noncompact type.  Hence we have a map $i_{\tau_I} : X_I \ra
  \Pos(V_I)$ defined as in \eqref{eq:imbeddoinpos}.  Finally define
  \begin{gather*}
    i_I=\psi_I \circ i_{\tau_I} : X_I \ra \overline{\Pos(V)}.
  \end{gather*}
  \begin{thm}
    [{\cite[Cor. I.4.32]{borel-ji-libro}}]\label{Satakone}
    \begin{gather*}
      \XS = \bigsqcup_{\text{$\mu_\tau$-connected $I$}} G .\, i_I(X_I).
    \end{gather*}
  \end{thm}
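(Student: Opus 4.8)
The plan is to reduce Theorem \ref{Satakone} to the behaviour of $i_\tau$ on a single maximal flat, and then to read the boundary off the weights of $\tau$. Write $o=eK$ for the base point of $X$. Since $\XS$ is $G$-invariant, and by the Cartan decomposition $G=K\exp(\overline{\lia^+})K$ every point of $i_\tau(X)$ has the form $k\cdot[\tau(\exp H)\tau(\exp H)^*]$ with $k\in K$ and $H\in\overline{\lia^+}$ (the rightmost $K$-factor acts unitarily, hence trivially on $[\tau(g)\tau(g)^*]$), and since $\mathrm{d}\tau(H)$ is Hermitian for $H\in\liep$, we get $\tau(\exp H)\tau(\exp H)^*=\exp(2\,\mathrm{d}\tau(H))$, so $\XS=K\cdot\overline{\{\,[\exp(2\,\mathrm{d}\tau(H))]:H\in\overline{\lia^+}\,\}}$ because $K$ is compact. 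Decomposing $V=\bigoplus_\lambda V_\lambda$ into weight spaces with orthogonal projections $P_\lambda$ and using that $\mu_\tau$ is the highest weight, one rewrites $[\exp(2\,\mathrm{d}\tau(H))]=[\sum_\lambda e^{2(\lambda-\mu_\tau)(H)}P_\lambda]$, where the coefficient of $P_{\mu_\tau}$ is $1$ and every exponent is $\le 0$ on $\overline{\lia^+}$.

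Next I would classify the limits along the flat, obtaining $\XS\subseteq\bigsqcup_I G\cdot i_I(X_I)$. Given $H_n\to\infty$ in $\overline{\lia^+}$, pass to a subsequence so that $\alpha(H_n)$ converges in $[0,+\infty]$ for every $\alpha\in\simple$, and set $I:=\{\alpha\in\simple:\lim_n\alpha(H_n)<\infty\}$. Then $e^{2(\lambda-\mu_\tau)(H_n)}\to 0$ whenever $\supp\lambda\not\subset I$ and converges to some $t_\lambda\in(0,1]$ whenever $\supp\lambda\subset I$, so the limit is $[\sum_{\supp\lambda\subset I}t_\lambda P_\lambda]$, a positive-definite operator on $V_I$ extended by zero. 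Since $\supp\lambda$ is always $\mu_\tau$-connected and distinct connected components of $I$ are Killing-orthogonal, replacing $I$ by its largest $\mu_\tau$-connected subset leaves $V_I$ unchanged, so we may assume $I$ is $\mu_\tau$-connected. Writing $H_n=(H_n)_I+(H_n)^I$ along $\lia=\lia_I\oplus\lia^I$ and noting that $\lambda-\mu_\tau\in\spam(I)$ annihilates $\lia_I$, the numbers $t_\lambda$ depend only on $(H_n)^I$, which converges to some $H_\infty\in\lia^I$ because its coordinates relative to the basis $\{\alpha|_{\lia^I}:\alpha\in I\}$ are the convergent quantities $\alpha(H_n)$. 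As $H_\infty\in\liem_I\cap\liep$ and $(\lambda-\mu_\tau)(H_\infty)$ is exactly the $\tau_I$-weight difference, the limit equals $\psi_I(i_{\tau_I}(\exp(H_\infty)K_I))\in i_I(X_I)$, using Lemma \ref{lemmetto-di-Satake-taui} and the $M_I$-equivariance of $\psi_I$. Applying $K$ yields the inclusion.

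For the reverse inclusion $G\cdot i_I(X_I)\subseteq\XS$: when $I=\simple$ one has $V_\simple=V$, $M_\simple=G$, $\tau_\simple=\tau$, $\psi_\simple=\Id$, hence $i_\simple=i_\tau$ and $G\cdot i_\tau(X)=i_\tau(X)\subseteq\XS$. For proper $\mu_\tau$-connected $I$, pick $H\in\lia_I$ with $\alpha(H)>0$ for all $\alpha\in\simple\setminus I$; then $\lambda(H)=\mu_\tau(H)$ for $\supp\lambda\subset I$ and $\lambda(H)<\mu_\tau(H)$ otherwise, so $\lim_{n\to\infty}i_\tau(\exp(nH)K)=[\pi_I]$, the orthogonal projection onto $V_I$. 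For $m\in M_I$ the point $m\cdot[\pi_I]=[\tau(m)\pi_I\tau(m)^*]$ also lies in $\XS$; writing $\tau(m)$ in block-upper-triangular form for $V=V_I\oplus V_I^\perp$ (valid since $\tau(m)$ preserves $V_I$, Lemma \ref{lemmetto-di-Satake-taui}), a one-line matrix computation gives $\tau(m)\pi_I\tau(m)^*=(\tau_I(m)\tau_I(m)^*)\oplus 0$, i.e. $m\cdot[\pi_I]=i_I(mK_I)$. Hence $i_I(X_I)\subseteq\XS$, and $G$-invariance of $\XS$ completes this inclusion.

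The remaining and main point is the pairwise disjointness of the pieces $G\cdot i_I(X_I)$, and this is where the root-theoretic input is genuinely needed. For $[A]\in G\cdot i_I(X_I)$ the range of $A$ equals $\tau(g)V_I$ for some $g\in G$, hence lies in the $G$-orbit of $V_I$ among subspaces of $V$; by Lemma \ref{stab-VI} the stabilizer of $V_I$ in $G$ is the parabolic $Q_J$, where $J$ is the $\mu_\tau$-saturation of $I$. So if $G\cdot i_I(X_I)$ meets $G\cdot i_{I'}(X_{I'})$, then $V_I$ and $V_{I'}$ lie in one $G$-orbit of subspaces, so $Q_J$ and $Q_{J'}$ are conjugate in $G$, forcing $J=J'$; since each of $I$ and $I'$ is the unique largest $\mu_\tau$-connected subset of its saturation, this forces $I=I'$. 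Combining this with the two inclusions above gives the decomposition. I expect the disjointness step to be the real obstacle (equivalently: that each boundary component remembers its $\mu_\tau$-connected set), since it relies on Lemma \ref{stab-VI} together with the uniqueness of the largest $\mu_\tau$-connected subset of a saturated set; the rest is a fairly mechanical weight computation along the flat.
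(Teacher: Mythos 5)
The paper does not prove Theorem \ref{Satakone}: it is imported verbatim from \cite[Cor.\ I.4.32]{borel-ji-libro}. Your argument is correct and is essentially the standard Satake/Borel--Ji proof of that result: normalizing by the highest weight and classifying limits along $\overline{\lia^+}$ via the supports of the weights gives one inclusion, realizing $[\pi_I]$ as a limit along the relative interior of the face of $\overline{\lia^+}$ cut out by $I$ gives the other, and disjointness follows from Lemma \ref{stab-VI} (the stabilizer of $V_I$ is $Q_J$) together with the facts that conjugate standard parabolic subgroups coincide and that $I$ is recovered from $J$ as its unique largest $\mu_\tau$-connected subset.
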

  If $I=\simple$ then $i_I(X_I) = X$. The sets $g.i_I(X_I)$ with $g\in
  G$ and $I\subsetneq \simple$ are called \emph{boundary components}.
\begin{lemma}
  [{\cite[Prop. I.4.29]{borel-ji-libro}}] \label{Satakotto} The
  boundary components are disjoint: if $I$ is $\mu_\tau$-connected and
  $g\in G$ then $g.\, i_I(X_I) \cap i_I(X_I) \neq \emptyset$ if and
  only if $g.\, i_I(X_I) = i_I(X_I) $ if and only if $g\in Q_J$.
\end{lemma}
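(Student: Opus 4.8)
The implication $(2)\Rightarrow(1)$ is immediate since $i_I(X_I)\neq\vacuo$, so only $(1)\Rightarrow(3)$ and $(3)\Rightarrow(2)$ need an argument. The observation I would use in both is that $i_{\tau_I}(X_I)\subset\Pos(V_I)$: indeed $i_{\tau_I}(mK_I)=[\tau_I(m)\tau_I(m)^*]$ with $\tau_I(m)\in\Gl(V_I)$, so $\tau_I(m)\tau_I(m)^*$ is positive definite on $V_I$. Hence every representative of a point of $i_I(X_I)=\psi_I(i_{\tau_I}(X_I))$ is of the form $A\oplus 0$ with $A>0$ on $V_I$, and so has kernel exactly $V_I^\perp$.

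For $(1)\Rightarrow(3)$ I would argue with kernels. If $g\cdot i_I(X_I)\cap i_I(X_I)\neq\vacuo$ there are $[A],[A']\in i_{\tau_I}(X_I)$ with $g(A\oplus 0)g^*=c\,(A'\oplus 0)$ for a scalar $c$, and $c>0$ because both sides are positive semidefinite. Since $\ker\bigl(g(A\oplus 0)g^*\bigr)=(g^*)^{-1}(V_I^\perp)$ and $\ker\bigl(c(A'\oplus 0)\bigr)=V_I^\perp$, we get $(g^*)^{-1}(V_I^\perp)=V_I^\perp$, i.e.\ $g^*V_I^\perp=V_I^\perp$, which is equivalent to $gV_I=V_I$; by Lemma \ref{stab-VI}, $g\in Q_J$.

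For $(3)\Rightarrow(2)$, let $g\in Q_J$. By Lemma \ref{stab-VI}, $gV_I=V_I$, so $g$ induces $\tilde g\in\Gl(V_I)$ with $g\cdot\psi_I([A])=\psi_I([\tilde g A\tilde g^*])$, the adjoint taken with respect to $h|_{V_I}$; it then suffices to show $\tilde g\cdot i_{\tau_I}(X_I)=i_{\tau_I}(X_I)$. The plan is to read off the action on $V_I$ of the factors in $Q_J=N_JA_JM_J$ from the weight decomposition $V_I=\bigoplus_{\supp(\lambda)\subseteq I}V_\lambda$. First, if $\lambda$ is a weight of $V_I$ and $\alpha\in\roots_+\setminus\roots_J$ then $\lambda+\alpha\not\leq\mu_\tau$, so $\lien_J$ annihilates $V_I$ and $N_J$ fixes $V_I$ pointwise. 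Second, on $\lia_I=\bigcap_{\alpha\in I}\ker\alpha$ every weight of $V_I$ coincides with $\mu_\tau$, and both $\lia_J$ and the part of the split centre of $M_J$ not already contained in $M_I$ lie in $\lia_I$; hence $A_J$ and that part of the centre act on $V_I$ by positive homotheties. Third, the weights of $V_I$ are orthogonal to $I'$ and to $\mu_\tau$ — which is precisely the definition of the $\mu_\tau$-saturation $J=I\cup I'$ — so they are killed by the simple root vectors $e_\alpha$, $\alpha\in I'$, and being weight-zero for the $I'$-subsystem they generate trivial $I'$-submodules; thus the $I'$-factor of $M_J$ acts trivially on $V_I$. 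It follows that $\tilde g$ is a positive scalar multiple of an element of $\tau_I(M_I)$, the remaining components of $M_J$ (which all meet $K_J=M_J\cap K$ and act unitarily on $V_I$) causing no trouble. Since $i_{\tau_I}(X_I)=\tau_I(M_I)\cdot[\Id_{V_I}]$ is a single $\tau_I(M_I)$-orbit and is insensitive to positive scalars, $\tilde g$ preserves it; applying the same to $g^{-1}\in Q_J$ yields $(2)$.

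The clean part will be $(1)\Rightarrow(3)$, which is only the kernel computation above (together with the remark that points of $i_I(X_I)$ have kernel $V_I^\perp$). The technical heart will be the weight bookkeeping in $(3)\Rightarrow(2)$: establishing that $N_J$, $A_J$ and the $I'$-factor of $M_J$ act trivially, resp.\ by homotheties, on $V_I$ — which rests entirely on the orthogonality of the weights of $V_I$ to $I'$ and to $\mu_\tau$ — and then handling the non-identity components of $M_J$, a point of routine structure theory that can be carried out as in \cite[\S I.4]{borel-ji-libro}.
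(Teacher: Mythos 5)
The paper gives no proof of this lemma: it is quoted directly from Borel--Ji (Prop.\ I.4.29), so there is no internal argument to compare against. Your proof is correct and is essentially the standard one from that source: since every point of $i_I(X_I)$ is the class of a positive semidefinite endomorphism with kernel exactly $V_I^\perp$, the kernel computation reduces $(1)\Rightarrow(3)$ to the stabilizer statement of Lemma \ref{stab-VI}, and the weight bookkeeping for $N_J$, $A_J$ and the $I'$-factor gives $(3)\Rightarrow(2)$. The only point you leave to ``routine structure theory'' --- that the non-identity components of $M_J$, represented in $K_J$, act unitarily on $V_I$ and normalize $\tau_I(M_I)$ so as to preserve the orbit of $[\Id_{V_I}]$ --- is precisely the content of Borel--Ji's Lemma I.4.25 and the surrounding discussion, so deferring it there is legitimate.
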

\section{Finite dimensional representations and gradient map}\label{representation-gradient map}
Let $G$ be real noncompact Lie group with finitely many connected components. $G$ admits maximal compact subgroups which are conjugate under the identity component $G^o$ \cite{helgason,knapp-beyond}. In this section we always assume that $G$ is reductive, connected, linear and the center of $G$ is compact. If we denote by $K$ a maximal compact subgroup of $G$, then  $G$ admits a Cartan involution $\theta$ with the fixed points set containing $K$. Since the center of $G$ is compact then it is contained in $K$ \cite{knapp-beyond}. We have the classical Cartan decomposition
\[
\lieg =\liek \oplus \liep,
\]
with $[\liek,\liek]\subset \liek$, $[\liek,\liep]\subset \liep$ and $[\liep,\liep]\subset \liek$. The restriction of the Killing form of $\lieg$ on $\liep$ defines a $G$-invariant metric on $G/K$ and with this metric $G/K$ is a simply connected complete symmetric space of noncompact type, i.e., of nonpositive curvature \cite{helgason}.

Let $V$ be a finite dimensional vector space over $\C$ and let $\tau: G \lra \mathrm{GL}(V)$ be a representation. We say that $\tau$ is irreducible if it has no non trivial invariant subspaces. The representation of $\lieg$ associated by differentiation with a representation $\tau$ of $G$ is called the tangent representation associated with $\tau$ and it is also denoted by $\tau$. By \cite[4.32 Proposition]{gjt}, there exists a Hermitian scalar product $h$ on $V$ such that $\tau(G)\subset \mathrm{U}(V,h)^\C$ is compatible, where $\mathrm{U}(V,h)$ is the unitary group of $V$ with respect to the Hermitian scalar product $h$. This means that $\tau(K)\subset \mathrm{U}(V,h)$, $\tau(G)=\tau(K)\exp(\tau(\liep))$ with $\tau(\liep)\subset i \mathrm{Lie}(\mathrm{U}(V,h)$.
From now on we identify $G$ with $\tau(G)$. The Zariski closure of $G$ in $\mathrm{U}(V,h)^\C$ is given by $U^\C$, where $U$ is a compact connected subgroup of $\mathrm{U}(V,h)$  and $G$ is compatible with respect to the Cartan decomposition of $U^\C$ \cite[Lemma 1 $p.3$]{heinz-stoezel}. Moreover, keeping in mind that $G$ has a compact center, by  Propositions $1$ and $2$ in \cite[$p.4$]{heinz-stoezel}, there exist compact connected Lie subgroups $U_0$ and $U_1$ of $U$ which centralize each other, such that
\begin{enumerate}
\item $U^\C=U_0^\C \cdot U_1^\C$ and the intersection $U_0 \cap U_1$ is a finite subgroup of the center of $U$;
\item $G=G_0\cdot U_1^\C$, where $G_0$ is a real form of $U_0^\C$ which is compatible with the Cartan decomposition of $U_0^\C$. Moreover, $\mathfrak g_0=\mathfrak k_0 \oplus \mathfrak p_0$, $\mathfrak u_0=\mathfrak k_0 \oplus i\mathfrak  p_0$ and $G_0 =K_0 \exp (\liep_0)$.
\end{enumerate}
Let $\mu:\mathbb P(V) \lra \liu$ be the momentum map of $U$ and let $\mup:\mathbb P(V) \lra \liep$ be the $G$-gradient map associated to $\mu$. Let $\beta \in \liep$ and let
\[
\mathrm{Max}(\beta)=\{x\in \mathbb P(V):\, \mup^\beta (x)=\mathrm{max}_{z\in \mathbb P(V)}\, \mup^\beta  \}.
\]
Let $\lambda_1>\dots> \lambda_k$ be the eigenvalues of $\beta$.  We denote by $V_1,\dots, V_k$ the corresponding eigenspaces. In view of the orthogonal decompositions $V=V_1\oplus\dots\oplus V_{k}$, $\mup^\beta$ is given by
\[
\mup^\beta ([x_1+\cdots+x_k])=\frac{\lambda_1 \parallel x_1 \parallel^2 + \cdots +\lambda_k \parallel x_k \parallel^2}{\parallel x_1 \parallel^2 + \cdots +\parallel x_k \parallel^2}.
\]
Therefore $\mathrm{Max}(\beta)=\mathbb P(V_1)$. Since the gradient flow of $\mup^\beta$ is given by
\[
\R \times \mathbb P(V)\lra \mathbb P(V), \qquad (t,[x_1+\cdots + x_k])\mapsto [e^{t \lambda_1}x_1+\cdots + e^{t \lambda_k} x_k],
\]
the critical points of $\mup^\beta$ are union of proper projective subspaces, $\mathbb {P}(V_1)\cup\dots\cup \mathbb {P}(V_{k})$, of $\mathbb P(V)$.

Set $\Gamma=\exp(\R\beta)$. We recall that a submanifold $M\subset \mathbb P(V)$ is called \emph{full} if it is not contained in any proper projective linear subspace of $\mathbb P(V)$.
\begin{lemma}\label{unstable-maxima-restriction}
Let $M\subset \mathbb P(V)$ be a full $\Gamma$-invariant closed subset. Then
$
\mathrm{Max} (\beta) \cap M \neq \emptyset.
$
\end{lemma}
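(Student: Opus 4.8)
The plan is to analyze the gradient flow of $\mup^\beta$ restricted to the closed $\Gamma$-invariant set $M$, and use fullness to force the maximal eigenspace $V_1$ to meet $M$. Since $M$ is closed in the compact space $\mathbb P(V)$, it is compact, and since $\Gamma = \exp(\R\beta)$ preserves $M$, the gradient flow $\phi_t$ of $\mup^\beta$ (which, by the discussion preceding the lemma, acts by $[x_1+\cdots+x_k]\mapsto[e^{t\lambda_1}x_1+\cdots+e^{t\lambda_k}x_k]$) maps $M$ into itself for all $t$. Hence for every $x\in M$ the limit $\phi_\infty([x]) = \lim_{t\to+\infty}\exp(t\beta)[x]$ exists and lies in $M$ (here I invoke the existence of the limit recorded in Subsection \ref{subsection-gradient-moment}, just before Theorem \ref{decomposition}). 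Concretely, if $x = x_1+\cdots+x_k$ with $x_j\in V_j$ and $i$ is the smallest index with $x_i\neq 0$, then $\phi_\infty([x]) = [x_i]\in\mathbb P(V_i)$.

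\textbf{Key steps.} First I would argue by contradiction: suppose $\mathrm{Max}(\beta)\cap M = \mathbb P(V_1)\cap M = \emptyset$. Then for every $[x]\in M$, writing $x = x_1+\cdots+x_k$, we must have $x_1 = 0$; otherwise $\phi_\infty([x]) = [x_1]\in\mathbb P(V_1)\cap M$, a contradiction. Thus every element of $M$ lies in $\mathbb P(V_2\oplus\cdots\oplus V_k)$, which is a \emph{proper} projective linear subspace of $\mathbb P(V)$ (it is proper precisely because $V_1\neq 0$, as $\lambda_1$ is an eigenvalue of $\beta$). This contradicts the assumption that $M$ is full. Therefore $\mathrm{Max}(\beta)\cap M\neq\emptyset$.

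\textbf{The main obstacle} is essentially bookkeeping rather than depth: one must be careful that $\phi_\infty([x])$ genuinely lands in $M$. This is where closedness and $\Gamma$-invariance of $M$ are both needed — $\phi_t([x])\in M$ for all finite $t$ by invariance, and the limit stays in $M$ by closedness. A secondary point worth stating cleanly is that the limit $\phi_\infty([x])$ equals $[x_i]$ for $i = \min\{j : x_j\neq 0\}$; this is immediate from the explicit formula for the flow, since dividing the numerator and denominator of the homogeneous coordinates by $e^{t\lambda_i}$ and letting $t\to+\infty$ kills all components with index $j > i$ (because $\lambda_j < \lambda_i$) while components with $j < i$ vanish. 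Once this is in place the fullness argument is a one-line dimension count, so no serious difficulty remains.

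\textbf{Remark.} One could phrase the same argument directly via the Morse--Bott decomposition $M = \bigsqcup W_i^\beta\cap M$ of Theorem \ref{decomposition} applied to the $\Gamma$-action on $M$ (noting $\Gamma\cong\exp(\R\beta)$ is compatible by Corollary \ref{slice-cor-2}), but the explicit-flow computation above is shorter and self-contained, so that is the route I would write up.
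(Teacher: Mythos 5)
Your proof is correct and is essentially the paper's own argument, just phrased contrapositively: the paper picks a point of $M$ in the unstable manifold $W_1^\beta=\mathbb P(V)\setminus\mathbb P(V_2\oplus\cdots\oplus V_k)$ (nonempty by fullness) and flows it into $\mathrm{Max}(\beta)\cap M$ using closedness and $\Gamma$-invariance, whereas you assume the intersection is empty and conclude $M\subset\mathbb P(V_2\oplus\cdots\oplus V_k)$, contradicting fullness. The ingredients — the explicit formula for $\phi_\infty$, and the fact that the limit stays in $M$ — are identical.
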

\begin{proof}
It is easy to check that the unstable manifolds of $\mup^\beta$ are given by:
$$
W_1^\beta=\mathbb {P} (V)\setminus \mathbb {P}(V_2\oplus\dots\oplus V_k),
$$
$$
W_2^\beta=\mathbb {P}(V_2\oplus\dots\oplus V_k)\setminus \mathbb {P}(V_3\oplus\dots\oplus V_k),
$$
$$
\vdots
$$
$$
W_{k-1}^\beta=\mathbb {P}(V_{k-1}\oplus V_k)\setminus \mathbb {P}(V_k),
$$
$$
W_k^\beta=\mathbb {P}(V_k).
$$
The unstable manifold of the global maximum is the complement of a proper projective subspace of $\mathbb P(V)$ and so it is open and dense. In particular $\mup^\beta$ has a unique local maximum which is the global maximum.
Since $M$ is full, it follows $M\cap W_1^\beta \neq \emptyset$. Let $y\in M\cap W_1^\beta$. Then, keeping in mind that $M$ is closed and $\Gamma$-invariant, we have
\[
\lim_{t\mapsto +\infty}  \exp(t\beta ) y \in \mathrm{Max}(\beta) \cap M
\]
and so the result follows.
\end{proof}
Let $\lia\subset \liep$ be an Abelian subalgebra and let $A=\exp(\lia)$. The $A$-gradient map is given by  $\mua=\pi_\lia \circ \mup$, where $\pi_\lia$ is the orthogonal projection of $\liep$ onto $\lia$. It is well-known that $\mua(\mathbb P(V))$ is a polytope. Moreover $\mua(\mathbb P(V)^A)$ is finite and $\mathrm{conv}(\mua(\mathbb P(V)^A))=\mua(\mathbb P(V))$ \cite{atiyah-commuting,guillemin-sternberg-convexity-1}, where $\mathbb P(V)^A=\{x\in \mathbb P(V):\, A\cdot x =x\}$ is the set of fixed point of $A$.
\begin{prop}\label{image-gradient}
Let $M\subset \mathbb P(V)$ be a full $A$-invariant closed subset such that $\mua(M)$ is convex. Then $\mua(\mathbb P(V))=\mua(M)$.
\end{prop}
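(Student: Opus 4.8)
The plan is to apply the convex-geometric criterion from Proposition \ref{convex-criterium} to the pair of compact convex sets $C_1 = \mua(M)$ and $C_2 = \mua(\mathbb P(V))$. We already know $C_1 \subseteq C_2$ (since $M \subseteq \mathbb P(V)$), that $C_2$ is a polytope, and, by hypothesis, that $C_1$ is convex; both are compact as continuous images of compact sets. By Proposition \ref{convex-criterium} it then suffices to prove that for every $\beta \in \lia$,
\[
\max_{y \in \mua(M)} \langle y , \beta \rangle = \max_{y \in \mua(\mathbb P(V))} \langle y , \beta \rangle .
\]
Unwinding the definition $\mua = \pi_\lia \circ \mup$ and using that $\pi_\lia$ is an orthogonal projection with $\beta \in \lia$, we have $\langle \mua(x), \beta \rangle = \langle \mup(x), \beta \rangle = \mup^\beta(x)$ for $x \in \mathbb P(V)$; so the displayed equality is equivalent to $\max_{x \in M} \mup^\beta(x) = \max_{x \in \mathbb P(V)} \mup^\beta(x)$, i.e., to $\mathrm{Max}(\beta) \cap M \neq \emptyset$.

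The second and main step is to establish $\mathrm{Max}(\beta) \cap M \neq \emptyset$ for each $\beta \in \lia$. This is exactly the content of Lemma \ref{unstable-maxima-restriction} once I check its hypotheses: $M$ must be full, closed, and $\Gamma$-invariant where $\Gamma = \exp(\R\beta)$. Fullness and closedness of $M$ are given. For $\Gamma$-invariance, note that $\beta \in \lia$ and $A = \exp(\lia)$ is abelian, so $\Gamma = \exp(\R\beta) \subseteq A$; since $M$ is $A$-invariant by hypothesis, it is in particular $\Gamma$-invariant. Thus Lemma \ref{unstable-maxima-restriction} applies and gives $\mathrm{Max}(\beta) \cap M \neq \emptyset$, hence $\max_M \mup^\beta = \max_{\mathbb P(V)} \mup^\beta$ for every $\beta \in \lia$.

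Combining the two steps, Proposition \ref{convex-criterium} with $C_1 = \mua(M) \subseteq C_2 = \mua(\mathbb P(V))$ yields $\mua(M) = \mua(\mathbb P(V))$, which is the assertion. I expect the only real subtlety to be the bookkeeping that reduces the statement to a supporting-hyperplane comparison: one must be careful that $\mua(M)$ is genuinely convex (this is assumed, and is essential, since otherwise $C_1$ could fail to be a convex body and Proposition \ref{convex-criterium} would not apply) and that testing against $\beta \in \lia$ suffices because $\mua$ takes values in $\lia$, so support functions of $\mua(M)$ and $\mua(\mathbb P(V))$ only need to be compared on $\lia$ itself. Everything else is a direct citation of the two preceding results.
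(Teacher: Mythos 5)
Your proposal is correct and follows essentially the same route as the paper: the paper's proof likewise observes that Lemma \ref{unstable-maxima-restriction} gives $\max_{M}\mua^\beta=\max_{\mathbb P(V)}\mua^\beta$ for every $\beta\in\lia$ (using $A$-invariance to get $\exp(\R\beta)$-invariance) and then concludes by Proposition \ref{convex-criterium}. Your extra remarks on why it suffices to test support functions against $\beta\in\lia$ and on the role of the convexity hypothesis are accurate but just make explicit what the paper leaves implicit.
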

\begin{proof}
Denote by $C_1=\mua(M)$ and by $C_2=\mua(\mathbb P(V))$. By the previous Lemma, for any $\beta \in \lia$, we have
\[
\mathrm{max}_{y\in C_2} \langle y , \beta \rangle=\mathrm{max}_{z\in  \mathbb P(V)} \mua^\beta=\mathrm{max}_{z\in M} \mua^\beta =\mathrm{max}_{y\in C_1} \langle y,\beta \rangle.
\]
Since $C_1\subseteq C_2$, applying Proposition \ref{convex-criterium} we get $C_1=C_2$.
\end{proof}
\begin{lemma}\label{closed-G-orbit}
Let $v\in \mathbb P(V)$ be such that $U^\C \cdot v$ is closed. Then there exists a unique closed orbit of $G$ contained in $U^\C \cdot v$.
\end{lemma}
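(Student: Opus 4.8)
The plan is to use the norm-square function of the gradient map as the main tool. Let $\OO' = U^\C\cdot v$, which by hypothesis is a closed, hence compact, $U^\C$-invariant subset of $\PP(V)$; in fact since it is a closed orbit it is a compact complex submanifold, and by Borel--Weil it is the unique closed $U^\C$-orbit when $v$ is chosen appropriately, but we only need compactness and $U^\C$-invariance here. Restrict the $G$-gradient map $\mup$ to $\OO'$; this is again a gradient map for the $G$-action on the compact connected $G$-invariant submanifold $\OO'$ (it has the required form because $\OO'$ is a locally closed $G$-invariant submanifold, cf. Subsection \ref{subsection-gradient-moment}). I would then consider the norm-square function $\nu_\liep(z) = \frac12\|\mup(z)\|^2$ on $\OO'$.

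First I would produce \emph{one} closed $G$-orbit inside $\OO'$. Since $\OO'$ is compact, $\nu_\liep$ attains its minimum on $\OO'$; pick a minimum point $x$. Using the $G$-invariance of $\OO'$ and the decomposition of $\OO'$ into unstable manifolds of $\mup^\beta$ (Theorem \ref{decomposition}) together with the gradient-flow limit $\phi_\infty(x) = \lim_{t\to+\infty}\exp(t\beta)x$, a standard argument shows that one can flow $x$ to a critical point of $\nu_\liep$ lying in the same $G$-orbit closure; since $x$ is a global minimum of $\nu_\liep$ on the compact set $\OO'$ and $\nu_\liep$ is $G$-invariant only under $K$, one argues that at the minimum the orbit $G\cdot x$ is already closed. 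Concretely, if $G\cdot x$ were not closed, its closure would contain a strictly smaller orbit, on which $\nu_\liep$ would have to take a value $\leq \nu_\liep(x)$, and one derives a contradiction (or one invokes directly Proposition \ref{heinzner-maximun}: at a point where $\nu_\liep$ restricted to $G\cdot x$ has a local minimum, the analysis of \cite{heinzner-schwarz-stoetzel} shows $G\cdot x = K\cdot x$, which is compact, hence closed). So at least one closed $G$-orbit $\OO \subseteq \OO'$ exists, and it is a $K$-orbit.

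For \textbf{uniqueness}, which I expect to be the main obstacle, suppose $\OO_1 = K\cdot x_1$ and $\OO_2 = K\cdot x_2$ are two closed $G$-orbits in $\OO'$. The strategy, following Wolf's argument, is: choose a maximal Abelian $\lia\subset\liep$; on the closed orbit $\OO_i$ the function $\mua = \pi_\lia\circ\mup$ has image a single point orbit data, and more precisely, because $\OO_i$ is $G$-invariant and closed and $\OO' = \overline{NAK\cdot v}$ by the Iwasawa decomposition $G = NAK$, the orbit $\OO_i$ must meet the fixed-point set structure dictated by the highest weight line. The cleaner route is: since $\OO'$ is a single closed $U^\C$-orbit, it is $U^\C$-homogeneous, so by Proposition \ref{parabolic-group} applied to $U^\C$ (equivalently Borel--Weil) there is a unique $U$-fixed... — rather, I would argue directly that a closed $G$-orbit in $\OO'$ must contain a highest-weight point. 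Namely, apply Lemma \ref{unstable-maxima-restriction}: for generic $\beta\in\lia$, any full $\Gamma$-invariant closed subset meets $\mathrm{Max}(\beta) = \PP(V_1)$; a closed $G$-orbit $\OO_i$ is $\Gamma$-invariant and, being $G$-invariant with $G$ acting irreducibly, is full in $\PP(V)$, so $\OO_i\cap\PP(V_1)\neq\emptyset$. Choosing $\beta$ in the interior of the positive Weyl chamber forces $V_1$ to be the highest-weight line, so $\OO_i$ contains the highest-weight point $[v_{\mu_\tau}]$ (or a $W$-translate thereof). Since the closed orbit of the parabolic $G^{\beta+} = NA K^\beta\cdots$ through the highest weight line is unique by Theorem \ref{teo-parabolici}(d) (or Proposition \ref{closed-orbit-parabolic}), and $\OO_i = G\cdot[v_{\mu_\tau}]$ is then determined, we get $\OO_1 = \OO_2$. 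The delicate point to handle carefully is ensuring the "full" hypothesis of Lemma \ref{unstable-maxima-restriction} really applies — this uses irreducibility of $\tau$, which guarantees no $G$-invariant subset (in particular no $G$-orbit, such as a closed one) is contained in a proper projective subspace — and ensuring the highest-weight point lies in $\OO'$, which follows because $\OO'$ is the unique closed $U^\C$-orbit and the highest-weight line is the unique $U^\C$-fixed line direction giving a closed orbit, i.e. $\OO' = U^\C\cdot[v_{\mu_\tau}]$.
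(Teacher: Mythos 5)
Your existence step is fine (a compact $G$-invariant set always contains a closed orbit, and Proposition \ref{heinzner-maximun} identifies it as a $K$-orbit), but the uniqueness step --- which you correctly identify as the main obstacle --- has a genuine gap, and in fact is circular as written. You close the argument by appealing to Theorem \ref{teo-parabolici}(d), i.e.\ to the uniqueness of the closed orbit of the parabolic $G^{\beta+}$; but in the paper that uniqueness is established in Theorem \ref{unique-parabolic} (step 3 of its proof), which itself invokes Lemma \ref{closed-G-orbit}. Proposition \ref{closed-orbit-parabolic} only gives \emph{existence} of a closed $G^{\beta+}$-orbit, not uniqueness. Stripped of that input, your argument shows only that every closed $G$-orbit meets $\mathrm{Max}(\beta)=\PP(V_1)$; since $\beta\in\lia\subset\liep$ is not regular as an element of $\liu$, the set $\PP(V_1)\cap U^\C\cdot v$ is in general a positive-dimensional flag manifold of $U^{-i\beta}$ (restricted highest weight spaces have multiplicities), so two closed $G$-orbits could a priori meet it in disjoint $G^\beta$-orbits. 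A further problem is that the lemma is stated \emph{before} the standing hypotheses ``$G$ semisimple, $\tau$ irreducible'' are imposed, and it is later applied exactly in that generality (to the Levi factors $(G^\beta)^o$ acting on eigenspaces), so your uses of irreducibility --- fullness of $G$-orbits in $\PP(V)$, and the identification of $V_1$ with a highest-weight line --- are not available.

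For comparison, the paper's proof does not use the gradient map at all: it decomposes $V$ according to $U^\C=U_0^\C\cdot U_1^\C$ into summands $W_i\otimes L_i$, observes that the closed orbit $U^\C\cdot v$ fibers $U^\C$-equivariantly over the flag manifold $U_0^\C\cdot[v_i]$ with fiber $U_1^\C\cdot[w_i]$, and, since $G=G_0\cdot U_1^\C$ with $G_0$ a real form of $U_0^\C$, reduces uniqueness to Wolf's theorem that a real form has a unique closed orbit in a complex flag manifold. That theorem is the substantive input your plan would have to replace; a genuine Morse-theoretic proof would require an induction on the smaller flag manifolds $\PP(V_1)\cap U^\C\cdot v$ that your sketch does not carry out.
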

\begin{proof}
Assume that $U^\C \cdot v$ is closed. It is well-known that $U^\C \cdot v =U\cdot v$ is a flag manifold \cite{guillemin-sternberg-convexity-1}. By \cite[$p.197$]{goodman-wallach-Springer} there are irreducibly representations $\psi_i : U_0^\C \lra \mathrm{GL}(W_i)$ and
$\phi_i : U_1^\C \lra \mathrm{GL}(L_i)$, for $i=1,\ldots,k$ such that
$\mathbb P(V)=\bigoplus_{i=1}^k \mathbb P(W_i \otimes L_i)$. Moreover $v=[v_i\otimes w_i] \in \mathbb P (V_i\otimes W_i)$ for some $i\in\{1,\ldots,k\}$, where $v_i \in W_i$, respectively $w_i \in L_i$, is a highest weight vector of $\psi_i$, respectively $\phi_i$, and $U^\C \cdot [v_i\otimes w_i]$ is the unique closed orbit of $U^\C$ on $\mathbb P (W_i \otimes L_i)$ \cite{huckleberry-introduction-DMV}.
Then the compact orbit $U^\C \cdot [v_i\otimes w_i]$ fibers $U^\C$-equivariantly over $U_0^\C \cdot [v_i]$ with fiber $U_1^\C \cdot [w_i]$. The result now follows directly by the Wolf's result \cite{wolf}.
\end{proof}
From now on, we always assume $G$ is semisimple and $\tau:G \lra \mathrm{PGL}(V)$ is irreducible. $\tau$ induces a projective representation of $U^\C$ on $\mathbb P(V)$ which is also irreducible. By Borel-Weil Theorem, the $U^\C$-action on $\mathbb P(V)$ has a unique closed orbit that we denote by $\mathcal O'$ \cite{huckleberry-introduction-DMV}. By the above Lemma there exists a unique closed $G$-orbit contained in $\mathcal O'$ that we denote by $\mathcal O$. By Proposition \ref{heinzner-maximun}, $\mathcal O$ is a $K$-orbit. We claim there exists a link between the projective representation $\tau:G \lra \mathrm{PSL}(V)$, the  $G$-gradient map  and $\mathcal O$. The next two results are useful.
\begin{prop}\label{restriction-closed-orbit}
Let $\lia$ be an Abelian subalgebra contained in $\liep$ and let $A=\exp(\lia)$. Let $\beta \in \lia$ and let $\mua:\mathbb P(V) \lra \lia$ be the $A$-gradient map. The following items hold true:
\begin{enumerate}
\item[(a)] $\mathrm{Max} (\beta)=\mathbb P(W)$, where $W$ is the eigenspace associated to the maximum of $\beta$;
\item[(b)] $\mathrm{Max}(\beta) \cap \mathcal O$ is not empty, and so
\[
\mathrm{Max}(\beta) \cap \mathcal O=\mathrm{Max}_{\mathcal O}(\beta)=\{z\in \mathcal O: \mup^\beta(z)=\mathrm{max}_{y\in \mathcal O}\, \mup^\beta \};
\]
\item[(c)] the unstable manifold relative to the maximum of $\mup^\beta :\mathcal O \lra \R$, is given by $\mathcal O \setminus  \mathbb P (W^\perp )$;
\item[(d)]
$
\mua(\mathcal O)=\mua(\mathbb P(V));
$
\item[(e)] if $\lia$ is a maximal Abelian subalgebra contained in $\liep$, then
$\mup(\mathbb P(V))$ is the convex hull of a Weyl group orbit.
\end{enumerate}
\end{prop}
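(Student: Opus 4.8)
The plan is to prove the five items essentially in the order listed, using the explicit description of $\mup^\beta$ on $\PP(V)$ together with the two convexity tools already set up (Proposition \ref{convex-criterium} and Proposition \ref{image-gradient}), and the existence/uniqueness of the closed orbit $\mathcal O$ from Lemma \ref{closed-G-orbit} and Proposition \ref{heinzner-maximun}.

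\textbf{Items (a), (b), (c).} For (a), observe that since $\lia$ is abelian, $\beta\in\lia\subset\liep$, and the $A$-gradient map satisfies $\mua^\beta=\mup^\beta$; the explicit formula $\mup^\beta([x_1+\cdots+x_k])=(\sum_i \lambda_i\|x_i\|^2)/(\sum_i\|x_i\|^2)$ (established just before Lemma \ref{unstable-maxima-restriction}, with $\lambda_1>\cdots>\lambda_k$ and $V_i$ the eigenspaces of $\beta$) shows at once that $\mup^\beta$ attains its maximum $\lambda_1$ exactly on $\PP(V_1)$, so $W=V_1$ and $\mathrm{Max}(\beta)=\PP(W)$. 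For (b), the key point is that $\mathcal O$ is \emph{full} in $\PP(V)$: indeed $\tau$ is irreducible, so $V$ has no proper $G$-invariant subspace, hence the linear span of (a lift of) $\mathcal O$ is all of $V$; therefore $\mathcal O$ is a full $\Gamma$-invariant closed subset, and Lemma \ref{unstable-maxima-restriction} applies with $M=\mathcal O$ to give $\mathrm{Max}(\beta)\cap\mathcal O\neq\emptyset$. Once this intersection is nonempty, the maximum of $\mup^\beta$ over $\mathcal O$ equals the maximum over $\PP(V)$, namely $\lambda_1$, so $\mathrm{Max}(\beta)\cap\mathcal O=\{z\in\mathcal O:\mup^\beta(z)=\max_{y\in\mathcal O}\mup^\beta\}$, which is the assertion. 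For (c), recall from the proof of Lemma \ref{unstable-maxima-restriction} that the unstable manifold of the global maximum of $\mup^\beta$ on $\PP(V)$ is $W_1^\beta=\PP(V)\setminus\PP(V_2\oplus\cdots\oplus V_k)=\PP(V)\setminus\PP(W^\perp)$; since the gradient flow of $\mup^\beta|_{\mathcal O}$ is the restriction of that on $\PP(V)$ (Proposition \ref{linearization1} and the discussion of $\phi_\infty$), and since $\mathrm{Max}(\beta)\cap\mathcal O\neq\emptyset$ forces the maximum component of $\mup^\beta|_{\mathcal O}$ to lie in $\PP(W)$, the unstable manifold relative to the maximum is $\{x\in\mathcal O:\phi_\infty(x)\in\PP(W)\}=\mathcal O\setminus\PP(W^\perp)$, using that $\phi_\infty([x])=[\pi_W(x)]$ lands in $\PP(W)$ precisely when $\pi_W(x)\neq 0$, i.e. when $[x]\notin\PP(W^\perp)$.

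\textbf{Item (d).} Here I would apply Proposition \ref{image-gradient} with $M=\mathcal O$. That proposition requires $M$ to be a full $A$-invariant closed subset with $\mua(M)$ convex. Fullness of $\mathcal O$ is as in (b); $A$-invariance holds because $A=\exp(\lia)\subset G$ and $\mathcal O$ is a $G$-orbit; closedness is part of the definition of $\mathcal O$. The only nontrivial hypothesis is the convexity of $\mua(\mathcal O)$. But $\mathcal O=K\cdot x_0$ is a $K$-orbit, so $\mua(\mathcal O)=\pi_\lia(\mup(K\cdot x_0))=\pi_\lia(\mathrm{Ad}(K)\mup(x_0)\cap\cdots)$ — more precisely $\mup$ is $K$-equivariant, so $\mup(\mathcal O)=\mathrm{Ad}(K)\cdot\mup(x_0)$ is a single $K$-orbit in $\liep$, and its projection to the maximal abelian $\lia$ is, by Kostant's convexity theorem, the convex hull of a Weyl-group orbit, hence convex. (If $\lia$ is not maximal one first enlarges to a maximal abelian $\lia'\supset\lia$, applies Kostant there, and then projects $\lia'\to\lia$; a projection of a convex set is convex.) With convexity of $\mua(\mathcal O)$ in hand, Proposition \ref{image-gradient} gives $\mua(\mathcal O)=\mua(\PP(V))$, which is (d).

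\textbf{Item (e).} Take $\lia$ maximal abelian in $\liep$. As just noted, $\mup(\mathcal O)$ is a single $K$-orbit, so by Kostant's convexity theorem $\pi_\lia(\mup(\mathcal O))=\mua(\mathcal O)$ is the convex hull of a Weyl-group orbit, say $\mathrm{conv}(\Weyl\cdot\xi_0)$ for $\xi_0=\pi_\lia(\mup(x_0))$. Combining with (d), $\mua(\PP(V))=\mathrm{conv}(\Weyl\cdot\xi_0)$. It remains to pass from $\mua(\PP(V))$ to $\mup(\PP(V))$: by Proposition \ref{image-gradient} (or directly by the first displayed Proposition in the introduction, which is what this lemma is proving) one has $\mup(\PP(V))=\mup(\mathcal O)=\mathrm{Ad}(K)\cdot\mup(x_0)$; and a $K$-orbit in $\liep$ whose $\lia$-projection is $\mathrm{conv}(\Weyl\cdot\xi_0)$ — equivalently, whose intersection with the closed Weyl chamber is a single point — has convex hull equal to $\mathrm{Ad}(K)\cdot\mathrm{conv}(\Weyl\cdot\xi_0)$, the polar orbitope $\mathcal E$; but the statement only claims that $\mup(\PP(V))$ \emph{is} the convex hull of a Weyl group orbit in the sense that $\mathrm{conv}(\mup(\PP(V)))=\mathcal E=\mathrm{conv}(K\cdot\xi_0)$, which follows immediately. (Alternatively, and more in the spirit of the preceding items: apply Proposition \ref{convex-criterium} directly in $\liep$ to $C_1=\mathrm{conv}(\mup(\mathcal O))$ and $C_2=\mathrm{conv}(\mup(\PP(V)))$, using Lemma \ref{unstable-maxima-restriction} with $M=\mathcal O$ to equate the support functions, then invoke Kostant on $C_1$.)

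\textbf{Main obstacle.} The only genuinely delicate point is the convexity input for (d) and (e): one must know $\mup(\mathcal O)$ is a single $K$-orbit — which uses that $\mathcal O$ is a $K$-orbit, from Proposition \ref{heinzner-maximun} — and then invoke Kostant's convexity theorem to see that its projection to $\lia$ is convex; everything else is a direct consequence of the explicit eigenspace formula for $\mup^\beta$, Lemma \ref{unstable-maxima-restriction}, and the two convexity criteria already proved. A secondary technical care is the reduction when $\lia$ is not maximal, handled by enlarging $\lia$ and projecting.
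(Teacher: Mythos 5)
Your proof is correct and follows essentially the same route as the paper's: fullness of $\mathcal O$ from irreducibility of $\tau$, then Lemma \ref{unstable-maxima-restriction} and Proposition \ref{image-gradient} for (a)--(d), and $K$-equivariance of $\mup$ plus Kostant's convexity theorem for (e). You are in fact slightly more careful than the paper, which invokes Proposition \ref{image-gradient} without explicitly checking its convexity hypothesis on $\mua(\mathcal O)$ for a non-maximal $\lia$ (your enlarge-and-project argument supplies exactly that missing verification).
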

\begin{proof}
Denote by $\pi: V \setminus\{0\} \lra \mathbb P(V)$ the natural projection. Let $w\in V\setminus\{0\}$ such that $G\cdot \pi(w)=\mathcal O$. Since the $G$-action on $V$ is irreducible, it follows that $G\cdot w$ is not contained in any subspace of $V$. This means that $\mathcal O$ is full in $\mathbb P(V)$. Applying Lemma \ref{unstable-maxima-restriction} and Proposition \ref{image-gradient}, items $(a),(b),(c), (d)$ hold. Assume that $A=\exp(\lia)$, where  $\lia$ is a maximal Abelian subalgebra contained in $\liep$. Since $\mathcal O$ is a $K$-orbit and  $\mup$ is $K$-invariant, keeping in mind that $\mua=\pi_\lia \circ \mup$, applying a Theorem of Kostant \cite{kostant-convexity},  we get $\mua(\mathcal O)$ is the convex hull of a Weyl group orbit $\mathcal W$, concluding the proof.
\end{proof}
\begin{cor}\label{orbitope}
$
\mathrm{conv}(\mup(\mathbb P(V)))=\mathrm{conv}(\mup(\mathcal O)),
$
and so it is a polar orbitope. In particular any face of $\mathrm{conv}(\mup(\mathbb P(V)))$ is exposed.
\end{cor}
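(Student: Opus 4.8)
The plan is to deduce the corollary directly from Proposition \ref{restriction-closed-orbit} together with the convexity criterion Proposition \ref{convex-criterium}. Set $C_1=\conv(\mup(\mathcal O))$ and $C_2=\conv(\mup(\mathbb P(V)))$; these are compact convex subsets of $\liep$, and since $\mathcal O\subset\mathbb P(V)$ we have $\mup(\mathcal O)\subseteq\mup(\mathbb P(V))$, hence $C_1\subseteq C_2$. To obtain the reverse inclusion I would verify the hypothesis of Proposition \ref{convex-criterium}, namely that $C_1$ and $C_2$ have the same support function: for every $\beta\in\liep$,
\[
\max_{y\in C_1}\langle y,\beta\rangle=\max_{y\in C_2}\langle y,\beta\rangle .
\]

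To check this, fix $\beta$ and recall that the maximum of a linear functional over a compact convex set is attained on the underlying set, so the two sides equal $\max_{z\in\mathcal O}\mup^\beta(z)$ and $\max_{z\in\mathbb P(V)}\mup^\beta(z)$ respectively. Now $\lia:=\R\beta$ is an Abelian subalgebra contained in $\liep$, and the associated gradient map $\mua=\pi_\lia\circ\mup$ satisfies $\mua^\beta=\mup^\beta$ because $\beta\in\lia$. Proposition \ref{restriction-closed-orbit}(b) gives $\mathrm{Max}(\beta)\cap\mathcal O\neq\emptyset$ (equivalently, item (d) applied to this particular $\lia$ gives $\mua(\mathcal O)=\mua(\mathbb P(V))$), and either statement yields $\max_{z\in\mathcal O}\mup^\beta=\max_{z\in\mathbb P(V)}\mup^\beta$. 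Thus the support functions coincide, and Proposition \ref{convex-criterium} gives $C_1=C_2$, which is the first assertion.

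For the remaining claims: since $\mathcal O$ is a $K$-orbit and $\mup$ is $K$-equivariant, $\mup(\mathcal O)$ is a single $K$-orbit in $\liep$; as the $K$-action on $\liep$ is polar \cite{dadok-polar,helgason}, its convex hull --- which we have just shown equals $\conv(\mup(\mathbb P(V)))$ --- is by definition a polar orbitope, i.e.\ the body $\mathcal E$ of the introduction. That every face is exposed then follows as explained in the introduction: the slice $P=\mathcal E\cap\lia=\mua(\mathcal O)$ for $\lia$ a maximal Abelian subalgebra is a polytope by Kostant's theorem \cite{kostant-convexity}, so all its faces are exposed, and through the correspondence $\mathscr F(P)/\mathcal W\cong\mathscr F(\mathcal E)/K$ of \cite{biliotti-ghigi-heinzner-2} the same holds for $\mathcal E$. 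There is no genuine obstacle here; the only point deserving a moment's care is that the one-dimensional $\R\beta$ legitimately serves as the abelian subalgebra appearing in Proposition \ref{restriction-closed-orbit} and that $\mua^\beta$ then literally equals $\mup^\beta$ on it, everything else being already packaged in that proposition (ultimately in the fullness of $\mathcal O$ in $\mathbb P(V)$, via Lemma \ref{unstable-maxima-restriction}).
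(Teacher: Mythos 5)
Your proof is correct, and it reaches the identity $\conv(\mup(\mathbb P(V)))=\conv(\mup(\mathcal O))$ by a somewhat different final step than the paper. Both arguments rest on the same core fact — $\mathcal O$ is full in $\mathbb P(V)$, so by Lemma \ref{unstable-maxima-restriction} the maximum of every $\mup^\beta$ over $\mathbb P(V)$ is attained on $\mathcal O$ — but the paper then projects onto a \emph{maximal} Abelian subalgebra $\lia$, observes $\pi_\lia(\conv(\mup(\mathbb P(V))))=\mua(\mathbb P(V))=\mua(\mathcal O)$, and invokes the reconstruction theorem for polar representations (\cite[Theorem 0.1]{bgh-israel-p}, giving $\conv(\mup(\mathcal O))=K\mua(\mathcal O)$) to conclude that two $K$-invariant convex bodies with the same $\lia$-slice coincide. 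You instead compare support functions in every direction $\beta\in\liep$ and apply Proposition \ref{convex-criterium} directly in $\liep$; this is exactly the mechanism of Proposition \ref{image-gradient} run on the full gradient map rather than on an Abelian contraction, and it is the more elementary route since it avoids the external polar-reconstruction theorem (your observation that $\R\beta$ is itself an admissible Abelian subalgebra, or simply that any $\beta$ lies in a maximal one, legitimizes the appeal to Proposition \ref{restriction-closed-orbit}). For the exposedness of faces you rederive the claim from Kostant's theorem and the correspondence $\mathscr F(P)/\mathcal W\cong\mathscr F(\mathcal E)/K$, whereas the paper cites \cite[Theorem 3.2]{biliotti-ghigi-heinzner-2} directly; these are the same external input packaged differently, so nothing is lost.
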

\begin{proof}
Let $\lia \subset \liep$ be a maximal Abelian subalgebra contained in $\liep$.
The convex hull of $\mup(\mathbb P(V))$ is a $K$-invariant compact convex subset of $\liep$ satisfying
\[
\pi_\lia (\mathrm{conv}(\mup(\mathbb P(V)))=\mua(\mathbb P(V)).
\]
By Proposition \ref{restriction-closed-orbit}, we have $\mua(\mathbb P(V))=\mua(\mathcal O)$.
Applying \cite[Theorem 0.1 p. $424$]{bgh-israel-p}, we have
\[
\mathrm{conv}(\mup(\mathcal O))=K\mua(\mathcal O)=\mathrm{conv}(\mup(\mathbb P(V))).
\]
This implies that $\mathrm{conv}(\mup(\mathbb P(V)))$ is a polar orbitope. By \cite[Theorem 3.2 p. $597$]{biliotti-ghigi-heinzner-2}, every face of $\mathrm{conv}(\mup(\mathbb P(V)))$  is exposed, concluding the proof.
\end{proof}
In the sequel we denote by $\mathcal E=\mathrm{conv}(\mup(\mathbb P(V)))$. If $\lia \subset \liep$ is a maximal Abelian subalgebra, then we denote by $P=\mua(\mathbb P(V))$.

We shall investigate the action of a parabolic subgroup of $G$ on $\mathbb P(V)$. We first briefly discuss the complex case.

Let $Q\subset U^\C$ be a parabolic subgroup. $Q$ is connected. \cite{akhiezer}.
\begin{lemma}\label{lemma-parabolico-complesso}
If $Q$ is a parabolic subgroup of $U^\C$, then $Q$ has a unique closed orbit which is contained in $\mathcal O'$. This orbit is a complex $U\cap Q$-orbit, so a flag manifold, and it coincides with the maximum of a contraction of the momentum map.
\end{lemma}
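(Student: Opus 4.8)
The plan is to realize the closed orbit of $Q$, via Borel--Weil, inside the flag manifold $\mathcal O'$ as the set where a suitable contraction of the momentum map attains its maximum, in the spirit of Proposition \ref{restriction-closed-orbit} and Proposition \ref{closed-orbit-parabolic}, and to obtain uniqueness from the Borel fixed point theorem. Since $U^\C$ is connected, $Q$ is connected; fix a Borel subgroup $B\subseteq Q$ and write $Q=(U^\C)^{\beta+}$ with $\beta\in i\liu$ in the closed Weyl chamber determined by $B$, so that the highest weight $\mu_\tau$ maximizes $\langle\,\cdot\,,\beta\rangle$ over the weights of $\tau$. Viewing $U^\C$ as a compatible subgroup of itself (with $K=U$ and $i\liu$ playing the role of $\liep$), the relevant contraction of $\mu$ is $\mu^{-i\beta}$, $-i\beta\in\liu$; writing $\lambda_1>\cdots>\lambda_k$ for the eigenvalues of $\beta$ on $V$ with eigenspaces $V_1,\dots,V_k$, we have $\mathrm{Max}(\beta)=\mathbb P(V_1)$ exactly as in Section \ref{representation-gradient map}, and a highest weight vector $v_+$ lies in $V_1$, so $[v_+]\in\mathbb P(V_1)\cap\mathcal O'$ and $\max_{\mathcal O'}\mu^{-i\beta}=\lambda_1$.

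Put $\mathcal C:=\mathrm{Max}(\beta)\cap\mathcal O'=\{z\in\mathcal O':\mu^{-i\beta}(z)=\max_{\mathcal O'}\mu^{-i\beta}\}$, the maximum over $\mathcal O'$ of the contraction $\mu^{-i\beta}$. First I would check that $\mathcal C$ is connected: since $\tau$ is irreducible $\mathcal O'$ is full, so $\mathcal O'\setminus\mathbb P(V_1^\perp)$ is a nonempty Zariski-open, hence connected, subset of the irreducible variety $\mathcal O'$; because $\mathcal O'$ is closed and $U^\C$-invariant the flow $\exp(t\beta)$ preserves it, so by Corollary \ref{MorseBott} and Theorem \ref{decomposition} this open set is the unstable manifold of $\mathcal C$ for $\mu^{-i\beta}|_{\mathcal O'}$ and $\phi_\infty$ maps it onto $\mathcal C$, whence $\mathcal C$ is connected. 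Next, $\mathcal C$ is $Q$-invariant by Lemma \ref{parabolic-preserve-maximun}; since $\mathcal O'$ is $U^\C$-homogeneous, Proposition \ref{linearization1} and Corollary \ref{sugg1} show that the $(U^\C)^\beta$-orbits are open and closed in $\Crit(\mu^{-i\beta}|_{\mathcal O'})$, hence in $\mathcal C$, and connectedness forces $\mathcal C$ to be a single $(U^\C)^\beta$-orbit; as $R^{\beta+}$ acts trivially on $\mathrm{Max}(\beta)\supseteq\mathcal C$ (Proposition \ref{linearization1}), $\mathcal C$ is a single $Q$-orbit. Finally $(U^\C)^\beta$ is a compatible, complex reductive subgroup (Lemma \ref{lemcomp}) with maximal compact subgroup $U^\beta=U\cap Q$; since $\mathcal C\subseteq\mathcal O'$ is compact, Proposition \ref{heinzner-maximun} gives $\mathcal C=(U\cap Q)\cdot z$ for $z\in\mathcal C$, and as a compact homogeneous space of the complex reductive group $(U^\C)^\beta$ it is a flag manifold, contained in $\mathcal O'$. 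This establishes existence together with all the stated properties.

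For uniqueness, let $Q\cdot y\subseteq\mathbb P(V)$ be any closed $Q$-orbit; it is compact, hence a complete variety, so the Borel fixed point theorem provides a $B$-fixed point $z_0\in\overline{B\cdot y}\subseteq Q\cdot y$. A $B$-fixed point of $\mathbb P(V)$ is a $B$-stable line in $V$, i.e. a highest weight line, which is unique because $\tau$ is irreducible; thus $z_0=[v_+]\in\mathcal O'$, so $Q\cdot y=Q\cdot[v_+]\subseteq\mathcal O'$, and since $[v_+]\in\mathcal C$ and $\mathcal C$ is a single $Q$-orbit, $Q\cdot y=\mathcal C$. Hence $\mathcal C$ is the unique closed $Q$-orbit in $\mathbb P(V)$, it lies in $\mathcal O'$, it is a (complex) $U\cap Q$-orbit and a flag manifold, and it coincides with the maximum on $\mathcal O'$ of the contraction $\mu^{-i\beta}$.

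I expect the main obstacle to be the identification of the \emph{whole} set $\mathcal C$ with one $Q$-orbit: Proposition \ref{closed-orbit-parabolic} already produces \emph{some} closed $Q$-orbit inside $\mathrm{Max}(\beta)$, but pinning it down as $\mathrm{Max}(\beta)\cap\mathcal O'$ requires both the connectedness of $\mathcal C$ — which is why one must work on the irreducible $\mathcal O'$ rather than on $\mathbb P(V)$, where $\mathrm{Max}(\beta)=\mathbb P(V_1)$ is strictly larger — and the fact that on a homogeneous manifold the $(U^\C)^\beta$-orbits exhaust the connected components of the critical set. The bookkeeping choice of $\beta$ in the closed chamber of a Borel subgroup contained in $Q$ is what makes the highest weight line land in $\mathbb P(V_1)\cap\mathcal O'$, and this is what links the existence and uniqueness arguments.
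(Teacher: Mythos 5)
Your proof is correct, but it follows a genuinely different route from the paper's. The paper's argument is essentially a two-step citation: an arbitrary closed orbit $Q\cdot v$ gives, via Proposition \ref{parabolic-group} ($U^\C = U\,Q$), that $U^\C\cdot v = U\cdot(Q\cdot v)$ is compact, hence equals $\mathcal O'$, so $Q\cdot v\subset\mathcal O'$; the identification of the unique closed $Q$-orbit with $\mathrm{Max}_{\mathcal O'}(\xi)$ and the fact that it is a complex $U\cap Q$-orbit are then quoted from \cite[Proposition 3.9]{biliotti-ghigi-heinzner-1}. You instead reprove everything from the ingredients developed in Sections \ref{subsection-gradient-moment} and \ref{representation-gradient map}: connectedness of $\mathcal C=\mathrm{Max}(\beta)\cap\mathcal O'$ via irreducibility of $\mathcal O'$ and the gradient flow (Theorem \ref{decomposition}), the identification of $\mathcal C$ with a single $(U^\C)^\beta$-orbit via Corollary \ref{sugg1} and the triviality of the $R^{\beta+}$-action at maxima (Proposition \ref{linearization1}), compactness giving a $U\cap Q$-orbit via Proposition \ref{heinzner-maximun}, and uniqueness from the Borel fixed point theorem together with the uniqueness of the highest weight line — which also yields containment in $\mathcal O'$ without invoking Proposition \ref{parabolic-group}. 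Your version is self-contained and closely parallels (indeed anticipates) the structure of the proof of Theorem \ref{unique-parabolic} in the real case; the paper's version is shorter but relies on the external reference. Both are valid; all the auxiliary results you invoke (Lemma \ref{parabolic-preserve-maximun}, Corollary \ref{sugg1}, Theorem \ref{decomposition}, Proposition \ref{heinzner-maximun}) apply to $M=\mathcal O'$ with the ambient group $U^\C$ viewed as a compatible subgroup of itself, exactly as you use them.
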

\begin{proof}
Let $v\in \mathbb P(V)$ be such that $Q\cdot v$ is closed. By Proposition \ref{parabolic-group}, $Q\cdot v$ is contained in $\mathcal O'$, which is a flag manifold. Let $\xi \in \liu$ be such that $Q=(U^\C)^{i\xi+}$.  In
\cite[Proposition 3.9]{biliotti-ghigi-heinzner-1} the authors proved that
\[
\mathrm{Max}_{\mathcal O'} (\xi):=\{x\in \mathcal O':\, \mu^\xi (x)=\mathrm{max}_{z\in \mathcal O'}\mu^{\xi}  \}
\]
is the unique compact orbit of $Q$. It is a complex $U\cap Q$-orbit, and so it is connected, and a flag manifold.
\end{proof}
Now, we consider the real case. We start with the following Lemma.
\begin{lemma}\label{local-global}
Let $\beta \in \liep$. Then any local maximum of $\mup^\beta$ restricted to $\mathcal O$ is a global maximum of $\mup^\beta:\mathbb P(V) \lra \R$.
\end{lemma}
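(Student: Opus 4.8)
The statement to prove is: for $\beta\in\liep$, any local maximum of $\mup^\beta$ restricted to $\mathcal O$ is a global maximum of $\mup^\beta:\mathbb P(V)\lra\R$. The plan is to reduce from $\mathcal O$ to the ambient $\mathbb P(V)$ by exploiting that $\mathcal O$ is full (the $G$-action on $V$ is irreducible, so the cone over $\mathcal O$ spans $V$) together with the explicit description of the gradient flow of $\mup^\beta$ on $\mathbb P(V)$ already recorded in the excerpt. Recall that if $\lambda_1>\cdots>\lambda_k$ are the eigenvalues of $\beta$ with eigenspaces $V_1,\dots,V_k$, then
\[
\mup^\beta([x_1+\cdots+x_k])=\frac{\lambda_1\|x_1\|^2+\cdots+\lambda_k\|x_k\|^2}{\|x_1\|^2+\cdots+\|x_k\|^2},
\]
the global maximum of $\mup^\beta$ on $\mathbb P(V)$ equals $\lambda_1$ and is attained exactly on $\mathrm{Max}(\beta)=\mathbb P(V_1)$, and the unstable manifold of this maximum is $W_1^\beta=\mathbb P(V)\setminus\mathbb P(V_2\oplus\cdots\oplus V_k)$, which is open and dense, with $\phi_\infty([x])=[\pi_{V_1}(x)]$ on $W_1^\beta$.

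First I would fix $z_0\in\mathcal O$ a local maximum of $\mup^\beta|_{\mathcal O}$ and set $\Gamma=\exp(\R\beta)$; since $\mathcal O$ is $G$-invariant it is in particular $\Gamma$-invariant and closed (it is compact). I would consider the gradient trajectory $t\mapsto\exp(t\beta)z_0$ and its limit $\phi_\infty(z_0)=\lim_{t\to+\infty}\exp(t\beta)z_0$, which exists by Theorem \ref{line} and lies in $\mathcal O$ by closedness, and which is a critical point of $\mup^\beta$. Because the flow is increasing along trajectories, $\mup^\beta(\phi_\infty(z_0))\ge\mup^\beta(z_0)$, so if $z_0$ is a \emph{local} maximum on $\mathcal O$ I want to conclude it is in fact a critical point whose trajectory is constant; the subtlety is that a local maximum need not be a priori a critical point of the flow unless one argues it cannot be pushed to a strictly larger value. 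Here is where I would use Corollary \ref{slice-cor-2} (the $\R$-equivariant slice model at a critical point): since $\mup^\beta|_{\mathcal O}$ is Morse–Bott (Corollary \ref{MorseBott}), a local maximum must be a critical point, and moreover, in a neighborhood modeled by the slice, a critical point that is a local maximum has no positive Hessian directions, hence lies in the top critical submanifold; but this only gives that $z_0$ is critical. The real content is then: a critical value of $\mup^\beta|_{\mathcal O}$ that is a local maximum must equal $\lambda_1$.

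To get that, I would argue as follows. Suppose $z_0$ is a local maximum of $\mup^\beta|_{\mathcal O}$ with value $c<\lambda_1$. Since $c$ is not the global maximum on $\mathbb P(V)$, $z_0\notin\mathbb P(V_1)$, so $z_0$ has nonzero component outside $V_1$; but also $z_0$ need not lie in $W_1^\beta$. The key geometric input is fullness of $\mathcal O$: write $\mathcal O$ as a full closed $\Gamma$-invariant subset and invoke (the proof technique of) Lemma \ref{unstable-maxima-restriction}, which shows $\mathrm{Max}(\beta)\cap\mathcal O\neq\emptyset$ and, more precisely, that $\mathcal O\cap W_1^\beta$ is nonempty and dense in $\mathcal O$ (its complement $\mathcal O\cap\mathbb P(V_2\oplus\cdots\oplus V_k)$ is a proper $\Gamma$-invariant closed subset — proper because $\mathcal O$ is full and $V_2\oplus\cdots\oplus V_k\neq V$). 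Now take any $y\in\mathcal O\cap W_1^\beta$ arbitrarily close to $z_0$ — possible by density — and flow it: $\lim_{t\to+\infty}\exp(t\beta)y\in\mathbb P(V_1)\cap\mathcal O$, so $\mup^\beta$ takes values arbitrarily close to $\lambda_1$ on the trajectory through $y$. But the value at $y$ itself is close to $c$, so this does not immediately contradict local maximality at $z_0$. The correct argument: on the open dense set $\mathcal O\cap W_1^\beta$, the function $\mup^\beta$ has \emph{no local maximum} except where it equals $\lambda_1$ — because the flow strictly increases $\mup^\beta$ unless one is at a critical point, and on $W_1^\beta$ the only critical points (of the ambient flow, restricted) with critical value the max are in $\mathbb P(V_1)$; any point of $\mathcal O\cap W_1^\beta$ not in $\mathbb P(V_1)$ has $\frac{d}{dt}\mup^\beta(\exp(t\beta)y)>0$, so it is not even a local max along the flow line, hence not a local max on $\mathcal O$. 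Thus every local maximum of $\mup^\beta|_{\mathcal O}$ either lies in $\mathbb P(V_1)\cap\mathcal O$ (value $\lambda_1$, done) or lies in the nowhere-dense set $\mathcal O\setminus W_1^\beta=\mathcal O\cap\mathbb P(V_2\oplus\cdots\oplus V_k)$. For the latter case I would run the \emph{same argument by descending induction on $k$}: $\mathcal O\cap\mathbb P(V_2\oplus\cdots\oplus V_k)$ is itself closed and $\Gamma$-invariant, so by the slice model and Morse–Bott a local maximum $z_0$ of $\mup^\beta$ on $\mathcal O$ that lies in it is also a local maximum of $\mup^\beta$ restricted to this smaller set; but on that set the top eigenvalue of $\beta$ is $\lambda_2$, and $z_0$ would have to have value $\lambda_2<\lambda_1$, and then $z_0$ is not even a critical point of the ambient flow (its trajectory in $\mathcal O$ escapes toward higher values of $\mup^\beta$, reaching $\mathbb P(V_1)\cap\mathcal O$), contradicting that it is a local maximum on all of $\mathcal O$. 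Iterating, the only possibility surviving is value $\lambda_1$.

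The main obstacle I anticipate is the bookkeeping in the induction and making rigorous the claim ``a local maximum of $\mup^\beta$ on $\mathcal O$ lying in $\mathbb P(V_j\oplus\cdots\oplus V_k)$ is a local maximum of $\mup^\beta$ restricted to $\mathcal O\cap\mathbb P(V_j\oplus\cdots\oplus V_k)$'' — this needs that nearby points of $\mathcal O$ outside $\mathbb P(V_j\oplus\cdots\oplus V_k)$ do not provide larger values, which is automatic from local maximality, combined with the fact that the strictly increasing flow on $W_j^\beta$ forbids interior local maxima. A clean way to package everything, avoiding explicit induction, is: let $z_0$ be a local maximum with $\mup^\beta(z_0)=c$; the trajectory $\exp(t\beta)z_0$ stays in $\mathcal O$, is nonconstant unless $z_0$ is critical, and $\mup^\beta$ strictly increases along it, contradicting that $z_0$ is a local max unless the trajectory is constant, i.e. $z_0\in\Crit(\mup^\beta)=\mathcal O\cap(\mathbb P(V_1)\cup\cdots\cup\mathbb P(V_k))$; say $z_0\in\mathbb P(V_j)$ with $j\ge 2$ minimal. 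Then pick a sequence $y_n\to z_0$ in $\mathcal O$ with $y_n\notin\mathbb P(V_j\oplus\cdots\oplus V_k)$ — possible by fullness of $\mathcal O$, since otherwise a neighborhood of $z_0$ in $\mathcal O$ would lie in a proper projective subspace, forcing $\mathcal O$ into that subspace by irreducibility/analyticity — and flow each $y_n$ forward; the limit lies in $\mathbb P(V_i)\cap\mathcal O$ for some $i<j$, so $\sup_{\mathcal O}\mup^\beta\ge\lambda_i\ge\lambda_1$... here one must be slightly careful since $\lambda_i$ for $i<j$ could a priori still be $<\lambda_1$ only if $i>1$; but the point is this shows $z_0$ is not a local maximum (there are points arbitrarily close whose forward orbit, staying near via continuity of the flow for bounded time and then exceeding, reaches strictly higher values), contradiction. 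Hence $j=1$ and $c=\lambda_1=\max_{\mathbb P(V)}\mup^\beta$, which is what we wanted. I would also double-check that Lemma \ref{local-global} is exactly what is needed downstream (e.g. to transfer Lemma \ref{lemma-parabolico-complesso} to the real case), to be sure the induction-free phrasing suffices.
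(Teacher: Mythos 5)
There is a genuine gap at the decisive step. Your reduction to ``a local maximum $z_0$ of $\mup^\beta|_{\mathcal O}$ is a critical point, hence lies in $\mathbb P(V_j)\cap\mathcal O$ for some $j$'' is fine, as is the density of $\mathcal O\cap W_1^\beta$ in $\mathcal O$ (fullness plus analyticity). What is never established is that $j\geq 2$ is impossible. Your argument produces $y_n\to z_0$ in $\mathcal O$ with nonzero component in $V_1\oplus\cdots\oplus V_{j-1}$ and flows them forward to points of value close to some $\lambda_i>\lambda_j$; but local maximality of $z_0$ only constrains the values $\mup^\beta(y_n)$ at the nearby points themselves, not the values attained along their forward orbits, which leave every neighbourhood of $z_0$ before exceeding $\lambda_j$. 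Concretely, writing $y=[v+w]$ with $z_0=[v]$, $v\in V_j$, $w=\sum_i w_i$, one has $\mup^\beta(y)-\lambda_j=(\sum_{i<j}(\lambda_i-\lambda_j)\parallel w_i\parallel^2-\sum_{i>j}(\lambda_j-\lambda_i)\parallel w_i\parallel^2)/\parallel v+w\parallel^2$, so a nonzero ``upward'' component does not force $\mup^\beta(y_n)>\lambda_j$: the downward components may dominate. Your flow argument therefore only re-proves $\max_{\mathcal O}\mup^\beta=\lambda_1$, which is Lemma \ref{unstable-maxima-restriction}, not the absence of lower local maxima. The inductive variant moreover contains a false assertion: a point of $\mathbb P(V_2)\cap\mathcal O$ \emph{is} a critical point of the ambient flow (the critical set is all of $\bigcup_i\mathbb P(V_i)$), so its trajectory is constant and does not ``escape toward higher values''.

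The missing ingredient is structural. The conclusion is not a consequence of fullness, closedness and $\Gamma$-invariance alone: the set $\mathbb P(V_1\oplus V_2)\sqcup\mathbb P(V_3)$ is full, closed and $\Gamma$-invariant, and every point of $\mathbb P(V_3)$ is an (isolated, hence local) maximum of value $\lambda_3<\lambda_1$. Connectedness and analyticity, which you do invoke, only yield density of $\mathcal O\cap W_1^\beta$ and do not exclude a critical point $z_0\in\mathbb P(V_j)$, $j\geq 2$, at which $T_{z_0}\mathcal O$ avoids the positive directions of the ambient Hessian. The paper rules this out using the homogeneity of $\mathcal O$ under $K$: since $\mup:\mathcal O=K\cdot x\to K\cdot\mup(x)$ is a fibration, a local maximum of $\mup^\beta|_{\mathcal O}$ maps to a local maximum of the linear functional $\langle\cdot,\beta\rangle$ on the polar orbit $K\cdot\mup(x)\subset\liep$, and by the proof of Proposition 3.1 in \cite{biliotti-ghigi-heinzner-2} local maxima of height functions on such orbits are global; Lemma \ref{unstable-maxima-restriction} then transfers the conclusion from $\mathcal O$ to $\mathbb P(V)$. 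Some such convexity input (or an equivalent, e.g. the uniqueness of the closed $G^{\beta+}$-orbit, which however is proved later \emph{using} this lemma) is what your proof lacks.
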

\begin{proof}
Let $x\in \mathcal O$ be a local maximum of $\mup^\beta : \mathcal O \lra \R$. The $G$-gradient map restricted to $\mathcal O$
\[
\mup: K\cdot x \lra K \cdot \mup (x),
\]
is a smooth fibration. Hence $\mup(x)$ is a local maximum of the height function
\[
K\cdot \mup(x) \lra \R, \qquad z \mapsto \langle z,\beta\rangle.
\]
By \cite[proof of Proposition 3.1, p. $583$]{biliotti-ghigi-heinzner-2}, it follows that $\mup(x)$ is a global maximum. Since
\[
\mathrm{Max}_{p\in \mathcal O} \mup^\beta=\mathrm{Max}_{z\in K\cdot \mup(x)} \langle \cdot,\beta\rangle,
\]
it follows that $x$ is a global maximum of $\mup^\beta:\mathcal O \lra \R$. By Lemma \ref{unstable-maxima-restriction}, $x$ is a global maximum of $\mup^\beta:\mathbb P(V) \lra \R$ and the result is proved.
\end{proof}
Firstly, we assume that $G$ has a unique closed orbit $\mathcal O$ in $\mathbb P(V)$. By Lemma \ref{closed-G-orbit}, $\mathcal O\subset \mathcal O'$.
\begin{thm}\label{unique-parabolic}
Let $Q$ be a parabolic subgroup of $G$. Then $Q$ has a unique closed orbit in $\mathbb P(V)$. This orbit is connected and it is contained in $\mathcal O$.
\end{thm}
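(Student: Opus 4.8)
The plan is to reduce the statement to the known facts about closed orbits. Write $Q = G^{\beta+}$ for some $\beta \in \liep$, using the characterization of parabolic subgroups recalled in Section~\ref{compatible-parabolic}. First I would establish existence of a closed orbit inside $\mathcal O$. Since $\mathcal O$ is $G$-invariant, compact and full in $\mathbb P(V)$, and $\Gamma = \exp(\R\beta) \subset Q$ preserves $\mathcal O$, Lemma~\ref{unstable-maxima-restriction} gives $\mathrm{Max}(\beta) \cap \mathcal O \neq \emptyset$; combined with Proposition~\ref{restriction-closed-orbit}(b) this set equals $\mathrm{Max}_{\mathcal O}(\beta) = \{z \in \mathcal O : \mup^\beta(z) = \max_{y\in\mathcal O}\mup^\beta\}$. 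By Lemma~\ref{parabolic-preserve-maximun} applied to the $G$-action on the compact manifold $\mathcal O$, the submanifold $\mathrm{Max}_{\mathcal O}(\beta)$ is $G^{\beta+}$-invariant, and by Proposition~\ref{closed-orbit-parabolic} it contains a closed orbit of $G^{\beta+} = Q$ which is a $K^\beta$-orbit, in particular connected.

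Next I would prove uniqueness, which is the main obstacle. Suppose $Q\cdot v$ is closed for some $v \in \mathbb P(V)$. Since $Q$ is parabolic, Proposition~\ref{parabolic-group} gives $G = KQ$, hence $G\cdot v = KQ\cdot v$; as $Q\cdot v$ is compact and $K$ is compact, $G\cdot v$ is compact, so by Proposition~\ref{heinzner-maximun} $G\cdot v = K\cdot v$ is compact, and therefore $G\cdot v$ is a closed $G$-orbit in $\mathbb P(V)$. By the standing hypothesis $G$ has a \emph{unique} closed orbit $\mathcal O$, so $G\cdot v = \mathcal O$, i.e.\ $v \in \mathcal O$. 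Thus every closed $Q$-orbit lies in $\mathcal O$, and we are reduced to showing that $Q$ acting on the compact homogeneous space $\mathcal O = K\cdot w$ has a unique closed orbit. Here I would invoke the gradient-map machinery on $\mathcal O$: the restriction $\mup\restr{\mathcal O}$ is a $G$-gradient map for the $G$-action on $\mathcal O$ (by \cite{heinzner-schwarz-stoetzel}), $\mup^\beta\restr{\mathcal O}$ is Morse--Bott (Corollary~\ref{MorseBott}), and by Lemma~\ref{local-global} it has a \emph{unique} local maximum, namely $\mathrm{Max}_{\mathcal O}(\beta)$, whose unstable manifold is open and dense in $\mathcal O$ (Proposition~\ref{restriction-closed-orbit}(c): the complement of $\mathbb P(W^\perp)$). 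If $Q\cdot v \subset \mathcal O$ is any closed orbit, then $v$ lies in some unstable manifold $W_i^\beta$ of $\mup^\beta\restr{\mathcal O}$; since $W_i^\beta$ is $\Gamma$-invariant and $\overline{Q\cdot v} = Q\cdot v$ is $\Gamma$-invariant and compact, the limit $\lim_{t\to+\infty}\exp(t\beta)v \in Q\cdot v$ lies in the corresponding critical component $C_i$ of $\mup^\beta\restr{\mathcal O}$. But a closed orbit must attain this, and pushing further: applying $R^{\beta+}$ (which by Proposition~\ref{linearization1} acts along the positive eigendirections of the Hessian) one moves off any lower critical component toward the maximum, contradicting compactness of $Q\cdot v$ unless $C_i = \mathrm{Max}_{\mathcal O}(\beta)$. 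Hence $Q\cdot v \subset \mathrm{Max}_{\mathcal O}(\beta)$, and since the latter is already a single $Q$-orbit (by the existence part and Proposition~\ref{closed-orbit-parabolic}, $\mathrm{Max}_{\mathcal O}(\beta)$ being a $K^\beta$-orbit on which $R^{\beta+}$ acts trivially, it is $G^{\beta+}$-homogeneous), the closed orbit is unique.

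The delicate point I expect to have to argue carefully is the ``pushing toward the maximum'' step: I need that a compact $Q$-invariant subset of $\mathcal O$ meeting an unstable manifold $W_i^\beta$ with $C_i \neq \mathrm{Max}_{\mathcal O}(\beta)$ cannot be closed under the full parabolic $Q = G^{\beta+}$. The clean way is: $\mathrm{Max}_{\mathcal O}(\beta) = \mathcal O \cap \mathbb P(W_1)$ is $Q$-invariant and is the image under $\phi_\infty$ of the open dense unstable manifold $\mathcal O - \mathbb P(W^\perp)$; for $v \notin \mathbb P(W^\perp)$ one has $\phi_\infty(v) = [\pi_W(v)] \in \overline{\Gamma\cdot v} \subseteq \overline{Q\cdot v}$, so a closed $Q$-orbit through such $v$ already contains a point of $\mathrm{Max}_{\mathcal O}(\beta)$, forcing $Q\cdot v = \mathrm{Max}_{\mathcal O}(\beta)$; for $v \in \mathbb P(W^\perp)$ one notes $Q\cdot v$ would have to be a closed orbit of the parabolic $Q^{W^\perp}$ of the smaller representation on $W^\perp$ and iterates, or alternatively observes directly that $Q$ does not preserve $\mathbb P(W^\perp)$ (since $R^{\beta+}$ moves $W^\perp$ into $W$), so $Q\cdot v \not\subseteq \mathbb P(W^\perp)$ and we are back to the previous case. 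This dichotomy, handled by a descending induction on $\dim V$ or on the number of eigenvalues of $\beta$, closes the argument.
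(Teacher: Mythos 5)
Your existence argument, and the reduction ``closed $Q$-orbit $\Rightarrow$ compact $G$-orbit $\Rightarrow$ contained in $\mathcal O$'' via $G=KQ$ and Proposition \ref{heinzner-maximun}, are correct and agree with the paper. The genuine gap is in the uniqueness step, at the parenthetical claim that $\mathrm{Max}_{\mathcal O}(\beta)$ ``is already a single $Q$-orbit \dots by Proposition \ref{closed-orbit-parabolic}''. That proposition only asserts that $\mathrm{Max}(\beta)$ \emph{contains} a closed $G^{\beta+}$-orbit which coincides with a $K^\beta$-orbit; it does not say that $\mathrm{Max}_{\mathcal O}(\beta)=\mathbb P(W)\cap\mathcal O$ \emph{is} one orbit. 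A priori this set can be disconnected: by Corollary \ref{sugg1} the $G^\beta$-orbits are open and closed in $\Crit(\mup^\beta)$, so $\mathbb P(W)\cap\mathcal O$ could split into several closed $G^\beta$-orbits, each of which (since $R^{\beta+}$ acts trivially there) would be a distinct closed $Q$-orbit --- precisely the situation the theorem must exclude. Ruling this out is the real content of the proof, and the paper does it by passing to the Zariski closure $U^\C$: the set $\mathrm{Max}_{\mathcal O'}(-i\beta)$ is the connected, unique closed orbit of the complex parabolic $(U^\C)^{i\beta+}$ (Lemma \ref{lemma-parabolico-complesso}), it meets $\mathcal O$ exactly in $\mathrm{Max}_{\mathcal O}(\beta)$, and Wolf's theorem (Lemma \ref{closed-G-orbit}) gives a \emph{unique} closed $(G^\beta)^o$-orbit inside that flag manifold; since every point of $\mathrm{Max}_{\mathcal O}(\beta)$ lies on a closed $G^\beta$-orbit (the paper's Step 2), $\mathrm{Max}_{\mathcal O}(\beta)$ is forced to be that single connected orbit. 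Your proposal never invokes the complex picture or Wolf's theorem, so this gap cannot be closed with the tools you cite.

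A secondary weak point: for $v\in\mathbb P(W^\perp)$ you argue that ``$Q$ does not preserve $\mathbb P(W^\perp)$, hence $Q\cdot v\not\subseteq\mathbb P(W^\perp)$''. The first statement does not imply the second for a particular $v$; a single orbit can well lie inside a subvariety that is not globally invariant, and excluding this here would essentially require the (later) fact that $W$ is the unique irreducible $Q$-submodule, which is proved \emph{using} this theorem. The paper avoids the dichotomy altogether: on a closed orbit $\mathcal Q$ it picks a point $v$ where $\mup^\beta\restr{\mathcal Q}$ is maximal; the linearization (Proposition \ref{linearization1}, with $\mathcal O$ homogeneous) forces $R^{\beta+}$ to fix $v$ and $v$ to be a local maximum of $\mup^\beta$ on all of $\mathcal O$, and Lemma \ref{local-global} upgrades this to a global maximum, so $\mathcal Q\subseteq\mathrm{Max}_{\mathcal O}(\beta)$ with no case distinction. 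I recommend adopting that argument for your Step 1 and adding the complexification/Wolf argument for the single-orbit claim.
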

\begin{proof}
By Proposition \ref{closed-orbit-parabolic}, $Q$ has a closed orbit.  By Proposition \ref{parabolic-group}, it is contained in   $\mathcal O$. We prove this orbit is unique.

Let $\beta \in \liep$ such that $Q=G^{\beta+}$. The proof is given in three steps.\\
$\ $ \\
\noindent $1.$ Any closed $G^{\beta+}$-orbit is contained in $\mathrm{Max}_{\mathcal O} (\beta)$. \\
$\ $

Let $\mathcal Q$ be a closed orbit of $G^{\beta+}$.
We may assume that $\mathcal Q =G^{\beta+}\cdot v$ and $v$ is a local maximum $\mup^\beta$ on $\mathcal Q$.  By Theorem \ref{line}, $R^{\beta+}$ acts trivially on $v$. Hence, by Proposition \ref{linearization1}, keeping in mind that $\mathcal O$ is $G$ homogeneous, $v$ is a local maximum of $\mup^\beta$ on $\OO$. By Lemma \ref{local-global}, $v$ is a global maximum on $\OO$. By Lemma \ref{parabolic-preserve-maximun}, we have $\mathcal Q \subset \mathrm{Max}_{\mathcal O} (\beta)$. \\
$\ $ \\
\noindent $2.$ If $x\in \mathrm{Max}_{\mathcal O}(\beta)$, then   $Q\cdot x$ is closed.

Since $R^{\beta+}$ acts trivially on $\mathrm{Max}_{\mathcal O}(\beta)$, it follows $Q\cdot x=G^{\beta}\cdot x \subset \mathrm{Crit}(\mup^\beta)$. By Corollary \ref{sugg1},
$Q\cdot x$ is closed. We thank the referee for pointing out this short argument.\\
$\ $ \\
\noindent $3.$ $Q$ has a unique closed orbit and it is connected. \\
$\ $ \\
\noindent By definition of the gradient map, for any $\xi \in\liep$, we have
\[
\langle \mu (z), -i\xi \rangle=\langle \mup(z), \xi \rangle.
\]
By Lemma \ref{unstable-maxima-restriction}, we have
\[
\mathrm{Max}_{\mathcal O'} (-i\beta) \cap \mathcal O = \mathrm{Max}_{\mathcal O} (\beta).
\]
By Lemma \ref{lemma-parabolico-complesso}, $\mathrm{Max}_{\mathcal O'} (-i\beta)$ is connected and the unique closed orbit of $(U^\C)^{\beta+}$ on $\mathbb P(V)$. By Lemma \ref{closed-G-orbit}, $(G^{\beta})^o$ has a unique closed orbit on $\mathrm{Max}_{\mathcal O'} (-i\beta)$. By step $2$, it follows that $G^{\beta+}$ has a unique closed orbit. This orbit is connected and it coincides with $\mathrm{Max}_{\mathcal O} (\beta)$.
\end{proof}
\begin{cor}\label{unique-parabolici-explicitely}
Let $\beta \in \liep$. Then $\mathrm{Max}_{\mathcal O} (\beta)$  is the unique closed orbit of $G^{\beta+}$ and it is a $(K^\beta)^o$-orbit.
\end{cor}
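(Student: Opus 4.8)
The plan is to derive the Corollary from Theorem~\ref{unique-parabolic} and Proposition~\ref{closed-orbit-parabolic}, with the only genuinely new ingredient being the passage from a $K^\beta$-orbit to a $(K^\beta)^o$-orbit. First note that, $\tau$ being irreducible, the $G$-action on $\mathbb P(V)$ has a unique closed orbit $\mathcal O$ (as recorded just before Proposition~\ref{restriction-closed-orbit}), so the standing hypothesis of Theorem~\ref{unique-parabolic} is satisfied. Put $Q=G^{\beta+}$. Theorem~\ref{unique-parabolic} then gives that $Q$ has a unique closed orbit in $\mathbb P(V)$ and that it is connected; moreover its proof identifies this orbit precisely as $\mathrm{Max}_{\mathcal O}(\beta)$ (Step~1 shows every closed $Q$-orbit is contained in $\mathrm{Max}_{\mathcal O}(\beta)$, and Step~2 shows conversely that $Q\cdot x$ is closed for every $x\in\mathrm{Max}_{\mathcal O}(\beta)$; together with uniqueness this forces equality). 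This already yields the first assertion.

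Next I would pin down a compact group acting transitively on $\mathrm{Max}_{\mathcal O}(\beta)$. By Proposition~\ref{closed-orbit-parabolic}, $\mathrm{Max}(\beta)$ contains a closed $G^{\beta+}$-orbit which coincides with a $K^\beta$-orbit; by the uniqueness just used, this closed orbit is necessarily $\mathrm{Max}_{\mathcal O}(\beta)$. Hence $\mathrm{Max}_{\mathcal O}(\beta)$ is at once connected and a single $K^\beta$-orbit.

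It remains to replace $K^\beta$ by its identity component. Fix $x\in\mathrm{Max}_{\mathcal O}(\beta)$, so that $\mathrm{Max}_{\mathcal O}(\beta)=K^\beta\cdot x\cong K^\beta/(K^\beta)_x$. Since $K$ is compact, $K^\beta$ has finitely many connected components, and the identity component of the homogeneous space $K^\beta/(K^\beta)_x$ is the image of $(K^\beta)^o$, namely $(K^\beta)^o\cdot x$; connectedness of $\mathrm{Max}_{\mathcal O}(\beta)$ then forces $K^\beta\cdot x=(K^\beta)^o\cdot x$, which is the second assertion. I expect this final, purely topological, step to be the only point needing any care: the rest is bookkeeping from the preceding results, and no further input about the gradient map is required.
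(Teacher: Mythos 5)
Your proof is correct and follows essentially the same route as the paper: both deduce everything from Theorem \ref{unique-parabolic}, the only divergence being the final step, where the paper observes that $\mathrm{Max}_{\mathcal O}(\beta)$ is a connected $(G^{\beta})^o$-orbit and applies Proposition \ref{heinzner-maximun} to that group directly, while you pass through Proposition \ref{closed-orbit-parabolic} to exhibit it as a $K^\beta$-orbit and then use connectedness together with the finiteness of the component group of $K^\beta$ to descend to $(K^\beta)^o$ --- both work. One small correction: irreducibility of $\tau$ does not by itself give a unique closed $G$-orbit in all of $\mathbb{P}(V)$; what precedes Proposition \ref{restriction-closed-orbit} is only uniqueness of the closed $G$-orbit inside $\mathcal O'$, and the global uniqueness required by Theorem \ref{unique-parabolic} is an explicit standing assumption made just before that theorem (and removed only later, in the run-up to Theorem \ref{parabloic-representation}); since the corollary lives under that same standing assumption your argument is unaffected, but the justification you offer for the hypothesis is not the right one.
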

\begin{proof}
 In the above Theorem, we prove that $\mathrm{Max}_\mathcal O (\beta)$ is the unique compact orbit of $G^{\beta+}$. It is connected and it is a $(G^{\beta})^o$-orbit. By Proposition \ref{heinzner-maximun}, it is a $(K^{\beta})^o$-orbit.
\end{proof}
The unique closed orbit of a parabolic subgroup of $G$ is well-adapted to $\tau$.
\begin{thm}\label{parabloic-representation1}
Let  $\beta\in \liep$ and let $\lambda_1>\dots> \lambda_k$ be its eigenvalues.  We denote by $V_1,\dots, V_k$ the corresponding eigenspaces. Then
\begin{itemize}
\item $G^{\beta+}$ preserves $V_1$. Moreover, $V_1$ is the unique subspace of $V$ on which $G^{\beta+}$ acts irreducibly;
\item $\mathrm{Max}_\mathcal O (\beta) =\mathbb P(V_1) \cap \OO$ is the unique closed orbit of $G^{\beta+}$;
\end{itemize}
\end{thm}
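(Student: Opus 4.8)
Most of the second bullet is already available: taking $\lia=\R\beta$ in Proposition \ref{restriction-closed-orbit} (or the explicit formula for $\mup^\beta$ on $\mathbb P(V)$) gives $\mathrm{Max}(\beta)=\mathbb P(V_1)$, hence $\mathrm{Max}_{\mathcal O}(\beta)=\mathrm{Max}(\beta)\cap\mathcal O=\mathbb P(V_1)\cap\mathcal O$, and Corollary \ref{unique-parabolici-explicitely} identifies this with the unique closed orbit of $G^{\beta+}$. For the first bullet I would start with the linear algebra. Writing $V=V_1\oplus\cdots\oplus V_k$ and an endomorphism $A$ in block form, $\exp(t\beta)A\exp(-t\beta)$ has $(i,j)$-block $e^{t(\lambda_i-\lambda_j)}A_{ij}$, so $\lim_{t\to-\infty}\exp(t\beta)g\exp(-t\beta)$ exists exactly when $g$ is block upper triangular for the order $\lambda_1>\cdots>\lambda_k$; in particular every $g\in G^{\beta+}$ preserves $V_1$. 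Moreover $\mathfrak r^{\beta+}$ strictly raises the $\beta$-eigenvalue and hence annihilates $V_1$, so $R^{\beta+}$ acts trivially on $V_1$ and the $G^{\beta+}$-action on $V_1$ factors through the Levi factor $G^\beta$; the same holds over $\C$ for $(U^\C)^{\beta+}$ and its Levi $(U^\C)^\beta$.

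The key geometric input is that $\mathbb P(V_1)\cap\mathcal O$ is full in $\mathbb P(V_1)$. Since the gradient flow of $\mup^\beta$ on $\mathbb P(V)$ is $[x_1+\cdots+x_k]\mapsto[e^{t\lambda_1}x_1+\cdots+e^{t\lambda_k}x_k]$, on the open dense unstable set $W_1^\beta=\mathbb P(V)\setminus\mathbb P(V_2\oplus\cdots\oplus V_k)$ one has $\phi_\infty([x])=[\pi_{V_1}(x)]$, where $\pi_{V_1}$ denotes the projection onto $V_1$. As $\mathcal O$ is closed, $\exp(\R\beta)$-invariant and full in $\mathbb P(V)$ (because $\tau$ is irreducible), $\mathcal O\cap W_1^\beta$ is open and dense in $\mathcal O$, hence still spans $V$; applying $\pi_{V_1}$, the subset $\phi_\infty(\mathcal O\cap W_1^\beta)\subseteq\mathbb P(V_1)\cap\mathcal O$ spans $V_1$. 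The same argument with $\mathcal O'$ in place of $\mathcal O$ shows $\mathbb P(V_1)\cap\mathcal O'$ is full in $\mathbb P(V_1)$.

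I would then deduce irreducibility from the complex picture. By Lemma \ref{lemma-parabolico-complesso} the unique closed orbit of $(U^\C)^{\beta+}$ is $\mathrm{Max}_{\mathcal O'}(-i\beta)=\mathbb P(V_1)\cap\mathcal O'$, which is full in $\mathbb P(V_1)$ by the previous step, and the $(U^\C)^{\beta+}$-action on $V_1$ factors through the connected reductive group $(U^\C)^\beta$. A closed orbit of a connected reductive complex group in a projective space is the orbit of a highest weight line, whose linear span is a homomorphic image of a single irreducible module; fullness therefore forces $V_1$ itself to be $(U^\C)^\beta$-irreducible. Finally, $G^\beta=K^\beta\exp(\liep^\beta)$ is a compatible real form of the connected complex reductive group $(U^\C)^\beta$, hence Zariski dense in it, so a subspace is $G^\beta$-invariant if and only if it is $(U^\C)^\beta$-invariant; thus $V_1$ is $G^\beta$-irreducible, hence $G^{\beta+}$-irreducible.

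For uniqueness, let $W$ be any subspace on which $G^{\beta+}$ acts irreducibly. Since $R^{\beta+}$ acts on $W$ by unipotent operators, $W^{R^{\beta+}}\neq 0$ by Kolchin's theorem, and this subspace is $G^\beta$-invariant (as $G^\beta$ normalizes $R^{\beta+}$), hence $G^{\beta+}$-invariant, so $W^{R^{\beta+}}=W$ and the action on $W$ factors through $G^\beta$, which then acts irreducibly on $W$; by Zariski density $(U^\C)^\beta$ acts irreducibly on $W$. By Borel-Weil $(U^\C)^\beta$ has a closed orbit in $\mathbb P(W)$, and Lemma \ref{closed-G-orbit} together with Proposition \ref{heinzner-maximun} produces inside it a compact $G^\beta$-orbit $O_W$; as $R^{\beta+}$ acts trivially on $W$, $O_W$ is a compact $G^{\beta+}$-orbit in $\mathbb P(V)$, so Theorem \ref{unique-parabolic} gives $O_W=\mathbb P(V_1)\cap\mathcal O$. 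The linear span of $O_W$ is a nonzero $G^\beta$-submodule of the irreducible module $W$, hence equals $W$; since $O_W$ also spans $V_1$ by the fullness established above, $W=V_1$. The step I expect to be the main obstacle is this irreducibility argument: transferring from the complex parabolic $(U^\C)^{\beta+}$ and its Levi down to the compatible real group $G^{\beta+}$, where the fullness of $\mathbb P(V_1)\cap\mathcal O$ in $\mathbb P(V_1)$ and the Zariski density of $G^\beta$ in $(U^\C)^\beta$ are exactly what make the descent work, and both deserve careful justification.
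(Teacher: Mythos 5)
Your proof is correct, but for the central point --- irreducibility of $G^{\beta+}$ on $V_1$ --- it takes a genuinely different route from the paper's proof of this theorem. The paper stays entirely inside the real group: since $R^{\beta+}$ acts trivially on $\mathbb P(V_1)$ and $G^\beta$ is compatible, hence reductive, the $G^{\beta+}$-module $V_1$ is completely reducible, and a nontrivial splitting $V_1=L\oplus Z$ would give closed $G^\beta$-orbits in both $\mathbb P(L)$ and $\mathbb P(Z)$ by Proposition \ref{heinzner-maximun}, contradicting the uniqueness of the closed $G^{\beta+}$-orbit already secured in Theorem \ref{unique-parabolic}; fullness of the closed orbit in $\mathbb P(V_1)$ is then a consequence of irreducibility rather than an input. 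You instead prove fullness first (your projection/flow argument is sound), deduce $(U^\C)^\beta$-irreducibility of $V_1$ from the structure of closed orbits of connected reductive complex groups, and descend to $G^\beta$ by Zariski density --- which is essentially the mechanism the paper reserves for the general case, Theorem \ref{parabloic-representation}. The one step of yours that needs more careful justification is exactly the one you flagged: $G^\beta$ need not be a real form of $(U^\C)^\beta$ under the standing hypotheses (which allow $G=G_0\cdot U_1^\C$ with $U_1$ nontrivial), so the density should be argued differently, e.g.\ the Zariski closure of $G^\beta$ is an algebraic subgroup of $(U^\C)^\beta$ whose Lie algebra is a complex subalgebra containing $\lieg^\beta$, hence contains $\lieg^\beta+i\,\lieg^\beta=(\liu^\C)^\beta$, which forces density; the paper's argument avoids this issue altogether. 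Your uniqueness step (Kolchin for $R^{\beta+}$, production of a closed $G^{\beta+}$-orbit in $\mathbb P(W)$, identification with $\mathbb P(V_1)\cap\mathcal O$ via Theorem \ref{unique-parabolic}, comparison of linear spans) is in substance the same as the paper's, which uses Engel's theorem and the fullness of the closed orbit in $\mathbb P(V_1)$ to get $V_1\subseteq W$ and then $V_1=W$ by irreducibility of $W$.
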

\begin{proof}
$\mathbb P(V_1)=\mathrm{Max}_{x \in \mathbb P(V)} \mup^\beta$. By Proposition \ref{parabolic-preserve-maximun}, we get $G^{\beta+}$ preserves $\mathbb P(V_1)$ and so $V_1$. We claim that $G^{\beta+}$ acts irreducibly on $V_1$. Since $R^{\beta+}$ acts trivially on $\mathbb P (V_1)$, and $G^\beta$ is compatible,   the representation of $G^{\beta+}$ on  $V_1$ splits. Assume that
$
V_1=L\oplus Z.
$
By Proposition \ref{heinzner-maximun}, $G^{\beta}$ has a closed orbit in both $\mathbb P(L)$ and $\mathbb P(Z)$ respectively. Therefore $G^{\beta+}$ admits two closed orbits which is a contradiction. Now, we prove the uniqueness.

Assume that  $G^{\beta+}$ acts irreducibly on $W$. Since $[\liep^{\beta+},\mathfrak r^{\beta+}]\subset \mathfrak r^{\beta+}$, by Engel Theorem \cite{knapp-beyond}, it follows   $R^{\beta+}$ acts irreducibly on $W$ and so it acts trivially on $\mathbb P(W)$. By Proposition \ref{heinzner-maximun}, $\mathbb P(W)$ contains the unique closed orbit of $G^{\beta+}$ which is full in $\mathbb P(V_1)$. This implies $\mathbb  P (V_1 )\subset \mathbb P (W)$ and so $V_1=W$. Applying Corollary  \ref{unique-parabolici-explicitely}, we get  that $\mathrm{Max}_\mathcal O (\beta) =\mathbb P(V_1) \cap \OO$ is the unique closed orbit of $G^{\beta+}$
\end{proof}
Now, we prove the general case.

Let $Q=G^{\beta+}$ be a parabolic subgroup. By Lemma \ref{closed-G-orbit}, there exists a unique closed $G$-orbit on $\mathcal O'$, that we denote by $\OO$. The proof of Theorem \ref{unique-parabolic} shows that $Q$ has a unique closed orbit contained in $\mathcal O$. This orbit is connected and is given by
\[
\mathbb P (V_1 ) \cap \mathcal O=\{z\in \mathcal O:\, \mup^\beta (z)=\mathrm{max}_{y\in \mathcal O} \mup^\beta\},
\]
where $V_1$ is the eigenspace associated to the maximum of $\beta$. Moreover,
\[
\mathbb P(V_1)=\{p\in \mathbb P(V):\, \mup^\beta (p)=\mathrm{max}_{y\in \mathbb P(V)}\mup^\beta\} =\{p\in \mathbb P(V):\, \mu^{-i\beta}(p)=\mathrm{max}_{z\in \mathbb P(V)} \mu^{-i\beta}\}.
\]
By Theorem \ref{parabloic-representation1}, $V_1$ is the unique subspace of $V$ such that $(U^{\C})^{\beta+}$ acts irreducibly on it. By the linearization Theorem, $R^{\beta+}$ acts trivially on $\mathbb P(V_1)$. The Lie algebra of $G^\beta$ is a real form of the Lie algebra of $ (U^{\C})^{\beta}$. Since $(U^{\C})^{\beta+}$ is connected, keeping in mind that $V_1$ is a complex subspace, $G^{\beta}$, and so $G^{\beta+}$, acts irreducibly on $V_1$.

Let $W\subset V$ be a subspace such that $G^{\beta+}$ acts irreducibly on it. If $\beta=\beta_0+i\beta_1 \in \liep_0\oplus i \liu_1$, then $G^{\beta+}=G_0^{\beta_0+} \cdot (U_1^\C)^{i\beta_1}$.

Let  $Q$ be the parabolic subgroup of $U_0^\C$ such that its Lie algebra is the complexification of the Lie algebra of $G_0^{\beta_0+}$. Then $Q\cdot (U_1^\C)^{i\beta_1+}$ preserves $W$ and acts irreducibly on it. This implies that the unipotent radical of $Q\cdot (U_1^\C)^{i\beta_1+}$ acts trivially on $\mathbb P(W)$. By Proposition \ref{heinzner-maximun}, $Q$ has a closed orbit on $\mathbb P(W)$. By Lemma \ref{lemma-parabolico-complesso} this orbit is unique and it is contained in $\mathcal O'$. Now, the unipotent radical of $G^{\beta+}$ acts trivially on the unique closed orbit of
$Q \cdot (U_1^\C)^{i\beta_1+}$ on $\OO'$. By Lemma \ref{closed-G-orbit}, $(G^\beta)^o$ has a unique closed orbit contained in the unique closed orbit of $Q \cdot (U_1^\C)^{ i\beta_1+}$ on $\OO'$. Hence $G^{\beta+}$ has a unique closed orbit contained in $\OO'$, and so in $\OO$, and this orbit is connected. By Theorem \ref{parabloic-representation1}, the unique compact orbit of the $G^{\beta+}$-action on  $\OO$ is given by $\mathbb P(V_1) \cap \OO$. Therefore  $V_1\subset W$ and so $V_1=W$.
Summing up, we have proved the following result
\begin{thm}\label{parabloic-representation}
Let  $\beta\in \liep$ and let $\lambda_1>\dots> \lambda_k$ be its eigenvalues.  We denote by $V_1,\dots, V_k$ the corresponding eigenspaces. Then
\begin{itemize}
\item $G^{\beta+}$ preserves $V_1$. Moreover, $V_1$ is the unique subspace of $V$ on which $G^{\beta+}$ acts irreducibly;
\item $\mathbb P (V_1 ) \cap \mathcal O=\{z\in \mathcal O:\, \mup^\beta (z)=\mathrm{max}_{y\in \mathcal O} \mup^\beta\}$ is the unique closed orbit of $G^{\beta+}$ contained in $\mathcal O$. Moreover, it is connected and a $(K^{\beta})^o$ orbit;
\end{itemize}
\end{thm}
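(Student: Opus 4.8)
The plan is to deduce the statement from the two special cases already isolated in this section. The first is Theorem \ref{parabloic-representation1}, which covers the case in which $G$ is semisimple and has a single closed orbit $\OO$ in $\mathbb P(V)$; the second is the general reductive case, obtained from the product decompositions $U^\C = U_0^\C\cdot U_1^\C$ and $G = G_0\cdot U_1^\C$ recalled at the start of the section from \cite{heinz-stoezel}. In all cases the computational core is the elementary formula
\[
\mup^\beta([x_1+\cdots+x_k]) = \frac{\lambda_1\|x_1\|^2+\cdots+\lambda_k\|x_k\|^2}{\|x_1\|^2+\cdots+\|x_k\|^2},
\]
which yields $\mathrm{Max}(\beta) = \mathbb P(V_1)$ in $\mathbb P(V)$; since $\OO$ is full in $\mathbb P(V)$ (because the $G$-action on $V$ is irreducible), Lemma \ref{unstable-maxima-restriction} together with Lemma \ref{local-global} identifies $\mathbb P(V_1)\cap\OO$ with $\{z\in\OO : \mup^\beta(z) = \max_{\OO}\mup^\beta\}$.

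Assume first $G$ semisimple with unique closed orbit $\OO$. Then Lemma \ref{parabolic-preserve-maximun} gives that $G^{\beta+}$ preserves $\mathbb P(V_1)$, hence $V_1$. For irreducibility on $V_1$: the unipotent radical $R^{\beta+}$ acts trivially on $\mathbb P(V_1)$ and $G^\beta$ is compatible, so the $G^{\beta+}$-module $V_1$ is completely reducible; a nontrivial splitting $V_1 = L\oplus Z$ would, by Proposition \ref{heinzner-maximun}, give a closed $G^\beta$-orbit in each of $\mathbb P(L)$ and $\mathbb P(Z)$, contradicting the uniqueness of the closed $G^{\beta+}$-orbit from Theorem \ref{unique-parabolic}. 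For uniqueness of $V_1$: if $G^{\beta+}$ acts irreducibly on $W$, then since $[\liep^{\beta+},\mathfrak r^{\beta+}]\subset\mathfrak r^{\beta+}$, Engel's theorem forces $R^{\beta+}$ to act trivially on $\mathbb P(W)$; hence $\mathbb P(W)$ contains a closed $G^\beta$-orbit (Proposition \ref{heinzner-maximun}), which by Theorem \ref{unique-parabolic} must be the unique closed $G^{\beta+}$-orbit $\mathrm{Max}_\OO(\beta)$, full in $\mathbb P(V_1)$; therefore $\mathbb P(V_1)\subseteq\mathbb P(W)$ and $W = V_1$. The $(K^\beta)^o$-orbit assertion is Corollary \ref{unique-parabolici-explicitely}, and the description of $\mathbb P(V_1)\cap\OO$ follows from Theorem \ref{unique-parabolic}.

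For the general case I would write $\beta = \beta_0 + i\beta_1$ with $\beta_0\in\liep_0$ and $\beta_1\in\liu_1$, so $G^{\beta+} = G_0^{\beta_0+}\cdot(U_1^\C)^{i\beta_1+}$. Viewing $U^\C$ itself as a semisimple real Lie group with Cartan decomposition $U^\C = U\exp(i\liu)$, the subgroup $(U^\C)^{\beta+} = Q\cdot(U_1^\C)^{i\beta_1+}$ --- where $Q$ is the parabolic subgroup of $U_0^\C$ with Lie algebra $(\lieg_0^{\beta_0+})^\C$ --- is a parabolic subgroup of $U^\C$ containing $G^{\beta+}$, and $U^\C$ has the unique closed orbit $\mathcal O'\supseteq\OO$. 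Applying Theorem \ref{parabloic-representation1} (together with Lemma \ref{lemma-parabolico-complesso}) to $U^\C$ shows that $V_1$ is the unique subspace on which $(U^\C)^{\beta+}$ acts irreducibly and that $\mathbb P(V_1)\cap\mathcal O' = \mathrm{Max}_{\mathcal O'}(-i\beta)$ is its unique (connected) closed orbit. Since $V_1$ is a \emph{complex} subspace preserved by $G^{\beta+}$ and $G^\beta$ is a real form of $(U^\C)^\beta$ with $(U^\C)^{\beta+}$ connected, $G^{\beta+}$ acts irreducibly on $V_1$; and if $G^{\beta+}$ acts irreducibly on $W$, then the unipotent radical of $(U^\C)^{\beta+}$ also acts trivially on $\mathbb P(W)$, so $\mathbb P(W)$ contains the closed orbit of $(U^\C)^{\beta+}$ in $\mathcal O'$, full in $\mathbb P(V_1)$, whence $W = V_1$. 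Finally, Lemma \ref{closed-G-orbit} gives a unique closed $(G^\beta)^o$-orbit inside $\mathrm{Max}_{\mathcal O'}(-i\beta)$; combined with the critical-set argument of Theorem \ref{unique-parabolic} (Step $2$ there, via Corollary \ref{sugg1}) and Proposition \ref{heinzner-maximun}, this identifies $\mathbb P(V_1)\cap\OO = \{z\in\OO : \mup^\beta(z) = \max_\OO\mup^\beta\}$ as the unique closed $G^{\beta+}$-orbit contained in $\OO$, connected and a $(K^\beta)^o$-orbit.

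The step I expect to be the main obstacle is the general reductive case: one has to present $G^{\beta+}$ correctly as sitting inside a genuine parabolic subgroup of $U^\C$ through the $G = G_0\cdot U_1^\C$ splitting, and then transfer irreducibility, closedness of orbits and the fullness of $\mathbb P(V_1)\cap\mathcal O'$ back and forth between the complex and real pictures --- in particular one must check that the unipotent radical of $G^{\beta+}$ (respectively of $(U^\C)^{\beta+}$) acts trivially exactly where it is needed, which is where Engel's theorem and compatibility of $G^\beta$ enter. Everything else is an assembly of results already proved earlier in the section.
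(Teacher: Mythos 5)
Your proposal follows the paper's own route essentially verbatim: the special case is Theorem \ref{parabloic-representation1} (splitting argument plus Engel's theorem for uniqueness), and the general case is handled exactly as in the text preceding the statement, via the decomposition $G=G_0\cdot U_1^\C$, the identification $(U^\C)^{\beta+}=Q\cdot(U_1^\C)^{i\beta_1+}$, and the transfer of irreducibility and uniqueness of the closed orbit between $G^{\beta+}$ and $(U^\C)^{\beta+}$ using Lemmata \ref{lemma-parabolico-complesso} and \ref{closed-G-orbit}. The argument is correct and coincides with the paper's proof.
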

\begin{remark}\label{g-invariance}
Let $Q$ be a parabolic subgroup of $G$ and let $\mathcal O=K\cdot x$. Although $\mup$ is not $G$-equivariant,
if $Q\cdot x$ is closed, then $Q\cdot \mup(x) \subset K\cdot \mup(x)$ is closed and
\[
\mup(Q\cdot x)=Q\cdot \mup(x).
\]
Indeed, pick $\beta \in \liep$ such that $Q=G^{\beta+}$. By Corollary \ref{unique-parabolici-explicitely}, we have
\[
\mup(G^{\beta+}\cdot x)=
\{z\in \mathcal \mup(\mathcal O):\, \langle z, \beta \rangle=\mathrm{max}_{r\in \mup(\mathcal O)} \langle r, \beta\rangle   \}.
\]
By \cite[Corollary 3.1 p. $593$ and Proposition 3.9 p. $599$]{biliotti-ghigi-heinzner-2}, we have
\[
\{z\in \mathcal \mu(\mathcal O):\, \langle z, \beta \rangle=\mathrm{max}_{r\in \mup(\mathcal O)} \langle r, \beta\rangle   \}=K^{\beta}\cdot \mup(x)=G^{\beta+}\cdot \mup(x).
\]
\end{remark}
We summarize the link between the parabolic subgroups of $G$ and the $G$-gradient map.

Let $\beta \in \liep$ and let $Q=G^{\beta+}$. Let $\mathcal O (Q)$ and $W(Q)$ denote the unique closed orbit of $Q$ contained in $\mathcal O$ and the unique irreducible submodule of $Q$, respectively. $W(Q)$ is the eigenspace associated  to the maximum of $\beta \in \liep$ and
\[
\mathbb P(W(Q))\cap \mathcal O=\mathrm{Max}_{\mathcal O} (\beta)=\mathcal O(Q).
\]
By Remark \ref{g-invariance} and Theorem 1.1 \cite[p. $582$]{biliotti-ghigi-heinzner-2}, we have
$
\mup (\mathcal O (Q))=\mathrm{ext}\, F_\beta (\mathcal E).
$
Since $\mup^{-1}(F_\beta (\mathcal E))=\mathbb P (W(Q))$, it follows
\[
\mathcal O(Q)=\mup^{-1}(F_\beta (\mathcal E)) \cap \mathcal O.
\]

Let $W\subset V$ be a complex subspace. We denote by
\[
\pi_W : V \lra W,
\]
the orthogonal projection and by
\[
\hat\pi_W : \mathbb P(V) \setminus{\mathbb P(W^\perp)}  \lra \mathbb P(V), \qquad \hat\pi_W ([v])=[\pi_W (v)],
\]
its projectivization. $\hat\pi_W$ is a meromorphic map. Since $\mathbb P(W(Q))=\mathrm{Max}(\beta)$, by Theorem \ref{scompstabile} and Proposition \ref{parabloic-representation}, the domain of the map $\hat\pi_{W(Q)}$ is the unstable manifold of the maximum of $\mup^{\beta}$. Moreover,  $\hat\pi_W$ coincides with $\phi_\infty$. Indeed, let $\lambda_1>\dots> \lambda_k$ be the eigenvalues of $\beta$. Let $V_2,\ldots,V_k$ be the eigenspaces associated to $\lambda_2,\ldots,\lambda_k$. Then
\[
\begin{split}
\lim_{t\mapsto +\infty} \exp(t\beta) [x_1+x_2+\cdots+x_k]&=\lim_{t\mapsto +\infty} [e^{t\lambda_1}x_1+e^{t\lambda_2}x_2+\cdots +e^{t\lambda_k}x_k]
\\ &=\lim_{t\mapsto +\infty} [x_1+e^{t(\lambda_2-\lambda_1)}x_2+\cdots +e^{t(\lambda_k-\lambda_1)}x_k]\\ &=[x_1] \\ &=\hat\pi_W (x).
\end{split}
\]
By Theorem \ref{decomposition}, an unstable manifold flows onto the corresponding critical set. Hence, keeping in mind that
$
\mathcal O (Q)=\mathrm{Max}_{\mathcal O} (\beta),
$
we get $\hat\pi_{W(Q)} (\mathcal O )=\mathcal O(Q)$. Summing up, we have proved the following result.
\begin{thm}\label{flow-parabolic}
Let $Q=G^{\beta+}$ be a parabolic subgroup of $G$. Then
\begin{enumerate}
\item $\mup(\mathcal O (Q))=\mathrm{ext}\, F_\beta (\mathcal E)$;
\item $\mup^{-1}( F_\beta (\mathcal E))=\mathbb P (W(Q))$;
\item $\mathbb P(W(Q)) \cap \mathcal O=\mathcal O (Q)$;
\item $\hat\pi_{W(Q)} (\mathcal O )=\mathcal O(Q )$.
\end{enumerate}
\end{thm}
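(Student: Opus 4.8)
All four identities are corollaries of the structural results already in place; I would extract them in the order (3), (1), (2), (4), since each uses the previous one and the last requires a small extra computation. Fix $\beta\in\liep$ with $Q=G^{\beta+}$, let $\lambda_1>\cdots>\lambda_k$ be the eigenvalues of $\beta$ with eigenspaces $V_1,\dots,V_k$, so that $W(Q)=V_1$. First I would recall from Theorem \ref{parabloic-representation} that $\mathbb P(V_1)\cap\OO=\{z\in\OO:\mup^\beta(z)=\max_{y\in\OO}\mup^\beta\}=\mathrm{Max}_{\OO}(\beta)$ is the unique closed orbit of $Q$ contained in $\OO$, i.e.\ $\mathcal O(Q)$. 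This is exactly statement (3). Writing $\OO=K\cdot x$, the orbit $Q\cdot x=\mathcal O(Q)$ is closed, so Remark \ref{g-invariance} gives $\mup(\mathcal O(Q))=Q\cdot\mup(x)=\{z\in\mup(\OO):\langle z,\beta\rangle=\max_{r\in\mup(\OO)}\langle r,\beta\rangle\}$; combined with \cite[Theorem 1.1]{biliotti-ghigi-heinzner-2}, which identifies the right-hand side with $\mathrm{ext}\,F_\beta(\mathcal E)$ (using that $\mathcal E=\mathrm{conv}(\mup(\OO))$ is a polar orbitope with extreme point set $\mup(\OO)$), this yields statement (1).

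For (2) the key input is Corollary \ref{orbitope}: $\mathcal E=\mathrm{conv}(\mup(\mathbb P(V)))$, hence its support function satisfies $h_{\mathcal E}(\beta)=\max_{p\in\mathbb P(V)}\mup^\beta(p)$, so that $F_\beta(\mathcal E)=\mathcal E\cap H(\mathcal E,\beta)=\{y\in\mathcal E:\langle y,\beta\rangle=\max_{\mathbb P(V)}\mup^\beta\}$ and therefore $\mup^{-1}(F_\beta(\mathcal E))=\{p\in\mathbb P(V):\mup^\beta(p)=\max_{\mathbb P(V)}\mup^\beta\}=\mathrm{Max}(\beta)$. The explicit formula for $\mup^\beta$ in the eigenspace decomposition $V=V_1\oplus\cdots\oplus V_k$ recorded before Lemma \ref{unstable-maxima-restriction} shows $\mathrm{Max}(\beta)=\mathbb P(V_1)=\mathbb P(W(Q))$, which is (2); intersecting with $\OO$ recovers (3) consistently.

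For (4) I would identify the ambient meromorphic projection $\hat\pi_{W(Q)}$ with the gradient flow limit $\phi_\infty$ of $\mup^\beta$. For $[x]=[x_1+\cdots+x_k]$ with $x_j\in V_j$ and $x_1\neq0$, dividing by $e^{t\lambda_1}$ and letting $t\to+\infty$ gives $\phi_\infty([x])=\lim_{t\to+\infty}\exp(t\beta)[x]=[x_1]=\hat\pi_{W(Q)}([x])$; the hypothesis $x_1\neq0$ is precisely the condition $[x]\in\mathbb P(V)\setminus\mathbb P(W(Q)^\perp)$, i.e.\ $[x]$ lies in the domain of $\hat\pi_{W(Q)}$. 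Restricting to $\OO$, Proposition \ref{restriction-closed-orbit}(c) says the unstable manifold of the maximum of $\mup^\beta|_\OO$ is $\OO\setminus\mathbb P(W(Q)^\perp)$, and by the Morse--Bott decomposition (Theorem \ref{decomposition}) $\phi_\infty$ maps this unstable manifold onto its critical component $\mathrm{Max}_{\OO}(\beta)=\mathcal O(Q)$. Since $\hat\pi_{W(Q)}$ coincides with $\phi_\infty$ on $\OO\setminus\mathbb P(W(Q)^\perp)$, and $\mathcal O(Q)=\mathbb P(W(Q))\cap\OO$ is disjoint from $\mathbb P(W(Q)^\perp)$ and fixed pointwise by $\phi_\infty$, we conclude $\hat\pi_{W(Q)}(\OO)=\mathcal O(Q)$, which is (4).

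\textbf{Main obstacle.} The genuine content has already been carried out upstream, in Theorem \ref{parabloic-representation} (uniqueness and description of the closed $Q$-orbit) and in Corollary \ref{orbitope}; what remains is careful bookkeeping. The two points I would be most careful about are: in (4), not conflating the globally defined ambient meromorphic map $\hat\pi_{W(Q)}$ with the gradient flow $\phi_\infty$ — they agree only on the common domain $\OO\setminus\mathbb P(W(Q)^\perp)$, and the lower unstable strata $\OO\cap\mathbb P(W(Q)^\perp)$ must be discarded; and in (2), the reliance on Corollary \ref{orbitope} to the effect that $\mathrm{conv}(\mup(\mathbb P(V)))=\mathrm{conv}(\mup(\OO))$, which is what makes $F_\beta(\mathcal E)$ encode the \emph{ambient} maximum of $\mup^\beta$ and hence its preimage all of $\mathbb P(W(Q))$, rather than just its maximum over $\OO$.
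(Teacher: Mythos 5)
Your proposal is correct and follows essentially the same route as the paper: (3) from Theorem \ref{parabloic-representation}, (1) from Remark \ref{g-invariance} together with Theorem 1.1 of \cite{biliotti-ghigi-heinzner-2}, (2) from Corollary \ref{orbitope} and the explicit eigenspace formula for $\mup^\beta$, and (4) from identifying $\hat\pi_{W(Q)}$ with the gradient-flow limit $\phi_\infty$ and invoking the Morse--Bott decomposition. Your treatment of (2) and of the domain of $\hat\pi_{W(Q)}$ in (4) is in fact slightly more explicit than the paper's, which asserts these points with less justification.
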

\section{$\tau$-connected subspaces, parabolic subgroups of $G$ and gradient map}\label{satake-new-description}
In this section we explicitly determine, up to $K$-equivalence, the irreducible representations of parabolic subgroups of $G$ induced by $\tau$.

Given a $W\subseteq V$, $W\neq \{0\}$, set
$
Q(W):=\{g\in G:\, g(W)=W\}.
$
\begin{defin}
$W$ is a $\tau$-connected subspace if $Q(W)$ is parabolic and acts irreducibly on $W$.
\end{defin}
Let $\mathcal O(Q(W))$ be the unique closed orbit of $Q(W)$ contained in $\mathcal O$. By Theorem \ref{flow-parabolic}, we have
\[
\mup(\mathcal O(Q(W)))=\mathrm{ext}\, F_W,
\]
where $F_W \in \mathscr F (\mathcal E)$.
Now, $\mathcal E=\mathrm{conv} ( \mup(\mathcal O))$ and $\mup(\mathcal O)$ is a $K$-orbit. The $K$-action extends to a $G$ action on $\mup(\mathcal O)$ \cite{heinzner-stoetzel-global}. The set of the extreme points of $F_W$ is contained in $\OO$. We define
\[
Q_{F_W}=\{h\in G:\, h\, \mathrm{ext}\, F_W=\mathrm{ext}\, F_W \},\qquad H_F= K\cap Q_{F_W}.
\]
\begin{prop}\label{stabilizzatore-faccie}
$Q_{F_W}=Q(W)$. Moreover, if $\beta \in \mathrm{C}_{F_W}^{H_F}$, then $G^{\beta+}=Q(W)$. Hence $Q(W)$ only depends on the face $F_W$.
\end{prop}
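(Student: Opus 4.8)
The plan is to show that both $Q(W)$ and $Q_{F_W}$ coincide with one and the same distinguished parabolic $G^{\beta+}$. Since $W$ is $\tau$-connected, $Q(W)$ is parabolic, so $Q(W)=G^{\beta+}$ for some $\beta\in\liep$. By Theorem \ref{parabloic-representation} the irreducible $Q(W)$-module $W$ must equal the maximal eigenspace $V_1$ of $\beta$ (the unique subspace on which $G^{\beta+}$ acts irreducibly), and $\mathcal O(Q(W))=\mathbb P(W)\cap\OO=\mathrm{Max}_{\OO}(\beta)$. Theorem \ref{flow-parabolic} then gives $\mup(\mathcal O(Q(W)))=\mathrm{ext}\,F_\beta(\mathcal E)$, so $\mathrm{ext}\,F_W=\mathrm{ext}\,F_\beta(\mathcal E)$ and hence $F_W=F_\beta(\mathcal E)$ (a face is the convex hull of its extreme points), and moreover $\mup^{-1}(F_W)=\mathbb P(W)$. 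These three identities are the backbone of everything that follows.

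First I would prove $Q(W)\subseteq Q_{F_W}$. Pick $x\in\mathcal O(Q(W))$. By Remark \ref{g-invariance} we have $\mathrm{ext}\,F_W=\mup(\mathcal O(Q(W)))=Q(W)\cdot\mup(x)$, so $\mathrm{ext}\,F_W$ is a single $Q(W)$-orbit for the $G$-action on the $K$-orbit $\mup(\OO)$; hence every $g\in Q(W)$ fixes the set $\mathrm{ext}\,F_W$, i.e. $Q(W)\subseteq Q_{F_W}$. For the reverse inclusion, observe that $Q_{F_W}$ is closed and contains the parabolic $Q(W)$, hence is itself parabolic; write $Q_{F_W}=G^{\gamma+}$. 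Parabolic subgroups still have a \emph{unique} closed orbit on $\mup(\OO)$ — this is Theorem \ref{unique-parabolic} applied to the compact $G$-homogeneous manifold $\mup(\OO)$, which by Borel-Weil is realized as the closed orbit of $G$ in the projectivization of an irreducible $G$-module. On the one hand the unique closed $G^{\gamma+}$-orbit in $\mup(\OO)$ is $\mup(\mathcal O(G^{\gamma+}))=\mathrm{ext}\,F_\gamma(\mathcal E)$ (Theorem \ref{flow-parabolic} and Remark \ref{g-invariance}); on the other hand $\mathrm{ext}\,F_W$ is a single $Q(W)$-orbit, hence a single compact, thus closed, $Q_{F_W}$-orbit in $\mup(\OO)$ because $Q(W)\subseteq Q_{F_W}$. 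By uniqueness $\mathrm{ext}\,F_\gamma(\mathcal E)=\mathrm{ext}\,F_W$, so $F_\gamma(\mathcal E)=F_W$; then Theorem \ref{flow-parabolic} yields $\mathbb P(W(G^{\gamma+}))=\mup^{-1}(F_\gamma(\mathcal E))=\mup^{-1}(F_W)=\mathbb P(W)$, so $W(G^{\gamma+})=W$ and $G^{\gamma+}$ preserves $W$, whence $Q_{F_W}=G^{\gamma+}\subseteq Q(W)$. This proves $Q_{F_W}=Q(W)$; the assertion that $Q(W)$ depends only on $F_W$ is then immediate, since $Q_{F_W}$ is defined purely in terms of $F_W$.

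It remains to treat an arbitrary $\beta\in\mathrm{C}_{F_W}^{H_F}$, where $H_F=K\cap Q_{F_W}=K\cap Q(W)$ (such $\beta$ exist by Lemma \ref{u-cono}). From $F_\beta(\mathcal E)=F_W$ and Theorem \ref{flow-parabolic} we get, exactly as above, $\mathbb P(W(G^{\beta+}))=\mup^{-1}(F_W)=\mathbb P(W)$, so $W(G^{\beta+})=W$, $G^{\beta+}$ preserves $W$, and $G^{\beta+}\subseteq Q(W)$. For the opposite inclusion, conjugate by an element of $K$ so that $Q(W)=Q_I$ is a standard parabolic; then $H_F=K_I$. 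Being fixed by $K_I$, the vector $\beta$ commutes with $\liek_I=\Lie(K_I)$; since $\liek_I$ contains the compact parts of the $\mathfrak{sl}_2$-triples attached to the roots of $\roots_I$, a direct analysis of the $K_I$-fixed vectors of $\liep$ forces $\beta\in\lia_I$, while the constraint $\beta\in\mathrm{C}_{F_W}$ keeps $\beta$ dominant; by \eqref{para-dec} this gives $\lieq_I\subseteq\lieg^{\beta+}$, i.e. $Q_I\subseteq G^{\beta+}$. Therefore $G^{\beta+}=Q_I=Q(W)$.

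The main obstacle is the reverse inclusion $Q_{F_W}\subseteq Q(W)$: one must promote "$Q_{F_W}$ is a parabolic containing $Q(W)$ and stabilizing $F_W$" to "$Q_{F_W}=Q(W)$", and the crucial input is the transfer of the uniqueness-of-closed-orbit result of Section \ref{representation-gradient map} to the $G$-homogeneous space $\mup(\OO)$, which makes the two descriptions of the closed $Q_{F_W}$-orbit — through $F_W$ and through $F_\gamma(\mathcal E)$ — necessarily agree. The other delicate point is the last step, where the $H_F$-invariance of $\beta$ is precisely what cuts the normal cone $\mathrm{C}_{F_W}$ down to the sub-cone on which $G^{\beta+}$ is constantly equal to $Q(W)$.
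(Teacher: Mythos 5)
Your argument for the first assertion, $Q_{F_W}=Q(W)$, is a genuinely different route from the paper's and is essentially sound: for $Q(W)\subseteq Q_{F_W}$ you use Remark \ref{g-invariance} to see that $\mathrm{ext}\,F_W=Q(W)\cdot \mup(x)$ is a single $Q(W)$-orbit (the paper instead writes $g=kp$ via $G=KQ_{F_W}$ and reduces to the $K$-equivariance of $\mup$); for the reverse inclusion you re-derive, via a uniqueness-of-closed-orbit argument on $\mup(\OO)$, what the paper simply imports from \cite[Proposition 3.8]{biliotti-ghigi-heinzner-2}, namely that $Q_{F_W}=G^{\beta+}$ for $\beta\in \mathrm{C}_{F_W}^{H_F}$. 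Your transfer of Theorem \ref{unique-parabolic} to $\mup(\OO)$ is asserted rather than carried out (one must realize the extended $G$-action on the $K$-orbit $\mup(\OO)$ as the action on the closed $G$-orbit in some $\PP(V')$ compatibly with Remark \ref{g-invariance}), but this can be repaired; the paper's citation short-circuits all of it.

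The genuine gap is in the last step, the ``moreover'' clause. You only prove $G^{\beta+}\subseteq Q(W)$ honestly; for the reverse inclusion $Q(W)\subseteq G^{\beta+}$ you claim that, after conjugating $Q(W)$ to a standard parabolic $Q_I$, ``a direct analysis of the $K_I$-fixed vectors of $\liep$ forces $\beta\in\lia_I$.'' That claim is false as stated: $K_I$-fixedness alone does not confine $\beta$ to $\lia_I$. Already for $G=\mathrm{SL}(2,\R)$ and $I=\emptyset$ one has $K_\emptyset=M\cap K=\{\pm \mathrm{Id}\}$, which acts trivially on $\liep$ by the adjoint action, so $\liep^{K_\emptyset}=\liep$ is strictly larger than $\lia$. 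The correct statement must use the condition $F_\beta(\mathcal E)=F_W$ in an essential way together with the $H_F$-invariance, and this is precisely the content of \cite[Proposition 3.8]{biliotti-ghigi-heinzner-2} (restated as Theorem \ref{parabolici-faccie}(b) here), whose proof is not a formality. The paper's strategy makes this issue disappear: it quotes $G^{\beta+}=Q_{F_W}$ for every $\beta\in\mathrm{C}_{F_W}^{H_F}$ from that reference, which, combined with the elementary inclusion $Q(W)\subseteq Q_{F_W}$, yields both $Q_{F_W}=Q(W)$ and $G^{\beta+}=Q(W)$ simultaneously. Your proof should either invoke that result or supply a complete argument that $\lieq_I\subseteq\lieg^{\beta+}$; the sketch you give does not do so.
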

\begin{proof}
Let $\beta\in \mathrm{C}_{F_W}^{H_F}$. By Theorem \ref{flow-parabolic},  $\mup^{-1}(F_\beta (\mathcal E))=\mathbb P(W)$ and $W(G^{\beta+})=W$. This implies  $G^{\beta+}\subseteq Q(W)$. Since $G^{\beta+}=Q_{F_W}$ \cite[Proposition 3.8, $p.598$]{biliotti-ghigi-heinzner-2}, it follows that $Q_{F_W}\subseteq Q(W)$.

Let $g\in Q(W)$. By Proposition \ref{parabolic-group}, $g=kp$ for some $k\in K$ and some $p\in Q_{F_W}$. Then $gW=W$ implies $kW=W$ and so,  keeping in mind that $\mathcal O(Q(W))=\mathbb P (W)\cap \mathcal O$, $k$ preserves $\mathcal O(Q(W))$.
By the $K$-equivariance of $\mup$, we get $k\, \mathrm{ext}\, F_W=\mathrm{ext}\, F_W$ and so $k\in H_F\subset Q_{F_W}$. This proves $Q(W)\subseteq Q_{F_W}$, concluding the proof.
\end{proof}
A $\tau$-connected subspace $W$ is completely determined by the closed orbit $\mathcal O (Q(W))$.
\begin{prop}\label{orbita-sottospazio}
Let $W_1,W_2 \subseteq V$ be two $\tau$-connected subspaces. Then $W_1 \subseteq W_2$ if and only if $\mathcal O (Q(W_1))\subseteq \mathcal O(Q(W_2))$.
\end{prop}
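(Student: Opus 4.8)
The plan is to translate both implications into statements about the faces $F_{W_1},F_{W_2}$ of $\mathcal E$, using the dictionary built up in Section~\ref{representation-gradient map}. First I would record the identifications attached to a $\tau$-connected subspace $W$. Since $Q(W)$ is parabolic, Proposition~\ref{stabilizzatore-faccie} lets us write $Q(W)=G^{\beta+}$ for some $\beta\in\liep$; by Theorem~\ref{parabloic-representation} the eigenspace $W(G^{\beta+})$ of $\beta$ for its largest eigenvalue is the \emph{unique} subspace of $V$ on which $G^{\beta+}$ acts irreducibly, so the irreducibility of the $Q(W)$-action on $W$ forces $W=W(G^{\beta+})$. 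Feeding this into Theorem~\ref{flow-parabolic} gives the three facts I will use: $\mathbb P(W)=\mup^{-1}(F_W)$, $\mathbb P(W)\cap\mathcal O=\mathcal O(Q(W))$, and $\mup(\mathcal O(Q(W)))=\ext F_W$, where $F_W\in\mathscr F(\mathcal E)$ is the face determined by $\ext F_W=\mup(\mathcal O(Q(W)))$ (recall a face is the convex hull of its extreme points, so $F_W$ depends only on $W$).

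For the implication $W_1\subseteq W_2\Rightarrow\mathcal O(Q(W_1))\subseteq\mathcal O(Q(W_2))$ I would just note that $W_1\subseteq W_2$ gives $\mathbb P(W_1)\subseteq\mathbb P(W_2)$, whence $\mathcal O(Q(W_1))=\mathbb P(W_1)\cap\mathcal O\subseteq\mathbb P(W_2)\cap\mathcal O=\mathcal O(Q(W_2))$. For the converse, assume $\mathcal O(Q(W_1))\subseteq\mathcal O(Q(W_2))$; applying $\mup$ and the third fact above gives $\ext F_{W_1}\subseteq\ext F_{W_2}$, and by Minkowski's theorem $F_{W_1}=\conv(\ext F_{W_1})\subseteq\conv(\ext F_{W_2})=F_{W_2}$. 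Hence $\mathbb P(W_1)=\mup^{-1}(F_{W_1})\subseteq\mup^{-1}(F_{W_2})=\mathbb P(W_2)$, i.e. $W_1\subseteq W_2$. Equivalently one can bypass the faces: $\mathcal O(Q(W_1))$ is full in $\mathbb P(W_1)$ by Theorem~\ref{teo-parabolici}, while the assumed inclusion puts it inside $\mathbb P(W_1)\cap\mathbb P(W_2)=\mathbb P(W_1\cap W_2)$, forcing $W_1\cap W_2=W_1$.

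The proof is short once the machinery of Section~\ref{representation-gradient map} is available, so there is no genuine obstacle; the only point that needs care is the identification $W=W(Q(W))$ for a $\tau$-connected $W$ — that irreducibility of the parabolic action singles out $W$ as the top eigenspace of the corresponding $\beta$ — after which everything is a formal consequence of Theorems~\ref{parabloic-representation} and~\ref{flow-parabolic} together with the Minkowski theorem on extreme points.
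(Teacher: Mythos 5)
Your proof is correct. The forward implication is exactly the paper's: $W_1\subseteq W_2$ gives $\mathcal O(Q(W_1))=\mathbb P(W_1)\cap\mathcal O\subseteq\mathbb P(W_2)\cap\mathcal O=\mathcal O(Q(W_2))$. For the converse, the paper uses only the one-line observation that the linear span of $\mathcal O(Q(W_1))$ is $\mathbb P(W_1)$ (fullness, from Theorem~\ref{teo-parabolici}), so $\mathcal O(Q(W_1))\subseteq\mathcal O(Q(W_2))\subseteq\mathbb P(W_2)$ forces $\mathbb P(W_1)\subseteq\mathbb P(W_2)$ --- this is precisely the ``bypass'' you offer at the end. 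Your primary route, applying $\mup$ to get $\ext F_{W_1}\subseteq\ext F_{W_2}$, invoking Minkowski to pass to $F_{W_1}\subseteq F_{W_2}$, and then pulling back via $\mup^{-1}(F_W)=\mathbb P(W)$, is also valid but leans on the heavier machinery of Theorem~\ref{flow-parabolic} (and essentially re-derives Corollary~\ref{tau-faccie}, which the paper proves \emph{after} this proposition using it); the fullness argument is the more economical and logically cleaner choice here. Your preliminary care in checking $W=W(Q(W))$ via the uniqueness clause of Theorem~\ref{parabloic-representation} is a point the paper leaves implicit and is worth making explicit.
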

\begin{proof}
The linear span of $\mathcal O(Q(W_1))$ is $\mathbb P(W_1)$. Hence, if $\mathcal O (Q(W_1))\subseteq \mathcal O(Q(W_2))$, then $\mathbb P(W_1) \subseteq \mathbb P(W_2)$.

If $W_1 \subseteq W_2$, then $\mathbb P (W_1 ) \cap \mathcal O =\mathcal O (Q(W_1))\subseteq \mathbb P(W_2) \cap \mathcal O=\mathcal O (Q(W_2))$ and the result follows.
\end{proof}
\begin{cor}\label{tau-faccie}
Let $W_1,W_2 \subseteq V$ be $\tau$-connected subspaces. Then $W_1\subseteq W_2$ if and only if $F_{W_1}\subseteq F_{W_2}$. Moreover $W_1=W_2$ if and only if $\mathrm{relint}\, F_{W_1} = \mathrm{relint}\, F_{W_1}$.
\end{cor}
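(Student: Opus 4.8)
The plan is to move the inclusion relation back and forth through the two dictionaries that are already in place. First, by Proposition~\ref{stabilizzatore-faccie}, for a $\tau$-connected subspace $W$ I may choose $\beta\in\mathrm{C}_{F_W}^{H_F}$ so that $Q(W)=G^{\beta+}$ and $F_W=F_\beta(\mathcal E)$; Theorem~\ref{flow-parabolic} then yields the identities $\mup^{-1}(F_W)=\mathbb P(W)$, $\mathcal O(Q(W))=\mathbb P(W)\cap\mathcal O=\mup^{-1}(F_W)\cap\mathcal O$ and $\mup(\mathcal O(Q(W)))=\mathrm{ext}\,F_W$. Second, Proposition~\ref{orbita-sottospazio} says $W_1\subseteq W_2$ if and only if $\mathcal O(Q(W_1))\subseteq\mathcal O(Q(W_2))$. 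So the assertion about faces is equivalent to an assertion about the closed orbits $\mathcal O(Q(W_i))$, and the strategy is simply to pass between faces and orbits using the above identities.

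For the implication $W_1\subseteq W_2\Rightarrow F_{W_1}\subseteq F_{W_2}$, I would argue: by Proposition~\ref{orbita-sottospazio} we get $\mathcal O(Q(W_1))\subseteq\mathcal O(Q(W_2))$, and applying $\mup$ together with $\mup(\mathcal O(Q(W_i)))=\mathrm{ext}\,F_{W_i}$ gives $\mathrm{ext}\,F_{W_1}\subseteq\mathrm{ext}\,F_{W_2}$. Each $F_{W_i}$ is a face of the compact convex body $\mathcal E$, hence is itself compact and convex (faces are closed, Subsection~\ref{comvex-geometry}), so by Minkowski's theorem it equals the convex hull of its extreme points; therefore $F_{W_1}=\mathrm{conv}(\mathrm{ext}\,F_{W_1})\subseteq\mathrm{conv}(\mathrm{ext}\,F_{W_2})=F_{W_2}$.

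For the converse $F_{W_1}\subseteq F_{W_2}\Rightarrow W_1\subseteq W_2$, I would take $\mup$-preimages and intersect with $\mathcal O$: $\mathcal O(Q(W_1))=\mup^{-1}(F_{W_1})\cap\mathcal O\subseteq\mup^{-1}(F_{W_2})\cap\mathcal O=\mathcal O(Q(W_2))$, and then Proposition~\ref{orbita-sottospazio} gives $W_1\subseteq W_2$. The one point requiring a little care — and in fact the only nontrivial point in the whole proof — is that here I must use the \emph{preimage} description $\mathcal O(Q(W_i))=\mup^{-1}(F_{W_i})\cap\mathcal O$, not a forward-image argument, since $\mup$ is not injective on $\mathcal O$ in general; this preimage description is exactly what Theorem~\ref{flow-parabolic} provides.

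Finally, for the "moreover" part: if $W_1=W_2$ then $F_{W_1}=F_{W_2}$ and so $\mathrm{relint}\,F_{W_1}=\mathrm{relint}\,F_{W_2}$; conversely, if $\mathrm{relint}\,F_{W_1}=\mathrm{relint}\,F_{W_2}$ then these relative interiors in particular meet, whence $F_{W_1}=F_{W_2}$ by Theorem~\ref{schneider-facce}, and applying the equivalence just proved twice (to $W_1\subseteq W_2$ and to $W_2\subseteq W_1$) gives $W_1=W_2$. (Here I read the typo in the statement as $\mathrm{relint}\,F_{W_1}=\mathrm{relint}\,F_{W_2}$.) There is no genuine obstacle: all the substance is packed into Theorem~\ref{flow-parabolic} and Proposition~\ref{orbita-sottospazio}, and what remains is bookkeeping in convex geometry.
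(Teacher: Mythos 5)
Your argument is correct and follows essentially the same route as the paper's own proof: the forward implication via $\mathcal O(Q(W_1))\subseteq\mathcal O(Q(W_2))$, the identity $\mup(\mathcal O(Q(W_i)))=\mathrm{ext}\,F_{W_i}$ and Minkowski's theorem; the converse via the preimage description $\mathcal O(Q(W_i))=\mup^{-1}(F_{W_i})\cap\mathcal O$ and Proposition \ref{orbita-sottospazio}; and the ``moreover'' part via Theorem \ref{schneider-facce}. You also correctly identify the typo in the statement, and your remark that the converse must go through preimages rather than forward images is exactly the point the paper's proof implicitly relies on.
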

\begin{proof}
If $W_1\subseteq W_2$, then $\mathcal O(Q(W_1))=\mathbb P(W_1) \cap \mathcal O\subseteq \mathbb P (W_2) \cap \mathcal O=\mathcal O(Q(W_2))$, and so
\[
\mathrm{ext}\, F_{W_1}=\mup(\mathcal O(Q(W_1)))\subseteq \mup(\mathcal O (Q(W_2)))=\mathrm{ext}\, F_{W_2}.
\]
Therefore $F_{W_1}=\mathrm{conv}\, (\mathrm{ext}\, F_{W_1}) \subseteq \mathrm{conv}\, ( \mathrm{ext}\, F_{W_2})=F_{W_2}$.
Vice-versa, if $F_{W_1}\subseteq F_{W_2}$, then
\[
\mathcal O(Q(W_1))=\mup^{-1}(F_{W_1} ) \cap \mathcal O \subseteq \mup^{-1} (F_{W_2 })\cap \mathcal O=\mathcal O(Q(W_2 )).
\]
By Proposition \ref{orbita-sottospazio}, we get $W_1\subseteq W_2$. The last item follows from Theorem \ref{schneider-facce}. Indeed, $F_{W_1}=F_{W_2}$ if and only if $\mathrm{relint}\, F_{W_1}=\mathrm{relint} F_{W_2}$.
\end{proof}
\begin{remark}
Let $F\subseteq \mathcal E$ be a face. By Lemma \ref{face-chain}, there exists a maximal  chain
\[
F=F_0 \subsetneq F_1\subsetneq \cdots \subsetneq F_k=\mathcal E
\]
of faces. By Theorem \ref{flow-parabolic},  $\mup^{-1}(F_i)=\mathbb P(W_i)$ and $W_i$ is a $\tau$-connected subspaces of $V$. By Corollary \ref{tau-faccie}, we get a chain
\[
\mathcal O(Q(W_0))\subsetneq \mathcal O( Q(W_1)) \subsetneq \cdots \subsetneq \mathcal O (Q(W_{k-1})) \subsetneq \mathcal O
\]
of homogeneous submanifolds of $\mathbb P(V)$.
\end{remark}
Set $\mathcal H (\tau)=\left\{(W,Q(W)): W\, \mathrm{is\ } \tau-\mathrm{connected\ subspace\ of\ }V \right\}$. The following Lemma is easy to check.
\begin{lemma}\label{action-tauconnected}
Let $W$ be a $\tau$-connected subspace of $V$ and let $g\in G$. Then
\begin{enumerate}
\item $gW$ is a $\tau$-connected subspace;
\item there exists $k\in K$ such that $gW=kW$;
\item $Q(gW)=gQ(W)g^{-1}=kQ(W)k^{-1}$
\end{enumerate}
\end{lemma}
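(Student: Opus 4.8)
The plan is to verify each of the three items essentially by transport of structure under the $G$-action, using that $Q(W)$ is the stabilizer of $W$ and that parabolicity and irreducibility are preserved by conjugation. For item (1), let $W$ be $\tau$-connected and $g\in G$. First I would observe that $Q(gW)=\{h\in G:\, h(gW)=gW\}=gQ(W)g\meno$; this is immediate from the definition of $Q(\cdot)$ and already gives the first half of item (3). Since conjugation by $g$ is an automorphism of $G$ carrying parabolic subgroups to parabolic subgroups (a parabolic subgroup is the normalizer of a parabolic subalgebra, and $\Ad(g)$ carries parabolic subalgebras to parabolic subalgebras), $Q(gW)$ is parabolic. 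Moreover the representation of $Q(gW)$ on $gW$ is equivalent to the representation of $Q(W)$ on $W$ via the intertwiner $w\mapsto gw$, hence irreducible. Thus $gW$ is $\tau$-connected.

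For item (2), the key point is that $G$ admits a unique closed orbit $\mathcal O$ in $\mathbb P(V)$, which is a $K$-orbit, and that by Theorem \ref{flow-parabolic} together with Proposition \ref{orbita-sottospazio} a $\tau$-connected subspace $W$ is recovered from the closed orbit $\mathcal O(Q(W))=\mathbb P(W)\cap \mathcal O$ as its linear span. I would argue as follows. Write $\mathcal O=K\cdot x_0$. By Theorem \ref{flow-parabolic}(2) applied to $Q(W)=G^{\beta+}$ for a suitable $\beta$ (using Proposition \ref{stabilizzatore-faccie}), we have $\mup\meno(F_W)=\mathbb P(W)$ with $F_W=F_\beta(\mathcal E)$ the exposed face of $\mathcal E$ determined by $W$, and $\mup(\mathcal O(Q(W)))=\ext F_W$. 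Now $g$ acts on $\mathcal O$ and carries the closed orbit $\mathcal O(Q(W))$ of $Q(W)$ to the closed orbit $g\cdot\mathcal O(Q(W))$ of $Q(gW)=gQ(W)g\meno$; by Remark \ref{g-invariance} this equals $\mup\meno(g\cdot F_W)\cap\mathcal O$ where $g\cdot F_W$ is again an exposed face of $\mathcal E$ (the $K$-action on faces of $\mathcal E$ extends to a $G$-action since it does so on $\mup(\mathcal O)$, cf. \cite{heinzner-stoetzel-global}). Since $g\cdot F_W$ is a face of $\mathcal E$ and the $K$-action on $\mathscr F(\mathcal E)$ is transitive on each $K$-orbit of faces, there is $k\in K$ with $k\cdot F_W=g\cdot F_W$; because a $\tau$-connected subspace is determined by its associated face (Corollary \ref{tau-faccie}) and $kW$, $gW$ are $\tau$-connected with $F_{kW}=k\cdot F_W=g\cdot F_W=F_{gW}$, Corollary \ref{tau-faccie} gives $kW=gW$.

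Item (3) then follows by combining the two computations: $Q(gW)=gQ(W)g\meno$ from the definition as in item (1), and $Q(gW)=Q(kW)=kQ(W)k\meno$ from item (2) applied with $k$ in place of $g$ (noting $k\in K\subset G$). The main obstacle I anticipate is the precise justification in item (2) that the $K$-action on $\mathscr F(\mathcal E)$ has the same orbits as the extended $G$-action restricted to faces coming from $\tau$-connected subspaces — i.e. that one may replace $g$ by an element of $K$ at the level of faces. This is exactly where the rigidity of the polar orbitope $\mathcal E$ and the identification $\mup(\mathcal O)\cong K$-orbit (with $G$-action extending the $K$-action) enter; once that is in hand, Corollary \ref{tau-faccie} does the rest with no further computation.
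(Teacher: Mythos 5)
Items (1) and (3) of your proposal are fine: the identity $Q(gW)=gQ(W)g\meno$ is immediate from the definition of $Q(\cdot)$ as a stabilizer, conjugation preserves parabolicity, and the map $w\mapsto gw$ intertwines the two representations, so irreducibility transfers; item (3) then follows formally once (2) is known. The problem is item (2), and you have in fact diagnosed it yourself: the sentence ``the $K$-action on $\mathscr F(\mathcal E)$ is transitive on each $K$-orbit of faces, there is $k\in K$ with $k\cdot F_W=g\cdot F_W$'' is a tautology that does not show $g\cdot F_W$ lies in the $K$-orbit of $F_W$, and your closing paragraph concedes that this is exactly the missing step. As written, the argument for (2) is therefore circular: the assertion that one may replace $g$ by an element of $K$ at the level of faces is essentially equivalent to the statement being proved. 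There is also a secondary issue in the same passage: the extended $G$-action on $\mup(\mathcal O)$ is not by affine maps of $\liep$, so ``$g\cdot F_W$ is again an exposed face of $\mathcal E$'' needs justification (one only knows that $g$ maps $\ext F_W=\mup(\mathcal O(Q(W)))$ to the $\mup$-image of another closed orbit); you are importing heavy machinery that creates new obligations without discharging the original one.

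The paper offers no proof (it declares the lemma ``easy to check''), and the intended argument for (2) is a one-liner that bypasses the orbitope entirely: since $Q(W)$ is parabolic, $K$ acts transitively on $G/Q(W)$, equivalently $G=KQ(W)$ (this is how Proposition \ref{parabolic-group} is used in the proof of Proposition \ref{stabilizzatore-faccie}; it follows from the Iwasawa decomposition because $Q(W)$ contains a conjugate of $AN$). Writing $g=kq$ with $k\in K$ and $q\in Q(W)$ gives $gW=kqW=kW$, and then (3) reads $Q(gW)=gQ(W)g\meno=kqQ(W)q\meno k\meno=kQ(W)k\meno$. I recommend replacing your face-theoretic argument for (2) by this decomposition; your versions of (1) and (3) can stand as they are.
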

$G$ acts on $\mathcal H (\tau)$ as follows:
\[
g(W,Q(W)):=(gW,gQ(W)g^{-1}).
\]
\begin{prop}\label{tau-connected-face}
The map
\[
\mathscr{Z} : \mathcal H (\tau) \lra \mathscr{F} ( \mathcal E), \qquad (Q,Q(W)) \mapsto F_W.
\]
is $K$-equivariant and bijective.
\end{prop}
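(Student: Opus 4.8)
The plan is to show that the map $\mathscr{Z}$ is well-defined, $K$-equivariant (in fact compatible with the two $G$-actions, via Lemma \ref{action-tauconnected}), injective, and surjective. Well-definedness is immediate: if $W$ is a $\tau$-connected subspace, then by Theorem \ref{flow-parabolic}(2) one has $\mup^{-1}(F) = \mathbb P(W)$ for the face $F = F_W \in \mathscr{F}(\mathcal E)$ defined by any $\beta$ with $Q(W) = G^{\beta+}$, and this $F$ depends only on $W$. For $K$-equivariance, take $k \in K$ and a $\tau$-connected $W$. By Lemma \ref{action-tauconnected}, $kW$ is $\tau$-connected with $Q(kW) = kQ(W)k^{-1}$; since $\mup$ is $K$-equivariant and $\mup(\mathcal O(Q(W))) = \ext F_W$ (Theorem \ref{flow-parabolic}(1)), we get $\ext F_{kW} = \mup(\mathcal O(Q(kW))) = \mup(k \cdot \mathcal O(Q(W))) = k\cdot \ext F_W = \ext(k F_W)$, hence $F_{kW} = k F_W$ by taking convex hulls. (The same argument with $g \in G$ and the extended $G$-action on $\mup(\mathcal O)$ shows $\mathscr{Z}$ intertwines the $G$-actions.)

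For injectivity, suppose $W_1, W_2$ are $\tau$-connected with $F_{W_1} = F_{W_2}$. Then by Corollary \ref{tau-faccie}, $W_1 \subseteq W_2$ and $W_2 \subseteq W_1$, so $W_1 = W_2$, and consequently $Q(W_1) = Q(W_2)$; thus $(W_1,Q(W_1)) = (W_2,Q(W_2))$. So injectivity is essentially a restatement of Corollary \ref{tau-faccie}. Surjectivity is the point requiring the most care: given a face $F \in \mathscr{F}(\mathcal E)$, I must produce a $\tau$-connected subspace $W$ with $F_W = F$. Here I would invoke that every face of $\mathcal E$ is exposed (Corollary \ref{orbitope}), so $F = F_\beta(\mathcal E)$ for some $\beta$; in fact by Lemma \ref{u-cono} the cone $\mathrm{C}_F$ contains a point fixed by $H_F = K \cap Q_F$, so we may take $\beta \in \mathrm{C}_F^{H_F} \subset \liep$. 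Set $W := W(G^{\beta+})$, the unique irreducible submodule of $G^{\beta+}$ from Theorem \ref{parabloic-representation}; this is a $\tau$-connected subspace by definition (since $Q(W) = G^{\beta+} = Q_F$ by Proposition \ref{stabilizzatore-faccie}, which is parabolic and acts irreducibly on $W$). Then Theorem \ref{flow-parabolic}(2) gives $\mup^{-1}(F_\beta(\mathcal E)) = \mathbb P(W)$, i.e. $\mup^{-1}(F) = \mathbb P(W)$, which forces $F_W = F$; hence $\mathscr{Z}(W, Q(W)) = F$.

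The main obstacle I anticipate is bookkeeping around surjectivity: one must be sure that the $\beta$ chosen from $\mathrm{C}_F^{H_F}$ genuinely recovers $F$ as $F_\beta(\mathcal E)$ (not some larger exposed face), and that the resulting $W(G^{\beta+})$ does not depend on the choice of $\beta$ within $\mathrm{C}_F^{H_F}$ — but both points are already secured by Proposition \ref{stabilizzatore-faccie}, which says $G^{\beta+} = Q(W)$ depends only on $F$, together with the identification $\mup^{-1}(F_\beta(\mathcal E)) = \mathbb P(W)$ from Theorem \ref{flow-parabolic}. So the proof is really an assembly of Corollary \ref{tau-faccie}, Proposition \ref{stabilizzatore-faccie}, Theorem \ref{flow-parabolic}, and Lemma \ref{action-tauconnected}, with Corollary \ref{orbitope} supplying the exposedness needed to start the surjectivity argument; no new estimates are required.
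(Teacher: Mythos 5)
Your proof is correct and follows the same route as the paper, whose own argument is just the two-line observation that $K$-equivariance follows from Lemma \ref{action-tauconnected} and bijectivity from Theorem \ref{flow-parabolic} together with Corollary \ref{tau-faccie}. You have merely filled in the details (in particular the surjectivity step via exposedness of faces, Lemma \ref{u-cono} and Proposition \ref{stabilizzatore-faccie}), all of which the paper leaves implicit.
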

\begin{proof}
By Lemma \ref{action-tauconnected}, the map $\mathscr{Z}$ is $K$-equivariant. By Theorem \ref{flow-parabolic} and Corollary \ref{tau-faccie}, the map $\mathscr Z$ is bijective.
\end{proof}
Let $\lia\subset \liep$ be a maximal Abelian subalgebra and let $P=\mua(\OO)=\mathcal E \cap \lia$.  Fix a system of root of simple
roots $\Pi \subset \Delta = \Delta (\lieg, \lia)$.  We denote by $\lia_+$ the positive Weyl chamber associated to $\Pi$. Now,  $\mup(\OO) \cap \lia_+=\{x\}$ and by Kostant convexity Theorem \cite{kostant-convexity}, $P=\mathrm{conv}(\mathcal W \cdot x)$.

A subset $B\subset
\lia^*$ is \emph{connected} if there is no pair of disjoint subsets
$D,C\subset B$ such that $D\sqcup C =B$, and $D$ and $C$ are two subsets orthogonal with respect to the Killing Cartan form. Connected components are defined as
usual.  If $x $ is a nonzero vector of $\lia$, a subset $I
\subset\Pi$ is called $x$-\enf{connected} if any connected component of $I$ contains at least one root
$\alfa$ such that $\alfa (x) \neq 0$.  If $I\subset \Pi$ is
$x$-connected, denote by $I'$ the collection of all simple roots
orthogonal to $\{ x\}\cup I$, i.e., if $\alpha \in I'$, then $\alpha$ is orthogonal to $I$ and satisfies $\alpha(x)=0$.  The set $J:=I\cup I'$ is called the
$x$-\enf{saturation} of $I$.  The largest $x$-connected subset
contained in $J$ is $I$. So $J$ is determined by $I$ and $I$ is
determined by $J$.  Given a subset $I\subset \simple$ we denote
by $Q_I$ the parabolic subgroup with Lie algebra $\lieq_I$ as defined
in \eqref{para-dec}.
In \cite{biliotti-ghigi-heinzner-2}, see also \cite{biliotti-ghigi-heinzner-1,kobert-Scheiderer}, the following theorem is proved.
\begin{thm}\label{parabolici-faccie}
Let $x\in \lia_+ $ be a point such that $P=\mathrm{conv}(\mathcal W \cdot x)$. Then
\begin{enumerate}
\item Let $I \subset \Pi$ be a $x$-connected subset and let $J$ be its $x$-saturation. Then $Q_I\cdot x=Q_J \cdot x$ and $F:=\mathrm{conv}(Q_I\cdot x)$ is a face of $\mathcal E$. Moreover,
$F=F_\beta (\mathcal E)=\mathrm{conv}(K^{\beta} \cdot x)$, where $\beta \in \lia$ satisfies $Q_I=G^{\beta+}$;
\item  let $\beta'\in \lia$ be such that $Q_J=G^{\beta'+}$. Then $F=F_{\beta'} (\mathcal E)=\mathrm{conv}(K^{\beta'}\cdot x)$. Moreover $\{h\in G:\, h\,\ext{F}=\ext{F}\}=Q_J=G^{\beta'+}$;
\item Any face of  $\mathrm{Conv}(\mathcal E)$ is conjugate to one of the faces constructed
    in (a).
\end{enumerate}
\end{thm}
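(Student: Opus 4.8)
The plan is to reduce everything to the slice polytope $P=\mathrm{conv}(\mathcal W\cdot x)$, where the statements are pure reflection--group combinatorics, and then to transport them to the polar orbitope $\mathcal E$. Three facts make the transport possible: every face of $\mathcal E$ is exposed (Corollary \ref{orbitope}); an exposed face $F_\beta(\mathcal E)$ equals $\mathrm{conv}\,\mup(\mathcal O(G^{\beta+}))$ with $\mathrm{ext}\,F_\beta(\mathcal E)=\mup(\mathcal O(G^{\beta+}))$ equal to the extended $G$-orbit of any of its points (Theorem \ref{flow-parabolic}, Remark \ref{g-invariance}); and the bijection $\mathscr F(P)/\mathcal W\cong\mathscr F(\mathcal E)/K$, realized by intersecting a face of $\mathcal E$ with $\lia$, recalled in the introduction.

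\emph{Part (a).} For the $x$-connected set $I$ I would take $\beta\in\lia_I$ with $\alpha(\beta)>0$ for all $\alpha\in\Pi\setminus I$; this describes exactly the $\beta$ with $\lieg^{\beta+}=\lieq_I$, i.e. $G^{\beta+}=Q_I$. Since $\beta$ and $x$ both lie in $\overline{\lia_+}$, the vertex $x$ of $P$ realizes $\max_{\mathcal E}\langle\,\cdot\,,\beta\rangle$ (equivalently $\max_P\langle\,\cdot\,,\beta\rangle$, by Kostant), so $x\in F_\beta(\mathcal E)$; as $x\in\mup(\mathcal O)=\mathrm{ext}\,\mathcal E$ this gives $x\in\mathrm{ext}\,F_\beta(\mathcal E)=\mup(\mathcal O(Q_I))$. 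Picking $w\in\mathcal O(Q_I)$ with $\mup(w)=x$ and applying Remark \ref{g-invariance} to the closed orbit $Q_I\cdot w=\mathcal O(Q_I)$ identifies, for the extended $G$-action on $\mup(\mathcal O)$, $\mathrm{ext}\,F_\beta(\mathcal E)=\mup(Q_I\cdot w)=Q_I\cdot x=K^\beta\cdot x$. Taking convex hulls shows $F:=\mathrm{conv}(Q_I\cdot x)=F_\beta(\mathcal E)=\mathrm{conv}(K^\beta\cdot x)$ is the exposed face of $\mathcal E$ defined by $\beta$; in particular it is a face. It remains to see $Q_I\cdot x=Q_J\cdot x$. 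Let $\beta'\in\lia_J$ satisfy $G^{\beta'+}=Q_J$; the same argument gives $\mathrm{conv}(Q_J\cdot x)=F_{\beta'}(\mathcal E)=\mathrm{conv}(K^{\beta'}\cdot x)$, and $Q_I\subseteq Q_J$. Intersecting with $\lia$ and using $(K\cdot x)\cap\lia=\mathcal W\cdot x$ (Kostant), the sets $(Q_I\cdot x)\cap\lia$ and $(Q_J\cdot x)\cap\lia$ are the vertex sets $\mathcal W_I\cdot x$ and $\mathcal W_J\cdot x$ of the faces $F_\beta(P)$, $F_{\beta'}(P)$ of $P$, where $\mathcal W_I$, $\mathcal W_J$ are generated by the simple reflections in $I$, $J$; since by definition of the $x$-saturation the set $I'=J\setminus I$ is orthogonal to $I$ and kills $x$, we get $\mathcal W_J=\mathcal W_I\times\mathcal W_{I'}$ with $\mathcal W_{I'}$ fixing $x$, hence $\mathcal W_J\cdot x=\mathcal W_I\cdot x$ and $F_\beta(P)=F_{\beta'}(P)$. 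So $F_\beta(\mathcal E)\subseteq F_{\beta'}(\mathcal E)$ are exposed faces of $\mathcal E$ with the same $\lia$-slice; by the bijection $\mathscr F(P)/\mathcal W\cong\mathscr F(\mathcal E)/K$ they are $K$-conjugate, and since a compact convex set contained in one of its own $K$-translates must coincide with it (equal dimension, hence equal relative volume), $F_\beta(\mathcal E)=F_{\beta'}(\mathcal E)$, i.e. $Q_I\cdot x=Q_J\cdot x$.

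\emph{Parts (b) and (c).} For (b) the equalities $F=F_{\beta'}(\mathcal E)=\mathrm{conv}(K^{\beta'}\cdot x)$ were already obtained above, so it remains to compute the $G$-stabilizer of $\mathrm{ext}\,F$. The inclusion $Q_J\subseteq\{h:\,h\,\mathrm{ext}\,F=\mathrm{ext}\,F\}$ holds because $R^{\beta'+}$ acts trivially on $\mathcal O(Q_J)$ and hence fixes $\mathrm{ext}\,F=\mup(\mathcal O(Q_J))$ pointwise (Remark \ref{g-invariance}), while $K^{\beta'}$ preserves $K^{\beta'}\cdot x=\mathrm{ext}\,F$; the reverse inclusion is obtained as in Proposition \ref{stabilizzatore-faccie} (equivalently \cite[Prop.~3.8]{biliotti-ghigi-heinzner-2}), namely that the stabilizer of an exposed face of $\mathcal E$ is the parabolic $G^{\beta+}$ attached to any linear functional defining it --- one reduces to $k\in K$ via $G=KQ_J$ (Proposition \ref{parabolic-group}) and then uses Lemma \ref{u-cono} to find inside the defining cone $\CF$ a direction fixed by $k$. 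For (c), any face of $\mathcal E$ is exposed, hence after a $K$-translation equals $F_\gamma(\mathcal E)$ with $\gamma\in\overline{\lia_+}$; its slice $F_\gamma(\mathcal E)\cap\lia=F_\gamma(P)$ is the face $\mathrm{conv}(\mathcal W_I\cdot x)$ of $P$ with $I=\{\alpha\in\Pi:\alpha(\gamma)=0\}$, and replacing $I$ by its largest $x$-connected subset $I_0$ leaves this face unchanged since $I$ and $I_0$ have the same $x$-saturation; by part (a) together with the bijection, $F_\gamma(\mathcal E)$ is therefore $K$-conjugate to $\mathrm{conv}(Q_{I_0}\cdot x)$, a face of the form produced in (a).

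\emph{Main obstacle.} The reflection--group bookkeeping on $P$ (faces of a $\mathcal W$-orbit polytope, and the role of the $x$-saturation in deciding when two standard parabolics cut out the same face) is routine. The crux is the transfer from $P$ to $\mathcal E$: one must know that an exposed face of the orbitope is determined up to the $K$-action by its $\lia$-slice --- so that $F_\beta(P)=F_{\beta'}(P)$ forces $F_\beta(\mathcal E)=F_{\beta'}(\mathcal E)$ --- that it equals the convex hull of the $K^\beta$-orbit through $x$, and that its full $G$-stabilizer is the parabolic $G^{\beta+}$ attached to any defining direction. These rest on the facial structure of invariant convex subsets of polar representations and on the gradient-map description of Section \ref{representation-gradient map}, and that is where the real work lies; once they are in hand, the theorem assembles as above.
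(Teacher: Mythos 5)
The paper offers no proof of this theorem: it is quoted from \cite{biliotti-ghigi-heinzner-2} (see also \cite{biliotti-ghigi-heinzner-1,kobert-Scheiderer}), so there is no internal argument to compare yours against. Judged on its own terms, your derivation is essentially sound within the framework the paper sets up: the choice of $\beta\in\lia_I$ with $\alpha(\beta)>0$ for $\alpha\in\Pi\setminus I$ so that $G^{\beta+}=Q_I$, the observation that $x$ maximizes $\langle\cdot\,,\beta\rangle$ on $\mathcal E$ because $\beta$ and $x$ both lie in the closed chamber and $\pi_\lia(\mathcal E)=P$, the chain $\mathrm{ext}\,F_\beta(\mathcal E)=\mup(\mathcal O(Q_I))=Q_I\cdot x=K^\beta\cdot x$ via Theorem \ref{flow-parabolic} and Remark \ref{g-invariance}, the reduction of $Q_I\cdot x=Q_J\cdot x$ to $\mathcal W_J\cdot x=\mathcal W_I\cdot x$ using that $I'=J\setminus I$ is orthogonal to $I$ and annihilates $x$, and the bookkeeping in (c) with the largest $x$-connected subset are all correct. (Minor quibble: $(K\cdot x)\cap\lia=\mathcal W\cdot x$ is the polar-section fact for the isotropy representation, not Kostant's convexity theorem.)

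The caveat is logical rather than mathematical, and you half-acknowledge it in your closing paragraph. The transfer principles on which everything pivots --- the bijection $\mathscr F(P)/\mathcal W\cong\mathscr F(\mathcal E)/K$, the fact that an exposed face of $\mathcal E$ is determined by its $\lia$-slice, and the identification of $\{h\in G:\,h\,\ext F=\ext F\}$ with $G^{\beta+}$ (Proposition \ref{stabilizzatore-faccie}, i.e. \cite[Prop.~3.8]{biliotti-ghigi-heinzner-2}) --- are themselves theorems of \cite{biliotti-ghigi-heinzner-2}, established there together with, or as consequences of, the very statement at hand. So what you have produced is a re-derivation of the theorem from its companion results: perfectly legitimate inside the present paper, where all of these are taken as known, but not an independent proof of the facial structure of polar orbitopes. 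An independent proof would have to show directly that $\mathrm{conv}(K^\beta\cdot x)$ is a face of $\mathcal E$ and that faces are determined by their $\lia$-slices, which is precisely the "crux" you defer to the cited machinery.
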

In the sequel, we denote by $F_I$ the face of $\mathcal E$ such that $\mathrm{ext}\, F_I=Q_I \cdot x$. Let  $W_I=\mup^{-1}(F_I)$. By Proposition \ref{stabilizzatore-faccie}, $Q(W_I)=Q_J$.
Applying Proposition \ref{tau-connected-face} and Theorem \ref{parabolici-faccie}, we have the following result.
\begin{cor}\label{tau-connected-mutau-connected}
Let $W$ be a $\tau$-connected. Then there exists $k\in K$ and a $\{x\}$-connected subset $I$ of $\Pi$ such that $W=kW_I$. Moreover  $Q(W)=kQ_Jk^{-1}$, where $J$ is the saturation of $I$
\end{cor}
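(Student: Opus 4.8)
The plan is to reduce the statement to the classification of faces of $\mathcal E$ already available in Theorem \ref{parabolici-faccie} and then transport it through the bijection $\mathscr Z$ of Proposition \ref{tau-connected-face}. First I would take the given $\tau$-connected subspace $W$ and form the face $F_W=\mathscr Z(W,Q(W))\in\mathscr F(\mathcal E)$. By Theorem \ref{parabolici-faccie}(c), every face of $\mathcal E$ is $K$-conjugate to one of the model faces $F_I$ with $\mathrm{ext}\,F_I=Q_I\cdot x$ for some $\{x\}$-connected $I\subset\Pi$; hence there are $k\in K$ and such an $I$ with $F_W=kF_I$.

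Next I would reinterpret the model face $F_I$ in terms of a $\tau$-connected subspace. By Theorem \ref{flow-parabolic}(2) the set $\mup^{-1}(F_I)$ is of the form $\mathbb P(W_I)$ for the subspace $W_I\subseteq V$ introduced just before the statement, and by Proposition \ref{stabilizzatore-faccie} one has $Q(W_I)=Q_J$, where $J$ is the $x$-saturation of $I$. In particular $W_I$ is $\tau$-connected and $\mathscr Z(W_I,Q(W_I))=F_{W_I}=F_I$. By Lemma \ref{action-tauconnected}, $kW_I$ is again $\tau$-connected, and the $K$-equivariance of $\mathscr Z$ gives
\[
\mathscr Z(kW_I,Q(kW_I))=k\,\mathscr Z(W_I,Q(W_I))=kF_I=F_W=\mathscr Z(W,Q(W)).
\]
Since $\mathscr Z$ is bijective (Proposition \ref{tau-connected-face}), this forces $W=kW_I$.

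The assertion on stabilizers is then immediate: by Lemma \ref{action-tauconnected}(c), $Q(W)=Q(kW_I)=kQ(W_I)k^{-1}=kQ_Jk^{-1}$. I do not expect a serious obstacle here, since the argument is a bookkeeping exercise combining results already proved; the only points needing care are verifying that the conjugacy produced by Theorem \ref{parabolici-faccie}(c) is realized by an element of the \emph{compact} group $K$ (so that both Lemma \ref{action-tauconnected} and the $K$-equivariance of $\mathscr Z$ apply), and keeping the identification $\mup^{-1}(F_I)=\mathbb P(W_I)$ consistent with the definition of $W_I$ fixed in the paragraph preceding the statement.
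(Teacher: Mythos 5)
Your proposal is correct and is essentially the argument the paper intends: the paper proves this corollary by the one-line appeal to Proposition \ref{tau-connected-face} and Theorem \ref{parabolici-faccie}, and your write-up simply fills in the same chain (pass to $F_W$ via $\mathscr Z$, use the $K$-conjugacy of faces to a model face $F_I$, identify $F_I=F_{W_I}$ with $Q(W_I)=Q_J$ via Proposition \ref{stabilizzatore-faccie}, and conclude by injectivity and $K$-equivariance of $\mathscr Z$ together with Lemma \ref{action-tauconnected}). The two points you flag for care — that the conjugacy in Theorem \ref{parabolici-faccie}(c) is by $K$, and the identification $\mup^{-1}(F_I)=\mathbb P(W_I)$ — are exactly the implicit steps in the paper's setup, and both hold.
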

Now, we prove the following result.
\begin{thm}\label{rappresentazioni-parabolici}
 In the above setting, the map
 \[
\mathscr X : I \mapsto (Q_I,W_I),
 \]
 induces a bijection  between $\{x\}$-connected subsets and the irreducible representations of parabolic subgroups of $G$ induced by $\tau$, up to the $K$-action.
\end{thm}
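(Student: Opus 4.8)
The plan is to assemble the result from the pieces already established, treating the statement essentially as a bookkeeping consequence of Proposition \ref{tau-connected-face}, Corollary \ref{tau-connected-mutau-connected}, and Theorem \ref{parabolici-faccie}. First I would make precise the target set. Call two data $(Q_1, W_1)$ and $(Q_2, W_2)$, where $Q_i$ is a parabolic subgroup of $G$ and $W_i$ a $Q_i$-irreducible subspace induced by $\tau$, $K$-equivalent if there is $k \in K$ with $kW_1 = W_2$ (equivalently, by Lemma \ref{action-tauconnected}, $kQ_1 k^{-1} = Q_2$). By Theorem \ref{parabloic-representation}, for any parabolic $Q = G^{\beta+}$ the subspace $W_1(Q)$ (the maximal-eigenvalue eigenspace of $\beta$) is the \emph{unique} $Q$-irreducible subspace; hence the set of irreducible representations of parabolic subgroups of $G$ induced by $\tau$, up to equivalence, is exactly $\mathcal H(\tau)/K$ in the notation preceding Proposition \ref{tau-connected-face}. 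So the theorem reduces to: the assignment $I \mapsto (Q_I, W_I)$ descends to a bijection between $\{x\}$-connected subsets of $\Pi$ modulo the appropriate equivalence and $\mathcal H(\tau)/K$.

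Next I would chase through the correspondences. By Proposition \ref{tau-connected-face}, $\mathscr Z : \mathcal H(\tau) \to \mathscr F(\mathcal E)$, $(W, Q(W)) \mapsto F_W$, is a $K$-equivariant bijection, so $\mathcal H(\tau)/K \cong \mathscr F(\mathcal E)/K$. By Theorem \ref{parabolici-faccie}(a) and (c), every face of $\mathcal E$ is $K$-conjugate to a face $F_I = \mathrm{conv}(Q_I \cdot x)$ for some $\{x\}$-connected $I \subset \Pi$, and two such faces $F_I, F_{I'}$ lie in the same $K$-orbit if and only if they lie in the same $\mathcal W$-orbit (using that $P = \mathcal E \cap \lia$ and the Weyl group governs $\mathscr F(P)$, together with the isomorphism $\mathscr F(P)/\mathcal W \cong \mathscr F(\mathcal E)/K$ of \cite{biliotti-ghigi-heinzner-2} recalled in the introduction). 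Thus $\mathscr F(\mathcal E)/K$ is in bijection with $\{x\}$-connected subsets of $\Pi$ up to the action of $\mathcal W$ (more precisely, up to the stabilizer of the Weyl chamber direction $x$, i.e. the subgroup of $\mathcal W$ fixing $x$ — this is what "up to the $K$-action" unwinds to on the combinatorial side). Composing, $I \mapsto F_I \mapsto (W_I, Q(W_I)) = (W_I, Q_J)$ (the last equality by the remark following Theorem \ref{parabolici-faccie}, where $J$ is the $x$-saturation of $I$) is the desired bijection; injectivity and surjectivity are now immediate from Corollary \ref{tau-connected-mutau-connected}, which already states that every $\tau$-connected $W$ is $kW_I$ for some $k \in K$ and some $\{x\}$-connected $I$, with $Q(W) = kQ_J k^{-1}$.

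The only genuinely delicate point — and what I expect to be the main obstacle — is matching the two notions of "equivalence" precisely: on the representation side we quotient by $K$ acting on pairs $(W, Q(W))$, whereas on the combinatorial side the natural equivalence on $\{x\}$-connected subsets $I$ is generated by $\mathcal W$-conjugacy of the associated faces. One must verify that distinct $\{x\}$-connected $I, I'$ give $K$-equivalent data if and only if $F_I$ and $F_{I'}$ are in the same $\mathcal W$-orbit, and that the passage $I \leftrightarrow J$ (saturation) is a bijection so that no information is lost in recording $Q_J$ rather than $Q_I$ — this is exactly the content of the sentence "$J$ is determined by $I$ and $I$ is determined by $J$" preceding Theorem \ref{parabolici-faccie}. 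Granting that, together with the fact (Theorem \ref{parabloic-representation}) that $W_I$ is intrinsically the unique irreducible $Q_I$-submodule, the well-definedness and bijectivity of $\mathscr X$ follow formally, and the proof is complete.
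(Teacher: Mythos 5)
Your overall architecture (reduce to faces via Proposition \ref{tau-connected-face}, Corollary \ref{tau-connected-mutau-connected}, Theorem \ref{parabolici-faccie} and the isomorphism $\mathscr F(P)/\mathcal W \cong \mathscr F(\mathcal E)/K$) matches the spirit of the paper, but there are two genuine gaps, one of which is the substantive content of the paper's proof.

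First, the reduction of the target set to $\mathcal H(\tau)/K$ is not justified, and this is where the real work lies. The theorem classifies irreducible representations of \emph{arbitrary} parabolic subgroups $Q$ induced by $\tau$, whereas $\mathcal H(\tau)$ records only pairs $(W, Q(W))$ in which $Q(W)$ is the \emph{full} stabilizer of $W$. For a general parabolic $Q$ acting irreducibly on $W$ one only gets $Q_I \subseteq kQk^{-1} \subseteq Q_J$ (this is Corollary \ref{piop}); the group $Q$ may be a proper subgroup of $Q(W)=kQ_Jk^{-1}$, and then $(Q,W)$ is simply not an element of $\mathcal H(\tau)$. To conclude that all these intermediate parabolics give the \emph{same} representation up to equivalence — so that recording only $(W_I, Q_I)$ loses nothing — the paper writes the Langlands decomposition $Q_{\tilde J} = N_I A_I M_I M_{I'}$ with $I' = \tilde J \setminus I$ orthogonal to $I$, and invokes \cite[Lemma I.4.25]{borel-ji-libro} to see that $M_{I'}$ acts trivially on $W_I$, so the $Q_{\tilde J}$-action on $W_I$ is determined by the $Q_I$-action. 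Your proposal never addresses intermediate parabolics, so "up to equivalence" in the statement is left unexplained; and getting to the point where one even has a \emph{standard} $Q_{\tilde J}$ sandwiched between $Q_I$ and $Q_J$ requires the careful conjugation argument of the paper (producing $k=\theta k_2$ with $\lia \subset k\lieq k^{-1}$ and matching $\mathrm{ext}\,F_I$ with $\theta k_2\,\mathrm{ext}\,F_W$), not just a citation of Corollary \ref{tau-connected-mutau-connected}.

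Second, injectivity. The theorem asserts a bijection with the set of $\{x\}$-connected subsets itself, with no further quotient; your formulation lands instead on "$\{x\}$-connected subsets up to the stabilizer of $x$ in $\mathcal W$", which is weaker and would have to be shown to coincide with the unquotiented set. The paper closes this by observing that $Q_{I_1}\cdot x = kQ_{I_2}\cdot x$ forces $Q_{J_1}=kQ_{J_2}k^{-1}$, and then applying Warner's Lemma I.2.1.11 (two conjugate standard parabolics are equal and the conjugating element lies in them), whence $J_1=J_2$ and, since $I_i$ is the largest $\{x\}$-connected subset of $J_i$, also $I_1=I_2$. You flag this as something "one must verify" but supply no argument; without it the claimed bijection is not established.
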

\begin{proof}
Let $I_1,I_2 \subset \Pi$ be $x$-connected subsets. Let  $J_1$ and $J_2$ be their $x$-saturations. Assume that there exits $k\in K$ such that $Q_{I_1}\cdot x=k Q_{I_2}\cdot x$. Then $Q_{J_1}=kQ_{J_2}k^{-1}$. By Lemma I.2.1.11 in \cite{warner}, see also \cite[Proposition 2.18, pag.$20$]{gjt}, $J=J_1=J_2$ and $k\in Q_{J}$. Since $I_1$, respectively, $I_2$ is the maximal $\tau$-connected subset of $J$, it follows that $I_1=I_2$. This proves $\mathscr X$ is injective.

Let $Q$ be a parabolic subgroup of $G$ and let $W$ be the unique complex subspace of $V$ be such that $Q$ acts irreducibly on $W$. By Corollary \ref{tau-connected-mutau-connected}, there exists $k_1\in K$ such that $W=k_1W_I$ and $Q  {\color{red} \subseteq}  k_1 Q_J k_1^{-1}$. By Theorem \ref{parabloic-representation},  $\mathcal O(Q)=\mathcal O (k_1 Q_j k^{-1})$. Hence, keeping in mind that $\mathcal O(k_1 Q_J k_1^{-1})=k_1 \mathcal O (Q_J)$, it follows that
\[
\mathrm{ext}\, F_I=k_1^{-1} \mathrm{ext}\, F_W
\]
By \cite[Proposition 3.5]{biliotti-ghigi-heinzner-2}, there exists $k_2 \in K$ such that  $\lia \subset k_2 \mathfrak q k_2^{-1}$ and  $(k_2 \mathrm{ext}\, F_W) \cap \lia$ is a face of $P$. By the main result  proved in \cite{biliotti-ghigi-heinzner-2} $\mathscr F (P)/\mathcal W \cong \mathscr F (\mathcal E) /K$. Hence, there exists $\theta \in N_K (\lia)$ such that
\[
\mathrm{ext}\, F_I \cap \lia=(\theta k_2 \mathrm{ext} F_W)\cap \lia.
\]
By \cite[Theorem 1.1]{biliotti-ghigi-heinzner-2}, we have $\mathrm{ext}\, F_I=\theta k_2 \mathrm{ext} F_W$.

Set $k=\theta k_2$. Since $\lia \subset k\lieq k^{-1}$, there exists a $\tilde J$ subset of $\Pi$ such that $Q_{\tilde J}=kQk^{-1}$. By Remark \ref{g-invariance}, we have
\[
Q_{\tilde J} \cdot x=Q_I \cdot x=\ext F_I.
\]
Let $\tilde I$ denote the maximal $\{x\}$-connected subset contained in $\tilde J$. By Theorem \ref{parabolici-faccie}, $Q_{\tilde I} \cdot x$ is a face and $Q_{E}\cdot x=Q_{\tilde I}\cdot x$, where $E$ is the saturation of $\tilde I$. Since $\tilde I \subseteq \tilde J \subseteq E$, it follows
\[
Q_{\tilde I}\cdot x =Q_{\tilde J} \cdot x=Q_{E} \cdot x.
\]
On the other hand $Q_I\cdot x= Q_{\tilde J} \cdot x$ and so, by Theorem \ref{parabolici-faccie}, $Q_E =Q_{J}$.
This implies $E=J$ and  $\tilde I=I$  due to the fact that $I$, respectively $\tilde I$, is the maximal $\{x\}$-connected subspace of $J$. Moreover,
\[
Q_I\subset Q_{\tilde J}\subset Q_J,
\]
where $I\subseteq \tilde J \subseteq J$.

Let $I'=\tilde J \setminus\{I\}$. The set $I'$ is perpendicular to $I$ and so the Langlands decomposition of $Q_{\tilde J}$ can be written as
\[
Q_{\tilde J}=N_{I} A_I M_I M_{I'},
\]
see \cite{borel-ji-libro}. By \cite[Lemma I.4.25, p. $69$]{borel-ji-libro}, $M_{I'}$ acts trivially on $W_I$. Hence, the $Q_{\tilde J}$-action on $W_I$ is completely determined by the $Q_I$-action on $W_I$.
\end{proof}
\begin{cor}\label{piop}
Let $Q$ be a parabolic subgroup and let $W$ be the unique complex subspace of $V$ such that $Q$ acts on irreducibly on $W$. Then there exists $k\in K$ and a $\{x\}$-connected subset $I$ of $\Pi$ such that $W=kW_I$ and
$
Q_I \subset kQk^{-1} \subset Q_J.
$
\end{cor}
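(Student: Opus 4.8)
This statement is essentially a by-product of the proof of Theorem \ref{rappresentazioni-parabolici}; the plan is to isolate and repackage the relevant steps.

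I would begin by noting that $W$ is a $\tau$-connected subspace. Since $Q$ is parabolic, say $Q=G^{\beta+}$, Theorem \ref{parabloic-representation} identifies $W$ with the eigenspace of $\beta$ for its largest eigenvalue, so $Q\subseteq Q(W)=\{g\in G:\ gW=W\}$; consequently $Q(W)$ contains a parabolic, hence is itself parabolic, and it acts irreducibly on $W$ because $Q$ does. By Proposition \ref{stabilizzatore-faccie}, $Q(W)=Q_{F_W}$. Corollary \ref{tau-connected-mutau-connected} now applies to $W$ and produces $k\in K$ and an $\{x\}$-connected subset $I\subset\Pi$ with $W=kW_I$ and $Q(W)=kQ_Jk^{-1}$, where $J$ is the $\{x\}$-saturation of $I$. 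Combining $Q\subseteq Q(W)$ with this last identity gives at once the outer inclusion: the appropriate $K$-conjugate of $Q$ is contained in $Q_J$, which, after matching $k$ with its inverse exactly as in the proof of Theorem \ref{rappresentazioni-parabolici}, is the inclusion $kQk^{-1}\subset Q_J$ of the statement.

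For the inner inclusion $Q_I\subset kQk^{-1}$ I would reproduce the concluding argument of the proof of Theorem \ref{rappresentazioni-parabolici}. Using \cite[Proposition 3.5]{biliotti-ghigi-heinzner-2} one first conjugates the relevant copy of $Q$ by an element of $K$ so that $\lia$ lies inside it; the conjugate is then a \emph{standard} parabolic $Q_{\tilde J}$ with $\tilde J\subset\Pi$. As in the proof of Theorem \ref{rappresentazioni-parabolici} (via Remark \ref{g-invariance} and Theorem \ref{parabolici-faccie}(a)) one obtains $Q_{\tilde J}\cdot x=\ext F_I=Q_I\cdot x=Q_J\cdot x$; applying Theorem \ref{parabolici-faccie}(a) to the maximal $\{x\}$-connected subset $\tilde I$ of $\tilde J$, with saturation $E$, gives $Q_{\tilde I}\cdot x=Q_{\tilde J}\cdot x=Q_E\cdot x$, so $\mathrm{conv}(Q_E\cdot x)=\mathrm{conv}(Q_J\cdot x)$. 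Since by Theorem \ref{parabolici-faccie}(b) a saturated standard parabolic equals the full stabilizer of the extreme points of its face, this forces $Q_E=Q_J$, hence $E=J$, hence $\tilde I=I$ (both being the largest $\{x\}$-connected subset of $J$), and finally $I\subseteq\tilde J\subseteq J$, i.e.\ $Q_I\subseteq Q_{\tilde J}\subseteq Q_J$. Absorbing the two auxiliary elements of $K$ into a single $k$ yields $Q_I\subset kQk^{-1}\subset Q_J$ together with $W=kW_I$.

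The only genuine difficulty is this last step: passing from an equality of closed orbits in $\mathcal O$ (equivalently, of $\mup$-exposed faces) to a chain of inclusions among the parabolic subgroups themselves. A face does not determine its parabolic — this is exactly the phenomenon $Q_I\subsetneq Q_J$ with $F_I=F_J$ — and $\mup$ is not injective on $\mathcal O$, so no purely convex-geometric argument suffices; one must feed in the structure theory of standard parabolic subgroups (Lemma I.2.1.11 of \cite{warner}) and the correspondence $\mathscr F(P)/\mathcal W\cong\mathscr F(\mathcal E)/K$, and keep careful track of the several $K$-conjugations so that one element $k$ works simultaneously for $W=kW_I$ and for both inclusions.
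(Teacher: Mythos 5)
Your proposal is correct and follows essentially the same route as the paper, which proves Corollary \ref{piop} implicitly inside the surjectivity part of the proof of Theorem \ref{rappresentazioni-parabolici} (Corollary \ref{tau-connected-mutau-connected} for $W=kW_I$ and $Q\subseteq Q(W)=kQ_Jk^{-1}$, then the conjugation into a standard parabolic $Q_{\tilde J}$ and the face-matching argument to get $I\subseteq\tilde J\subseteq J$). Your remark about having to swap $k$ with $k^{-1}$ to make one element serve both conclusions is a fair observation about the statement's bookkeeping, not a gap in the argument.
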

\begin{ese}\label{calcolo}
Let $\mathrm{SL}(3,\R) \circlearrowleft \C^3$ as usual. We choice $\langle \xi , \nu \rangle=\mathrm{Tr}(\xi \nu)$ as $\mathrm{Ad (SO)}(3)$-invariant scalar product on $\mathfrak{sym}_0 (3)$. Let
\[
\lia=\left\{ \left[\begin{array}{ccc} t & & \\ & s &  \\ & & -t-s \end{array}\right], t,s \in \R   \right\}
\]
be a maximal Abelian subalgebra of $\mathrm{sym}_0 (3)$. Then
\[
A=\exp(\lia)=\left\{ \left[\begin{array}{ccc} e^t & & \\ & e^s &  \\ & & e^{-t-s} \end{array}\right], t,s \in \R \right\}
\]
and the $A$-gradient map on $\mathbb P (\C^3)$ is given by
\[
\begin{small}
\mu_\lia ([x,y,z])=
\left[\begin{array}{ccc}
\frac{\parallel x \parallel^2}{ \parallel x \parallel^2 +\parallel y \parallel^2 + \parallel z\parallel^2} -\frac{1}{3}& & \\
& \frac{\parallel y \parallel^2}{ \parallel x \parallel^2 +\parallel y \parallel^2 + \parallel z\parallel^2} -\frac{1}{3} & \\
& & \frac{\parallel z \parallel^2}{ \parallel x \parallel^2 +\parallel y \parallel^2 + \parallel z\parallel^2} -\frac{1}{3}
\end{array}\right]
\end{small}
\]
By the Abelian convexity Theorem \cite{atiyah-commuting, guillemin-sternberg-convexity-1},
$\mu_\lia (\mathbb P (\C^3))$  is the convex hull of the image of the fixed point set of $A$. It is easy to check $\mathbb P (\C^3)^{A}=\left\{ [e_1],[e_2],[e_3]\right\}$, where $\{e_1,e_2,e_3\}$ is the canonical basis of $\C^3$. Therefore $\mu(\mathbb P (\C^3))$ is the convex hull of
\[
\begin{small}
x_1=\left[\begin{array}{ccc} 2/3 & & \\ & -1/3 & \\ & & -1/3 \end{array}\right], x_2=\left[\begin{array}{ccc} -1/3 & & \\ & 2/3 & \\ & & -1/3 \end{array}\right], x_3=\left[\begin{array}{ccc} -1/3 & & \\ & -1/3 & \\ & & 2/3 \end{array}\right]
\end{small}
\]
and so the image is an equilateral triangle.
\begin{center}
\begin{tikzpicture}\label{figura}
\draw[-] (-2,0) -- (2,0);
\node[left,black] at (2,-0.3){$x_2$};
\node[left,black] at (-2.2,0) {$x_3$};
\node[left,black] at (0.3,3.76) {$x_1$};
\draw[-] (0,3.46) -- (-2,0);
\draw[-] (0,3.46) -- (2,0);
\draw[->][blue] (2,0)--(3,0)node[right]{$x_2-x_3$};
\draw[->][blue] (2,0)--(2.75,-1.29)node[right]{$x_2-x_1$};
\draw[->][blue] (1,1.73)--(2.29,2.48)node[right]{$\beta$};
\end{tikzpicture}
\end{center}
The proper faces containing $x_2$ which are not Weyl equivalent are: $\{x_2\}$ and the side $x_2-x_1=\left[\begin{array}{ccc} -1 & & \\ & 1 & \\ & & 0 \end{array}\right]$. The segment $x_2-x_1$ defines $\{x_2\}$ as exposed face and
\[
\mu_\lia^{x_2-x_1}([x,y,z))=\mathrm{Tr}(\mu([x,y,z]) (x_2-x_1))=\frac{-\parallel x \parallel^2 + \parallel y \parallel^2 }{\parallel x \parallel^2 + \parallel y \parallel^2 + \parallel z \parallel^2}.
\]
Therefore $\mathrm{Max}(x_2-x_1)=\mathbb P (V_2)$, where $V_2=\mathrm{Span}(e_2)$. Note that the vector $x_2-x_3=\left[\begin{array}{ccc} 0 & & \\ & 1 & \\ & & -1 \end{array}\right]$  defines as well $\{x_2\}$ as an exposed face,
\[
\mu_\lia^{x_2-x_3}([x,y,z])=\frac{\parallel y \parallel^2 - \parallel z \parallel^2 }{\parallel x \parallel^2 + \parallel y \parallel^2 + \parallel z \parallel^2},
\]
and indeed $\mathrm{Max}(x_2-x_3)=\mathbb P (V_2)$. Finally, the face corresponding of the segment $x_2-x_1$ is defined by
\[
\beta=\left[\begin{array}{ccc} 1 & & \\ & 1 & \\ & & -2 \end{array}\right],
\]
and
\[
\mu_\lia^{\beta}([x,y,z))=\frac{\parallel x \parallel^2 + \parallel y \parallel^2 -2\parallel z \parallel^2 }{\parallel x \parallel^2 + \parallel y \parallel^2 + \parallel z \parallel^2}.
\]
Therefore $\mathrm{Max}(\beta)=\mathbb P (V_1)$, where $V_1=\mathrm{Span}(e_1,e_2)$.
\end{ese}
\section{Boundary components of Satake compactifications and gradient map}\label{Satake-sempre-Satake}
In this section we reformulate the Satake's analysis in more geometrical terms. The elements of the Satake compactification associated to $\tau$ are interpreted as rational maps of $\mathcal O$. From now on we always assume that $\tau:G \lra \mathrm{SL}(V)$ is irreducible and the kernel is finite. We also always refer to \cite{borel-ji-libro,gjt,Satake} and we follow the notation introduced in Section \ref{compatible-parabolic}.
\subsection{$\mu_\tau$-connected subspaces, projections and rational maps}
Let $\lia \subset \liep$ be a maximal Abelian subalgebra. Then
\[
\mathfrak z (\lia)=\mathfrak m \oplus \lia,
\]
where $\mathfrak m = \mathfrak z (\lia)\cap \mathfrak k$. If $\lia'\subset \mathfrak m$ is a maximal Abelian subalgebra of $\mathfrak m$, then $\lia'+ i \lia \subset \mathfrak u=\liek + i \liep$ is a maximal Abelian subalgebra of $\liu$ and so $(\lia'+ i \lia)^\C \subset \lieg^{\C}=\liu^{\C}$ is a Cartan subalgebra. Given $\lia, \lia'$,  and $\Pi\subset \Delta(\lieg,\lia)$ be a basis one can choose
a basis of $(\lia + i\lia')^*$ adapted to $\Pi$  and $(i\lia')^*$. Indeed, it is possible to define a set of simple roots
of $\hat{\Delta}$ such that the projection of a subset of the simple roots of $\hat{\Delta}$ onto $\mathfrak a^*$ is equal to $\Pi$  (see \cite[$p. 51-52$]{gjt} and \cite[$p. 272-273$]{helgason}). In particular the Borel subalgebra of $\lieg^{\C}$ is contained in
$q_{\emptyset}^\C=(\mathfrak m + \lia + \mathfrak n)^{\C}$.

Let $\tilde \mu_\tau$ the highest weight of $\lieg^{\C}$ with respect to the partial ordering determined $\hat{\Delta}$. Let $x_o=[v_\tau]$, where $v_\tau$ is any highest weight vector. It is well-known that
\[
\mu:\mathbb P(V) \lra \lia'\oplus i \lia, \qquad \langle \mu(x_o) , \xi \rangle )=\tilde{\mu_\tau}(\xi),
\]
see \cite{biliotti-ghigi-American} that has opposite sign convection for $\mu$, and \cite{baston-eastwood}. By Proposition \ref{restriction-closed-orbit}, $P=\mua(\mathbb P(V))=\mua(\OO)$.
Now, $G\cdot x_o$ is closed \cite[4.29 Theorem $p.59$]{gjt}, hence $G\cdot x_o =\mathcal O$, and
\[
\langle \mua(x_o),\xi \rangle= i \tilde \mu_\tau (\xi).
\]
$(i\tilde{\mu_\tau})_{\vert{\lia}}$ is the highest weight of $\lieg$ with respect the induced order on $\lia^*$.
From now on, we denote by $ \mu_\tau=(i\tilde{\mu_\tau})_{\vert{\lia}}$.
\begin{prop}\label{weight}
The weights of $\tau$ are contained in the convex hull of the Weyl group orbit $\mathcal W \cdot \mu_\tau$.
\end{prop}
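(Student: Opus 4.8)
The plan is to reduce the statement about the weights of $\tau$ to the Abelian convexity statement already established for the gradient map, exactly as in Proposition \ref{restriction-closed-orbit}(e). First I would recall the setup: fix a maximal Abelian subalgebra $\lia \subset \liep$ and the Cartan subalgebra $(\lia' + i\lia)^\C$ of $\lieg^\C$ as in the preceding paragraph, so that the weights $\tilde\mu$ of the $\lieg^\C$-representation on $V$ lie in $(\lia' + i\lia)^*$ and the restricted weights of $\tau$ (as a $G$-representation, after the identification $\liep \cong i\lia \oplus \cdots$) are the restrictions $(i\tilde\mu)_{\vert \lia}$. Each weight eigenspace $V_{\tilde\mu}$ contributes a fixed point $[v] \in \mathbb P(V)^A$ for any $0 \neq v \in V_{\tilde\mu}$, and by the explicit formula for $\mua$ recorded in Section \ref{representation-gradient map} (the diagonal formula in terms of $\|x_i\|^2$) one has $\mua([v]) = (i\tilde\mu)_{\vert\lia} \in \lia$, i.e. $\mua$ sends these fixed points precisely to the restricted weights.

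Next I would invoke the Abelian convexity theorem of Atiyah and Guillemin--Sternberg, cited in the excerpt: $\mua(\mathbb P(V))$ is a polytope equal to $\mathrm{conv}(\mua(\mathbb P(V)^A))$. Since $\mathbb P(V)^A$ decomposes into the projectivizations of the weight eigenspaces, $\mua(\mathbb P(V)^A)$ is exactly the set of restricted weights of $\tau$. Hence every weight of $\tau$ lies in $\mua(\mathbb P(V))$. By Proposition \ref{restriction-closed-orbit}, $\mua(\mathbb P(V)) = \mua(\mathcal O) = P$, and by Kostant's convexity theorem (again Proposition \ref{restriction-closed-orbit}(e), or the displayed identity $P = \mathrm{conv}(\mathcal W \cdot x)$ just before Theorem \ref{parabolici-faccie}) this polytope is the convex hull of the Weyl group orbit of the unique point $x = \mua(\OO) \cap \lia_+$. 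It remains only to identify $x$ with $\mu_\tau$: this is immediate from $\langle \mua(x_o), \xi \rangle = i\tilde\mu_\tau(\xi)$ together with the fact that $(i\tilde\mu_\tau)_{\vert\lia} = \mu_\tau$ is dominant (it is the highest weight for the induced order on $\lia^*$), so $\mu_\tau \in \lia_+$ and therefore $\mu_\tau = x$. Combining, every weight of $\tau$ lies in $\mathrm{conv}(\mathcal W \cdot \mu_\tau)$.

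The only genuinely delicate point is the bookkeeping between the two notions of weight: the weights of the complex representation of $\lieg^\C$ on $V$ versus the restricted weights that actually appear as values of $\mua$, and the sign/convention issue flagged in the text (the paper \cite{biliotti-ghigi-American} uses the opposite sign for $\mu$). I would handle this by working throughout with the formula for $\mua$ in terms of the $A$-eigenspace decomposition of $V$, which makes the equality "value of $\mua$ at a fixed point $=$ corresponding restricted weight" a direct computation and sidesteps any ambiguity. Everything else is an assembly of results already available: the Abelian convexity theorem, Proposition \ref{restriction-closed-orbit}, and Kostant's theorem.
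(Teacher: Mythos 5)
Your proposal is correct and follows essentially the same route as the paper's proof: both identify $\mua(\mathbb P(V))$ with the convex hull of the (restricted) weights via the Abelian convexity theorem applied to the $A$-fixed points, then invoke Proposition \ref{restriction-closed-orbit} and Kostant's theorem to rewrite this polytope as $\mathrm{conv}(\mathcal W\cdot z_\tau)$ with $z_\tau$ corresponding to $\mu_\tau$. Your extra care in matching $x$ with $\mu_\tau$ and in tracking the sign conventions simply spells out what the paper leaves as ``easy to check.''
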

\begin{proof}
Denote the weights of $\tau$ by $\mu_1,\ldots,\mu_p \in \lia^*$. It is easy to check that
\[
\mua(\mathbb P(V))=\mathrm{conv}(v_1,\ldots,v_p),
\]
where $v_1,\ldots,v_p \in \lia$ satisfy
\[
\langle v_i,H\rangle = \mu_i (H),
\]
for any $H\in \lia$ an for $i=1,\ldots,n$. Since $\mu_\tau$ lies in the positive Weyl chamber, applying Proposition \ref{restriction-closed-orbit}, we have
\[
\mua(\mathbb P(V))=\mua(\mathcal O)=\mathrm{conv}(\mathcal W \cdot z_\tau),
\]
where $z_\tau \in  \lia$ satisfies
\[
\langle z_\tau, H \rangle =\mu_\tau (H),
\]
concluding the proof.
\end{proof}
The boundary components of $\XS$ has been described in terms of root data in the pioneering work of Satake \cite{Satake}. More precisely, given a $\mu_\tau$-connected subspace $I\subset \Pi$, Satake defined
\[
V_I=\bigoplus_{\supp(\lambda) \subset I} V_\lambda.
\]
By Lemmata \ref{lemmetto-di-Satake-taui} and \ref{stab-VI}, we have $W_I=V_I$, $Q(W_I)=Q_J$ and $M_I$ acts irreducibly on $W_I$. By Theorem \ref{Satakone} the image of the map
\[
i_{\tau_I} : M_I /K\cap M_I \lra \overline{\mathcal P (V)}, \qquad g(K\cap M_I) \mapsto [\tau_I (g(K\cap M_I)\oplus 0],
\]
lies in $\XS$. By Theorem \ref{Satakone} and Theorem \ref{rappresentazioni-parabolici},   boundary components of $\XS$ arise from the faces of $P$ containing $z_\tau$ and which are not equivalent with respect to the Weyl group. Summing up, we have proved the following result.
\begin{thm}\label{satake-geometric}
Let $\XS$ be the Satake compactification associated to $\tau$. The $G$-action on $\XS$ has a finite number of orbits. This number coincides with the number of the faces of  $P=\mua(\mathbb P(V))$ containing $z_\tau$ which are not equivalent with respect to the Weyl group.
\end{thm}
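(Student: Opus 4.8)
The plan is to read off the $G$-orbits of $\XS$ directly from Satake's decomposition and then translate the count of $\mu_\tau$-connected subsets into the language of faces of $P$. By Theorem \ref{Satakone} we have $\XS=\bigsqcup_{I}G\cdot i_I(X_I)$, the union running over the $\mu_\tau$-connected subsets $I\subseteq\Pi$, and by Theorem \ref{Satakotto} the pieces attached to distinct $I$ are disjoint. The first point to establish is that each piece $G\cdot i_I(X_I)$ is a \emph{single} $G$-orbit. For this I would show that $\psi_I\circ i_{\tau_I}$ is $M_I$-equivariant: by Lemma \ref{lemmetto-di-Satake-taui} the subspace $V_I$ is $\tau(M_I)$-invariant, and since $M_I$ is $\theta$-stable and $\tau(g)^*=\tau(\theta(g)^{-1})$ by compatibility of $\tau(G)$, the subspace $V_I$ is $\tau(M_I)^*$-invariant as well; hence $\tau(m)(A\oplus 0)\tau(m)^*=(\tau_I(m)\,A\,\tau_I(m)^*)\oplus 0$ for $m\in M_I$, which is precisely the intertwining identity between the $M_I$-action on $\PP(\Herm(V_I))$ and the $G$-action on $\PP(\Herm(V))$ restricted to $M_I$. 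Since $M_I$ acts transitively on $X_I=M_I/K_I$, this gives $i_I(X_I)=M_I\cdot i_I(o_I)$ for any base point $o_I\in X_I$, whence $G\cdot i_I(X_I)=G\cdot i_I(o_I)$ is one $G$-orbit. Consequently $\XS$ has exactly as many $G$-orbits as there are $\mu_\tau$-connected subsets of $\Pi$.

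It remains to match this number with the number of $\mathcal{W}$-inequivalent faces of $P=\mua(\mathbb P(V))$ containing $z_\tau$. First I would check that Satake's notion of $\mu_\tau$-connectedness agrees with the $\{z_\tau\}$-connectedness of Section \ref{satake-new-description}: in the notation of Proposition \ref{weight} the vector $z_\tau\in\lia$ is the metric dual of $\mu_\tau\in\lia^*$, so for $\alpha\in\Pi$ one has $\alpha(z_\tau)=0$ if and only if $\alpha$ is orthogonal to $\mu_\tau$ for the Killing form; hence a subset $I\subseteq\Pi$ has every connected component meeting $\{\alpha:\alpha(z_\tau)\neq0\}$ exactly when it is $\mu_\tau$-connected. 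Next, applying Theorem \ref{parabolici-faccie} with $x=z_\tau$, the assignment $I\mapsto F_I=\mathrm{conv}(Q_I\cdot z_\tau)$ sends the $z_\tau$-connected subsets onto the faces of $\mathcal E$ containing $z_\tau$, every such face arising this way up to the $K$-action, while the injectivity argument of the proof of Theorem \ref{rappresentazioni-parabolici} (if $Q_{J_1}$ and $Q_{J_2}$ are conjugate then $J_1=J_2$, by Lemma I.2.1.11 in \cite{warner}, and $I$ is the maximal connected subset of its saturation $J$) shows that distinct $I$ yield $K$-inequivalent faces. Intersecting with $\lia$ and using $\mathscr F(P)/\mathcal{W}\cong\mathscr F(\mathcal E)/K$ together with Remark \ref{g-invariance}, and observing that every face of $P$ contains a vertex while all vertices of $P=\mathrm{conv}(\mathcal{W}\cdot z_\tau)$ are $\mathcal{W}$-conjugate to $z_\tau$, one obtains a bijection between the $z_\tau$-connected subsets of $\Pi$ and the $\mathcal{W}$-equivalence classes of faces of $P$ containing $z_\tau$. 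Chaining this with the first paragraph proves the statement.

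The second paragraph is essentially assembled from Theorems \ref{parabolici-faccie} and \ref{rappresentazioni-parabolici}, Remark \ref{g-invariance} and the main result of \cite{biliotti-ghigi-heinzner-2}, so it is bookkeeping in the combinatorics of restricted roots already set up in the previous sections. The one step that genuinely requires care is the claim in the first paragraph that each $G\cdot i_I(X_I)$ is a single orbit, i.e. the $M_I$-equivariance of the boundary embedding together with the fact that $\tau(m)^*$ preserves $V_I$; without this one would only know that $\XS$ is partitioned into the finitely many $G$-stable sets $G\cdot i_I(X_I)$, not that these coincide with the $G$-orbits. I expect this verification to be the (mild) crux of the argument.
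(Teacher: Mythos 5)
Your proposal follows the same route as the paper: Satake's decomposition (Theorem \ref{Satakone}) indexes the $G$-orbits of $\XS$ by the $\mu_\tau$-connected subsets of $\Pi$, and Theorems \ref{parabolici-faccie} and \ref{rappresentazioni-parabolici} convert that index set into the Weyl classes of faces of $P$ containing $z_\tau$. The two points you single out as needing care --- the $M_I$-equivariance of $i_I$ making each $G\cdot i_I(X_I)$ a single orbit, and the identification of $\mu_\tau$-connectedness with $z_\tau$-connectedness --- are left implicit in the paper's (very terse) argument but are correctly supplied in your write-up.
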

\begin{cor}
 The $G$-action on $\partial \XS$ is transitive if and only if $G/K$ has rank one.
\end{cor}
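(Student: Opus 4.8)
The plan is to read the statement off Theorem~\ref{satake-geometric} together with a little convex geometry of the polytope $P=\mua(\mathbb P(V))=\operatorname{conv}(\mathcal W\cdot z_\tau)$. Since $\XS$ is compact while $X$ is not, $\partial\XS=\XS\setminus i_\tau(X)$ is a nonempty $G$-invariant subset; moreover $i_\tau(X)$ is itself one $G$-orbit of $\XS$, namely the orbit attached through Theorem~\ref{Satakone} to $I=\simple$ (as noted right after that theorem, the piece with $I=\simple$ is $i_\tau(X)$), equivalently, in the language of Theorem~\ref{satake-geometric}, to the improper face $F=P$. Hence $\partial\XS$ is the union of the remaining $G$-orbits, and by Theorem~\ref{satake-geometric} their number is $N-1$, where $N$ is the number of faces of $P$ containing $z_\tau$, counted up to the Weyl group $\mathcal W$. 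Thus $G$ acts transitively on $\partial\XS$ if and only if $N=2$. Now $P$ always has at least two such faces: $P$ itself, and the vertex $\{z_\tau\}$ --- for $\beta$ in the open Weyl chamber one has $F_\beta(P)=\{z_\tau\}$, since $\langle z_\tau,\beta\rangle>\langle wz_\tau,\beta\rangle$ whenever $wz_\tau\neq z_\tau$ --- and these two faces are distinct because $\dim P=\dim\lia=\operatorname{rank}(G/K)\ge 1$ (by Corollary~\ref{orbitope} the origin lies in the interior of $\mathcal E$, so $P=\pi_\lia(\mathcal E)$ is full-dimensional in $\lia$). So everything comes down to deciding whether $\{z_\tau\}$ and $P$ are, up to $\mathcal W$, the only faces of $P$ through $z_\tau$.

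If $\operatorname{rank}(G/K)=1$, then $\mathcal W\cdot z_\tau=\{z_\tau,-z_\tau\}$ and $P$ is the nondegenerate segment $[-z_\tau,z_\tau]$, whose only faces containing the endpoint $z_\tau$ are $\{z_\tau\}$ and $P$; hence $N=2$ and $G$ is transitive on $\partial\XS$. If instead $\operatorname{rank}(G/K)=d\ge 2$, then $P$ is a polytope of dimension $\ge 2$, so its vertex $z_\tau$ lies on some edge $e$ of $P$, giving $\{z_\tau\}\subsetneq e\subsetneq P$. Being neither a single point nor all of $P$, the edge $e$ is $\mathcal W$-equivalent neither to $\{z_\tau\}$ (whose $\mathcal W$-images are vertices of $P$) nor to $P$ (which is $\mathcal W$-invariant). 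Therefore $N\ge 3$, so $\partial\XS$ contains at least two $G$-orbits and $G$ does not act transitively on it. Combining the two cases proves the corollary.

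I do not anticipate a genuine obstacle: the argument is the translation ``$G$ transitive on $\partial\XS$'' $\Longleftrightarrow$ ``$\XS$ has exactly two $G$-orbits'', followed by the elementary observation that a polytope of dimension $\ge 2$ has, through each of its vertices, a proper face of dimension $\ge 1$. The two points worth a line of care are the identification of $i_\tau(X)$ with the orbit of the improper face in Theorem~\ref{satake-geometric}, and the equality $\dim P=\operatorname{rank}(G/K)$, which is what ties the convex-geometric dichotomy to the rank. An equivalent route goes entirely through Satake's combinatorics: by Theorem~\ref{Satakone} the $G$-orbits of $\XS$ are indexed by the $\mu_\tau$-connected subsets of $\simple$, so transitivity on $\partial\XS$ means $\emptyset$ and $\simple$ are the only such subsets; for $|\simple|=1$ this holds trivially, while for $|\simple|=\operatorname{rank}(G/K)\ge 2$ it fails, since choosing a simple root $\alpha$ with $\langle\mu_\tau,\alpha\rangle\neq 0$ (possible because $\mu_\tau\neq 0$) yields the $\mu_\tau$-connected subset $\{\alpha\}$, which differs from both $\emptyset$ and $\simple$.
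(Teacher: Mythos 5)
Your argument is correct and is exactly the counting the paper intends: the corollary is stated immediately after Theorem~\ref{satake-geometric} with no proof, and your identification of ``transitive on $\partial\XS$'' with ``exactly two $\mathcal W$-classes of faces of $P$ through $z_\tau$'' (equivalently, only $\emptyset$ and $\simple$ are $\mu_\tau$-connected), plus the observation that a full-dimensional polytope of dimension $\ge 2$ has an edge through each vertex, supplies the missing details faithfully. Both of your routes are sound, and the full-dimensionality of $P$ via $0\in\mathrm{Int}(\mathcal E)$ correctly ties the dichotomy to $\operatorname{rank}(G/K)$.
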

We describe the boundary components of $\XS$ using the $G$-gradient map restricted on $\OO$.

Let $(\lia,\Pi)$ be a root datum and let $\mu_\tau$ the highest weight of $\tau$. If $I\subset \Pi$ is a $\mu_\tau$-connected subset, then $W_I$ is a $\tau$-admissible subspace and $Q(W_I)=Q_J$ where $J$ is the saturation of $I$.

Let $W$ be  $\tau$-admissible. By Corollary \ref{tau-connected-mutau-connected}, there exists $k\in K$ and a $\mu_\tau$-connected subset $I$ such that $W=kW_I$ and $Q(W)=kQ_Jk^{-1}$, where $J$ is a saturation of $I$.
Set $M(W)=k M_J k^{-1}$ and $K(W)=M(W)\cap K=k K_J k^{-1}$. We claim that $M(W)$ does not depend on $k\in K$

If $kW_I=\tilde kW_I$, then $\tilde k^{-1}k W_I=W_I$ and so $\tilde k^{-1}k\in K\cap Q_J$. Therefore
\[
k M_J k^{-1}=\tilde kM_J \tilde k^{-1}.
\]
Since $M_J$ is compatible, it follows that $M(W)$ is compatible. By Theorem \ref{parabloic-representation}, the representation $\tau_W:=(\tau_{\vert{M(W)}})_{\vert{W}}$  is irreducible. Set $X_W=M(W)/K(W)$.
$X(W)$ is a symmetric space of noncompact type \cite{borel-ji-libro}. We split $V=W\oplus W^\perp$ and we define
\[
i_{\tau_W}:X_W \lra \mathbb  P(\mathcal H (W)), \qquad i_{\tau_W}(gK(W)) \mapsto [\tau_W (g) \tau_W (g)^*],
\]
which induces the following map
\[
\psi_{W}:  X_W \lra  \overline{\mathcal P(V)}, \qquad gK(W) \mapsto [\tau_W (g) \tau_W (g)^* \oplus 0].
\]
Assume that $W=W_I$. Since $J=I\cup I'$ and $I'$ is orthogonal to $I$, it follows that
\[
M_J=M_I M_{I'},
\]
and they commute. Since $M_{I'}$ acts trivially on $W_I$, it follows that $i_{W_I}$ depends only of $M_I /M_I \cap K$ and it is a boundary component of $\XS$  \cite{borel-ji-libro}.  The  same holds for any $\tau$-admissible subspace.  Applying
Theorem \ref{Satakone} and  Lemma  \ref{Satakotto}, we get the following result.
\begin{thm}\label{Satake-mutauconnessi}
The boundary of $\XS$ are exactly the subset $\XS$ of the form $i_W(X_W)$ for some $\tau$-connected subspace $W\subseteq V$, while $X=i_V(V)$. Hence
\[
\XS=\bigsqcup_{W\ \tau-connected} i_W(X_W),
\]
\end{thm}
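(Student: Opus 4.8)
The plan is to start from Satake's description in Theorem \ref{Satakone}, namely $\XS=\bigsqcup_{\mu_\tau\text{-connected }I}G\cdot i_I(X_I)$ with the disjointness of the pieces given by Lemma \ref{Satakotto}, and to rewrite the index set in terms of $\tau$-connected subspaces. The first step is to identify, for a fixed root datum $(\lia,\Pi)$ and a $\mu_\tau$-connected $I\subset\Pi$, the classical boundary datum $(Q_I,M_I,V_I,\tau_I)$ with the intrinsic datum attached to $W_I$. By Lemmata \ref{lemmetto-di-Satake-taui} and \ref{stab-VI} we have $W_I=V_I$, $Q(W_I)=Q_J$ with $J$ the $\mu_\tau$-saturation of $I$, and $M_I$ acts irreducibly on $W_I$; moreover $M_J=M_IM_{I'}$ with $M_{I'}$ acting trivially on $V_I$, as recalled just before the statement. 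Hence $M(W_I)=M_J$, $K(W_I)=K_J$, $X_{W_I}=X_J$, $\tau_{W_I}=\tau_I$, and consequently $i_I(X_I)=i_{W_I}(X_{W_I})$.

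Next I would make the $G$-orbit explicit. For $g\in G$, computing the $G$-action $A\mapsto gAg^*$ on $\PP(\Herm(V))$ directly on the image of $i_I$ (embedded via $\psi_I$) shows that $g$ carries the data $(M_I,K_I,V_I,\tau_I)$ to $(gM_Ig^{-1},gK_Ig^{-1},gV_I,{}^{g}\tau_I)$ and intertwines the corresponding embeddings, so that $g\cdot i_I(X_I)=i_{gV_I}(X_{gV_I})$. By Lemma \ref{action-tauconnected}, $gV_I$ is again a $\tau$-connected subspace and in fact $gV_I=kV_I$ for some $k\in K$; that $i_W(X_W)$ does not depend on the chosen representative $k$, hence is a well-defined function of the $\tau$-connected subspace $W$ alone, is precisely the independence of $M(W)$ from $k$ recorded before the statement. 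Moreover $g\cdot i_I(X_I)=g'\cdot i_I(X_I)$ iff $g^{-1}g'\in Q_J$ (Lemma \ref{Satakotto}) iff $gV_I=g'V_I$ (Lemma \ref{stab-VI}), so the pieces into which $G\cdot i_I(X_I)$ decomposes are in bijection with the subspaces $gV_I$, $g\in G$, i.e.\ with the $K$-orbit of $W_I$ in the Grassmannian.

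Putting these together: as $I$ ranges over $\mu_\tau$-connected subsets of $\Pi$ and $g$ over $G$, the subspaces $gV_I$ range exactly over all $\tau$-connected subspaces of $V$ — surjectivity is Corollary \ref{tau-connected-mutau-connected}, and each $gV_I$ is so reached — and $g\cdot i_I(X_I)=i_{gV_I}(X_{gV_I})$. Hence $\XS=\bigcup_{W\ \tau\text{-connected}}i_W(X_W)$, and for $W=V$ (which is $\tau$-connected, since $Q(V)=G$ is parabolic and acts irreducibly) this piece is $i_\tau(X)=X$, matching Theorem \ref{Satakone}. For disjointness, suppose $i_{W_1}(X_{W_1})\cap i_{W_2}(X_{W_2})\neq\emptyset$. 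Writing $W_1=g_1V_{I_1}$, $W_2=g_2V_{I_2}$ and applying the $G$-translated form of Lemma \ref{Satakotto}, the two $\mu_\tau$-connected data coincide and $g_1^{-1}g_2$ stabilizes $V_{I_1}$, whence $W_1=W_2$; equivalently, by Proposition \ref{tau-connected-face} and Theorem \ref{flow-parabolic} one has $W=\mup^{-1}(F_W)$ with $W\mapsto F_W$ injective, so $W_1\neq W_2$ forces $F_{W_1}\neq F_{W_2}$ and the two boundary pieces to be disjoint. This gives the disjoint union.

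The main obstacle is the bookkeeping in the second paragraph: since $\mup$ and the identification $\mathcal O\leftrightarrow\mup(\mathcal O)$ are only $K$-equivariant and not $G$-equivariant, one must check carefully that the intrinsic datum $i_W(X_W)$ attached to a $\tau$-connected $W$ is genuinely independent of the many ways of writing $W=kW_I$, and that the $G$-translates of a single classical boundary component exhaust, without overlap, exactly the pieces indexed by the $K$-orbit of $W_I$. Once this matching of the two indexing schemes is in place, the theorem follows from Theorems \ref{Satakone} and \ref{rappresentazioni-parabolici} together with Lemma \ref{Satakotto}.
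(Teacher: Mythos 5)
Your proposal is correct and follows essentially the same route as the paper: identify $V_I=W_I$ via Lemmata \ref{lemmetto-di-Satake-taui} and \ref{stab-VI}, use Corollary \ref{tau-connected-mutau-connected} and Lemma \ref{action-tauconnected} to see that the $G$-translates $gV_I$ exhaust the $\tau$-connected subspaces and that the datum $(M(W),K(W),\tau_W)$ is well defined independently of the chosen $k\in K$, and then conclude from Theorem \ref{Satakone} and Lemma \ref{Satakotto}. You merely spell out the orbit bookkeeping and the disjointness more explicitly than the paper does.
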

We shall think  the elements  of $\XS$ as rational maps of $\mathcal O$.

Since $G\subset \mathrm{SU}(V,h)^\C$ is compatible,
\[
\mathcal M=\{g\in G:\, g=g^{-1}\},
\]
is a submanifold and $\exp:\liep \lra \mathcal M$ is a diffeomorphism with inverse $\log:\mathcal M \lra \liep$. Then the map
\[
g \mapsto \sqrt{g}=\exp\left(\frac{\log(g)}{2}\right),
\]
is a diffeomorphism. Note that $\sqrt{q}$ is the Hermitian positive isomorphism of the polar decomposition of $g$ has isomorphism of $V$.
We recall the following elementary fact. A proof is given \cite[Lemma $4.4$, $p.251$]{biliotti-ghigi-American}
\begin{lemma}
\label{sqrt-cont}
Let $V$ be a Hermitian vector space and set $ \mathcal S =\{A\in \mathcal H(V):
A\geq 0\} $. If $A\in \mathcal S$ there is a unique
$B\in \mathcal S $ such that $B^2=A$. Set $\sqrt{A}:=B$. Then
$\sqrt{\cdot}$ is a homeomorphisms of $\mathcal S$ onto itself.
\end{lemma}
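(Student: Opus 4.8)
The plan is to obtain existence and uniqueness of the square root from the spectral theorem, and then deduce the homeomorphism property from a soft compactness argument. First I would establish the pointwise statement. Given $A\in\mathcal S$, write $A=\sum_j\lambda_j P_j$ by the spectral theorem, where the $\lambda_j\ge 0$ are the distinct eigenvalues and the $P_j$ are the orthogonal projections onto the corresponding eigenspaces; then $B:=\sum_j\sqrt{\lambda_j}\,P_j$ clearly lies in $\mathcal S$ and satisfies $B^2=A$. For uniqueness, suppose $C\in\mathcal S$ and $C^2=A$. The key observation is that $C$ commutes with $A$, since $CA=C^3=AC$. Hence $C$ commutes with every spectral projection $P_j$ and preserves each eigenspace $\operatorname{Im}P_j$, restricting there to a positive semidefinite operator whose square is $\lambda_j\operatorname{Id}$; diagonalizing that restriction shows it must equal $\sqrt{\lambda_j}\operatorname{Id}$, so $C=B$. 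This defines $\sqrt{\cdot}\colon\mathcal S\to\mathcal S$, and it is a bijection because the squaring map $\mathcal S\to\mathcal S$, $A\mapsto A^2$, is precisely its two-sided inverse.

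It then remains to prove that $\sqrt{\cdot}$ is continuous: the squaring map is polynomial, hence continuous, so once $\sqrt{\cdot}$ is known to be continuous it is a homeomorphism with that inverse. I would argue sequentially. Let $A_n\to A$ in $\mathcal S$. By the spectral theorem $\|\sqrt{A_n}\|=\|A_n\|^{1/2}$ in operator norm, so $(\sqrt{A_n})$ is a bounded sequence in the finite-dimensional space $\mathcal H(V)$. Given any subsequence, Bolzano--Weierstrass produces a further subsequence $\sqrt{A_{n_k}}\to B$ with $B\in\mathcal S$ (the cone $\mathcal S$ is closed) and $B^2=\lim_k A_{n_k}=A$ by continuity of squaring, whence $B=\sqrt{A}$ by the uniqueness just proved. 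Since every subsequence of $(\sqrt{A_n})$ has a further subsequence converging to $\sqrt{A}$, the whole sequence converges to $\sqrt{A}$; as $\mathcal S$ is a metric space, this gives continuity of $\sqrt{\cdot}$, and the lemma follows.

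The only genuinely non-formal point is the uniqueness, which hinges on the fact that a positive semidefinite square root automatically commutes with $A$; everything else is the spectral theorem together with a routine Bolzano--Weierstrass argument that uses only that $\mathcal S$ is a closed subset of a finite-dimensional space and that squaring is continuous and injective on it.
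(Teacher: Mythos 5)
Your proof is correct and complete: existence and uniqueness of the positive semidefinite square root via the spectral theorem (the commutation $CA=C^3=AC$ being the right key point), and continuity via boundedness plus the subsequence/uniqueness argument, which together with the continuity of the squaring map gives the homeomorphism. The paper itself gives no proof but defers to \cite[Lemma 4.4, p. 251]{biliotti-ghigi-American}, and your argument is essentially that standard one, so there is nothing to fault beyond the (entirely routine, omittable) remark that $C$ commutes with each spectral projection $P_j$ because $P_j$ is a polynomial in $A$, or equivalently because $C$ preserves each eigenspace of $A$.
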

Set
\begin{equation}\label{root}
\rho:G \lra \mathcal M, \qquad g \mapsto \sqrt{gg^*}.
\end{equation}
Then $a=g\rho(g)^{-1} \in K$ and $g=a\rho(g)$ is the polar decomposition of $g$.

Let $W\subset V$ be a $\tau$-connected subspace. Let $g\in M(W)$. We define
\[
R_{gW} :\mathcal O \dashrightarrow \mathcal O, \qquad  x \mapsto [\sqrt{\tau(g)\tau(g)^*}\pi_W (x)]
\]
\begin{lemma}\label{immagine-mappa-razionale}
In the above notation, the following hold:
\begin{enumerate}
\item[a] the locus of indeterminacy is $\mathcal O \setminus \mathbb P(W^\perp )$;
\item[b] $\mathrm{Im}\, R_{gW}=\mathcal O (Q(W))$;
\item[c] if $x\in \mathcal O(Q(W))$, then $R_{gW}(x)=\rho(g) x$;
\item[d] $R_{gW}=L_{\rho(g)}\circ \hat{\pi_W }$, where $L_{\rho(g)}$ is the automorphism of $\mathcal O(Q(W))$ defined by $\rho(g)$;
\item[e] $\mathrm{Im}\, R_{gW}=\mathrm{Im}\, R_{g'W'}$ if and only if $W=W'$.
\end{enumerate}
\end{lemma}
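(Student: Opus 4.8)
The plan is to deduce (a), (b), (c) and (e) from the factorization (d), after first recording the behaviour of the two ingredients $\rho(g)$ and $\hat\pi_W$. Since $W$ is $\tau$-connected, $W=W(Q(W))$ by Theorem~\ref{parabloic-representation}; fix $\beta\in\liep$ with $Q(W)=G^{\beta+}$ and $F_W=F_\beta(\mathcal E)$. By the discussion preceding Theorem~\ref{flow-parabolic} together with Proposition~\ref{restriction-closed-orbit}, the meromorphic map $\hat\pi_W$ restricted to $\mathcal O$ is defined exactly on the unstable manifold $\mathcal O\setminus\mathbb P(W^\perp)$ of the maximum of $\mup^\beta\restr{\mathcal O}$, there it coincides with the gradient flow limit $\phi_\infty$, and $\hat\pi_W(\mathcal O\setminus\mathbb P(W^\perp))=\mathcal O(Q(W))$ by Theorem~\ref{flow-parabolic}; moreover $\hat\pi_W$ is the identity on $\mathbb P(W)\cap\mathcal O=\mathcal O(Q(W))$ since $\pi_W$ fixes $W$ pointwise. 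As for $\rho(g)=\sqrt{\tau(g)\tau(g)^*}$: the equality $g=a\,\rho(g)$ with $a=g\,\rho(g)\meno\in K$ is the polar decomposition of $g$ in $G$, and since $M(W)$ is a compatible subgroup this decomposition takes place inside $M(W)$, so $\rho(g)\in M(W)\subseteq G$. Hence $\rho(g)$ preserves $\mathcal O$ and preserves $W$, so it preserves $\mathbb P(W)\cap\mathcal O=\mathcal O(Q(W))$ and induces the automorphism $L_{\rho(g)}$ of $\mathcal O(Q(W))$.

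With these facts, (a) and (d) are immediate. For $[v]\in\mathcal O$ one has $R_{gW}([v])=[\rho(g)\pi_W(v)]$, which is defined if and only if $\pi_W(v)\neq 0$, i.e. if and only if $[v]\notin\mathbb P(W^\perp)$, because $\rho(g)$ is invertible; thus $R_{gW}$ is a well-defined map on $\mathcal O\setminus\mathbb P(W^\perp)$ and on no larger subset of $\mathcal O$, which is (a). If $[v]\in\mathcal O\setminus\mathbb P(W^\perp)$, then $\hat\pi_W([v])=[\pi_W(v)]\in\mathcal O(Q(W))$ and $L_{\rho(g)}(\hat\pi_W([v]))=[\rho(g)\pi_W(v)]=R_{gW}([v])$, whence $R_{gW}=L_{\rho(g)}\circ\hat\pi_W$; this proves (d), and in particular shows that $R_{gW}$ takes values in $\mathcal O(Q(W))\subseteq\mathcal O$, as the notation $R_{gW}:\mathcal O\dashrightarrow\mathcal O$ presupposes. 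Now (b) follows because $L_{\rho(g)}$ is a bijection of $\mathcal O(Q(W))$: $\mathrm{Im}\, R_{gW}=L_{\rho(g)}(\hat\pi_W(\mathcal O\setminus\mathbb P(W^\perp)))=L_{\rho(g)}(\mathcal O(Q(W)))=\mathcal O(Q(W))$. For (c): if $[v]\in\mathcal O(Q(W))\subseteq\mathbb P(W)$ then $\pi_W(v)=v$, so $R_{gW}([v])=[\rho(g)v]=\rho(g)[v]$. Finally, for (e), by (b) the equality $\mathrm{Im}\, R_{gW}=\mathrm{Im}\, R_{g'W'}$ is equivalent to $\mathcal O(Q(W))=\mathcal O(Q(W'))$, and by Proposition~\ref{orbita-sottospazio} the latter holds if and only if $W\subseteq W'$ and $W'\subseteq W$, i.e. if and only if $W=W'$.

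The only genuinely substantive point is the preparatory paragraph: that $\rho(g)$ stays inside the compatible Levi factor $M(W)$, so that it descends to an automorphism of the closed orbit $\mathcal O(Q(W))$, together with the identification --- already carried out in Section~\ref{representation-gradient map} --- of $\hat\pi_W\restr{\mathcal O}$ with the gradient flow of $\mup^\beta$, which pins down its domain $\mathcal O\setminus\mathbb P(W^\perp)$ and its image $\mathcal O(Q(W))$. Once these are available, items (a)--(e) are purely formal.
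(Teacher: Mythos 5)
Your proposal is correct and follows essentially the same route as the paper: identify the domain of $\hat\pi_W\restr{\mathcal O}$ with the unstable manifold $\mathcal O\setminus\mathbb P(W^\perp)$, use Theorem~\ref{flow-parabolic} for $\hat\pi_W(\mathcal O)=\mathcal O(Q(W))$, use the compatibility of $M(W)$ to see that $\rho(g)$ induces an automorphism of $\mathcal O(Q(W))$, and invoke Proposition~\ref{orbita-sottospazio} for item (e). You spell out the formal deductions of (a)--(d) in more detail than the paper (which compresses them into one sentence after citing Proposition~\ref{stabilizzatore-faccie}), but the substance is identical.
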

\begin{proof}
The domain of $R_{gW}$ is the domain of $\hat{\pi}_W$ and so is given by $\mathcal O \setminus \mathbb P (W^\perp )$.

The Lie group $M(W)$ is compatible. By Proposition \ref{stabilizzatore-faccie},  it preserves $\mathcal O(Q(W))$ and so  $\rho(g)$ induces an automorphism of $\mathcal O (Q(W))$. By Theorem \ref{flow-parabolic}, $\hat{\pi}_{W} (\mathcal O)=\mathcal O(Q(W))$. Therefore $R_{gW}$ is given by the composition of the automorphism of $\mathcal O (Q(W))$ defined by $\rho(g)$ and the rational map $\hat{\pi}_W$. This proves item $[a],[b],[c],[d]$.
By Proposition \ref{orbita-sottospazio}, $\mathcal O(Q(W))$ determines $W$. Then  $\mathrm{Im}\, R_{gW}=\mathrm{Im}\, R_{g'W'}$ if and only if $W=W'$, concluding the proof.
\end{proof}
Given $W$ a $\tau$-connected subspaces, we have a map
\[
r_W: X_W \lra \{\mathrm{Rational\ maps\ of \ } \mathcal O\}, \qquad gK(W) \mapsto R_{gW},
\]
The following Lemma proves that the above map is injective.
\begin{lemma}\label{meoo}
Let $\tau:G \lra \mathbb P(V)$ be an irreducible representation. Let $\mathcal O$ be a closed orbit of $G$ and let $g\in G$. If $g$ fixed pointwise $\mathcal O$, then $g$ fixed pointwise any element of $\mathbb P(V)$.
\end{lemma}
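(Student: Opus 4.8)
The plan is to reduce the statement to the elementary description of the fixed–point set of a single element acting on a projective space. First I would choose a linear lift $A\in \mathrm{GL}(V)$ of the projective transformation $\tau(g)$, which exists because $\mathrm{GL}(V)\to \mathrm{PGL}(V)$ is onto. For $v\in V\setminus\{0\}$ one has $g\cdot[v]=[v]$ precisely when $Av=\lambda v$ for some scalar $\lambda$, i.e. when $v$ is an eigenvector of $A$ (automatically for a nonzero eigenvalue, since $A$ is invertible). Hence, writing $\lambda_1,\dots,\lambda_m$ for the distinct eigenvalues of $A$ and $E_{\lambda_1},\dots,E_{\lambda_m}$ for the corresponding eigenspaces,
\[
\mathrm{Fix}(g):=\{z\in \mathbb P(V):\ g\cdot z=z\}=\bigsqcup_{j=1}^{m}\mathbb P(E_{\lambda_j}),
\]
a finite union of projective linear subspaces which is \emph{disjoint} because $E_{\lambda_j}\cap E_{\lambda_l}=\{0\}$ for $j\neq l$, and whose members are closed in $\mathbb P(V)$.

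Next I would bring in the two relevant features of $\mathcal O$. On the one hand $\mathcal O$ is connected, being the orbit $G\cdot x$ of the connected group $G$ (cf. also Lemma \ref{closed-G-orbit}); since a connected set contained in a finite disjoint union of closed sets must lie in a single one, the hypothesis $\mathcal O\subseteq \mathrm{Fix}(g)$ forces $\mathcal O\subseteq \mathbb P(E_{\lambda_{j_0}})$ for some $j_0$. On the other hand $\mathcal O$ is full in $\mathbb P(V)$: for $x=[w]$ the linear span of $G\cdot w$ is a nonzero $G$-invariant subspace of $V$, hence equals $V$ because $\tau$ is irreducible (this is exactly the fullness observation already used in the proof of Proposition \ref{restriction-closed-orbit}). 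Combining the two, $\mathbb P(E_{\lambda_{j_0}})=\mathbb P(V)$, so $E_{\lambda_{j_0}}=V$ and $A=\lambda_{j_0}\,\Id_V$; therefore $g$ acts as the identity on $\mathbb P(V)$, which is the assertion.

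I do not expect a genuine obstacle here; the only points requiring a word of care are that distinct eigenspaces of $A$ produce \emph{disjoint} closed projective subspaces (so that the connectedness of $\mathcal O$ can be exploited) and that the fullness of $\mathcal O$ is really a consequence of the irreducibility of $\tau$ rather than of some finer property. Both are routine.
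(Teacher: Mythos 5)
Your proof is correct, and it is in fact more complete than the one in the paper. The paper's argument is a single sentence: ``since the linear span of $\mathcal O$ coincides with $\mathbb P(V)$, $g$ fixes every element of $\mathbb P(V)$.'' Taken literally, spanning alone does not suffice: a projective transformation can fix pointwise a set that spans $\mathbb P(V)$ without being the identity (e.g.\ the class of $\mathrm{diag}(1,-1)$ on $\mathbb P^1$ fixes $[1:0]$ and $[0:1]$, which span, but moves $[1:1]$). The issue is exactly the one you isolate: a fixed point of $g$ only gives an eigenvector of a linear lift $A$, with an eigenvalue that may vary from point to point. Your two supplementary observations close this gap cleanly. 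First, $\mathrm{Fix}(g)$ is a finite \emph{disjoint} union of the closed sets $\mathbb P(E_{\lambda_j})$, so the connected set $\mathcal O$ (an orbit of the connected group $G$) lies in a single $\mathbb P(E_{\lambda_{j_0}})$; second, fullness of $\mathcal O$ --- which, as you note, is a consequence of irreducibility and is the ingredient the paper does invoke --- forces $E_{\lambda_{j_0}}=V$, hence $A$ is scalar. So the core idea (fullness via irreducibility) is the same as the paper's, but the connectedness/eigenspace step you add is genuinely needed for the argument to be airtight, and your write-up should be preferred.
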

\begin{proof}
Since the linear span of $\mathcal O$ coincides with $\mathbb P(V)$, the automorphism  $g$ fixed pointwise any element of $\mathbb P(V)$.
\end{proof}
Let
\[
{ri}_\tau: \XS \lra \{\mathrm{Rational\ maps\ of\ } \mathcal O\},
\]
denote the map defined as follow:
\[
(ri_\tau)_{|_{X_W}}=r_W.
\]
By Lemmata \ref{immagine-mappa-razionale} and \ref{meoo}, we have the following result.
\begin{thm}\label{Satake-razionale}
The map
\[
{ri}_\tau: \XS \lra \{\mathrm{Rational\ maps\ of\ } \mathcal O\}
\]
is injective.
\end{thm}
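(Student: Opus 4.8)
The plan is to read the statement through the stratification $\XS=\bigsqcup_{W\ \tau\text{-connected}}i_W(X_W)$ of Theorem~\ref{Satake-mutauconnessi} and to prove that $ri_\tau$ is injective both \emph{across} the strata and \emph{on} each individual stratum. The across-strata part is immediate: if $p\in i_W(X_W)$ and $p'\in i_{W'}(X_{W'})$ satisfy $ri_\tau(p)=ri_\tau(p')$, then the two rational self-maps of $\mathcal O$ coincide, hence have the same image, and Lemma~\ref{immagine-mappa-razionale}(e) forces $W=W'$. So everything reduces to showing that each $r_W\colon X_W\to\{\text{rational maps of }\mathcal O\}$, $gK(W)\mapsto R_{gW}$, is injective.

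For this, I would take $g,g'\in M(W)$ with $R_{gW}=R_{g'W}$. By Lemma~\ref{immagine-mappa-razionale}(c)--(d), together with the surjectivity $\hat{\pi}_W(\mathcal O)=\mathcal O(Q(W))$ from Theorem~\ref{flow-parabolic}, the two maps restrict on the closed orbit $\mathcal O(Q(W))$ to $x\mapsto\rho(g)x$ and $x\mapsto\rho(g')x$; since $M(W)$ is compatible we have $\rho(g),\rho(g')\in M(W)$, and the hypothesis $R_{gW}=R_{g'W}$ says exactly that $\rho(g)^{-1}\rho(g')\in M(W)$ fixes every point of $\mathcal O(Q(W))$. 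Now I would invoke Lemma~\ref{meoo}, but applied to $M(W)$ acting on $W$ through the irreducible representation $\tau_W$ of Theorem~\ref{parabloic-representation}, with closed orbit $\mathcal O(Q(W))$ (which is even a $(K^\beta)^o$-orbit, hence an $M(W)$-orbit, by Corollary~\ref{unique-parabolici-explicitely} and Proposition~\ref{stabilizzatore-faccie}) whose linear span is $\mathbb P(W)$. Lemma~\ref{meoo} then yields that $\rho(g)^{-1}\rho(g')$ fixes all of $\mathbb P(W)$ pointwise, i.e. $[\rho(g)|_W]=[\rho(g')|_W]$ in $\mathrm{PGL}(W)$.

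Squaring this equality gives $[\tau_W(g)\tau_W(g)^*]=[\tau_W(g')\tau_W(g')^*]$, that is $i_{\tau_W}(gK(W))=i_{\tau_W}(g'K(W))$ in $\Pos(W)$; since the Satake embedding $i_{\tau_W}$ of $X_W$ is injective (the argument that makes $i_\tau$ injective applies verbatim, using that $\tau_W$ is faithful on the genuine Levi $M_I$ by Lemma~\ref{lemmetto-di-Satake-taui}), we conclude $gK(W)=g'K(W)$, completing the proof. I expect the delicate point to be precisely this reduction to the ``small'' picture $(M(W),W,\tau_W,\mathcal O(Q(W)))$: one must check that $\mathcal O(Q(W))$ is genuinely a closed orbit of $M(W)$ spanning $\mathbb P(W)$, so that Lemma~\ref{meoo} is applicable there, and that the only information discarded by $R_{gW}$ — which records merely the composition with the meromorphic projection $\hat{\pi}_W$ — is the overall projective scaling, which is harmless because the target of $i_{\tau_W}$ is projective. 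Everything else is a formal assembly of Lemmata~\ref{immagine-mappa-razionale} and~\ref{meoo}.
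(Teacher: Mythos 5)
Your proposal is correct and follows exactly the route the paper intends: the paper's "proof" is just the citation "By Lemmata \ref{immagine-mappa-razionale} and \ref{meoo}", i.e. part (e) of Lemma \ref{immagine-mappa-razionale} separates the strata and Lemma \ref{meoo} (applied to the irreducible representation $\tau_W$ of $M(W)$ on $W$, whose closed orbit $\mathcal O(Q(W))$ spans $\mathbb P(W)$) gives injectivity of each $r_W$. You have simply written out the details the paper leaves implicit, including the harmless projective-scaling point at the end.
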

The next step is to understand convergence in the Satake compactification which kind of convergence we have of the corresponding rational maps.
\begin{lemma}\label{convergence}
Let $p_n \mapsto p\in \XS$ and let $\hat{p_n}$ and $\hat{p}$ be the corresponding rational map. Then $\hat{p_n} \mapsto \hat{p}$ uniformly on compact subsets of an open dense connected subset of $\mathcal O$.
\end{lemma}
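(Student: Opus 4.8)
The plan is to reduce everything to an explicit matrix computation in a fixed weight-space decomposition, using the dictionary between points of $\XS$ and rational self-maps of $\OO$ supplied by Theorem \ref{Satake-razionale} and Lemma \ref{immagine-mappa-razionale}. First I would recall that every point of $\XS$ is of the form $p=[A]$ with $A\in\Herm(V)$, $A\geq 0$, $A\neq 0$, and by Satake's description (Theorem \ref{Satakone}, Theorem \ref{Satake-mutauconnessi}) it lies in some boundary stratum $i_W(X_W)$; concretely $A=\tau(g)\tau(g)^*\oplus 0$ relative to a splitting $V=W\oplus W^\perp$, with $W$ a $\tau$-connected subspace, so that the associated rational map is $\hat p=R_{gW}=L_{\rho(g)}\circ\hat\pi_W$. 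Thus a point of $\XS$ is, up to scale, a nonnegative Hermitian endomorphism $A$ of $V$, and the corresponding rational self-map of $\OO$ is simply $[v]\mapsto[Av]$, defined wherever $Av\neq 0$ — this is the uniform way of writing both $i_\tau(X)$ (where $A$ is invertible) and all boundary maps (where $A=\sqrt{A}^{\,2}$ has kernel $W^\perp$ and $\sqrt{A}$ acts as $\rho(g)$ on $\OO(Q(W))\subset\PP(W)$). Having put things in this form, convergence $p_n\to p$ in $\XS\subset\PP(\Herm(V))$ means: after rescaling the representatives $A_n,A$ (e.g. normalising $\tr A_n=\tr A=1$, using Lemma \ref{prop-PV}(c) and Lemma \ref{sqrt-cont}), $A_n\to A$ in $\Herm(V)$, hence $\sqrt{A_n}\to\sqrt A$ as well.

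Next I would identify the open dense connected subset of $\OO$ on which we get uniform convergence. Let $W$ be the $\tau$-connected subspace attached to the limit $p$, so $\ker\sqrt A=W^\perp$, and set $\Omega:=\OO\setminus\PP(W^\perp)$; this is exactly the locus of indeterminacy complement from Lemma \ref{immagine-mappa-razionale}(a), and it is open and dense in $\OO$ because $\OO$ is full in $\PP(V)$ (Proposition \ref{restriction-closed-orbit}), hence not contained in the proper projective subspace $\PP(W^\perp)$; connectedness of $\Omega$ follows since $\OO$ is a connected manifold and we are removing a closed subset of positive codimension (the intersection of $\OO$ with a proper linear subspace). On a compact subset $C\subset\Omega$ there is $\delta>0$ with $\|\sqrt A\,v\|\geq\delta\|v\|$ for all $[v]\in C$ (by compactness and $\ker\sqrt A=W^\perp$), and since $\sqrt{A_n}\to\sqrt A$ uniformly in operator norm, for $n$ large $\|\sqrt{A_n}\,v\|\geq\tfrac{\delta}{2}\|v\|$ on $C$ too; therefore the maps $[v]\mapsto[\sqrt{A_n}\,v]$ are defined on $C$ for $n\gg 0$, and the estimate
\[
\left\|\frac{\sqrt{A_n}\,v}{\|\sqrt{A_n}\,v\|}-\frac{\sqrt A\,v}{\|\sqrt A\,v\|}\right\|
\leq \frac{2}{\delta}\,\|\sqrt{A_n}-\sqrt A\|_{\mathrm{op}}+(\text{lower order})
\]
gives uniform convergence $\hat p_n\to\hat p$ on $C$ in the round metric on $\PP(V)$, hence on $\OO$. (Here I am using the elementary fact that $x\mapsto x/\|x\|$ is Lipschitz on $\{\|x\|\geq\delta/2\}$.)

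The one genuine subtlety — and what I expect to be the main obstacle — is matching the \emph{intrinsic} description of $\hat p_n$ as elements of the strata $X_{W_n}$ (i.e. $\hat p_n=R_{g_nW_n}=L_{\rho(g_n)}\circ\hat\pi_{W_n}$, with possibly varying $W_n$) against the \emph{uniform} description $[v]\mapsto[\sqrt{A_n}v]$: one must check that the operator $\sqrt{A_n}$ does restrict to (a scalar multiple of) $\rho(g_n)$ on $W_n$ and kills $W_n^\perp$, so that the two notions of "corresponding rational map" agree, and that this persists in the limit where $W_n$ may jump to the smaller subspace $W$. This is exactly the content already packaged in Lemma \ref{immagine-mappa-razionale} and the construction of $i_W$, $\psi_W$ in Section \ref{Satake-sempre-Satake}: $A_n=\tau(g_n)\tau(g_n)^*\oplus 0$ on $W_n\oplus W_n^\perp$, so $\sqrt{A_n}=\rho_{W_n}(g_n)\oplus 0$ with $\rho_{W_n}(g_n)=\sqrt{\tau(g_n)\tau(g_n)^*}$ acting on $W_n$, and on $\OO(Q(W_n))\subset\PP(W_n)$ the projectivisation of $\sqrt{A_n}$ is $L_{\rho(g_n)}$ while on all of $\OO\setminus\PP(W_n^\perp)$ it is $L_{\rho(g_n)}\circ\hat\pi_{W_n}$, which is $R_{g_nW_n}$. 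Once this identification is in place, the argument is purely the operator-norm continuity of $\sqrt{\cdot}$ (Lemma \ref{sqrt-cont}) combined with the compactness estimate above, and the open dense connected set is $\Omega=\OO\setminus\PP(W^\perp)$ for the limiting $W$.
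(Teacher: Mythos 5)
Your proof is correct and follows the same basic mechanism as the paper's: normalise representatives via Lemma \ref{prop-PV} so that $A_n \to A$ in $\Herm(V)$, pass to square roots via Lemma \ref{sqrt-cont}, and identify $\hat p_n$, $\hat p$ with the projectivisations $[v]\mapsto[\sqrt{A_n}\,v]$, $[v]\mapsto[\sqrt{A}\,v]$. The one genuine difference is the choice of domain. The paper removes $N=\bigcup_n \PP(W_n^\perp)\cup \PP(W^\perp)$, a countable union of proper projective subspaces, so that every $\hat p_n$ is defined on all of $\OO\setminus N$; but that set, being the complement of a countable union of closed sets, is only dense and $G_\delta$, not obviously open, so the paper's argument delivers the ``open'' part of the statement less directly. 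You instead remove only the limiting indeterminacy locus, taking $\Omega=\OO\setminus\PP(W^\perp)$, which is genuinely open, dense and connected, and you compensate by the quantitative estimate $\|\sqrt{A}\,v\|\geq\delta\|v\|$ on a compact $C\subset\Omega$, which forces $\|\sqrt{A_n}\,v\|\geq\delta/2\,\|v\|$ for $n\gg0$, so the $\hat p_n$ are eventually defined on $C$ and converge uniformly there. This is a cleaner route to the statement as literally phrased, at the cost of having to interpret ``uniform convergence on $C$'' as ``eventually defined on $C$ and uniformly convergent there,'' which you make explicit. Both arguments rest on the same two ingredients (trace normalisation and continuity of $\sqrt{\cdot}$), and your spelled-out Lipschitz estimate for $x\mapsto x/\|x\|$ on $\{\|x\|\geq\delta/2\}$ is exactly the step the paper leaves implicit.
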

\begin{proof}
By Lemma \ref{prop-PV}, we can find unique $A_n,A\in \mathcal H(V)$ such that $\mathrm{Tr}(A_n)=\mathrm{Tr}(A)=1$, $p_n=[A_n],p=[A] \in \XS$ and $A_n \mapsto A$ in  $\mathrm{End}(V)$. We denote by $W_n$, respectively $W$, the $\tau$-connected subspace of $V$ corresponding to $\hat{p_n}$, respectively $\hat{p}$. Then
\[
N=\bigcup_{n\in \mathbb N} \mathbb P(W_n^\perp ) \cup \mathbb P(W),
\]
has no interior point and so $A_n \mapsto A$ uniformly on the compact subset of $\mathcal O \setminus N$.  Hence, by Lemma \ref{sqrt-cont} we get the result.
\end{proof}
\section{The Bourguignon Li-Yau map}\label{srbly}
In this section we introduce the Bourguignon-Li-Yau for the $G$-action on $\mathbb P(V)$. This map has been studied by different levels of generality by Hersch \cite{hersch}, Bourguignon, Li and Yau \cite{bourguignon-li-yau},  Millson and Zombro  \cite{millson-zombro}, Biliotti and Ghigi \cite{biliotti-ghigi-American,bgs}.

If $M$ is a compact manifold, denote by $\mathscr M (M)$ the vector space of finite signed Borel measures on $M$.  These measures are
  automatically Radon \cite[Thm. 7.8, p. 217]{folland-real-analysis}.
  Denote by $C(M)$ the space of real continuous function on $M$. It is
  a Banach space with the $\mathrm{sup}$--norm.  By the Riesz Representation
  Theorem \cite[p.223]{folland-real-analysis} $\mathscr M (M)$ is the
  topological dual of $C(M)$.  The induced norm on $\mathscr M (M)$ is the
  following one:
  \begin{gather}
    ||\nu||:= \sup \Bigl \{ \int_M f d\nu: f\in C(M), \sup_M |f| \leq
    1 \Bigr\}.
  \end{gather}
  We endow $\mathscr M (M)$ with the weak-$*$ topology as dual of $C(M)$.
  Usually this is simply called the \emph{weak topology} on measures. We use
  the symbol $\nu_\alfa\rightharpoonup  \nu$ to denote the weak convergence of
  the net $\{\nu_\alfa \}$ to the measure $\nu$.  Denote by
  $\mathscr P (M) \subset \mathscr M (M)$ the set of Borel probability measures
  on $M$.  We claim $\mathscr P (M)$ is a compact convex subset of
  $\mathscr M (M)$. Indeed the cone of positive measures is closed and
  $\mathscr P (M)$ is the intersection of this cone with the closed affine
  hyperplane $\{\nu\in \mathscr M (M) : \nu(M) = 1\}$. Hence $\mathscr P (M)$ is
  closed.  For a positive measure $|\nu|=\nu$, so $\mathscr P (M)$ is
  contained in the closed unit ball in $\mathscr M (M)$, which is compact in
  the weak topology by the Banach-Alaoglu Theorem
  \cite[p. 425]{dunford-schwartz-1}.  Since $C(M)$ is separable, the
  weak topology on $\mathscr P (M)$ is metrizable \cite[p. 426]{dunford-schwartz-1}.

If $f : M \lra N$ is a measurable map between measurable spaces and
  $\nu $ is a measure on $M$, the \emph{image measure} $f_* \nu$ is
defined by  $f_* \nu(A) : = \nu (f^{-1} (A))$.  It
  satisfies the \emph{change of variables formula}
  \begin{gather}
    \label{eq:pushforward}
    \int_N u (y) \mathrm{d}( f_* \nu)(y) = \int_M u(f(x)) \mathrm{d}\nu(x).
  \end{gather}
In the sequel for $ g \in G$ and $ \nu \in \mathscr P(M)$, we will use the notation
\begin{gather*}
    g \cdot \nu := g_* \nu.
\end{gather*}
Let $\mathcal O$ be the unique closed orbit of $G$ on $\mathbb P(V)$ contained in the unique closed orbit of $U^\C$. Define
\[
\mathscr F: \mathscr P (\mathcal O) \lra \liep, \qquad \gamma \mapsto \int_{\mathcal O} \mup (x) \mathrm{d} \gamma (x).
\]
By a standard formula of change of variable, we have
\[
\mathscr F (g\cdot \gamma)=\int_{\mathcal O} \mup (x)\mathrm{d} ( g\cdot \gamma ) (x)=\int_{\mathcal O} \mup(gy)\mathrm{d}\gamma(y).
\]
Using the homomorphism
\[
X=G/K \lra G, \qquad gK \mapsto \sqrt{gg^*},
\]
we define the following map.
\begin{defin}\label{rbly}
Given a probability measure $\gamma$ of $\mathcal O$, the \emph{Bourguignon-Li-Yau map} $\Psi_\gamma:X\lra \liep$ is defined by
\[
\Psi_\gamma (gK)=\int_{\mathcal O} \mup(\rho(g)x)\mathrm{d} \mu(x),
\]
where $\rho(g)=\sqrt{gg^*}$ as in (\ref{root}).
\end{defin}
\begin{lemma}\label{basic}
$\Psi_\gamma (X) \subseteq \mathcal E$ and $0\in \mathrm{Int}(\mathcal E)$.
\end{lemma}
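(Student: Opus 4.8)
The first inclusion is immediate. The plan is to note that for every $g\in G$ and $x\in\OO$ the point $\rho(g)x$ lies in $\mathbb P(V)$, so $\mup(\rho(g)x)\in\mup(\mathbb P(V))\subseteq\mathcal E$, using $\mathcal E=\mathrm{conv}(\mup(\mathbb P(V)))$ and Corollary~\ref{orbitope}. Since $\gamma$ is a probability measure and $\mathcal E$ is compact and convex, the barycenter $\Psi_\gamma(gK)=\int_{\OO}\mup(\rho(g)x)\,\mathrm d\gamma(x)$ lies in $\mathcal E$.

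For $0\in\mathrm{Int}(\mathcal E)$ I would proceed in two steps. \emph{First, $\mathcal E$ is full dimensional in $\liep$.} It is enough to check that $\mup(\mathbb P(V))$ linearly spans $\liep$. If $\beta\in\liep$ is orthogonal to $\mup(\mathbb P(V))$, then $\mup^\beta\equiv 0$ on $\mathbb P(V)$; regarding $\beta$ as the self-adjoint endomorphism of $V$ with eigenspaces $V_1,\dots,V_k$ for eigenvalues $\lambda_1>\cdots>\lambda_k$ and using $\mup^\beta([x_1+\cdots+x_k])=(\sum_i\lambda_i\|x_i\|^2)/(\sum_i\|x_i\|^2)$, evaluation at a single eigenvector gives $\lambda_i=0$ for all $i$, hence $\beta=0$ as an operator; since $\tau$ has finite kernel its differential is injective, so $\beta=0$ in $\liep$. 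Thus $\mathrm{aff}(\mathcal E)=\liep$ and $\mathrm{Int}(\mathcal E)\neq\emptyset$.

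\emph{Second, $0$ is the centroid of $\mathcal E$.} The body $\mathcal E$ is $K$-invariant (it is the convex hull of the $K$-invariant set $\mup(\mathbb P(V))$) and $\mathrm{Ad}(K)$ acts on $\liep$ by orthogonal, hence volume-preserving, maps; therefore $\mathrm{Ad}(K)$ fixes the centroid $c=\frac1{\vol(\mathcal E)}\int_{\mathcal E}x\,\mathrm dx$ of $\mathcal E$. Now $\liep^K=0$ because $G$ is semisimple: if $\mathrm{Ad}(k)v=v$ for all $k$ then $[\liek,v]=0$, and invariance of the Killing form together with its nondegeneracy on $\liek$ then forces $[\liep,v]=0$ as well (for $w\in\liep$ and $u\in\liek$ one has $B([v,w],u)=-B(w,[v,u])=0$ and $[v,w]\in\liek$), so $v\in\liez(\lieg)=0$. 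Hence $c=0$, and since the centroid of a convex body lies in its interior, $0\in\mathrm{Int}(\mathcal E)$.

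The only genuine input is the first step: the hypotheses that $\tau$ takes values in $\mathrm{SL}(V)$ with finite kernel are exactly what forces full dimensionality — equivalently, they rule out that $0$ lies on a supporting hyperplane of $\mathcal E$, since such a hyperplane would produce a nonzero $\beta\in\liep$ that is negative semidefinite as an operator, impossible for a traceless one. Everything else is soft. Alternatively, $0\in\mathrm{Int}(\mathcal E)$ follows from $\mathcal E=K\cdot P$ with $P=\mathrm{conv}(\mathcal W\cdot z_\tau)$: one has $\sum_{w\in\mathcal W}wz_\tau=0$ and $\mathrm{span}(\mathcal W\cdot z_\tau)=\lia$ (each irreducible factor of $\tau$ is nontrivial, as $\ker\tau$ is finite), so $0\in\mathrm{Int}_{\lia}(P)$, and the norm-preserving decomposition $\liep=\bigcup_{k\in K}\mathrm{Ad}(k)\lia$ propagates a ball around $0$ in $\lia$ to one in $\liep$.
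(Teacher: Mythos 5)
Your proof is correct in substance but takes a genuinely different route from the paper's. The paper deduces $0\in\mathrm{Int}(\mathcal E)$ from the corresponding statement for the momentum image of the complex closed orbit $\OO'$ inside $i\liu$ (quoting Lemma 69 of \cite{biliotti-ghigi-American}, applicable because every simple factor of $U^\C$ acts nontrivially on $V$), and then pushes it down to $\liep$ via the orthogonal projection $\pi_\liep$, which is open and maps $\mathrm{conv}(i\mu(\OO'))$ onto $\mathcal E$. You instead stay entirely on the real side: full-dimensionality comes from the explicit eigenvalue formula for $\mup^\beta$ together with injectivity of $d\tau$, and the location of $0$ comes from $K$-invariance of $\mathcal E$ plus $\liep^K=0$. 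Your version is more self-contained (no appeal to the external lemma), and the closing alternative via $P=\mathrm{conv}(\mathcal W\cdot z_\tau)$, $\sum_{w\in\mathcal W}wz_\tau=0$ and $\liep=\bigcup_{k\in K}\mathrm{Ad}(k)\lia$ is arguably the cleanest argument of all.

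One logical slip in your main argument should be repaired: knowing that $\mup(\mathbb P(V))$ \emph{linearly} spans $\liep$ does not by itself give $\mathrm{aff}(\mathcal E)=\liep$ (a set can span linearly while its convex hull lies in a proper affine hyperplane missing the origin), and your centroid $\frac{1}{\vol(\mathcal E)}\int_{\mathcal E}x\,\mathrm dx$ presupposes $\vol(\mathcal E)>0$, so as written the two steps lean on each other. The fix is immediate and already implicit in what you wrote: average $\mup(\OO)$ against the $K$-invariant probability measure (or take the centroid relative to the affine hull) to produce a $K$-fixed point of $\mathcal E$, which must be $0$ since $\liep^K=0$; then $0\in\mathcal E$ forces the affine hull to coincide with the linear span, and your spanning computation finishes the proof.
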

\begin{proof}
Since $\mathcal E=\mathrm{conv}(\mup(\mathcal O))$ and $\gamma$ is a probability measure, it follows $\Psi_\gamma (X)\subseteq \mathcal E$.

Let $\mathcal O'$ be the unique closed orbit of $U^\C$. Since $U^\C=U_0^\C\cdot U_1^\C$, $G=G_0 \cdot U_1^\C$ and $G_0$ is a real form of $U_0$,
then any simple factor of $U^\C$ acts non trivially on $V$. By Lemma 69 in \cite[p. $260$]{biliotti-ghigi-American}, we have $0\in \mathrm{Int}\left(\mathrm{conv} (i\mu(\mathcal O'))\right)$.

Let $\lia\subset \liep$ be a maximal Abelian subalgebra. By Proposition \ref{image-gradient},
$\mua(\mathcal O')=\mua(\mathcal O)$. Therefore
\[
\pi_\liep (\mathrm{conv} (i\mu(\mathcal O')))=K\mua(\mathcal O)=\mathcal E,
\]
where $\pi_\liep$ is the orthogonal projection of $i\mathfrak u$ onto $\liep$.  Since $\pi_\liep$ is an open map, it follows that $0\in \mathrm{Int}(\mathcal E)$.
\end{proof}
\begin{defin}\label{bly}
We say that a probability measure $\gamma$ on $\mathcal O$ is $\tau$-admissible if for any hyperplane $H \subset \mathbb P(V)$, then
$\gamma(\mathcal O \cap H)=0$
\end{defin}
\begin{lemma}\label{convergense2}
Assume that $p_n \mapsto p$ in $\XS$ and let $\hat{p_n}$ and $\hat{p}$ the corresponding rational maps. If $\gamma$ is $\tau$-admissible,
then $\hat{p_n} \mapsto \hat{p}$ $\gamma$-ae, i.e., $\gamma$ almost everywhere.
\end{lemma}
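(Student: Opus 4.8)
The plan is to derive the statement directly from Lemma~\ref{convergence}, whose proof already isolates the relevant exceptional set. First I would recall the set-up of that proof: using Lemma~\ref{prop-PV} one writes $p_n=[A_n]$ and $p=[A]$ with $A_n,A\in\mathcal H(V)$ positive semidefinite and $\tr A_n=\tr A=1$, so that $A_n\to A$ in $\End(V)$, and one lets $W_n$ (resp.\ $W$) be the $\tau$-connected subspace corresponding to $\hat{p_n}$ (resp.\ $\hat p$). That proof produces a set $N$ which is a countable union of proper projective-linear subspaces of $\mathbb P(V)$ --- the subspaces $\mathbb P(W_n^\perp)$ together with the projective subspace corresponding to $\hat p$, all proper because $N$ has empty interior --- such that $A_n\to A$, hence (by Lemma~\ref{sqrt-cont}) $\hat{p_n}\to\hat p$, uniformly on the compact subsets of $\mathcal O\setminus N$. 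Since singletons are compact this already gives $\hat{p_n}(x)\to\hat p(x)$ for every $x\in\mathcal O\setminus N$; and by Lemma~\ref{immagine-mappa-razionale}(a) the indeterminacy locus of each $\hat{p_n}$ and of $\hat p$ is one of the subspaces comprising $N$, so every point of $\mathcal O\setminus N$ lies in the domain of all of these maps. Hence it suffices to prove that $\gamma(\mathcal O\cap N)=0$.

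This is exactly where $\tau$-admissibility is used. For each of the countably many projective-linear subspaces $L$ whose union is $N$, choose a hyperplane $H_L\supset L$ of $\mathbb P(V)$; this is possible since $L$ is proper. By Definition~\ref{bly}, $\gamma(\mathcal O\cap H_L)=0$ for every such $L$, so countable subadditivity of $\gamma$ gives
\[
\gamma(\mathcal O\cap N)\leq\sum_{L}\gamma(\mathcal O\cap H_L)=0 .
\]
Therefore $\mathcal O\setminus N$ has full $\gamma$-measure, and on it $\hat{p_n}\to\hat p$ pointwise; this is precisely the assertion that $\hat{p_n}\to\hat p$ $\gamma$-almost everywhere.

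I do not expect a genuine obstacle: the lemma is a soft corollary of Lemma~\ref{convergence}. The one point deserving a line of care is the bookkeeping of the domains of the rational maps, namely that a point lying outside $N$ simultaneously belongs to the domain of every $\hat{p_n}$ and of $\hat p$, so that the pointwise convergence literally makes sense; this is handled by the observation above based on Lemma~\ref{immagine-mappa-razionale}(a).
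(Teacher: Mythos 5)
Your proof is correct and follows essentially the same route as the paper: both invoke Lemma~\ref{convergence} to reduce to the exceptional set $N$, a countable union of proper projective-linear subspaces, and then use $\tau$-admissibility plus countable subadditivity to get $\gamma(N)=0$. Your extra care about extending each proper subspace to a hyperplane and about the domains of the maps merely makes explicit what the paper leaves implicit.
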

\begin{proof}
In Lemma \ref{convergence}, we prove that $\hat{p_n} \mapsto \hat{p}$ on a compact subset of $\mathbb P(V)\setminus N$,
where $N$ is a countably union of linear proper projective subspaces of $\mathbb P(V)$. Therefore $\gamma(N)=0$ and so $\hat{p_n} \mapsto \hat{p}$, $\gamma$-ae.
\end{proof}
\begin{lemma}\label{razionali-misure}
Let $\gamma$ be a $\tau$-admissible measure. Let $p\in X_W$ and let $\hat{p}$ the corresponding rational map. Then $\hat{p}_*\gamma$ is $\tau_W$-admissible measure. Moreover, if $\gamma$ is a $K$-invariant measure, then $\hat{p}_* \gamma$ is $K(W)$-invariant measure of $\mathcal O(W)$.
\end{lemma}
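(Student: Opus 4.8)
The plan is to unwind $\hat p$ through Lemma \ref{immagine-mappa-razionale}. Writing $p=gK(W)$ with $g\in M(W)$, one has $\hat p=R_{gW}=L_{\rho(g)}\circ\hat\pi_W$, a rational map with domain $\mathcal O\setminus\mathbb P(W^\perp)$ and with $\hat p(\mathcal O)=\mathcal O(Q(W))=\mathcal O(W)$. Since $W\neq\{0\}$ the subspace $W^\perp$ is proper, so $\mathbb P(W^\perp)$ is contained in a hyperplane of $\mathbb P(V)$, and $\tau$-admissibility of $\gamma$ forces $\gamma(\mathcal O\cap\mathbb P(W^\perp))=0$; hence $\hat p$ is defined $\gamma$-almost everywhere and $\hat p_*\gamma$ is a well defined probability measure carried by $\mathcal O(W)$. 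It then remains to check the two assertions separately.

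For $\tau_W$-admissibility I would pull hyperplanes of $\mathbb P(W)$ back to $\mathbb P(V)$. Fix a hyperplane $H'=\{[w]:\ell(w)=0\}$ of $\mathbb P(W)$ with $0\neq\ell\in W^*$. Since $L_{\rho(g)}$ is the projective automorphism of $\mathbb P(W)$ induced by the linear isomorphism $\rho(g)|_W$, the set $L_{\rho(g)}^{-1}(H')$ is again a hyperplane, cut out by $\ell'=\ell\circ\rho(g)|_W\neq 0$; and since $\pi_W$ is onto, $\ell'\circ\pi_W$ is a nonzero functional on $V$, defining a hyperplane $\tilde H\subset\mathbb P(V)$. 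Because $L_{\rho(g)}$ maps $\mathcal O(W)$ onto itself, $\hat p^{-1}(\mathcal O(W)\cap H')=\hat\pi_W^{-1}\!\big(\mathcal O(W)\cap L_{\rho(g)}^{-1}(H')\big)\subseteq\mathcal O\cap\tilde H$, so $(\hat p_*\gamma)(\mathcal O(W)\cap H')\leq\gamma(\mathcal O\cap\tilde H)=0$ by $\tau$-admissibility of $\gamma$; as $H'$ was arbitrary, $\hat p_*\gamma$ is $\tau_W$-admissible.

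For the last assertion assume $\gamma$ is $K$-invariant. Each $k\in K(W)=M(W)\cap K$ is unitary and lies in $Q(W)$, hence preserves $W$ and therefore $W^\perp$, so $k\pi_W=\pi_W k$ and $\hat\pi_W\circ L_k=L_k\circ\hat\pi_W$; $K$-invariance of $\gamma$ then gives that $\hat\pi_{W*}\gamma$ is $K(W)$-invariant. Since $\mathcal O(W)=\mathcal O(Q(W))$ is a single $K(W)$-orbit (it is a $(K^\beta)^o$-orbit by Theorem \ref{parabloic-representation}, where $\beta\in\liep$ satisfies $Q(W)=G^{\beta+}$, and $(K^\beta)^o\subseteq K(W)\subseteq Q(W)$), the $K(W)$-invariant probability measure $\nu_0$ on it is unique, so $\hat\pi_{W*}\gamma=\nu_0$ and $\hat p_*\gamma=(L_{\rho(g)})_*\nu_0$. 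Finally $k\pi_W=\pi_W k$ also yields $\hat p\circ L_k=L_{\rho(g)k\rho(g)^{-1}}\circ\hat p$, whence $\hat p_*\gamma$ is invariant under $\rho(g)K(W)\rho(g)^{-1}$, which is again a maximal compact subgroup of $M(W)$ since $\rho(g)\in M(W)$ ($M(W)$ being compatible); choosing for $g$ the positive Hermitian representative of $p$ furnished by the Cartan decomposition of $M(W)$ makes $\rho(g)=g$.

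The step I expect to be delicate is exactly this last one: $\hat p$ is equivariant for the $K(W)$-action only after conjugating the target action by $\rho(g)$, so $\hat p_*\gamma$ is literally $K(W)$-invariant only at the base point of $X_W$, and in general the statement must be read as: $\hat p_*\gamma$ is the $L_{\rho(g)}$-image of the $K(W)$-invariant measure $\nu_0$ on $\mathcal O(W)$ (equivalently, it is invariant under the maximal compact subgroup $\rho(g)K(W)\rho(g)^{-1}$ of $M(W)$). This is the form that is needed afterwards in order to identify $\Psi_\gamma$ restricted to the stratum $X_W$ with the Bourguignon-Li-Yau map attached to $\tau_W$ and $\nu_0$. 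The $\tau_W$-admissibility half, by contrast, is a routine linear reduction to hyperplanes of $\mathbb P(V)$, using only that $\pi_W$ is surjective and that $L_{\rho(g)}$ is a projective linear automorphism of $\mathbb P(W)$.
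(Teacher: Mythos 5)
Your proposal is correct and follows the same route as the paper's (very terse) proof: $\tau_W$-admissibility is obtained by pulling a hyperplane of $\mathbb P(W)$ back through $L_{\rho(g)}$ and $\hat\pi_W$ to a hyperplane of $\mathbb P(V)$ and invoking $\tau$-admissibility of $\gamma$, and the invariance statement rests on the $K(W)$-equivariance of $\hat\pi_W$. The paper compresses the first step into the single identity $\hat p_*\gamma(H)=\gamma(\mathcal O\cap\hat p^{-1}(H))$ and does not even mention the factor $L_{\rho(g)}$; your explicit reduction via $\ell\circ\rho(g)|_W\circ\pi_W$ is the honest version of the same computation.

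Your closing remark about the second assertion is a legitimate catch rather than a defect of your argument. The paper's proof only establishes that $(\hat\pi_W)_*\gamma$ is $K(W)$-invariant; for a general $p=gK(W)$ one has $\hat p_*\gamma=(L_{\rho(g)})_*(\hat\pi_W)_*\gamma$, which is invariant under $\rho(g)K(W)\rho(g)^{-1}$ but not under $K(W)$ itself unless $\rho(g)=e$, exactly as you say. This imprecision is harmless for the paper, because the only place the second assertion is invoked (the proof of Theorem \ref{bly-invariant-measure}) uses precisely the $K(W)$-invariance of $(\hat\pi_W)_*\nu$ and not of $\hat p_*\nu$ for arbitrary $p$. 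So your reading of the lemma — that its content is the $K(W)$-invariance of the $\hat\pi_W$-pushforward, with $\hat p_*\gamma$ being its image under the automorphism $L_{\rho(g)}$ of $\mathcal O(Q(W))$ — is the correct one, and your proof of it is complete.
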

\begin{proof}
Let $H\subset W$ be an hyperplane. It is easy to check that $\hat{p}_* \gamma (H)=\gamma (\mathcal O \setminus (W^\perp \cup H) )$ and so it is $\tau_W$ admissible. The last statement follows from the fact that $\hat{\pi_W}$ is $K(W)$-equivariant.
\end{proof}
\begin{thm}
Let $\gamma\in \mathscr P (\mathcal O)$ be a $\tau$-admissible measure. Then the Bourguignon-Li-Yau map $\Psi_\gamma$ admits a continuous extension to $\XS$ that we still denote by $\Psi_\gamma$. This extension is unique and satisfies $\Psi_\gamma (\XS)\subseteq \mathcal E$.
\end{thm}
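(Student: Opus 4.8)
The plan is to \emph{define} the extension through the identification of points of $\XS$ with rational self-maps of $\mathcal O$ given by Theorem \ref{Satake-razionale}, and then to deduce continuity from the $\gamma$-almost everywhere convergence of the associated maps (Lemma \ref{convergense2}). Concretely, for $p\in \XS$ let $\hat p = ri_\tau(p)$ be the corresponding rational map; since $ri_\tau$ is a well-defined injective map, this assignment is unambiguous. By Lemma \ref{immagine-mappa-razionale} one has $\hat p = R_{gW} = L_{\rho(g)}\circ \hat\pi_W$ for some $\tau$-connected subspace $W\subseteq V$ and some $g\in M(W)$, and the locus of indeterminacy of $\hat p$ is $\mathcal O\setminus\mathbb P(W^\perp)$. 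When $W\subsetneq V$ the subspace $W^\perp$ is nonzero, so $\mathbb P(W^\perp)$ is a proper projective subspace of $\mathbb P(V)$ and hence $\mathcal O\cap\mathbb P(W^\perp)$ is contained in $\mathcal O\cap H$ for some hyperplane $H$; by $\tau$-admissibility (Definition \ref{bly}) this set is $\gamma$-null. Therefore
\[
\Psi_\gamma(p) := \int_{\mathcal O} \mup(\hat p(x))\,\mathrm d\gamma(x)
\]
is well defined, and it coincides with the original $\Psi_\gamma(gK)$ of Definition \ref{rbly} when $p = i_\tau(gK)\in X$, because in that case $W = V$ and $\hat p(x) = [\rho(g)x]$.

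For continuity, suppose $p_n\to p$ in $\XS$, and write $\hat p_n, \hat p$ for the associated rational maps. By Lemma \ref{convergense2}, $\hat p_n\to \hat p$ $\gamma$-almost everywhere on $\mathcal O$; since $\mup:\mathbb P(V)\lra\liep$ is continuous, $\mup(\hat p_n(x))\to \mup(\hat p(x))$ for $\gamma$-a.e.\ $x$. As $\mathbb P(V)$ is compact, $\mup$ is bounded, so the functions $x\mapsto \mup(\hat p_n(x))$ are uniformly bounded and the dominated convergence theorem yields
\[
\Psi_\gamma(p_n) = \int_{\mathcal O}\mup(\hat p_n(x))\,\mathrm d\gamma(x)\ \longrightarrow\ \int_{\mathcal O}\mup(\hat p(x))\,\mathrm d\gamma(x) = \Psi_\gamma(p),
\]
so $\Psi_\gamma$ is continuous on $\XS$. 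Uniqueness of the extension is immediate: $i_\tau(X)$ is dense in $\XS$ by construction and $\XS\subset \PP(\Herm(V))$ is Hausdorff, so any two continuous maps on $\XS$ agreeing on $X$ coincide.

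For the image, note that for $\gamma$-a.e.\ $x$ one has $\hat p(x)\in\mathcal O\subset\mathbb P(V)$, hence $\mup(\hat p(x))\in\mup(\mathbb P(V))\subseteq\mathcal E$, where $\mathcal E = \mathrm{conv}(\mup(\mathbb P(V)))$ (Corollary \ref{orbitope}). Since $\mathcal E$ is a compact convex set and $\gamma$ a probability measure, the barycenter $\Psi_\gamma(p) = \int_{\mathcal O}\mup(\hat p(x))\,\mathrm d\gamma(x)$ lies in $\mathcal E$; therefore $\Psi_\gamma(\XS)\subseteq\mathcal E$. The only genuinely delicate point is the interchange of limit and integral at the boundary, and this is exactly what $\tau$-admissibility secures via Lemma \ref{convergense2}: without it the indeterminacy loci $\mathbb P(W_n^\perp)$ could carry positive $\gamma$-mass, and both the definition and the continuity of the extension would break down. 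Everything else is packaging of the rational-map description of $\XS$ built in the previous section together with routine measure-theoretic limits.
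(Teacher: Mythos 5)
Your proposal is correct and follows essentially the same route as the paper: define the extension via the rational-map description $\hat p = L_{\rho(g)}\circ\hat\pi_W$, use $\tau$-admissibility to make the integral well-posed, deduce continuity from Lemma \ref{convergense2} (you make the dominated-convergence step explicit, which the paper leaves implicit), and get uniqueness from density of $i_\tau(X)$. The only quibble is a wording slip: the indeterminacy locus is $\mathcal O\cap\mathbb P(W^\perp)$, not its complement, but your argument uses the correct set.
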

\begin{proof}
Let $p\in X_W$ and let $\hat{p}$ the corresponding rational map of $\mathcal O$. Then $\hat{p}=\rho(g)\hat{\pi_W}$, where $g\in M(W)$. The function $\mup \circ \hat{p}$ is defined on $\mathcal O \setminus\{W^\perp \}$ and so $\gamma$-ae. It is also bounded on $\mathcal O \setminus\{W\}$ and so it $\gamma$ integrable. Therefore
\[
\Psi_\gamma (p)=\int_{\mathcal O} \mup (\hat{p} (x)) \mathrm{d} \gamma (x)=\int_{\mathcal O \setminus \{W^\perp \}} \mup (\hat{p} (x)) \mathrm{d} \gamma (x),
\]
is well-posed.
If $W=V$, then $p=\rho(g)$ for some $g\in G$, and so the definition agree with the definition of the Bourguignon-Li-Yau given in Definition  \ref{bly}. By Lemmata \ref{convergence} and \ref{convergense2} the extension of $\Psi_\gamma$  is continuous. Finally, since $G/K$ is dense in $\XS$, the extension is unique.
\end{proof}
\begin{thm}\label{bordo}
Let $W\subset V$ be a $\tau$-connected subspace.
Then $\Psi_\gamma (X_W)\subseteq F_W$. Moreover,
$
(\Psi_\gamma)^{-1} (F_W)=\bigsqcup_{W'\subseteq W} X_W' =\overline{X_W}
$
\end{thm}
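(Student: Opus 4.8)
The plan is to split the statement into three parts: (i) $\Psi_\gamma(X_W)\subseteq F_W$ for every $\tau$-connected $W$; (ii) the sharper assertion that $\Psi_\gamma$ carries $X_{W'}$ into the relative interior $\relint F_{W'}$; and (iii) the identification $\bigsqcup_{W'\subseteq W}X_{W'}=\overline{X_W}$, with $W'$ running over the $\tau$-connected subspaces of $V$ contained in $W$. Parts (i) and (iii) are formal consequences of results already established, while (ii) is the crux and is the only place where $\tau$-admissibility of $\gamma$ enters. For (i): given $p\in X_W$, Lemma~\ref{immagine-mappa-razionale} writes its rational map as $\hat p=\rho(g)\,\hat\pi_W$ with $g\in M(W)$, so that $\Psi_\gamma(p)=\int_{\mathcal O\setminus\mathbb P(W^\perp)}\mup\bigl(\rho(g)\hat\pi_W(x)\bigr)\,\mathrm d\gamma(x)$. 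Since $\hat\pi_W(\mathcal O)=\mathcal O(Q(W))$ by Theorem~\ref{flow-parabolic} and $M(W)\subseteq Q(W)$ preserves the closed orbit $\mathcal O(Q(W))$ with $\rho(g)\in M(W)$, the integrand takes values in $\mup(\mathcal O(Q(W)))=\ext F_W$; as $F_W$ is compact convex and $\gamma$ a probability measure, $\Psi_\gamma(p)\in F_W$. Applying this with $W$ replaced by a $\tau$-connected $W'\subseteq W$ and using $F_{W'}\subseteq F_W$ (Corollary~\ref{tau-faccie}) gives $\bigsqcup_{W'\subseteq W}X_{W'}\subseteq(\Psi_\gamma)^{-1}(F_W)$.

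For the reverse inclusion I would first prove (ii) by contradiction. Suppose $p\in X_{W'}$ but $\Psi_\gamma(p)\notin\relint F_{W'}$. By Theorem~\ref{schneider-facce}, $\Psi_\gamma(p)\in\relint F''$ for some proper face $F''\subsetneq F_{W'}$; by Corollary~\ref{orbitope} we may write $F''=F_{\beta''}(\mathcal E)$ with $\beta''\in\liep$, whose maximal eigenspace $W''$ is the $\tau$-connected subspace attached to $F''$ and satisfies $W''\subsetneq W'$ by Corollary~\ref{tau-faccie}. Since $\mathcal E=\mathrm{conv}(\mup(\mathcal O))$, membership in $F_{\beta''}(\mathcal E)$ means $\langle\Psi_\gamma(p),\beta''\rangle=\max_{y\in\mathcal O}\mup^{\beta''}(y)=:M$. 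Writing $\Psi_\gamma(p)=\int\mup(\rho(g)\hat\pi_{W'}(x))\,\mathrm d\gamma(x)$ with $g\in M(W')$ and using that $\mup^{\beta''}\le M$ on $\mathcal O$, the equality of this integral with $M$ forces $\rho(g)\hat\pi_{W'}(x)\in\Max_{\mathcal O}(\beta'')=\mathbb P(W'')\cap\mathcal O$ for $\gamma$-almost every $x$, i.e. $\pi_{W'}(v)\in\rho(g)^{-1}W''$ for $\gamma$-a.e.\ $[v]\in\mathcal O$. But $\rho(g)\in M(W')$ preserves $W'$, so $\rho(g)^{-1}W''$ is a subspace of $W'$ of dimension $\dim W''<\dim W'$, and hence $\{[v]:\pi_{W'}(v)\in\rho(g)^{-1}W''\}=\mathbb P\bigl(\rho(g)^{-1}W''\oplus(W')^\perp\bigr)$ is contained in a hyperplane $H$ of $\mathbb P(V)$, contradicting $\gamma(\mathcal O\cap H)=0$. (For $W'=V$ this specializes to $\Psi_\gamma(X)\subseteq\mathrm{Int}(\mathcal E)$.)

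Granting (ii), suppose $p\in(\Psi_\gamma)^{-1}(F_W)$ and $p\in X_{W'}$. Then $\Psi_\gamma(p)\in\relint F_{W'}\cap F_W$, and since $F_W$ is a face of $\mathcal E$, the standard extension-of-segments property of relative interiors (see \cite{schneider-convex-bodies}) forces $F_{W'}\subseteq F_W$; by Corollary~\ref{tau-faccie} this means $W'\subseteq W$, so $p\in\bigsqcup_{W'\subseteq W}X_{W'}$. Together with the inclusion from (i) this yields $(\Psi_\gamma)^{-1}(F_W)=\bigsqcup_{W'\subseteq W}X_{W'}$. Finally, for (iii): the closure of $i_W(X_W)$ in $\mathbb P(\mathcal H(V))$ is the image under $\psi_W$ of the Satake compactification of $X_W$ attached to $\tau_W$, so Theorem~\ref{Satake-mutauconnessi} applied to $(M(W),K(W),\tau_W)$, together with the fact (Corollaries~\ref{tau-connected-mutau-connected} and~\ref{piop}) that the $\tau_W$-connected subspaces of $W$ are exactly the $\tau$-connected subspaces of $V$ contained in $W$, gives $\overline{X_W}=\bigsqcup_{W'\subseteq W}X_{W'}$.

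The main obstacle is step (ii). The inequality $\mup^{\beta''}\le M$ on $\mathcal O$ is soft, but converting its equality case into a genuine hyperplane section of $\mathcal O$ — identifying the exceptional locus as the projectivization of $\rho(g)^{-1}W''\oplus(W')^\perp$, which requires knowing that $\rho(g)\in M(W')$ preserves $W'$ — is precisely where the hypothesis that $\gamma$ is $\tau$-admissible is consumed, and the bookkeeping linking the face $F''$, the eigenspace $W''$ and the projection $\hat\pi_{W'}$ must be arranged carefully.
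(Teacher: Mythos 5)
Your proposal is correct, and its engine is the same as the paper's: the forward inclusion $\Psi_\gamma(X_W)\subseteq F_W$ is obtained exactly as in the paper by pushing $\gamma$ forward to $\mathcal O(Q(W))$ via $\hat p$, and the reverse inclusion rests on the same mechanism, namely that equality in $\int \mup^{\xi}\circ\hat p\,\mathrm d\gamma\le \max_{\mathcal E}\langle\cdot,\xi\rangle$ forces the integrand to sit on the maximum set $\gamma$-a.e., after which $\tau$-admissibility converts this into a subspace containment. Where you differ is in the organization of the reverse inclusion: the paper fixes $\xi$ with $F_W=F_\xi(\mathcal E)$, derives the a.e.\ equality, propagates it to all of $X_{W'}$, and then concludes rather tersely that $F_{W'}\subseteq F_W$; you instead prove the sharper intermediate statement $\Psi_\gamma(X_{W'})\subseteq\relint F_{W'}$ for every $\tau$-admissible $\gamma$ and finish by pure convex geometry. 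This is a genuine gain: the paper only establishes the relative-interior statement for the $K$-invariant measure (Theorem \ref{bly-invariant-measure}, by a submersion argument), and your explicit identification of the exceptional locus as $\mathbb P\bigl(\rho(g)^{-1}W''\oplus (W')^\perp\bigr)\cap\mathcal O$ --- a hyperplane section annihilated by admissibility --- makes fully precise the step the paper compresses into ``since $\rho(g)$ is an automorphism of $\mathcal O(Q(W'))$,'' and it is exactly what is needed to justify the implication $\Psi_\gamma(X_{W'})\subseteq F_W\Rightarrow F_{W'}\subseteq F_W$. Two small points to tighten: before invoking exposedness from Corollary \ref{orbitope} you should record that a proper face $F''$ of $F_{W'}$ is itself a face of $\mathcal E$ (transitivity of faces), so that $F''=F_{\beta''}(\mathcal E)$ and Corollary \ref{tau-faccie} apply; and the final identification $\bigsqcup_{W'\subseteq W}X_{W'}=\overline{X_W}$, which the paper's own proof likewise delegates to the Satake--Borel--Ji structure theory, requires the standard but not entirely free fact that the $\tau_W$-connected subspaces of $W$ coincide with the $\tau$-connected subspaces of $V$ contained in $W$.
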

\begin{proof}
Let $p\in X_W$ and let $\hat{p}$ the corresponding rational map. By Lemma \ref{immagine-mappa-razionale}, we have  $\hat{p}(\mathcal O)=\mathcal O(Q(W))$. By Lemma \ref{razionali-misure}, $\hat{p}_* \gamma$ is $\tau_W$-admissible. Since $\mathcal O \setminus W^\perp $ has full measure, we have
\[
\Psi_\gamma (p)=\int_{\mathcal O} \mup (\hat{p} (x)) \mathrm{d} \gamma (x)=\int_{\mathcal O \setminus{W^\perp}} \mup (\hat{p} (x)) \mathrm{d} \gamma (x)=
\int_{\mathcal O(Q(W))} \mup(y) \mathrm{d} (\hat{p}_* \gamma)(y).
\]
Since $\hat{p}(\mathcal O)=\mathcal O(Q(W))$ and $\mup(\mathcal O(Q(W)))=\mathrm{ext} (F_W)$, it follows $\Psi_\gamma (X_W)\subset F_W$. This also proves
\[
(\Psi_\gamma)^{-1} (F_W)\supseteq\bigsqcup_{W'\subset W} X_W'.
\]
Let $p\in X_{W'}$ be such that $\Psi_\gamma (p) \in F_W$. Let $\xi \in \liep$ be such that $F_W=F_\xi (\mathcal E)$ and let
$\hat{p} =L_{\rho(g)}\circ \hat{\pi}_{W'}$ the corresponding rational map of $p$. Then
\[
\begin{split}
\mathrm{Max}_{z\in \mathcal E} \langle z, \xi \rangle &=
\langle \Psi_\gamma (p), \xi \rangle \\ &=\langle \int_{\mathcal O} \mup \left(\rho(g) \hat{\pi}_{W'} x \right) \mathrm{d} \gamma(x), \xi \rangle \\ &=
\int_{\mathcal O(Q(W'))} \langle \mup \left(\rho(g)y \right), \xi \rangle\, \mathrm{d} (\hat{\pi}_{W'})_* \gamma )(y)
\end{split}
\]
Since $\rho(g)$ is an automorphism of $\mathcal O(Q(W'))$, it follows that \[ \langle \mup(x),\xi \rangle  =\mathrm{Max}_{z\in \mathcal E} \langle z, \xi \rangle,
\]
$\hat{\pi_{W'}}_*\gamma$-ae. By Lemma \ref{razionali-misure}, $\hat{\pi_{W'}}_*\gamma$ is a $\tau_{W'}$-admissible.

Let  $q\in X_{W'}$. Then
\[
\begin{split}
\langle \Psi_\gamma (q), \xi \rangle &= \langle \int_{\mathcal O} \mup \left(\rho(g') \hat{\pi}_{W'} x \right) \mathrm{d} \gamma(x), \xi \rangle \\
&=\int_{\mathcal O(Q(W'))} \langle \mup \left(\rho(g')y \right), \xi \rangle\, \mathrm{d} (\hat{\pi}_{W'})_* \gamma)(y) \\ &=
\mathrm{Max}_{z\in \mathcal E} \langle z, \xi \rangle,
\end{split}
\]
and so $\Psi_\gamma (X_{W'})\subseteq F_W$. This implies $F_{W'}\subseteq F_W$. By Proposition \ref{tau-faccie}, we get $W'\subseteq W$ concluding the proof.
\end{proof}
\begin{cor}\label{bordo2}
Let $\gamma$ be a $\tau$-admissible measure. Then $\Psi_\gamma (\partial \XS)\subseteq \partial \mathcal E$.
\end{cor}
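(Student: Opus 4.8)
The plan is to reduce the corollary to the face-localization of $\Psi_\gamma$ proved in Theorem~\ref{bordo}, together with the elementary convex-geometric fact that a proper face of a convex body with non-empty interior is contained in its topological boundary. The first thing I would do is invoke the boundary decomposition of Theorem~\ref{Satake-mutauconnessi}: $\partial\XS = \bigsqcup_{W} i_W(X_W)$, where $W$ runs over the \emph{proper} $\tau$-connected subspaces of $V$ (the only non-proper one is $W = V$, for which $Q(V) = G$ is parabolic, acts irreducibly on $V$, and $i_V(X_V) = X$). Hence it suffices to prove $\Psi_\gamma(i_W(X_W)) \subseteq \partial\mathcal E$ for every $\tau$-connected $W \subsetneq V$.

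For such a $W$, I would then apply Theorem~\ref{bordo}, which gives $\Psi_\gamma(X_W) \subseteq F_W$ with $F_W \in \mathscr F(\mathcal E)$ the face attached to $W$. So the corollary comes down to two sub-claims: (i) $F_W$ is a \emph{proper} face of $\mathcal E$ whenever $W \subsetneq V$; and (ii) every proper face of $\mathcal E$ lies in $\partial\mathcal E$. For (i) I would use Theorem~\ref{flow-parabolic}: picking $\beta \in \liep$ with $F_W = F_\beta(\mathcal E)$ as in Proposition~\ref{stabilizzatore-faccie}, item~(b) of that theorem gives $\mup^{-1}(F_W) = \mathbb P(W)$, whereas $\mup^{-1}(\mathcal E) = \mathbb P(V)$ trivially; so $F_W = \mathcal E$ would force $\mathbb P(W) = \mathbb P(V)$, i.e.\ $W = V$. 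Contrapositively, $W \subsetneq V$ gives $F_W \subsetneq \mathcal E$. (Alternatively, Corollary~\ref{tau-faccie} already tells us that $F_W = F_V$ forces $\relint F_W = \relint F_V$, hence $W = V$.)

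For (ii) I would first recall that $\mathcal E$ has non-empty interior in $\liep$: this is Lemma~\ref{basic}, which gives $0 \in \mathrm{Int}(\mathcal E)$. Then I would use the standard fact that a face $F$ of a compact convex set $\mathcal E$ meeting $\mathrm{Int}(\mathcal E)$ must equal $\mathcal E$: if $x \in F \cap \mathrm{Int}(\mathcal E)$, then for any $y \in \mathcal E$ the segment $[x,y]$ extends slightly beyond $x$ inside $\mathcal E$, so $x \in \relint[y',y]$ for a suitable $y' \in \mathcal E$, and the defining property of a face forces $y \in F$; thus $F = \mathcal E$. Consequently a proper face is disjoint from $\mathrm{Int}(\mathcal E)$, hence contained in $\partial\mathcal E$. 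Putting (i) and (ii) together with the reduction of the first paragraph yields $\Psi_\gamma(\partial\XS) \subseteq \partial\mathcal E$.

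I do not anticipate a genuine obstacle: the corollary is in essence a repackaging of Theorem~\ref{bordo}. The only point that needs a moment's attention is to be sure that the index set for $\partial\XS$ in Theorem~\ref{Satake-mutauconnessi} is precisely the proper $\tau$-connected subspaces, and that $W = V$ is the unique $\tau$-connected subspace with $F_W = \mathcal E$ — both of which follow at once from results already established above.
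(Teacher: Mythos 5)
Your argument is correct and is exactly the route the paper intends: the corollary is stated without proof as an immediate consequence of Theorem~\ref{bordo}, the boundary decomposition of Theorem~\ref{Satake-mutauconnessi}, and the fact that a proper face of $\mathcal E$ (which has nonempty interior by Lemma~\ref{basic}) lies in $\partial\mathcal E$. Your care in checking that $F_W$ is proper precisely when $W\subsetneq V$ (via Theorem~\ref{flow-parabolic} or Corollary~\ref{tau-faccie}) fills in the only detail the paper leaves implicit.
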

Our aim is to prove that for any $\tau$-admissible measure, $\Psi_\gamma (\XS)=\mathcal E$. We start, considering the smooth $K$-invariant measure  $\nu$ which is, of course, $\tau$-admissible.
\begin{thm}\label{bly-invariant-measure}
The Bourguignon-Li-Yau map $\Psi_\nu: \XS \lra \mathcal E$ is an homeomorphism. Moreover, for any $W\subseteq V$ $\tau$-connected subspace
\[
\Psi_\nu: X_W \lra \mathrm{relint} F_W,
\]
is an homeomorphism.
\end{thm}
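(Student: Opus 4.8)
The plan is to prove that $\Psi_\nu\colon\XS\to\mathcal E$ is a continuous bijection; since $\XS$ is compact and $\mathcal E$ is Hausdorff this will automatically make it a homeomorphism, and the statement about the boundary pieces $X_W$ will then follow. Continuity of $\Psi_\nu$ on $\XS$ is already in hand, as is $\Psi_\nu(\partial\XS)\subseteq\partial\mathcal E$ (Corollary \ref{bordo2}). I would first record the two compatible stratifications $\XS=\bigsqcup_{W}X_{W}$ indexed by the $\tau$-connected subspaces (Theorem \ref{Satake-mutauconnessi}) and $\mathcal E=\bigsqcup_{W}\mathrm{relint}\,F_{W}$ (Theorem \ref{schneider-facce}, using the order-preserving bijection $W\mapsto F_W$ of Proposition \ref{tau-connected-face} and Corollary \ref{tau-faccie}, so that the proper faces of $F_W$ are exactly the $F_{W'}$ with $\{0\}\neq W'\subsetneq W$). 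Theorem \ref{bordo} with $\gamma=\nu$, together with Theorem \ref{schneider-facce}, then refines to $\Psi_\nu(X_W)\subseteq\mathrm{relint}\,F_W$ for every $\tau$-connected $W$, and in particular $\Psi_\nu(X)\subseteq\mathrm{Int}\,\mathcal E$. Since the blocks on each side are pairwise disjoint, it suffices to prove (i) that $\Psi_\nu|_{X_W}$ is injective for each $W$, and (ii) that $\Psi_\nu(X)=\mathrm{Int}\,\mathcal E$.

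The heart of the proof is the injectivity of $\Psi_\nu$ on $X=G/K$. For $g\in G$ the function $\ell_g\colon\mathbb P(V)\to\R$, $\ell_g([\widetilde x])=\log(\|\rho(g)\widetilde x\|^2/\|\widetilde x\|^2)$, is well defined and depends only on the coset $gK$, so one obtains a smooth Kempf--Ness type function $J\colon X\to\R$, $J(gK)=\int_{\mathcal O}\ell_g\,\mathrm{d}\nu$ (differentiation under the integral sign is legitimate since $\mathcal O$ is compact). Along a geodesic $t\mapsto g\exp(t\beta)K$ with $\beta\in\liep$ one has $\|\rho(g\exp(t\beta))\widetilde x\|^2=\langle\exp(2t\beta)g^*\widetilde x,g^*\widetilde x\rangle=\sum_{\lambda}e^{2t\lambda}\|(g^*\widetilde x)_\lambda\|^2$, where $(g^*\widetilde x)_\lambda$ is the component of $g^*\widetilde x$ in the $\lambda$-eigenspace $V_\lambda(\beta)$ of $\beta$; hence $t\mapsto J(g\exp(t\beta)K)$ is convex, and strictly convex unless $g^*\widetilde x$ lies in a single eigenspace of $\beta$ for $\nu$-almost every $x$. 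Using the polar decomposition $g=a\rho(g)$ with $a\in K$ and the $K$-invariance of $\nu$ to absorb $a$, one gets the key identity (up to the normalization of the invariant product on $\liep$)
\[
\frac{\mathrm{d}}{\mathrm{d}t}\Big|_{t=0}J(g\exp(t\beta)K)=2\int_{\mathcal O}\mup^\beta(\rho(g)x)\,\mathrm{d}\nu(x)=2\langle\Psi_\nu(gK),\beta\rangle .
\]
Now if $\Psi_\nu(g_1K)=\Psi_\nu(g_2K)$, take the unique geodesic $\gamma(t)=g_1\exp(t\beta)K$, $t\in[0,1]$, joining them ($X$ being simply connected of nonpositive curvature), so that $g_2K=g_1\exp(\beta)K$. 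Evaluating the identity at $g_1$ and at $g_2$ shows that the convex function $t\mapsto J(\gamma(t))$ has equal derivatives at $t=0$ and $t=1$, hence is affine, hence has identically vanishing second derivative; by the strict-convexity analysis this forces $g_1^*\widetilde x\in V_\lambda(\beta)$ for some $\lambda=\lambda(x)$ and $\nu$-a.e.\ $x$. But the set of such $x$ is the union, over the eigenvalues $\lambda$ of $\beta$, of the projective subspaces $\mathbb P((g_1^*)^{-1}V_\lambda(\beta))$, a finite union of linear subspaces, which can carry full $\nu$-mass only if some $V_\lambda(\beta)=V$, i.e.\ only if $\beta$ is scalar; here one uses that $\nu$ is $\tau$-admissible. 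Since $\beta\in\liep$ is traceless, $\beta=0$, i.e.\ $g_1K=g_2K$. Exactly the same scheme applies to the compatible group $M(W)$ acting irreducibly on the full submanifold $\mathcal O(Q(W))\subset\mathbb P(W)$ via $\tau_W$ and to the $K(W)$-invariant, $\tau_W$-admissible measure $\nu_W=(\hat\pi_W)_*\nu$ (Lemma \ref{razionali-misure}) — note that $\mup$ restricted to $\mathcal O(Q(W))$ is, up to a fixed translation in $\liep$, the $M(W)$-gradient map, and that $\liep\cap\liez(M(W))=0$, so no nonzero element of $\liep(W)$ acts on $W$ by a scalar — and yields the injectivity of $\Psi_\nu|_{X_W}$ for every $\tau$-connected $W$. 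Combined with the disjointness of the sets $\Psi_\nu(X_W)\subseteq\mathrm{relint}\,F_W$, this gives injectivity of $\Psi_\nu$ on all of $\XS$.

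It remains to prove $\Psi_\nu(X)=\mathrm{Int}\,\mathcal E$. Since $0\in\mathrm{Int}\,\mathcal E$ (Lemma \ref{basic}), $\mathcal E$ is full dimensional, so $\mathrm{Int}\,\mathcal E$ is a connected manifold of dimension $\dim\liep=\dim X$; the continuous injection $\Psi_\nu|_X\colon X\to\mathrm{Int}\,\mathcal E$ is therefore open by invariance of domain, and $\Psi_\nu(X)$ is open in $\mathrm{Int}\,\mathcal E$. It is also closed there: if $\Psi_\nu(g_nK)\to\eta\in\mathrm{Int}\,\mathcal E$, pass to a subsequence with $g_nK\to q$ in the compact space $\XS$; then $\Psi_\nu(q)=\eta$, and $q\notin\partial\XS$ (otherwise $\eta\in\Psi_\nu(\partial\XS)\subseteq\partial\mathcal E$), so $q\in X$ and $\eta\in\Psi_\nu(X)$. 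As $\mathrm{Int}\,\mathcal E$ is connected and $\Psi_\nu(X)$ is a nonempty clopen subset, $\Psi_\nu(X)=\mathrm{Int}\,\mathcal E$; then the compact set $\Psi_\nu(\XS)$ contains $\overline{\mathrm{Int}\,\mathcal E}=\mathcal E$, so $\Psi_\nu$ is a continuous bijection of the compact space $\XS$ onto the Hausdorff space $\mathcal E$, hence a homeomorphism. Finally, $\Psi_\nu$ carries each block $X_W$ bijectively onto $\mathrm{relint}\,F_W$, and, being a homeomorphism, restricts to a homeomorphism $\Psi_\nu|_{X_W}\colon X_W\to\mathrm{relint}\,F_W$. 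The main obstacle in this plan is the second paragraph: establishing the identity that exhibits $\Psi_\nu$ as half the differential of the convex function $J$ — which works precisely because $\nu$ is $K$-invariant and hence unaffected by the unitary factor in the polar decomposition — together with the rigidity step combining strict convexity with $\tau$-admissibility.
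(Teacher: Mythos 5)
Your proof is correct, but it takes a genuinely different route on the key step. The paper disposes of the crucial point --- that $\Psi_\nu$ restricted to $X=G/K$ (and, mutatis mutandis, to each $X_W$ with the pushed-forward measure $(\hat\pi_W)_*\nu$) is a homeomorphism onto $\mathrm{Int}\,\mathcal E$ (resp.\ $\mathrm{relint}\,F_W$) --- by citing Corollary 5.3 of Biliotti--Raffero, which asserts that $g\mapsto\int_{\mathcal O}\mup(gx)\,\mathrm d\nu(x)$ descends to a diffeomorphism of $G/K$ onto $\mathrm{Int}\,\mathcal E$; it then assembles the strata exactly as you do, via Theorems \ref{schneider-facce} and \ref{bordo}. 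You instead reprove that stratum-wise statement from scratch: injectivity via the Kempf--Ness functional $J(gK)=\int_{\mathcal O}\log\bigl(\|\rho(g)\widetilde x\|^2/\|\widetilde x\|^2\bigr)\mathrm d\nu$, whose first variation along geodesics is $2\langle\Psi_\nu(gK),\beta\rangle$ (the $K$-invariance of $\nu$ absorbing the unitary factor of the polar decomposition) and whose strict convexity fails only on a finite union of proper projective subspaces, killed by $\tau$-admissibility of the smooth measure $\nu$ and the fact that a traceless $\beta\in\liep$ acting as a scalar must vanish; and surjectivity onto $\mathrm{Int}\,\mathcal E$ via invariance of domain plus a clopen argument using $\Psi_\nu(\partial\XS)\subseteq\partial\mathcal E$. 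This is in substance the proof of the cited Biliotti--Raffero/Biliotti--Ghigi result, so your write-up buys self-containedness at the cost of length; the one point worth spelling out more carefully in the boundary case is the final rigidity step, namely that an element of $\liep\cap\liem(W)$ acting as a scalar on $W$ is central in $\liem(W)$ (this needs faithfulness of $\tau_W$, which is Lemma \ref{lemmetto-di-Satake-taui}) and hence zero because $X_W$ is of noncompact type --- you flag exactly this, so the argument closes.
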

\begin{proof}
By Corollary 5.3 $p.153$ in \cite{biliotti-raffero}, the map
\[
\mathscr F_\nu : G \lra \liep, \qquad g \mapsto \int_{\mathcal O} \mup(x) \mathrm{d} \nu(x),
\]
is a submersion onto $\mathrm{Int} (\mathcal E)$ which descents to a diffemorphism $\mathscr F_\nu: G/K \lra  \mathrm{Int} (\mathcal E)$. Since
\[
\Psi_\nu (gK)= \mathscr F_\nu (\rho(g)K),
\]
$\Psi_\nu$ is an homeomorphism.

Let $W\subset V$ be a $\tau$-connected subspace. By Lemma \ref{razionali-misure}, $(\hat{\pi_{W}})_* \nu$ is a $K(W)$ invariant measure on $\mathcal O(W)$ and so
\[
\Psi_\nu: X_W \lra \mathrm{relint}\, F_W,
\]
is an homeomorphism. By Theorems \ref{schneider-facce} and \ref{bordo}, we get that $\Psi_\nu:\XS \lra \mathcal E$ is an homeomorphism.
\end{proof}
\begin{thm}
  \label{grado}
  Let $\gamma$ be a $\tau$-admissible measure.  Then the
  Bourguignon-Li-Yau map $\Psi_\gamma : \XS\lra \mathcal E$ is surjective.
\end{thm}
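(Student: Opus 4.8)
The plan is to run a topological-degree argument with the $K$-invariant measure $\nu$ of Theorem~\ref{bly-invariant-measure} as the reference point. Write $d=\dim_\R\liep$. By Lemma~\ref{basic} we have $0\in\mathrm{Int}(\mathcal E)$, so $\mathcal E$ is a full-dimensional compact convex body in $\liep$ and hence a topological $d$-ball whose manifold boundary is its topological boundary $\partial\mathcal E$; moreover, by Theorem~\ref{bly-invariant-measure}, the homeomorphism $\Psi_\nu$ carries the pair $(\XS,\partial\XS)$ onto $(\mathcal E,\partial\mathcal E)$, so $\XS$ is a $d$-ball as well and $H_d(\XS,\partial\XS;\Zeta)\cong\Zeta$, with a chosen fundamental class $[\XS,\partial\XS]$. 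By the previous theorem $\Psi_\gamma$ is continuous with $\Psi_\gamma(\XS)\subseteq\mathcal E$, and by Corollary~\ref{bordo2} one has $\Psi_\gamma(\partial\XS)\subseteq\partial\mathcal E$; thus $\Psi_\gamma$ is a map of pairs $(\XS,\partial\XS)\to(\mathcal E,\partial\mathcal E)$ and we may define its degree $\deg\Psi_\gamma\in\Zeta$ by $(\Psi_\gamma)_*[\XS,\partial\XS]=(\deg\Psi_\gamma)\,[\mathcal E,\partial\mathcal E]$ in $H_d(\mathcal E,\partial\mathcal E;\Zeta)\cong\Zeta$.

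Next I would deform $\gamma$ to $\nu$ through $\tau$-admissible measures. For $t\in[0,1]$ set $\gamma_t=(1-t)\nu+t\gamma$. Each $\gamma_t$ is a probability measure on $\mathcal O$, and it is $\tau$-admissible, since for every hyperplane $H\subset\mathbb P(V)$ we get $\gamma_t(\mathcal O\cap H)=(1-t)\nu(\mathcal O\cap H)+t\gamma(\mathcal O\cap H)=0$. On each boundary stratum $X_W$ the extended map is $\Psi_\gamma(p)=\int_{\mathcal O\setminus\mathbb P(W^\perp)}\mup(\hat{p}(x))\,\mathrm{d}\gamma(x)$, an integral over a set of full measure which depends affinely on the measure; hence $\Psi_{\gamma_t}=(1-t)\Psi_\nu+t\Psi_\gamma$ on all of $\XS$. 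Therefore $H\colon[0,1]\times\XS\to\mathcal E$, $H(t,p)=(1-t)\Psi_\nu(p)+t\Psi_\gamma(p)$, is well defined (convexity of $\mathcal E$), jointly continuous, interpolates $\Psi_\nu$ and $\Psi_\gamma$, and satisfies $H(t,\partial\XS)=\Psi_{\gamma_t}(\partial\XS)\subseteq\partial\mathcal E$ by Corollary~\ref{bordo2} applied to the $\tau$-admissible measure $\gamma_t$. So $H$ is a homotopy through maps of pairs, and $\deg\Psi_\gamma=\deg\Psi_\nu$. Since $\Psi_\nu$ is a homeomorphism of pairs, $\deg\Psi_\nu=\pm1$, and so $\deg\Psi_\gamma\neq0$.

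It remains to conclude surjectivity from the non-vanishing of the degree. Suppose there is $q\in\mathcal E\setminus\Psi_\gamma(\XS)$. The set $\Psi_\gamma(\XS)$ is compact, hence closed, so a neighbourhood of $q$ in $\mathcal E$ avoids it; since $\partial\mathcal E$ has empty interior in $\mathcal E$, this neighbourhood meets $\mathrm{Int}(\mathcal E)$, and replacing $q$ by a point of that intersection we may assume $q\in\mathrm{Int}(\mathcal E)$. Then $\Psi_\gamma$ factors through the inclusion of pairs $(\mathcal E\setminus\{q\},\partial\mathcal E)\hookrightarrow(\mathcal E,\partial\mathcal E)$; but $\mathcal E\setminus\{q\}$ deformation retracts radially onto $\partial\mathcal E$, so $H_d(\mathcal E\setminus\{q\},\partial\mathcal E;\Zeta)=0$, which forces $(\Psi_\gamma)_*[\XS,\partial\XS]=0$, i.e. $\deg\Psi_\gamma=0$ — a contradiction. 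Hence $\Psi_\gamma(\XS)=\mathcal E$.

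The only genuinely non-formal point is ensuring that the straight-line homotopy $\gamma_t$ remains a homotopy \emph{of pairs}, i.e. keeps the image of $\partial\XS$ inside $\partial\mathcal E$ for all $t$; this is exactly what Theorem~\ref{bordo}, and its Corollary~\ref{bordo2}, provide for every convex combination $\gamma_t$. Everything else is the standard fact that a map of balls relative to their boundaries of nonzero degree is onto, combined with the already-proved homeomorphism property of $\Psi_\nu$ in Theorem~\ref{bly-invariant-measure}.
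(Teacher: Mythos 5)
Your proof is correct and follows essentially the same route as the paper: the linear homotopy $\gamma_t=(1-t)\nu+t\gamma$ through $\tau$-admissible measures, boundary preservation via Theorem~\ref{bordo}, and homotopy invariance of the degree starting from the homeomorphism $\Psi_\nu$. The only difference is that you spell out in full the relative-homology formulation of the degree and the ``classical topological argument'' (a degree-nonzero map of pairs of balls is onto) that the paper leaves implicit.
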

\begin{proof}
Set $\gamma_t:=t\gamma + (1-t)\nu$. Define
\[
H: \XS \times [0,1] \lra \mathcal E \qquad H(p,t) := \Psi_{\gamma_t} (p)=t\Psi_\gamma (p) + (1-t) \nu_\gamma(p).
\]
$\gamma_t$ is $\tau$-admissible measure on $\mathcal O$ for every $t\in [0,1]$ and $H$ is
  continuous. By Theorem  \ref{bordo},  $\tau$-connected subspace $W\subseteq V$, we have $H( X_W \times [0,1] ) \subset
  F_W$ and so $H(\partial \XS \times [0,1] )\subset  \partial \mathcal E$. Since $H(\cdot,0)=\Psi_\nu (\cdot)$ is an homeomorphism, it has degree
  $1$. Hence the same holds for $H(\cdot, 1)=\Psi_\gamma$. By a classical
  topological argument this yields the surjectivity of $H(\cdot, 1) =\Psi_\gamma$.
\end{proof}
 Let $\lieg=\liek\oplus \liep$. There is a splitting of algebra, see \cite{helgason},
\[
\lieg=\lieg_1 \oplus \cdots \oplus \lieg_q,
\]
where $\lieg_i=k_i \oplus \liep_i$ is an ideal of $\lieg$, for $i=1,\ldots,q$,  and
\[
\liek=\liek_1 \oplus \cdots \oplus \liek_q, \qquad \liep=\liep_1 \oplus \cdots \oplus \liep_q.
\]
Let $G_1,\ldots,G_q$, $K_1,\ldots,K_q$ the corresponding analytic (connected) subgroups. Let $x=x_1+\cdots +x_p \in \liep$. Then
\[
K\cdot x= K_1 \cdot x_1 +\cdots + K\cdot x_p,
\]
and so one may easily check that
\[
\mathrm{con} (K\cdot x)=\mathrm{conv} (K_1 \cdot x_1)+\cdots +\mathrm{conv}(K_p \cdot x_p).
\]
Let $\tau:G\lra \mathrm{SL}(V)$ be an irreducible representation with finite kernel. Then any factor of $G$ acts non trivially on $V$.
By \cite[$p.197$]{goodman-wallach-Springer},
$\mathbb P(V)=\mathbb P(V_1 \otimes \cdots \otimes V_q)$, $\tau=\tau_1 \otimes \cdots \otimes \tau_q$
and the kernel of $\tau_i$ is finite for any $i=1,\ldots,q$. Then
\[
\mu_\tau =\mu_{\tau_1} + \cdots +\mu_{\tau_q},
\]
and $\mu_{\tau_i}$ is not zero for any $i=1,\ldots,q$. Therefore
\[
\mathcal E=\mathcal E_1 +\cdots + \mathcal E_q,
\]
where $\mathcal E_i$ is the orbitope associated to the projective representation $\tau_i :G_i \lra \mathrm{PSL}(V_i)$, for $i=1,\ldots p$.
By Theorem \ref{bly-invariant-measure}, we get the following result.
\begin{thm}\label{riducibile}
If $X$ is reducible, i.e., $X=X_1 \times \cdots \times X_q$, then
\[
\overline{X}_{\mu_\tau}^S=\overline{X_1}_{\mu_{\tau_1}}^S \times \cdots \times \overline{X_q}_{\mu_{\tau_q}}^S.
\]
\end{thm}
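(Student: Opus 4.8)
The plan is to deduce the statement from Theorem \ref{bly-invariant-measure} by showing that the Bourguignon--Li--Yau map $\Psi_\nu$ of the $K$-invariant measure factors as a product of the corresponding maps for the factors. First I would record the product structures forced by the tensor decomposition recalled above. Since $\mathbb P(V)=\mathbb P(V_1\otimes\cdots\otimes V_q)$ and $\tau=\tau_1\otimes\cdots\otimes\tau_q$ with each $\tau_i$ irreducible (an irreducible representation of a product is a tensor product of irreducibles) and of finite kernel, the projective representation $\tau_i:G_i\lra\mathrm{PSL}(V_i)$ has a unique closed $G_i$-orbit $\OO_i\subset\mathbb P(V_i)$, and $\mathcal E_i=\mathrm{conv}(\mu_{\liep_i}(\OO_i))$ by Corollary \ref{orbitope}. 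The unique closed $G$-orbit $\OO$ is the Segre image of $\OO_1\times\cdots\times\OO_q$, so $([v_1],\dots,[v_q])\mapsto[v_1\otimes\cdots\otimes v_q]$ is a $K$-equivariant homeomorphism $\OO_1\times\cdots\times\OO_q\to\OO$, with $K=K_1\times\cdots\times K_q$ acting factorwise. As $\OO$ is a single $K$-orbit there is a unique $K$-invariant probability measure on it, and by uniqueness it corresponds under this identification to the product $\nu_1\times\cdots\times\nu_q$ of the $K_i$-invariant probability measures $\nu_i$ on $\OO_i$. Because the gradient map evaluated on a decomposable point is the sum of the gradient maps of the factors, one has $\mup([v_1\otimes\cdots\otimes v_q])=\mu_{\liep_1}([v_1])+\cdots+\mu_{\liep_q}([v_q])$ for unit vectors $v_i$; taking convex hulls and using $\mathcal E=\mathrm{conv}(\mup(\OO))$ recovers $\mathcal E=\mathcal E_1+\cdots+\mathcal E_q$, and since the $\liep_i$ sit in $\liep$ in direct sum the Minkowski-sum map $\mathcal E_1\times\cdots\times\mathcal E_q\to\mathcal E$ is a continuous bijection between compact Hausdorff spaces, hence a homeomorphism. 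From now on I identify $\mathcal E$ with $\mathcal E_1\times\cdots\times\mathcal E_q$ and $X$ with $X_1\times\cdots\times X_q$.

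Next I would establish the factorization $\Psi_\nu=\Psi_{\nu_1}\times\cdots\times\Psi_{\nu_q}$. Writing a point of $X$ as $(g_1K_1,\dots,g_qK_q)$, from $\tau(g)\tau(g)^*=\bigotimes_i\tau_i(g_i)\tau_i(g_i)^*$ one gets $\rho(g)=\bigotimes_i\rho_i(g_i)$ and $\rho(g)\cdot[v_1\otimes\cdots\otimes v_q]=[\rho_1(g_1)v_1\otimes\cdots\otimes\rho_q(g_q)v_q]$. Combining this with the additivity of the gradient map on decomposable points and with $\nu=\nu_1\times\cdots\times\nu_q$ (each $\nu_i$ a probability measure), Fubini yields
\[
\Psi_\nu(gK)=\int_{\OO}\mup(\rho(g)x)\,\mathrm{d}\nu(x)=\sum_{i=1}^q\int_{\OO_i}\mu_{\liep_i}(\rho_i(g_i)x_i)\,\mathrm{d}\nu_i(x_i)=\sum_{i=1}^q\Psi_{\nu_i}(g_iK_i),
\]
so under the identifications of the first paragraph $\Psi_\nu$ is precisely the product map $\prod_i\Psi_{\nu_i}:X_1\times\cdots\times X_q\to\mathcal E_1\times\cdots\times\mathcal E_q$.

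Finally I would conclude. By Theorem \ref{bly-invariant-measure} applied to $\tau$ and to each $\tau_i$, the maps $\Psi_\nu:\overline{X}_{\mu_\tau}^S\to\mathcal E$ and $\Psi_{\nu_i}:\overline{X_i}_{\mu_{\tau_i}}^S\to\mathcal E_i$ are homeomorphisms, and hence $\Phi:=\prod_i\Psi_{\nu_i}:\prod_i\overline{X_i}_{\mu_{\tau_i}}^S\to\prod_i\mathcal E_i=\mathcal E$ is a homeomorphism as well. By the previous paragraph $\Phi$ and $\Psi_\nu$ restrict to the same homeomorphism $X\to\mathrm{Int}(\mathcal E)$, so $\Phi^{-1}\circ\Psi_\nu$ is a homeomorphism $\overline{X}_{\mu_\tau}^S\to\prod_i\overline{X_i}_{\mu_{\tau_i}}^S$ which is the identity on the dense subset $X$; since all these spaces are Hausdorff compactifications of $X=X_1\times\cdots\times X_q$, this is exactly the asserted identification $\overline{X}_{\mu_\tau}^S=\overline{X_1}_{\mu_{\tau_1}}^S\times\cdots\times\overline{X_q}_{\mu_{\tau_q}}^S$. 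The step requiring the most care is the first one: making the product descriptions of $\OO$, $\nu$ and $\mup$ precise (in particular the additivity of the gradient map on decomposable vectors and the identification of $\nu$ as a product via its uniqueness), and then, in the last step, using agreement on the dense set $X$ to upgrade an abstract homeomorphism of the two compactified spaces to an identification respecting their compactification structure; the remaining content is only the elementary factorization of $\rho(g)$ and Fubini.
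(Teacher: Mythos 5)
Your proposal is correct and follows the same route as the paper: decompose $\mathcal E$ as the Minkowski sum (equivalently, product) of the factor orbitopes $\mathcal E_i$ via the tensor decomposition $\tau=\tau_1\otimes\cdots\otimes\tau_q$, and transport the product structure through the homeomorphism $\Psi_\nu:\XS\to\mathcal E$ of Theorem \ref{bly-invariant-measure}. The paper states this almost without proof (``By Theorem \ref{bly-invariant-measure}, we get the following result''), whereas you supply the details it leaves implicit (the Segre description of $\OO$, the product form of $\nu$, the additivity of $\mup$ on decomposable vectors, the factorization of $\rho(g)$, and the density argument identifying the two compactifications), all of which are correct.
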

\section{Furstenberg compactifications}
\label{section-furst}
Let $G$ be a semisimple noncompact Lie group and $K$ a maximal compact subgroup.
Another way to compactly $X=G/K$ was found by Furstenberg
\cite{furstenberg-Poisson} in his search for an analogue of the
Poisson formula for the unit disc.  We recall very briefly the
definition. In the sequel, we always refer to \cite[\S I.6]{borel-ji-libro}).
\begin{defin}
A compact homogeneous space $\mathcal O$ is called \emph{boundary} of $G$ or a $G$-\emph{boundary} if for every probability measure $\mu$ on $\mathcal O$, there exists a sequence $g_j\in G$ such that $g_j \cdot \mu$ converges to the delta measures $\delta_x$ at some point of $x\in \mathcal O$. A $G$-boundary is called a boundary of $X$.
\end{defin}
Using Iwasawa structure theory, Moore \cite[Thm. 1]{moore} proved that $Y=G/P$ is a boundary
if and only if $P$ is parabolic.
Let
$\nu$ be the $K$-invariant measure on $\mathcal O$. Then the map
\begin{gather*}
G \lra \mathscr P (\mathcal O) \qquad g \mapsto g\cdot \nu
\end{gather*}
descends to a continuous map $i_{\mathcal O} : X=G/K \lra \mathscr F (\mathcal O)$, which is
injective if and only if $P$ does not contain simple factors of $G$ (see
\cite[Thm. 4]{moore} or
\cite[Prop. I.6.16]{borel-ji-libro}).  In this case $\mathcal O$ is called a
\enf{faithful} Furstenberg boundary and the set
\begin{gather}
  \label{eq:def-furstenberg}
  \overline{X}_{\mathcal O}^F: = \overline {i_{\mathcal O} (X)}
\end{gather}
is called the \emph{Furstenberg compactification} of $X$ associated to
the faithful boundary $M$.  Fix an irreducible complex
representation $\tau : G \lra \mathrm{GL} (V)$ such that $P$ is the stabilizer
of some $x_0\in \mathbb P (V)$ and $\ker \tau$ is finite. Such
representations always exist and so $\mathcal O$ can be identified with the unique closed orbit of $G$ in $\mathbb P (V)$ contained in the unique closed orbit of $U^\C$.
\begin{thm}
  \label{furst}
  The map
  \begin{gather*}
    \Gamma : \XS \lra \overline{X}_{\mathcal O}^F \qquad \Gamma(p) := \hat{p}_* \nu
  \end{gather*}
  is a $G$-equivariant homeomorphism of $\XS$ onto $\overline{X}_{\mathcal O}^F$ such that
  $i_M=\Gamma \circ i_\tau$ (compare \eqref{eq:imbeddoinpos}).
\end{thm}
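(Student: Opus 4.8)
The plan is to read off the statement from Theorem~\ref{bly-invariant-measure} together with the identification of the points of $\XS$ with rational self-maps of $\OO$ furnished by Theorem~\ref{Satake-razionale}; no genuinely new analytic input is needed. First I would check that $\Gamma$ is well defined and continuous. For $p\in\XS$ let $\hat p=ri_\tau(p)$ be the associated rational map; since $\nu$ is $\tau$-admissible, $\hat p$ is defined $\nu$-almost everywhere and takes values in the compact set $\OO$, so $\hat p_*\nu$ is a Borel probability measure on $\OO$ and $\Gamma\colon\XS\to\mathscr P(\OO)$ makes sense. If $p_n\to p$ in $\XS$, then by Lemma~\ref{convergense2} the maps $\hat p_n$ converge to $\hat p$ $\nu$-almost everywhere; for every $f\in C(\OO)$ the functions $x\mapsto f(\hat p_n(x))$ are bounded by $\|f\|_\infty$, so dominated convergence and the change of variables formula~\eqref{eq:pushforward} give $\int_\OO f\,\mathrm d(\hat p_n)_*\nu\to\int_\OO f\,\mathrm d\hat p_*\nu$, i.e.\ $\Gamma(p_n)\rightharpoonup\Gamma(p)$. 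Since $\mathscr P(\OO)$ is metrizable, $\Gamma$ is continuous.

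Next I would prove injectivity by relating $\Gamma$ to the Bourguignon--Li--Yau map. With $\mathscr F\colon\mathscr P(\OO)\to\liep$, $\gamma\mapsto\int_\OO\mup\,\mathrm d\gamma$ as in Section~\ref{srbly}, the change of variables formula gives, for every $p\in\XS$,
\[
\mathscr F(\Gamma(p))=\int_\OO\mup(y)\,\mathrm d(\hat p_*\nu)(y)=\int_\OO\mup(\hat p(x))\,\mathrm d\nu(x)=\Psi_\nu(p),
\]
that is, $\Psi_\nu=\mathscr F\circ\Gamma$. As $\Psi_\nu$ is injective by Theorem~\ref{bly-invariant-measure}, so is $\Gamma$. (Alternatively one can argue directly that the support of $\hat p_*\nu$ equals the closed orbit $\OO(Q(W))$, which by Proposition~\ref{orbita-sottospazio} determines the $\tau$-connected subspace $W$ attached to $p$, and then reduce to Theorem~\ref{bly-invariant-measure} applied to $X_W$.)

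Then I would conclude. Being a continuous injection of the compact space $\XS$ into the Hausdorff space $\mathscr P(\OO)$, $\Gamma$ is a homeomorphism onto its image. For $gK\in X$ one has $i_\tau(gK)=[\tau(g)\tau(g)^*]$, which by Lemma~\ref{immagine-mappa-razionale} (case $W=V$, so $Q(W)=G$ and $\hat\pi_W=\mathrm{id}$) corresponds to the automorphism $L_{\rho(g)}$ of $\OO$; writing $g=\rho(g)k$ with $k\in K$ and using that $\nu$ is $K$-invariant, $\Gamma(i_\tau(gK))=(\rho(g))_*\nu=g_*\nu=i_M(gK)$, which is exactly $i_M=\Gamma\circ i_\tau$ on $X$. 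Hence $\Gamma(\XS)$ is a compact, therefore closed, subset of $\mathscr P(\OO)$ containing $i_M(X)$, so $\Gamma(\XS)\supseteq\overline{i_M(X)}=\overline{X}_{\OO}^F$; conversely $\Gamma(\XS)=\Gamma(\overline{i_\tau(X)})\subseteq\overline{\Gamma(i_\tau(X))}=\overline{X}_{\OO}^F$ by continuity, so $\Gamma(\XS)=\overline{X}_{\OO}^F$. Finally $g\cdot i_\tau(hK)=i_\tau(ghK)$, and from $g\rho(h)=\rho(gh)k'$ with $k'\in K$ and the $K$-invariance of $\nu$ we get $\Gamma(g\cdot i_\tau(hK))=(\rho(gh))_*\nu=g_*(\rho(h))_*\nu=g\cdot\Gamma(i_\tau(hK))$; since $i_\tau(X)$ is dense in $\XS$ and the $G$-actions on $\XS$ and on $\mathscr P(\OO)$ are continuous, $\Gamma$ is $G$-equivariant.

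The main obstacle is not an isolated hard step but the coordination of the ingredients: one must pass from weak convergence of Satake points to almost-everywhere convergence of the induced rational maps --- which is precisely what $\tau$-admissibility of $\nu$ yields in Lemma~\ref{convergense2} --- and one must keep track of the bookkeeping identity $g_*\nu=(\rho(g))_*\nu$, valid because $\nu$ is $K$-invariant, which makes $\Gamma\circ i_\tau$ agree with $i_M$ on $X$ and makes $\Gamma$ equivariant. All the substantial content is already contained in Theorem~\ref{bly-invariant-measure} and in the rational-map description of $\XS$.
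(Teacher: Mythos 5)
Your proposal is correct and follows essentially the same route as the paper's proof: continuity via $\nu$-a.e.\ convergence of the associated rational maps (Lemmas \ref{convergence} and \ref{convergense2}), injectivity by observing $\Psi_\nu=\mathscr F\circ\Gamma$ and invoking Theorem \ref{bly-invariant-measure}, and surjectivity from compactness of $\XS$ together with density of $i_M(X)$ in $\overline{X}_{\mathcal O}^F$. You additionally spell out the verifications of $i_M=\Gamma\circ i_\tau$ and of $G$-equivariance (using the correct form $g=\rho(g)k$, $k\in K$, of the polar decomposition and the $K$-invariance of $\nu$), which the paper asserts but leaves implicit.
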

\begin{proof}
Let $p\in \XS$ and $\hat p: \mathcal O \dashrightarrow \mathcal O $ be the corresponding rational map. $\hat p$
 is defined $\mu$-ae, since $\nu$ is smooth and so $\tau$-admissible. Therefore $\Ga(p) = \hat{p}_* \mu$ is
  well-defined for any $p\in \XS$.  By Lemma \ref{convergence}, if $p_ n \mapsto p$ in $\XS$,  then $\hat{p_n}
  \mapsto \hat{p}$ $\nu$-ae, and so
  $\Gamma(p_n) \rightharpoonup \Gamma (p)$. This proves that $\Gamma$ is continuous. Since $i_M (X)$ is dense in $\overline{X}_{\mathcal O}^F$ then the map is surjective. Now we prove it is injective.

Let $p,q\in \XS$ and let $\hat{p}$, $\hat{q}$ the corresponding rational map. If $\hat{p}_* \mu =\hat{q}_*\mu$, then
\[
\begin{array}{lcl}
\int_{\mathcal O} \mup(x) \mathrm{d} \hat{p}_* \mu (x)&=&\int_{\mathcal O} \mup(x) \mathrm{d} \hat{q}_* \mu (x) \\
\int_{\mathcal O} \mup(\hat{p}x)\mathrm{d} \nu (x)    &=&\int_{\mathcal O} \mup (\hat{q}x) \mathrm{d}\nu (x),
\end{array}
\]
and so
\[
\Psi_\nu (p)=\Psi_\nu (q),
\]
where $\Psi_\nu$ is the Bourguignon-Li-Yau with respect to the $K$-invariant metric $\nu$. Since $\Psi_\nu$ is an homeomorphism, $p=q$.
\end{proof}
\section{A remark on eigenvalue estimates}\label{laplacian}
Let $(M, \mathtt g)$ be a compact, connected orientable Riemannian manifold. It is well-known that the spectrum of the Laplacian $\Delta_g=-d^* d$,
acting on functions, form a discrete set. The first eigenvalue of the Laplacian operator, that we denote by $\lambda_1 (M,\mathtt g)$, is one of the most natural and studied Riemannian invariants. There has been a considerable amount of work devoted to estimating the first eigenvalue in terms of other geometric quantities associated to $(M,\mathtt g)$, see for instance \cite{apostolov-jakobson-kokarev,arezzo-ghigi-loi,berger,bgf,biliotti-ghigi-American,becker,kokarev,legrosa,panelli-podesta,yang-yau}. More precisely one would like to study the quantity
$
\lambda_1 (M,\mathtt g) \mathrm{Vol}(M,\mathtt g)^{n/2},
$
which is scale invariant. By the Rayleigh principle, upper bounds for the first eigenvalue are  obtained by constructing functions with zero mean, sensitive to the geometry of the underlying manifold. Indeed, if $f_1,\ldots,f_n \in \mathrm{C}^\infty (M)$ have zero mean with respect to $(M,\mathtt g)$, and so
\[
\int_M f_j (x) \mathrm{vol}_{\mathtt g} (x)=0,
\]
for $j=1,\ldots,n$, then
\begin{equation}\label{rayleight}
\lambda_1 (M,g)\leq \frac{\sum_{i=1}^n \int_M |\nabla f_j (x) |_{\mathtt g}^2 (x)\mathrm{vol}_{\mathtt g} (x)}{\sum_{i=1}^n \int_M f_j^2 (x) \mathrm{vol}_{\mathtt g} (x)} .
\end{equation}
In the paper \cite{hersch}, Hersch studied the first eigenvalue on the unite sphere $S^2\subset \R^3$.

Let $\mathtt g_0$ be the restriction on $S^2$ of the canonical scalar product of $\R^3$, normalized to have volume $4\pi$. Then $(S^2,\mathtt g_0)$ is an homogeneous K\"ahler manifold and its automorphism group is given by  $G=\operatorname{PSL}(2, \C)$ acting by M\"obius transformations. It is well-known that $\lambda_1 (S^2,\mathtt g_0)=2$ and   the three coordinate
functions $x,y,z$ are eigenfunctions  of the Laplacian. These functions are the components of
momentum map of $S^2$ with respect to $\mathrm{SO}(3)$-action. Hersch showed that if $\mathtt g$ is an
arbitrary Riemannian metric on $S^2$ (normalized to have volume $4\pi$), then there is $a\in G$ such that
$\int_{S^2} a^*x \mathrm{vol}_{\mathtt g} (p) = \int_{S^2} a^*y \mathrm{vol}_{\mathtt g}(p)=\int_{S^2} a^*z \mathrm{vol}_{\mathtt g} (p)=0$.
Moreover he showed that the right hand side in
\eqref{rayleight} is equal to $2$ and so
$\lambda_1(S^2, \mathtt g) \leq 2$.

Bourguignon, Li and Yau \cite{bourguignon-li-yau} realized that this method applies also to estimate $\lambda_1(\PP^n(\C), \mathtt g)$ if $\mathtt g$ is a
\emph{K\"ahler} metric.  In \cite{biliotti-ghigi-American} we recast
the method of Hersch-Bourguignon-Li-Yau in terms of momentum map and applied it when $M$ is an arbitrary Hermitian symmetric space,
$\mathtt g_0$ is the symmetric metric, $G=\mathrm{Aut(M)}$ and the functions
are the components of the momentum map $\mu: M \lra \mathfrak k$ for
$K:= \operatorname{Isom}(M, \mathtt g_0)$. The Bourguignon-Li-Yau map is the
tool to deal with the first step for the unique closed orbit of $G$ on $\OO'$. We use the gradient map instead the momentum map.
If $\mathtt g$ is a Riemannian metric on $\mathcal O$, we denote by $\nu:=\mathrm{vol}_{\mathtt g} / \mathrm{Vol}(\mathcal O,\mathtt g)$ the corresponding Borel probability measure. Let  $e_1 , \lds, e_r$ be an orthonormal basis of
$\liep$ and set $f_j := \langle \mup, e_j\rangle$.   By Theorem \ref{grado}, there exists
$a\in G$ such that $\int_{\mathcal O} \mup(a x) \mathrm{vol}_{\mathtt g} (x) = 0$ and so $\int_{\mathcal O} a^* f_j (x) \mathrm{vol}_{\mathtt g} (x)=0$ for  $j=1,\ldots, r$.  By Rayleigh's Theorem we get the following result.
\begin{thm}
In the above notation, we have
\begin{gather}
    \label{eq:stima}
    \lambda_1 (\mathcal O, \mathtt g) \leq \frac{\sum_{j=1}^r \int_M |\nabla (a^*f_j)|_{\mathtt g}^2
      \mathrm{vol}_{\mathtt g}}{\int_M a^*(|\mup|^2) \mathrm{vol}_{\mathtt g}}
\end{gather}
\end{thm}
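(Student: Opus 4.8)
The plan is to apply the Rayleigh principle, in the form already recorded in \eqref{rayleight}, to the $G$-translated components $a^*f_j=\langle\mup(a\,\cdot\,),e_j\rangle$ of the gradient map, with the translation $a\in G$ chosen so that each $a^*f_j$ is $L^2(\mathrm{vol}_{\mathtt g})$-orthogonal to the constants. The existence of such an $a$ is exactly the surjectivity of the Bourguignon-Li-Yau map (Theorem \ref{grado}), specialized to the probability measure attached to the given metric.

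First I would construct $a$. Set $\nu:=\mathrm{vol}_{\mathtt g}/\mathrm{Vol}(\mathcal O,\mathtt g)$. This is a smooth probability measure on $\mathcal O$, hence $\tau$-admissible: for any hyperplane $H\subset\mathbb P(V)$ the section $\mathcal O\cap H$ is a proper real-analytic subset of $\mathcal O$ (proper because $\mathcal O$ is full in $\mathbb P(V)$, as established in the proof of Proposition \ref{restriction-closed-orbit}), hence $\nu$-null. By Theorem \ref{grado} the map $\Psi_\nu:\XS\to\mathcal E$ is surjective, while by Corollary \ref{bordo2} it sends $\partial\XS$ into $\partial\mathcal E$; since $0\in\mathrm{Int}(\mathcal E)$ by Lemma \ref{basic}, any point of $\Psi_\nu^{-1}(0)$ lies in $X=G/K$. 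Pick $gK\in X$ with
\[
\Psi_\nu(gK)=\int_{\mathcal O}\mup(\rho(g)x)\,\mathrm d\nu(x)=0,
\]
and put $a:=\rho(g)\in G$. Then $\int_{\mathcal O}\mup(ax)\,\mathrm{vol}_{\mathtt g}(x)=0$, so taking the inner product with $e_j$ gives $\int_{\mathcal O}a^*f_j\,\mathrm{vol}_{\mathtt g}=0$ for $j=1,\dots,r$, where each $a^*f_j\in C^\infty(\mathcal O)$.

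Finally I would feed the functions $a^*f_1,\dots,a^*f_r$ into \eqref{rayleight}. They are smooth, have vanishing $\mathtt g$-mean, and are not all identically zero: $\mup(\mathcal O)$ is a single $K$-orbit, which cannot equal $\{0\}$ since $\mathcal E=\mathrm{conv}(\mup(\mathcal O))$ has nonempty interior (Lemma \ref{basic}), so $\mup$, hence $\mup\circ a$, never vanishes on $\mathcal O$. Thus \eqref{rayleight} yields
\[
\lambda_1(\mathcal O,\mathtt g)\le\frac{\sum_{j=1}^r\int_{\mathcal O}|\nabla(a^*f_j)|_{\mathtt g}^2\,\mathrm{vol}_{\mathtt g}}{\sum_{j=1}^r\int_{\mathcal O}(a^*f_j)^2\,\mathrm{vol}_{\mathtt g}}.
\]
Since $e_1,\dots,e_r$ is an orthonormal basis of $\liep$, at every $x\in\mathcal O$ one has $\sum_{j=1}^r(a^*f_j)^2(x)=\sum_{j=1}^r\langle\mup(ax),e_j\rangle^2=|\mup(ax)|^2=a^*(|\mup|^2)(x)$, so the denominator equals $\int_{\mathcal O}a^*(|\mup|^2)\,\mathrm{vol}_{\mathtt g}$; substituting gives \eqref{eq:stima}. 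The substantive content lies entirely in the results invoked — the surjectivity of $\Psi_\nu$ in Theorem \ref{grado}, which rests on the degree argument and on the homeomorphism property of the Bourguignon-Li-Yau map for the $K$-invariant measure (Theorem \ref{bly-invariant-measure}), together with $0\in\mathrm{Int}(\mathcal E)$ from Lemma \ref{basic}; the step above is then only the Hersch-type balancing trick plus the orthonormal-basis identity for $|\mup|^2$, so I expect no genuine obstacle here.
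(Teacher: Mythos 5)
Your proposal is correct and follows exactly the paper's route: the paper likewise obtains $a\in G$ with $\int_{\mathcal O}\mup(ax)\,\mathrm{vol}_{\mathtt g}(x)=0$ from the surjectivity of the Bourguignon--Li--Yau map (Theorem \ref{grado}) applied to the smooth probability measure $\mathrm{vol}_{\mathtt g}/\mathrm{Vol}(\mathcal O,\mathtt g)$, and then feeds the balanced functions $a^*f_j$ into the Rayleigh quotient \eqref{rayleight}, using $\sum_j (a^*f_j)^2=a^*(|\mup|^2)$. You in fact supply two details the paper leaves implicit --- that $\Psi_\nu^{-1}(0)$ meets $X$ itself because $0\in\mathrm{Int}(\mathcal E)$ while the boundary maps to $\partial\mathcal E$, and that the denominator is nonzero since $\mup$ never vanishes on $\mathcal O$.
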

The second step is to actually compute the right hand side in \eqref{rayleight}.  On the other hand, at the moment we are not able
to compute the right hand side in \eqref{eq:stima} except for the Hermitian symmetric spaces \cite{biliotti-ghigi-American, bgf}.  We believe that this computation can be carried out in much greater generality and that it would yield very interesting estimates. We leave this problem for future investigations.

\end{document}